\def\tilde{\widetilde}
\newtheorem{thm}{Theorem}[section]
\newtheorem{dfn}[thm]{Definition}
\newtheorem{lem}[thm]{Lemma}
\newtheorem{prp}[thm]{Proposition}
\newtheorem{rmk}[thm]{Remark}
\newtheorem{cor}[thm]{Corollary}
\newtheorem{thm'}{Theorem}
\newtheorem{conj'}[thm']{Conjecture}
\newtheorem{ques'}[thm']{Question}
\title{Instanton knot invariants with rational holonomy parameters and an application for torus knot groups}
\author{Hayato Imori}
\date{}
\begin{document}
\nocite{*}
\maketitle
\begin{abstract}
There are several knot invariants in the literature that are defined using singular instantons. 
Such invariants provide strong tools to study the knot group and give topological applications.
For instance, it gives powerful tools to study the topology of knots in terms of representations of fundamental groups.
In particular, it is shown that any traceless representation of the torus knot group can be extended to any concordance from the torus knot to another knot.
Daemi and Scaduto proposed a generalization that is related to a version of the Slice-Ribbon conjecture to torus knots.
The results of this paper provide further evidence towards the positive answer to this question.
The method is a generalization of Daemi-Scaduto's equivariant singular instanton Floer theory following Echeverria's earlier work.
Moreover, the irreducible singular instanton homology of torus knots for all but finitely many rational holonomy parameters are determined as $\mathbb{Z}/4$-graded abelian groups.
\end{abstract}
\tableofcontents

\maketitle

\section{Introduction}
\subsection{Background}
Floer homology is an infinite dimensional analogue of Morse homology. In the context of gauge theory, instanton Floer homology \cite{Flo88}, Heegaard Floer homology  \cite{OS04b} and monopole Floer homology \cite{KrMr07} have provided strong topological invariants for low-dimensional manifolds.
Knot invariants  have been also developed in Floer theories. This list of knot invariants includes knot Floer homology introduced by Ozsv{\'a}th -Szab{\'o} \cite{OS04a} and Rasmussen \cite{Ras03}  in Heegaard Floer theory and Kronheimer-Mrowka \cite{kronheimer2010knots} in monopole Floer theory.
In the field of  instanton Floer theory, invariants of knot constructed by Floer \cite{floer1990instanton} and Braam-Donaldson \cite{braam1995floer} via framed surgery of knots. 
It is conjectured that their instanton knot invariants are related to knot invariants in Ozsv{\'a}th-Szab{\'o} and Rasmussen \cite{Ras03} by Kronheimer-Mrowka \cite{kronheimer2010knots}.
Collin-Steer \cite{collin1999instanton} and Kronheimer-Mrowka \cite{KM11} developed other type invariants for knots. 
While knot invariants in \cite{floer1990instanton} and \cite{braam1995floer} are related to invariants of 3-manifold via surgery along knot, knot invariants in \cite{collin1999instanton} and \cite{KM11} are related to 3-manifold's invariants via branched covering.

The advantage of instanton invariants is that it is directly related to fundamental groups of knot complement. 
For example, Kronheimer-Mrowka \cite{kronheimer2004dehn} show that the knot group $\pi_{1}(S^{3}\setminus K)$ for a non-trivial knot $K\subset S^{3}$ admits non-abelian representation $\pi_{1}(S^{3}\setminus K)\rightarrow SU(2)$. 
This is a  refinement of the result by Papakyriakopoulos \cite{papakyriakopoulos1957dehn} which states that $K\subset S^{3}$ is unknot if only if $\pi_{1}(S^{3}\setminus K)$ is infinitely cyclic.  
A concordance analogue of the result by Kronheimer-Mrowka \cite{kronheimer2004dehn} was given by Daemi-Scaduto \cite{DS2} using a version of instanton Floer theory.
Daemi-Scaduto \cite{DS2} also show the following statement which is specific to torus knots. 
\begin{thm'}  \label{DSthm8} {\rm (\cite[Theorem 8]{DS2} )}
Let $S:T_{p,q}\rightarrow K$ be a given smooth concordance. Then any traceless $SU(2)$-representation of $\pi_{1}(S^{3}\setminus T_{p, q})$ extends over the concordance complement. 
\end{thm'}
Here $T_{p, q}$ denotes the $(p, q)$-torus knot in $S^{3}$, where $p$ and $q$ are positive coprime integers. 
An $SU(2)$-traceless representation of $\pi_{1}(S^{3}\setminus K)$  is a  $SU(2)$-representation of $\pi_{1}(S^{3}\setminus K)$ which sends a homotopy class of meridian $\mu_{K}$ of $K$ to a traceless element in $SU(2)$.
The motivation of this theorem is related to a version of the Slice-Ribbon conjecture. 
A concordance $S:K\rightarrow K'$ is called ribbon concordance if the projection $S^{3}\times [0, 1]\supset S\rightarrow [0, 1]$ is a Morse function without any local maximums. 
Consider a knot $K$ which is concordant to unknot $U$.
The Slice-Ribbon conjecture proposed by Fox \cite{fox1962some} states that there is a ribbon concordance from $U$ to $K$ under this assumption. 
A generalization of the Slice-Ribbon conjecture by Daemi-Scaduto \cite{DS2} is 
\begin{conj'}\label{DSconj}
Let $K$ be a knot which is concordant to the $(p, q)$-torus knot $T_{p, q}$. Then there is a ribbon concordance from $T_{p, q}$ to $K$.
\end{conj'}
A necessary condition to show that a concordance $S:K\rightarrow K'$ is ribbon can be  stated in terms of representations of knot groups.
For a topological space $X$, we write $\mathcal{R}(X, SU(2))$ for the $SU(2)$-character variety of $X$ (i.e. the space of conjugacy classes of $SU(2)$-representations of $\pi_{1}(X)$).
\begin{thm'}{\rm (\cite{Gor}, \cite{DL})}\label{thm'}
Let $S:K\rightarrow K'$ be a ribbon concordance between two knots. Then the inclusion $i:S^{3}\setminus K\rightarrow S^{3}\times [0, 1]$ induces a surjection $i^{*} :\mathcal{R}(S^{3}\times [0, 1]\setminus S,SU(2))\rightarrow \mathcal{R}(S^{3}\setminus K, SU(2))$.   
\end{thm'}
Hence Theorem \ref{DSthm8} gives a piece of evidence towards Conjecture  \ref{DSconj}. 
The traceless condition  on representations of $\pi_{1}(S^{3}\setminus T_{p, q})$ arises from the specific type of knot invariants developed in \cite{DS1}. 
In light of Theorem \ref{thm'} and Conjecture \ref{DSconj}, it is natural to ask the following question.
\begin{ques'}\label{Ques}
Can we drop the traceless condition in Theorem \ref{DSthm8}?
\end{ques'}
In this paper, we will affirmatively solve this question. 
To explain our strategy, let us describe technical backgrounds of the Daemi-Scaduto's work. 
It mainly consists of three ingredients, singular gauge theory, equivariant Floer theory and the Chern-Simons filtration.
\\

Firstly, let us explain the notion of singular connections.
Let $K\subset Y$ be a knot in a 3-manifold. 
Roughly speaking, an $SU(2)$-singular connection $A$ is  an $SU(2)$-connection defined over the knot complement with the holonomy condition
\begin{equation}\lim_{r\rightarrow 0}{\rm Hol}_{\mu(r)}(A)\sim 
\left[\begin{array}{cc}
e^{2\pi i \alpha}&0\\
0&e^{-2\pi i \alpha}
\end{array}
\right].\label{holonomy}\end{equation}
Here $\mu(r)$ is a radius $r$ meridian of  $K\subset Y$ and  $\alpha$ is a fixed parameter in $(0, \frac{1}{2})$. $\sim$ denotes two matrices are conjugate in $SU(2)$. 
The parameter $\alpha$ is called the holonomy parameter of the singular connection $A$.  
In particular, a singular flat $SU(2)$-connection corresponds to an $SU(2)$-representation of $\pi_{1}(Y\setminus K)$ which sends the meridian $\mu_{K}$ of knot $K$ to an element which is conjugate to the matrix in (\ref{holonomy}).
Kronheimer-Mrowka developed singular version of Yang-Mills gauge theory in \cite{KM93, KM95, KM11}. 
Such kind of Floer homology theories constructed via singular connections are called singular instanton homology.
Singular gauge theory has different features compared to non-singular ones. 
In fact, singular Floer homology cannot defined over the coefficient ring $\mathbb{Z}$ for a general holonomy parameter $\alpha$.
To be more precise, singular instanton Floer homology is defined over $\mathbb{Z}$ only for $\alpha=\frac{1}{4}$.
This is called {\it the monotonicity condition}.
Most of works in singular instanton homology including \cite{DS1} and \cite{DS2} impose the monotonicity condition.
This is the reason that the statement of Theorem \ref{DSthm8} includes the traceless condition.

Next, we discuss the equivariant Floer theory.
 Fr{\o}yshov developed the homology cobordism invariant in \cite{fro02} and \cite{fro04}
 based on the equivariant Floer theory for integral homology 3-spheres, which was introduced by Donaldson \cite{Don}.
 The equivariant Floer theory introduced by Daemi-Scaduto \cite{DS1} produces invariants for a knot $K$ in an integral homology 3-sphere $Y$, and this can be regarded as 
 the counterpart of F{\o}yshov's work in singular gauge theory.
 Daemi-Scaduto's construction uses in a crucial way the $U(1)$-reducible singular flat connection $\theta$ which corresponds to the conjugacy class of  representation 
 \[\pi_{1}(Y\setminus K)\rightarrow H_{1}(Y\setminus K; \mathbb{Z})\rightarrow SU(2)\]
 whose image of the meridian $\mu_{K}$ of $K\subset Y$ is trace-free. 
 Here $\pi_{1}(Y\setminus K)\rightarrow H_{1}(Y\setminus K; \mathbb{Z})$ is the abelianization.
 In this situation, the construction which is similar to Floer's instanton homology \cite{Flo88} produces a chain complex $C_{*}(Y, K)$ for a knot in an integral homology 3-sphere.
 Its homology group $I_{*}(Y, K)$ can be interpreted as a categorification of knot signature for the case $Y=S^{3}$.
 Daemi-Scaduto \cite{DS1} also introduced chain complexes which have the following form \[\tilde{C}_{*}(Y, K):=C_{*}(Y, K)\oplus C_{*-1}(Y, K)\oplus \mathbb{Z}.\]
 Such objects are called $\mathcal{S}$-complex. 
 This can be interpreted as a version of $S^{1}$-equivariant Floer theory. 
 Let $\mathcal{B}(Y, K)$ be a configuration space of singular connections over $(Y, K)$ with a holonomy parameter $\alpha=\frac{1}{4}$. 
 Then there is a configuration space $\mathcal{B}(Y, K)_{0}$ of framed connections. The Chern-Simons functional on $\mathcal{B}(Y, K)$ lifts to $\mathcal{B}(Y, K)_{0}$ in an equivariant way. An $\mathcal{S}$-complex $\widetilde{C}_{*}(Y, K)$ is  related to the lifted $S^{1}$-equivariant Chern-Simons functional on $\mathcal{B}(Y, K)_{0}$.
 
 Another feature of Daemi-Scaduto's construction is the Chern-Simons filtration of $\mathcal{S}$-complexes.
 While usual instanton Floer theory is the analogue of Morse theory on the configuration space, its filtered version can be seen the Morse theory on the universal covering of the configuration space.
 The Chern-Simons filtration gives more refined structures on $\mathcal{S}$-complexes.
The counterpart idea in non-singular instanton Floer theory was used by Daemi  \cite{Dae2020} and Nozaki-Sato-Taniguchi  \cite{nozaki2019filtered}, which provided homology cobordism invariants. 
  
Any of the above version of singular instanton Floer theories can be extended to an arbitrary holonomy parameters if the integer coefficient ring is replaced with a Novikov ring $\Lambda$ by the Echeverria's work \cite{E19}.
To be more precise, the holonomy parameter should satisfy the technical condition $\Delta_{(Y, K)}(e^{4\pi i \alpha})\neq 0$, where $\Delta_{(Y, K)}$ is the Alexander polynomial for $K\subset Y$.
  One of the flavors of Echeverria's Floer homology is a categorification of the Tristram-Levine signature when $Y=S^{3}$.
  For a knot $K$ in an integral homology 3-sphere $Y$, the Tristram-Levine signature is given by
  \[\sigma_{\alpha}(Y, K):={\rm sign}[(1-e^{4\pi i \alpha})V+(1-e^{-4\pi i \alpha})V^{T}]\]
   where $V$ is a Seifert matrix form of $K\subset Y$. 
 For the case $Y=S^{3}$, we omit $Y$ from the notation.
 
 Our strategy to drop the traceless condition from Theorem \ref{DSthm8} is constructing a family of $\mathcal{S}$-complexes for general holonomy parameters.
 \\
 
\subsection{Summary of Results}\label{summary}

First of all, we state the main theorem of this paper which gives the positive answer to Question \ref{Ques}. 
\begin{thm}\label{main thm}
For a given knot $K$ and a smooth concordance $S: T_{p, q}\rightarrow K$, any $SU(2)$-representation of 
 $\pi_{1}(S^{3}\setminus T_{p, q})$ extends to an $SU(2)$-representation of $\pi_{1}((S^{3}\times [0, 1])\setminus S)$.
\end{thm}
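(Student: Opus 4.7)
The plan is to argue by contradiction, adapting the strategy used by Daemi--Scaduto to prove Theorem \ref{DSthm8}, but replacing the traceless (monotone, $\alpha=\tfrac14$) Floer theory by a one-parameter family of singular instanton Floer theories indexed by rational holonomy parameters. Given an $SU(2)$-representation $\rho$ of $\pi_{1}(S^{3}\setminus T_{p,q})$, after conjugation the meridian $\mu_{T_{p,q}}$ is sent to a matrix of the form in \eqref{holonomy} for some $\alpha\in(0,\tfrac12)$. By a small deformation (using that the character variety of $T_{p,q}$ is a real semi-algebraic set of positive-dimensional components and that the Alexander polynomial $\Delta_{T_{p,q}}$ vanishes only at finitely many roots of unity) one may assume $\alpha\in\mathbb{Q}$ and $\Delta_{T_{p,q}}(e^{4\pi i\alpha})\neq 0$, so Echeverria's \cite{E19} construction applies. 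If $\rho$ did not extend across $S$, neither would nearby representations, so the contradiction need only be derived in this generic setting.

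The main technical work is to construct, for every such rational $\alpha$, an equivariant filtered $\mathcal{S}$-complex
\[
\widetilde{C}_{*}(Y,K;\alpha)\;=\;C_{*}(Y,K;\alpha)\oplus C_{*-1}(Y,K;\alpha)\oplus\Lambda,
\]
over a Novikov ring $\Lambda$, generalizing the $\alpha=\tfrac14$ complex of \cite{DS1} and carrying a Chern--Simons filtration as in \cite{Dae2020,nozaki2019filtered}. Functoriality must be established for negative-definite concordance cobordisms, yielding chain maps and Fr\o yshov-type numerical invariants $\Gamma_{\alpha}$ attached to the pair $(Y,K)$. Crucially, the irreducible Floer group $I_{*}(S^{3},T_{p,q};\alpha)$ should be computed explicitly as a $\mathbb{Z}/4$-graded $\Lambda$-module for all $\alpha$ outside the finite bad set; this is done by identifying the flat singular moduli space with (a perturbation of) the $\alpha$-holonomy slice of the torus-knot character variety, which is Morse--Bott of known type, and matching the count with the Tristram--Levine signature $\sigma_{\alpha}(T_{p,q})$ predicted by Echeverria's categorification.

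With these pieces in place, the concordance $S:T_{p,q}\to K$ induces a morphism of filtered $\mathcal{S}$-complexes, and the assumption that $\rho$ fails to extend produces, via the obstruction interpretation of critical points of the $S^{1}$-equivariant Chern--Simons functional on $\mathcal{B}(Y,K)_{0}$, a distinguished generator whose image cannot be killed. The combination of the concordance functoriality and the explicit torus-knot computation forces a relation on the $\Gamma_{\alpha}$-invariants that is violated, delivering the contradiction and hence the extension of $\rho$.

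The hard part is item (2), namely, pushing through the equivariant filtered $\mathcal{S}$-complex construction outside the monotone range. One must handle bubbling and compactness for singular anti-self-dual connections with non-monotone weight (where energy no longer controls topology integrally, forcing Novikov coefficients), establish transversality for equivariant perturbations compatible with the reducible $\theta$ fixed by the $S^{1}$-action, and verify that the Chern--Simons filtration behaves well under cobordism maps despite the action taking values in a dense subgroup of $\mathbb{R}$. The explicit torus-knot computation is also delicate: one needs the perturbation to be equivariant, to preserve the Morse--Bott structure of the character variety, and to produce signs matching the Tristram--Levine signature jump loci, so that the comparison with $\sigma_{\alpha}(T_{p,q})$ is actually sharp.
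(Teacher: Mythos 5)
Your overall program is the same as the paper's (extend the Daemi--Scaduto equivariant, Chern--Simons--filtered $\mathcal{S}$-complex to rational holonomy parameters over a Novikov-type ring, compute the torus-knot Floer groups against $\sigma_{\alpha}(T_{p,q})$, use concordance functoriality, and dispose of non-generic $\alpha$ by a density/limit argument -- your contrapositive reduction via openness of the non-extendable locus is essentially equivalent to the paper's limiting step using compactness of $SU(2)$). The genuine gap is in the decisive step, where you claim that non-extension of $\rho$ ``produces a distinguished generator whose image cannot be killed'' and ``forces a relation on the $\Gamma_{\alpha}$-invariants that is violated.'' Numerical invariants extracted from the $\mathcal{S}$-complex (the Fr{\o}yshov number $h^{\alpha}_{\mathscr{S}}$, or $\Gamma$-type data) only record ranks and filtration levels; they cannot detect whether one \emph{specific} flat connection lies in the image of the restriction map from the concordance complement, and a cobordism map can perfectly well be a (filtered) quasi-isomorphism while individual matrix entries vanish. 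So as stated the contradiction does not materialize, and no mechanism is given that turns Floer-theoretic nontriviality into an honest flat extension of the chosen $\rho$.

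What the paper actually does at this point is chain-level and uses two ingredients you do not mention. First, it composes $S$ with its reverse to get a self-concordance $\bar S\circ S\colon T_{p,q}\to T_{p,q}$ (Proposition \ref{selfconc}), so that source and target complexes coincide; an extension over $\bar S\circ S$ restricts to one over $S$. Second, it exploits that $C^{\alpha}_{*}(T_{p,q};\Delta_{\mathscr{S}})$ is generated by the genuine flat connections, all of odd degree with vanishing differential and nondegenerate without perturbation (Theorem \ref{absolute counting}, Theorem \ref{sub thm}, Proposition \ref{non-deg'}), together with $h^{\alpha}_{\mathscr{S}}(T_{p,q})=-\tfrac12\sigma_{\alpha}(T_{p,q})={\rm rank}\,C^{\alpha}_{*}$ (Theorems \ref{Froyshov} and \ref{str}) to build a $v$-tower $\hat\beta_{0},\dots,\hat\beta_{d-1}$; applying the operators $\mathcal{W}_{i,j,k}=\delta_{1}v^{i}U^{k}Z^{j}$ in the $\mathbb{Z}\times\mathbb{R}$-bigraded complex over $\mathscr{R}_{\alpha}\otimes\mathbb{Q}$ shows the induced map $m_{\bar S\circ S}$ is an isomorphism on a truncated window $C^{[-\epsilon,R]}_{*}$. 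Ordering the generators by Chern--Simons values makes this map block-triangular, with diagonal blocks counting (perturbed) \emph{flat} connections on the concordance complement; invertibility forbids a zero column, which is exactly the statement that every irreducible flat connection on $S^{3}\setminus T_{p,q}$ with that holonomy parameter extends (reducibles extend trivially). Without the self-concordance composition, the identification of generators with unperturbed flat connections, and this filtered ``invertible diagonal block'' argument, your plan does not reach the conclusion, even granting the full construction of the theory for rational $\alpha$.
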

The proof of Theorem \ref{main thm} requires the special property that all generators of singular instanton homology for torus knots have odd gradings.  The outline of the proof is as follows. 
After extending the condition of \cite{DS1}, we define analogues knot Floer theory of \cite{DS1} for  all  $\alpha\in \mathcal{I}$,  where $\mathcal{I}$ is a dense subset of $[0, \frac{1}{2}]$. 
This means that all $SU(2)$-representations of $\pi_{1}(S^{3}\setminus T_{p, q})$ with the holonomy parameter $\alpha\in \mathcal{I}$ extend to the concordance complement. The limiting argument shows that this extension property is true for all $SU(2)$-representations of $\pi_{1}(S^{3}\setminus T_{p, q})$  with any holonomy parameter $\alpha \in [0, \frac{1}{2}]$.

As described above, singular instanton knot homology \cite{E19} and its equivariant counterparts are key tools for the proof of Theorem.
Let us review essential properties of these objects employed in this paper.
We consider the Novikov ring $\Lambda^{\mathbb{Z}[T^{-1},T]\!]}$  which is given by
\[\Lambda^{\mathbb{Z}[T^{-1},T]\!]}:=\left\{\sum_{r\in \mathbb{R}}p_{r}\lambda^{r}\ \middle|\  p_{r}\in \mathbb{Z}[T^{-1}, T]\!], \forall C>0, \#\{p_{r}\neq 0\}_{r>C}<\infty\right\}.\]
Let $\alpha$ be a parameter in $(0, \frac{1}{2})$. We introduce the following subring $\mathscr{R}_{\alpha}$ of $\Lambda^{\mathbb{Z}[T^{-1}, T]\!]}$.
\[\mathscr{R}_{\alpha}:=\begin{cases}
\mathbb{Z}[\xi^{\pm 1}_{\alpha}][\![\lambda^{-1},\lambda]&(\alpha\leq\frac{1}{4})\\
\mathbb{Z}[\lambda^{\pm 1}][\![\xi^{-1}_{\alpha}, \xi_{\alpha}]&(\alpha>\frac{1}{4})
\end{cases}\]
where $\xi_{\alpha}=\lambda^{2\alpha}T^{2}$.
The geometric aspect of the subring $\mathscr{R}_{\alpha}$ is described in Subsection \ref{Floer homology}.
\begin{thm}
Let $\mathscr{S}$ be an algebra over $\mathscr{R}_{\alpha}$.
Let $K\subset Y$ be an oriented knot in an integral homology 3-sphere. Choose a holonomy parameter $\alpha\in \mathbb{Q}\cap (0, \frac{1}{2})$ so that $\Delta_{(Y, K)}(e^{4\pi i \alpha})\neq 0$. 
Then we can associate a $\mathbb{Z}/4$-graded module $I^{\alpha}_{*}(Y, K;\Delta_{\mathscr{S}})$ over $\mathscr{S}$.  
Moreover, if $\mathscr{S}$ is an integral domain, we can associate a $\mathbb{Z}/4$-graded $\mathcal{S}$-complex
$(\tilde{C}^{\alpha}_{*}(Y, K;\Delta_{\mathscr{S}}), \tilde{d}, \chi)$ to a given triple $(Y, K, \alpha)$ with $\Delta_{(Y, K)}(e^{4\pi i \alpha})\neq 0$.
\end{thm}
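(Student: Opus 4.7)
The plan is to carry out the construction of singular instanton Floer homology for rational holonomy parameters following Kronheimer--Mrowka \cite{KM11}, Echeverria \cite{E19} and Daemi--Scaduto \cite{DS1}, with the additional observation that under the hypothesis $\Delta_{(Y,K)}(e^{4\pi i\alpha})\neq 0$ the Novikov-type weights which appear in the differential lie in the refined subring $\mathscr{R}_{\alpha}\subset \Lambda^{\mathbb{Z}[T^{-1},T]\!]}$. First I set up the configuration space $\mathcal{B}^{\alpha}(Y,K)$ of singular $SU(2)$-connections with meridional holonomy asymptotic to the diagonal matrix in (\ref{holonomy}), together with the Chern--Simons functional. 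The Alexander polynomial hypothesis ensures that the reducible flat connection $\theta$ is isolated and non-degenerate in $\mathcal{B}^{\alpha}(Y,K)$, so after a small holonomy perturbation $\pi$ in the sense of Kronheimer--Mrowka the set of irreducible critical points $\mathfrak{C}^{\mathrm{irr}}_{\pi}$ is finite and transversely cut out and the trajectory moduli spaces have the expected dimension.

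I then define $C^{\alpha}_{*}(Y,K;\Delta_{\mathscr{R}_{\alpha}})$ to be the free graded $\mathscr{R}_{\alpha}$-module on $\mathfrak{C}^{\mathrm{irr}}_{\pi}$, with the $\mathbb{Z}/4$-grading given by the mod-$4$ spectral flow, and weight each rigid trajectory from $\beta$ to $\gamma$ in a relative homotopy class $z$ by the monomial $\lambda^{-E(z)}T^{h(z)}$ recording its topological energy $E(z)$ and its relative monodromy degree $h(z)$ on the singular $SO(3)$-bundle. Compactness modulo bubbling shows that finitely many $(E(z),h(z))$ occur in each energy window, so the sum converges a priori in the ambient Novikov ring, and $d^{2}=0$ follows from the standard analysis of ends of one-dimensional moduli spaces (no reducible breakings occur because the irreducible subcomplex is closed under $d$ once $\theta$ is isolated). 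The refinement needed here is that on index-$1$ trajectories the pair $(E(z),h(z))$ is forced by the singular Chern--Simons identity for rational $\alpha$ to lie in a specific cone: every contributing monomial is a non-negative $\mathbb{Z}$-combination of $\lambda^{-1}$ and $\xi_{\alpha}=\lambda^{2\alpha}T^{2}$ when $\alpha\leq\tfrac{1}{4}$, and dually of $\lambda$ and $\xi_{\alpha}^{-1}$ when $\alpha>\tfrac{1}{4}$. This is exactly the statement that the differential has coefficients in $\mathscr{R}_{\alpha}$, and then $I^{\alpha}_{*}(Y,K;\Delta_{\mathscr{S}})$ is defined by base change as $H_{*}(C^{\alpha}_{*}(Y,K;\Delta_{\mathscr{R}_{\alpha}})\otimes_{\mathscr{R}_{\alpha}}\mathscr{S})$.

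For the $\mathcal{S}$-complex part I enlarge $\mathcal{B}^{\alpha}(Y,K)$ to its framed version $\mathcal{B}^{\alpha}(Y,K)_{0}$, on which the stabilizer $S^{1}$ of $\theta$ acts, and mimic the algebraic recipe of \cite{DS1}. This produces $\widetilde{C}^{\alpha}_{*}(Y,K;\Delta_{\mathscr{S}}):=C^{\alpha}_{*}\oplus C^{\alpha}_{*-1}\oplus \mathscr{S}$ with differential $\widetilde{d}$ assembled from $d$, two maps $D_{1}\colon C_{*-1}\to C_{*}$ and $\delta_{2}\colon \mathscr{S}\to C_{*-1}$ counting framed trajectories into and out of $\theta$, and a point class $\delta_{1}\colon C_{*}\to \mathscr{S}$; the map $\chi$ is induced by rotation of the framing and satisfies $\widetilde{d}\chi+\chi\widetilde{d}=0$. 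The $\mathcal{S}$-complex identities reduce, as in \cite{DS1}, to analysing the ends of framed one- and two-dimensional moduli spaces with prescribed asymptotic behaviour at $\theta$, and the integral-domain hypothesis on $\mathscr{S}$ is used exactly as in \emph{loc.\ cit.}, to justify cancelling a nonzero scalar coming from the framed count at $\theta$.

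The principal obstacle, and the point where the argument genuinely extends beyond \cite{E19}, is the cone computation which forces the differential into the subring $\mathscr{R}_{\alpha}$ rather than only into $\Lambda^{\mathbb{Z}[T^{-1},T]\!]}$. This requires careful bookkeeping of the singular Chern--Simons values in terms of a lift of the meridional holonomy; rationality of $\alpha$ is used to guarantee that only a discrete lattice of energy-monodromy pairs is realised geometrically, while uniqueness of the reducible $\theta$ (from $\Delta_{(Y,K)}(e^{4\pi i\alpha})\neq 0$) eliminates trajectories whose weights would otherwise fall outside the cone. Once this lattice analysis is in place, the remaining transversality, compactness and gluing arguments are routine adaptations of \cite{KM11,DS1,E19}, and the $\mathcal{S}$-complex axioms follow from the framed moduli analysis of \cite{DS1} transplanted to the present coefficient ring.
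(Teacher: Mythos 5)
Your overall architecture is the same as the paper's: isolate $\theta_{\alpha}$ via $\Delta_{(Y,K)}(e^{4\pi i\alpha})\neq 0$, perturb, build the Floer complex over a Novikov-type coefficient system weighted by relative energy and monopole number, and then assemble the $\mathcal{S}$-complex from $v,\delta_{1},\delta_{2}$ following Daemi--Scaduto. However, the step you single out as the crux --- the ``cone computation'' asserting that every monomial appearing in the differential is a non-negative $\mathbb{Z}$-combination of $\lambda^{-1}$ and $\xi_{\alpha}$ (resp.\ of $\lambda$ and $\xi_{\alpha}^{-1}$) --- is both unproved and false as stated, and it also misidentifies what membership in $\mathscr{R}_{\alpha}$ requires. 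Since $\lambda$ and $\xi_{\alpha}$ are units in $\mathscr{R}_{\alpha}$, no positivity of exponents is needed; what is needed is (a) that, after fixing lifts of the critical points (equivalently, using the local system $\Delta_{\beta}=\mathscr{R}_{\alpha}\cdot\lambda^{CS(\tilde\beta)}T^{\mathrm{hol}_{K}(\tilde\beta)}$), every weight $\lambda^{-\kappa(z)}T^{\nu(z)}$ becomes an integer monomial $\lambda^{-k}\xi_{\alpha}^{-l}$, which follows from the action of $\pi_{0}(\mathcal{G}(Y,K))\cong\mathbb{Z}\oplus\mathbb{Z}$ on $(\kappa,\nu)$ recorded in Lemma \ref{kappa and nu}, and (b) the Novikov finiteness in the $\lambda^{-1}$ (resp.\ $\xi_{\alpha}^{-1}$) direction, which comes from Kronheimer--Mrowka's energy finiteness combined with the index relation ${\rm gr}$ shifting by $8k+4l$: for a fixed matrix entry the contributing classes lie on the line $8k+4l=\mathrm{const}$, so increasing energy corresponds to powers of $\lambda^{-1}\xi_{\alpha}^{2}=\lambda^{4\alpha-1}T^{4}$ when $\alpha<\tfrac14$. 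Exponents of $\xi_{\alpha}$ of both signs do occur (already the unit prefactors relating different lifts, and the maps $v,\delta_{1},\delta_{2}$ and cobordism maps, produce arbitrary integer powers), so if you tried to prove your cone statement you would fail; fortunately the weaker statement above is what the paper actually proves, and your setup is close enough that repairing this amounts to writing down the local coefficient system and the two bullet points (a), (b) explicitly. Relatedly, rationality of $\alpha$ is not what discretizes the energy--monodromy pairs (that lattice structure comes from $\pi_{0}$ of the gauge group for any $\alpha$); in the paper it is used to choose the orbifold cone angle with $2\alpha\nu\in\mathbb{Z}$ so that the Fredholm package applies (Proposition \ref{Fredholm}).

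A smaller point: your justification that no reducible breakings obstruct $d^{2}=0$ (``because the irreducible subcomplex is closed under $d$ once $\theta$ is isolated'') is circular. The correct reason is the index gluing formula (\ref{adding formula for grading}), where the $S^{1}$ stabilizer of $\theta_{\alpha}$ contributes the extra $+1$, so a trajectory factoring through $\theta_{\alpha}$ requires relative grading at least $3$ and hence cannot appear as an end of the two-dimensional parametrized moduli spaces used for $d^{2}=0$; breakings through $\theta_{\alpha}$ do appear one dimension higher, which is exactly what produces the relation $dv-vd=\delta_{2}\delta_{1}$ in the $\mathcal{S}$-complex. With these two corrections your proposal matches the paper's construction; the remaining transversality, compactness, orientation and framed-moduli arguments are indeed the routine adaptations you describe, and the integral-domain hypothesis on $\mathscr{S}$ enters only because the algebraic notion of $\mathcal{S}$-complex is formulated over an integral domain.
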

The precise definition of $\mathcal{S}$-complex can be seen in Subsection \ref{4-1}.
We call $I^{\alpha}_{*}(Y, K; \Delta_{\mathscr{S}})$ {\it irreducible singular instanton knot homology over $\mathscr{S}$ with a holonomy parameter $\alpha$}.
For the case $Y=S^{3}$, we drop $Y$ from these notations.
The difference of the construction of our Floer homology $I^{\alpha}_{*}(Y, K; \Delta_{\mathscr{S}})$ and $I_{*}(Y, K, \alpha)$ introduced by \cite{E19} is the choice of local coefficients.
The construction of $(C^{\alpha}_{*}(Y, K), d)$ and $(\tilde{C}^{\alpha}_{*}(Y, K), \tilde{d}, \chi)$ depend on additional data (metric and perturbation), however, their chain homotopy classes in the sense of $\mathcal{S}$-complex are independent of such choices. 

\begin{rmk}
{\rm
For the coefficient $\mathscr{S}=\mathscr{R}_{\alpha}$, the Floer chain complex $C^{\alpha}_{*}(Y, K;\Delta_{\mathscr{S}})$ and $\mathcal{S}$-complex $\tilde{C}^{\alpha}_{*}(Y, K;\Delta_{\mathscr{S}})$ has $\mathbb{Z}\times \mathbb{R}$-bigraded structure if we fix auxiliary date.
Moreover, they have a filtered structure induced from $\mathbb{R}$-grading.
The precise description of $\mathbb{Z}\times \mathbb{R}$-bigrading and the filtered structure are contained in subsection \ref{Floer homology} and \ref{vmu}.}
\end{rmk}

The following statement describes behavior of $\mathcal{S}$-complex under the connected sum.
\begin{thm}\label{connected sum S-cpx}
Let $\mathscr{S}$ be an integral domain over $\mathscr{R}_{\alpha}$.
Let $K\subset Y$ and $K'\subset Y'$be a two oriented knots in integral homology 3-spheres. Fix a holonomy parameter $\alpha\in \mathbb{Q}\cap (0, \frac{1}{2})$ such that $\Delta_{(Y, K)}(e^{4\pi  i \alpha})\cdot\Delta_{(Y', K')}(e^{4\pi i \alpha})\neq 0$. 
Then there is a chain homotopy equivalence of $\mathcal{S}$-complexes
\[\tilde{C}^{\alpha}_{*}(Y\#Y', K\#K';\Delta_{\mathscr{S}})\simeq\tilde{C}^{\alpha}_{*}(Y, K;\Delta_{\mathscr{S}})\otimes_{\mathscr{S}}\tilde{C}^{\alpha}_{*}(Y', K';\Delta_{\mathscr{S}}).\]
\end{thm}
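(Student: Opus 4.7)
The plan is to adapt the connected-sum formula for $\mathcal{S}$-complexes established by Daemi-Scaduto in the monotonic traceless setting to the present framework of rational holonomy parameters $\alpha$ and Novikov local coefficients $\Delta_{\mathscr{S}}$. The proof proceeds by constructing chain maps in both directions via the standard pair-of-pants cobordism and its reverse, and then verifying that these are mutually inverse up to chain homotopy as $\mathcal{S}$-complex morphisms.

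First, I would construct the singular cobordism $(W,\Sigma):(Y,K)\sqcup(Y',K')\to(Y\#Y',K\#K')$ by attaching a singular $1$-handle to $(Y\sqcup Y')\times[0,1]$ that connects small arcs of $K$ and $K'$. Since the Alexander polynomial is multiplicative under connected sum, the hypothesis forces $\Delta_{(Y\#Y',K\#K')}(e^{4\pi i\alpha})\neq 0$, so the Floer complex of the connected sum is well-defined and its reducible $\theta$ is isolated from the irreducible critical set. Under a generic metric and holonomy perturbation, counting framed singular ASD instantons on $(W,\Sigma)$ weighted by $\Delta_{\mathscr{S}}$ produces a chain map
\[\Phi:\tilde{C}^{\alpha}_{*}(Y,K;\Delta_{\mathscr{S}})\otimes_{\mathscr{S}}\tilde{C}^{\alpha}_{*}(Y',K';\Delta_{\mathscr{S}})\longrightarrow \tilde{C}^{\alpha}_{*}(Y\#Y',K\#K';\Delta_{\mathscr{S}}).\]
As a morphism of $\mathcal{S}$-complexes, $\Phi$ is specified by a block matrix whose components correspond to the irreducible/reducible summands and must be constructed so as to intertwine the differential $\tilde{d}$ and the equivariant map $\chi$ of the tensor product with those of the target. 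Reversing $(W,\Sigma)$ yields a candidate homotopy inverse $\Psi$ in the opposite direction.

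To establish $\Psi\circ\Phi\simeq\mathrm{id}$ and $\Phi\circ\Psi\simeq\mathrm{id}$, I would apply a neck-stretching argument to the composite cobordism. Standard Floer-theoretic reasoning, analogous to the monotonic case, decomposes broken singular instanton moduli into contributions that at the chain level produce the identity up to an explicit homotopy, using that the $\alpha$-character variety of the two-pointed sphere contributes only a distinguished reducible orbit after the framed $S^{1}$ quotient. The off-diagonal $\chi$-components of $\Phi$ and $\Psi$ are designed to match the corresponding $S^{1}$-fixed contributions appearing in the composition.

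The principal technical obstacle will be performing the singular gluing and bubbling analysis for $\alpha\neq\tfrac{1}{4}$ without the integer energy/grading available in the monotonic setting, and verifying that the local coefficient weights in $\mathscr{S}$ assigned to broken trajectories multiply correctly across the neck. This multiplicativity reduces to additivity of the Chern-Simons functional and the holonomy pairing under the pair-of-pants, but one must check compatibility with the Novikov completion defining $\mathscr{R}_{\alpha}$ so that the tensor product over $\mathscr{S}$ is well-behaved, and that the resulting homology-level identification lifts to a genuine $\mathcal{S}$-complex chain homotopy equivalence by tracking all $\chi$-components through the gluing. This last step, upgrading from a quasi-isomorphism to an $\mathcal{S}$-complex equivalence, is where the bulk of the technical work is expected to concentrate.
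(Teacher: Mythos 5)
Your proposal follows essentially the same route as the paper's appendix: the split/merge cobordism $(W,S)$ and its reverse induce $\mathcal{S}$-morphisms (with all components $m,\mu,\Delta_1,\Delta_2$ built from holonomy-cut-down instanton moduli), and neck-stretching with Morse--Bott gluing along $(S^1\times S^2, S^1\times\{2\,\mathrm{pt}\})$ and $(S^3,S^1)$ shows the two compositions are $\mathcal{S}$-chain homotopic to the identity. The one wrinkle you gesture at but should make explicit is that, without monotonicity, infinitely many homotopy classes contribute on the glued pieces, so the compositions agree with the identity only up to multiplication by a Novikov series with leading coefficient $1$ --- a unit of $\mathscr{S}$ --- and the genuine $\mathcal{S}$-chain homotopy equivalence is then recovered exactly as in Remark \ref{S chain unit}.
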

The precise definition of {\it a chain homotopy equivalence of $\mathcal{S}$-complex} can be seen in Subsection \ref{Scpx}.
This is a generalization of connected sum theorem by Daemi-Scaduto \cite{DS1}. 
The method of the proof of \cite[Theorem 6.1 ]{DS1} cannot be directly adapted to prove Theorem \ref{connected sum S-cpx} since we have to deal with  the non-monotonicity situation which arises for general holonomy parameters.
\begin{rmk}
{\rm
For the  coefficient ring $\mathscr{R}_{\alpha}$, the connected sum theorem for $\mathcal{S}$-complex as in Theorem \ref{connected sum S-cpx} is still hold as a $\mathbb{Z}\times \mathbb{R}$-bigraded complex.}
\end{rmk}

 As described in \cite{DS1}, we can associate an integer valued invariant which is called Fr{\o}yshov type invariant to a given $\mathcal{S}$-complex.
 Our construction of $\mathcal{S}$-complex provides  an integer-valued invariant $h_{\mathscr{S}}^{\alpha}(Y, K)$ for a knot in a homology 3-sphere $(Y, K)$. 
  We call $h^{\alpha}_{\mathscr{S}}(Y, K)$ {\it the Fr{\o}yshov invariant for $(Y, K)$ over $\mathscr{S}$ with a holonomy parameter $\alpha$ }.
   We drop $Y$ from the notation when $Y=S^{3}$.
   Note that Echeverria \cite{E19} also introduced Fr{\o}yshov type invariant denoted by $h(Y, K, \alpha)$, which is constructed from singular instanton Floer homology with a different local coefficient system from our setting.
  The invariant $h^{\alpha}_{\mathscr{S}}(Y, K)$ satisfies the following properties.
 \begin{thm}\label{Froyshov prp}
  Let $(Y, K)$ and $(Y', K')$ are two pairs of integral homology 3-spheres and knots. 
  Assume that $\alpha\in\mathbb{Q}\cap(0, \frac{1}{2})$ satisfies $\Delta_{(Y, K)}(e^{4\pi i \alpha})\neq 0$ and $\Delta_{(Y', K')}(e^{4\pi i \alpha})\neq 0$.
  Then 
\[h^{\alpha}_{\mathscr{S}}(Y\#Y', K\#K')=h^{\alpha}_{\mathscr{S}}(Y, K)+h^{\alpha}_{\mathscr{S}}(Y', K')\]
Moreover, if $(Y, K)$ and $(Y', K')$ are homology concordant, then
\[h^{\alpha}_{\mathscr{S}}(Y, K)=h^{\alpha}_{\mathscr{S}}(Y', K').\]
\end{thm}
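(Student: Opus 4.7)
The plan is to reduce both parts to abstract properties of $\mathcal{S}$-complexes combined with cobordism functoriality. Recall from \cite{DS1} that the Fr{\o}yshov invariant of an $\mathcal{S}$-complex is extracted from the $\chi$-action on the reducible line together with the differential $\tilde{d}$, and is an invariant of the $\mathcal{S}$-chain homotopy class. Both statements therefore lift to statements about $\mathcal{S}$-homotopy types of the chain complexes involved.

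For the additivity statement, Theorem \ref{connected sum S-cpx} yields an $\mathcal{S}$-homotopy equivalence
\[\tilde{C}^{\alpha}_{*}(Y\# Y', K\# K';\Delta_{\mathscr{S}}) \simeq \tilde{C}^{\alpha}_{*}(Y, K;\Delta_{\mathscr{S}})\otimes_{\mathscr{S}} \tilde{C}^{\alpha}_{*}(Y', K';\Delta_{\mathscr{S}}),\]
so it suffices to prove additivity of $h$ under tensor products of $\mathcal{S}$-complexes. This is purely algebraic: the reducible generator of the tensor product is the tensor of the two reducibles, $\chi$ acts as $\chi_{1}\otimes 1+1\otimes \chi_{2}$, and the minimal exponent at which $\chi^{k}$ applied to the reducible becomes a $\tilde{d}$-boundary adds under tensor product via the Leibniz formula for the tensor differential. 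This is the direct analogue of the additivity argument carried out in Daemi-Scaduto's Section 4.

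For concordance invariance, let $(W,S)$ be a homology concordance from $(Y,K)$ to $(Y',K')$. The singular instanton cobordism construction of \cite{E19}, adapted to the Novikov local coefficient $\Delta_{\mathscr{S}}$ over $\mathscr{R}_{\alpha}$, produces a morphism of $\mathcal{S}$-complexes
\[\lambda_{(W,S)} : \tilde{C}^{\alpha}_{*}(Y, K;\Delta_{\mathscr{S}}) \to \tilde{C}^{\alpha}_{*}(Y', K';\Delta_{\mathscr{S}})\]
that sends the reducible generator to the reducible generator (up to a unit), commutes with $\chi$, and intertwines the differentials, since the $U(1)$-reducible flat connection $\theta$ extends uniquely across a homology concordance. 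Such a morphism transports any relation of the form $\chi^{k}\theta = \tilde{d}(x)$ from the source to the target, giving the inequality $h^{\alpha}_{\mathscr{S}}(Y',K')\le h^{\alpha}_{\mathscr{S}}(Y,K)$. Applying the same construction to the cobordism with reversed orientation yields the reverse inequality, and the two invariants coincide.

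The main obstacle is the construction and well-definedness of the cobordism map $\lambda_{(W,S)}$ in the non-monotone setting with coefficients in $\mathscr{R}_{\alpha}\subset \Lambda^{\mathbb{Z}[T^{-1},T]\!]}$: one must verify that the weighted count of singular instantons on $(W,S)$ produces an element of $\mathscr{S}$ rather than a divergent formal sum, and that the energy/Chern-Simons bookkeeping fits the subring $\mathscr{R}_{\alpha}$. This analytic input is precisely the geometric motivation for the design of $\mathscr{R}_{\alpha}$ described in Subsection \ref{Floer homology}, and once it is in place the remainder of the argument is formal and parallels the monotone template of \cite{DS1}.
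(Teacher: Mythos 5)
Your proposal is correct and follows essentially the same route as the paper: additivity comes from the connected sum theorem (Theorem \ref{connected sum S-cpx}) combined with the tensor-product additivity of $h$ from \cite{DS1} (Proposition \ref{h sum}), and concordance invariance comes from the $\mathcal{S}$-morphisms induced by the homology concordance, which is a negative definite cobordism in both directions, together with the monotonicity of $h$ under $\mathcal{S}$-morphisms (Proposition \ref{Froyshov ineq}). One minor caveat: the invariant $h$ is detected via the $v$-map and the maps $\delta_{1},\delta_{2}$ rather than by powers of $\chi$ (which satisfies $\chi^{2}=0$), but since you defer to the algebraic results of Daemi--Scaduto this imprecision does not affect the argument.
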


 Let us consider a knot in $S^3$. 
 It is shown that  Fr{\o}yshov type invariant in \cite{DS1} reduces to knot signature (\cite[Theorem 7]{DS2}).
 The invariant $h_{\mathscr{S}}^{\alpha}$ reduces to the Tristram-Levine signature as follows.
 \begin{thm}\label{Froyshov}
Let $\mathscr{S}$ be an integral domain over $\mathscr{R}_{\alpha}$.
 For any knot $K\subset S^{3}$ and for a holonomy parameter  $\alpha\in (0, \frac{1}{2})\cap\mathbb{Q}$ with $\Delta_{K}(e^{4\pi i \alpha})\neq 0$, the following equality holds;
 \[h_{\mathscr{S}}^{\alpha}(K)=-\frac{1}{2}\sigma_{\alpha}(K).\]

 \end{thm}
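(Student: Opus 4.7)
The plan is to mimic the strategy of \cite[Theorem 7]{DS2}, which establishes the identity in the monotone case $\alpha = 1/4$, and to adapt it to arbitrary rational holonomy parameters. Both invariants in the identity, $h^{\alpha}_{\mathscr{S}}$ and $-\sigma_{\alpha}/2$, descend to homomorphisms from the subgroup of the smooth concordance group consisting of knots with $\Delta_{K}(e^{4\pi i \alpha}) \neq 0$ to $\mathbb{Z}$: for $h^{\alpha}_{\mathscr{S}}$ this is Theorem \ref{Froyshov prp}, and for $\sigma_{\alpha}$ it is classical that additivity holds in general and that concordance invariance holds precisely at non-jump parameters, a condition equivalent to $\Delta_{K}(e^{4\pi i \alpha}) \neq 0$.

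First I would set up the key 4-dimensional input. Given a Seifert surface $\Sigma$ for $K$, pushing its interior into $B^{4}$ produces a properly embedded surface $F \subset B^{4}$ with $\partial F = K$. The pair $(B^{4}, F)$ is a singular cobordism from $(S^{3}, U)$, for which $h^{\alpha}_{\mathscr{S}}$ vanishes by direct computation, to $(S^{3}, K)$. By the usual $\mathcal{S}$-complex functoriality for singular cobordisms over the coefficient system $\Delta_{\mathscr{S}}$, the pair induces a morphism between the corresponding $\mathcal{S}$-complexes, and the Fr{\o}yshov-style comparison applied to the image of the distinguished reducible generator yields an inequality
\[
2\,h^{\alpha}_{\mathscr{S}}(K) \;\leq\; -\sigma_{\alpha}(K).
\]
The reverse inequality is obtained by running the same argument on the mirror knot $\overline{K}$ and then invoking the orientation-reversal identities $h^{\alpha}_{\mathscr{S}}(\overline{K}) = -h^{\alpha}_{\mathscr{S}}(K)$ and $\sigma_{\alpha}(\overline{K}) = -\sigma_{\alpha}(K)$, together with additivity under connected sum and vanishing on the unknot.

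The main obstacle is evaluating the exact filtration shift produced by the cobordism $(B^{4}, F)$. Because $\alpha$ is an arbitrary rational parameter, the Chern-Simons filtration on $\tilde{C}^{\alpha}_{\ast}(S^{3}, K; \Delta_{\mathscr{S}})$ is genuinely $\mathbb{R}$-valued, and the cobordism map takes values in the Novikov-type subring $\mathscr{R}_{\alpha}$, with weights recorded by powers of $\lambda$ and $\xi_{\alpha} = \lambda^{2\alpha} T^{2}$. The heart of the argument is showing that these weights, computed from the holonomy of the $U(1)$-reducible singular connection along the pushed-in Seifert surface, assemble into precisely the Tristram-Levine form $(1 - e^{4\pi i \alpha}) V + (1 - e^{-4\pi i \alpha}) V^{T}$ rather than some uncontrolled correction. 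Concretely, one tracks the holonomy on a symplectic basis of curves on $\Sigma$ and verifies, via a Litherland-type computation on the branched cover of $(B^{4}, F)$, that the associated Hermitian form has signature $\sigma_{\alpha}(K)$. Once this bookkeeping is completed both inequalities become sharp, and the identity $h^{\alpha}_{\mathscr{S}}(K) = -\sigma_{\alpha}(K)/2$ follows.
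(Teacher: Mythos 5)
Your overall framing (both invariants are concordance homomorphisms vanishing on the unknot, so reduce everything to one 4-dimensional computation) is reasonable, but the 4-dimensional input you propose does not work, and it is not what the paper does. The pair obtained by pushing a Seifert surface $\Sigma$ of $K$ into the four-ball, viewed as a cobordism $(I\times S^{3},F):(S^{3},U)\rightarrow(S^{3},K)$, is in general \emph{not} negative definite in the sense of Definition \ref{negativedef}: by Proposition \ref{index formula} the flat reducible on this pair has index $-2g(\Sigma)-\sigma_{\alpha}(K)-1$, which equals $-1$ only in the extremal case $\sigma_{\alpha}(K)=-2g(\Sigma)$. Hence no $\mathcal{S}$-morphism is induced and Proposition \ref{Froyshov ineq} cannot be invoked to obtain $2h^{\alpha}_{\mathscr{S}}(K)\leq-\sigma_{\alpha}(K)$, nor the reverse inequality via the mirror. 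The other available mechanism, the relative cycle of Proposition \ref{ineq}, is also vacuous for a Seifert surface: there $d^{\alpha}=-g(\Sigma)-\frac{1}{2}\sigma_{\alpha}(K)-1$, which is negative since $|\sigma_{\alpha}(K)|\leq 2g(\Sigma)$, so one gets no bound at all. The genus term is a genuine obstruction and cannot be absorbed by ``bookkeeping'' of Chern--Simons and holonomy weights in $\mathscr{R}_{\alpha}$; moreover, in this theory $\sigma_{\alpha}$ does not appear as the signature of a Hermitian form assembled from those weights, but through the index of reducibles on cobordisms (Lemma \ref{sig formula}, via the branched-cover signature formula), which is precisely what decides whether a cobordism is negative definite. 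The mirror identity $h^{\alpha}_{\mathscr{S}}(\overline{K})=-h^{\alpha}_{\mathscr{S}}(K)$ that you invoke is also not established in the paper and would require a separate duality argument.

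The paper's proof avoids Seifert surfaces entirely. It uses crossing-change cobordisms (immersed annuli blown up at their double points), which are negative definite exactly when $\sigma_{\alpha}$ does not change (Proposition \ref{neg def pair}), together with Proposition \ref{immersed map}, to prove the structure theorem (Theorem \ref{str}): $\tilde{C}^{\alpha}_{*}(K;\Delta_{\mathscr{S}})\simeq\tilde{C}^{\alpha}_{*}(lT_{2,2n+1};\Delta_{\mathscr{S}})$ with $l=-\frac{1}{2}\sigma_{\alpha}(K)$. It then computes $h^{\alpha}_{\mathscr{S}}(T_{2,2n+1})=1$ directly from a genus-zero unknotting cobordism for which $d^{\alpha}=0$, so Proposition \ref{ineq} does apply (Lemma \ref{tsig}, Proposition \ref{h=1}), and concludes by additivity of $h$ under connected sum. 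To salvage your outline you would have to replace the pushed-in Seifert surface by such genus-zero crossing-change constructions, so that the relevant minimal reducibles have index $-1$; as written, both of your claimed inequalities rest on a cobordism map that does not exist.
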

 For a given knot $K\subset S^3$ and integer $l$, we define a knot $l K\subset S^{3}$ so that
 \[
 l K:=
 \begin{cases}
 {\#_{l}K}&(l>0)\\
 U\text{ (unknot)}&  (l=0)\\
 {\#_{-l}(-K)}&(l<0)
 \end{cases}
 \]
where $-K$ is the mirror of $K$ with the reverse orientation.
More strongly, $\mathcal{S}$-complexes have the following structure theorem.
 \begin{thm}\label{str}
Let $\mathscr{S}$ be an integral domain over $\mathscr{R}_{\alpha}$.
 For a knot $K$ in $S^{3}$ and for a holonomy parameter $\alpha\in \mathbb{Q}\cap (0, \frac{1}{2})$ with $\Delta_{K}(e^{4\pi i \alpha})\neq 0$, there is an two-bridge torus knot $T_{2, 2n+1}$ such that $\Delta_{T_{2,2n+1}}(e^{4\pi i \alpha})\neq 0$, $\sigma_{\alpha}(T_{2, 2n+1})=-2$ and the relation
 \[\tilde{C}^{\alpha}_{*}(K;\Delta_{\mathscr{S}})\simeq \tilde{C}^{\alpha}_{*}(lT_{2, 2n+1};\Delta_{\mathscr{S}})\]
holds, where $l=-\frac{1}{2}\sigma_{\alpha}(K)$.
 \end{thm}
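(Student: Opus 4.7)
The plan is to upgrade a numerical equality of Fr{\o}yshov invariants to a full chain homotopy equivalence of $\mathcal{S}$-complexes. First observe that the two sides have matching Fr{\o}yshov invariants: by Theorem~\ref{Froyshov}, $h_{\mathscr{S}}^{\alpha}(K) = -\sigma_{\alpha}(K)/2 = l$, and by Theorem~\ref{Froyshov prp} combined with $\sigma_{\alpha}(T_{2,2n+1}) = -2$, we have $h_{\mathscr{S}}^{\alpha}(lT_{2,2n+1}) = l$ as well. The strategy is to establish that over the coefficient ring $\mathscr{S}$, the chain-homotopy class of an $\mathcal{S}$-complex is determined, in a suitable sense, by its Fr{\o}yshov invariant, so that this numerical equality is enough to force the equivalence.

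Before turning to that, I would verify the existence of $n$ with the stated properties. Using the identity $\Delta_{T_{2,2n+1}}(t) = (t^{2n+1}+1)/(t+1)$, the roots lie among $(4n+2)$-nd roots of unity other than $-1$, so for fixed rational $\alpha$ the condition $\Delta_{T_{2,2n+1}}(e^{4\pi i \alpha}) \neq 0$ fails for at most finitely many $n$. The condition $\sigma_{\alpha}(T_{2,2n+1}) = -2$ follows from the explicit Tristram--Levine formula for torus knots: for $T_{2,2n+1}$, $\sigma_{\alpha}$ counts (with sign) the integers $j \in \{1, \ldots, 2n\}$ whose associated lattice point $1/2 + j/(2n+1)$ lies in a sub-interval of $(0,1)$ determined by $\alpha$, and for any fixed rational $\alpha \in (0, 1/2)$ one can arrange that exactly one such $j$ contributes.

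The heart of the proof is an identification of a minimal model of $\tilde{C}^{\alpha}_{*}(T_{2,2n+1}; \Delta_{\mathscr{S}})$ together with a rigidity statement. The key structural input (emphasized in the outline of the proof of Theorem~\ref{main thm}) is that all generators of the irreducible singular instanton chain complex of torus knots lie in odd $\mathbb{Z}/4$-gradings, forcing the internal differential on $C^{\alpha}_{*}(T_{2,2n+1}; \Delta_{\mathscr{S}})$ to vanish identically. Combined with $\sigma_{\alpha}(T_{2,2n+1}) = -2$, this pins down $\tilde{C}^{\alpha}_{*}(T_{2,2n+1}; \Delta_{\mathscr{S}})$ as an explicit ``building-block'' $\mathcal{S}$-complex, and by Theorem~\ref{connected sum S-cpx} its $l$-fold tensor product realizes $\tilde{C}^{\alpha}_{*}(lT_{2,2n+1}; \Delta_{\mathscr{S}})$. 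For a general $K$, I would then prove a Barannikov-type normal form for $\mathcal{S}$-complexes over $\mathscr{S}$, using the formal power series structure of $\mathscr{R}_{\alpha}$ to perform successive cancellations that terminate in a minimal representative depending only on the Fr{\o}yshov invariant, and verify that this representative coincides with the tensor product above.

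The main obstacle will be the rigidity step. A priori an $\mathcal{S}$-complex carries far more information than its Fr{\o}yshov invariant --- the module ranks of $I^{\alpha}_{*}$ and the precise interaction of $\tilde{d}$ with $\chi$ can be quite rich --- so the assertion that over $\mathscr{S}$ all of this extra information is chain-homotopy trivial is strong. I expect the argument to hinge on the Novikov-style completion built into $\mathscr{R}_{\alpha}$ (convergent series in $\lambda^{-1}$ or $\xi_{\alpha}^{-1}$), which makes otherwise-obstructed cancellations converge when organized along the Chern--Simons $\mathbb{R}$-filtration. The subtlety is to ensure that these cancellations are themselves morphisms of $\mathcal{S}$-complexes, respecting both the $\chi$-action and the compatibility with the reducible connection that distinguish an $\mathcal{S}$-complex from a plain chain complex.
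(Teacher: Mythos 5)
Your preliminary step (existence of $n$ with $\Delta_{T_{2,2n+1}}(e^{4\pi i\alpha})\neq 0$ and $\sigma_{\alpha}(T_{2,2n+1})=-2$) matches the paper's Lemma \ref{tsig}, but the heart of your argument does not work. The rigidity step you yourself flag as the main obstacle --- that over $\mathscr{S}$ the $\mathcal{S}$-chain homotopy class of an $\mathcal{S}$-complex is determined by its Fr{\o}yshov invariant, to be proved by a Barannikov-type normal form over the Novikov-completed ring --- is false as an algebraic statement, so no amount of filtration-organized cancellation can establish it. For instance, take $C_{*}=\mathscr{S}$ placed in degree $1$ with $d=v=\delta_{1}=\delta_{2}=0$: this is a legitimate $\mathcal{S}$-complex with $h=0$, yet it is not $\mathcal{S}$-chain homotopy equivalent to the trivial complex $\tilde{C}_{*}=\mathscr{S}$ (their homologies have different ranks over the integral domain $\mathscr{S}$), and the Novikov structure of $\mathscr{R}_{\alpha}$ does not change this since none of the relevant maps are nonzero to begin with. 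The collapse of $\tilde{C}^{\alpha}_{*}(K;\Delta_{\mathscr{S}})$ onto a model determined by $\sigma_{\alpha}(K)$ is a geometric fact about knot complexes, not an algebraic one, and your outline supplies no geometric input that would single out the geometric complexes from arbitrary $\mathcal{S}$-complexes. There is also a circularity relative to the paper's logic: you invoke Theorem \ref{Froyshov} ($h^{\alpha}_{\mathscr{S}}(K)=-\tfrac{1}{2}\sigma_{\alpha}(K)$ for all $K$) to match Fr{\o}yshov invariants, but in the paper that theorem is deduced from Theorem \ref{str}; before \ref{str} only the torus-knot computation of Proposition \ref{h=1} is available. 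Finally, the odd-grading property of torus-knot generators that you cite as the key structural input plays no role in this theorem; it is used later, for Theorem \ref{sub thm} and the main theorem.

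The paper's actual route is geometric and you would need its two ingredients. Since any two knots in $S^{3}$ are homotopic, $K$ can be deformed to $lT_{2,2n+1}$ by crossing changes, giving immersed cobordisms $S\colon K\to lT_{2,2n+1}$ and $S'\colon lT_{2,2n+1}\to K$ inside $[0,1]\times S^{3}$; because $\sigma_{\alpha}(K)=\sigma_{\alpha}(lT_{2,2n+1})$, the blown-up crossing-change cobordisms are negative definite over $\mathscr{S}$ (Proposition \ref{neg def pair}), so they induce $\mathcal{S}$-morphisms $\tilde{m}_{S}$ and $\tilde{m}_{S'}$. The compositions $S'\circ S$ and $S\circ S'$ differ from product cobordisms by finitely many finger moves, and Proposition \ref{immersed map} shows that twist and finger moves change the induced $\mathcal{S}$-morphism only by an $\mathcal{S}$-chain homotopy and multiplication by a unit of $\mathscr{S}$; hence $\tilde{m}_{S'}\circ\tilde{m}_{S}$ and $\tilde{m}_{S}\circ\tilde{m}_{S'}$ are $\mathcal{S}$-chain homotopic to unit multiples of the identity, and Remark \ref{S chain unit} upgrades this to the claimed $\mathcal{S}$-chain homotopy equivalence. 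If you want to salvage your plan, this cobordism-theoretic mechanism is precisely the geometric rigidity your algebraic normal form cannot replace.
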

For the proof of Theorem \ref{str}, it is essential to observe behaviors of morphisms of $\mathcal{S}$-complexes induced from cobordisms between pairs $(Y, K)$ and $(Y', K')$.
In \cite{DS2},  techniques in \cite{Kr97} are used to describe behaviors of morphisms of $\mathcal{S}$-complexes  for the case $\alpha=\frac{1}{4}$. 
However, such techniques do not directly adapt to prove Theorem \ref{str} because of the lack of the monotonicity condition.

Theorem \ref{connected sum S-cpx} and Theorem \ref{str} imply the Euler characteristic formula
\begin{equation}\chi(I^{\alpha}(K, \Delta_{\mathscr{S}}))=\frac{1}{2}\sigma_{\alpha}(K),\label{Euler char}\end{equation}
as described in subsection \ref{eulerchar}.
Since our grading convention of generators coincides with that of \cite{E19}, the above argument also gives an alternative proof of the Euler characteristic formula in  \cite[Theorem 17]{E19} for the case $Y=S^{3}$.

Next, we focus on $(p, q)$-torus knot $ T_{p, q}$ in 3-sphere. We always  assume that $p$ and $q$ are positive coprime integers. The following is a characteristic property of the torus knot and a key-lemma for the proof of Theorem \ref{main thm}.
Let $\mathcal{R}_{\alpha}(Y\setminus K, SU(2))$ be the space of conjugacy classes of $SU(2)$-representations of $\pi_{1}(Y\setminus K)$ with a holonomy parameter $\alpha$.
Let $\mathcal{R}^{*}_{\alpha}(Y\setminus K, SU(2))$ be its irreducible part.
\begin{thm}\label{absolute counting}
For any $\alpha\in [0, \frac{1}{2}]$ with $\Delta_{T_{p, q}}(e^{4\pi i \alpha})\neq 0$,
\[|\mathcal{R}^{*}_{\alpha}(S^{3}\setminus T_{p,q }, SU(2))|=-\frac{1}{2}\sigma_{\alpha}(T_{p, q}).\]
\end{thm}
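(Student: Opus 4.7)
The plan is to combine the explicit description of the torus-knot character variety with the Euler characteristic formula for $I^{\alpha}_{*}$. First I would restrict to a rational parameter $\alpha \in \mathbb{Q} \cap (0, \tfrac{1}{2})$ satisfying $\Delta_{T_{p,q}}(e^{4\pi i \alpha}) \neq 0$, so that the Floer machinery of Subsection \ref{Floer homology} is available. Using the presentation $\pi_{1}(S^{3} \setminus T_{p,q}) = \langle a, b \mid a^{p} = b^{q} \rangle$, one checks directly that $\mathcal{R}^{*}_{\alpha}(S^{3} \setminus T_{p,q}, SU(2))$ is a finite set of non-degenerate conjugacy classes: a dimension count of the Zariski tangent space (equivalently, of $H^{1}(\pi_{1}; \mathrm{ad}\,\rho)$) shows that non-degeneracy is implied by $\Delta_{T_{p,q}}(e^{4\pi i \alpha}) \neq 0$. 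Consequently $C^{\alpha}_{*}(T_{p,q}; \Delta_{\mathscr{S}})$ may be built with trivial holonomy perturbation and is freely generated by these representations.

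The decisive property, which is specific to torus knots, is that all of these generators lie in odd $\mathbb{Z}/4$-grading\,---\,the same feature invoked in the outline of Theorem \ref{main thm}. Granting it, the differential $d$ vanishes for parity reasons, so $I^{\alpha}_{*}(T_{p,q}; \Delta_{\mathscr{S}})$ is free of rank $|\mathcal{R}^{*}_{\alpha}(S^{3} \setminus T_{p,q}, SU(2))|$ concentrated in odd degrees. Since odd generators contribute $-1$ to the $\mathbb{Z}/4$-graded Euler characteristic, the formula (\ref{Euler char}) gives
\[
-|\mathcal{R}^{*}_{\alpha}(S^{3} \setminus T_{p,q}, SU(2))| \;=\; \chi\!\left(I^{\alpha}(T_{p,q}; \Delta_{\mathscr{S}})\right) \;=\; \tfrac{1}{2}\sigma_{\alpha}(T_{p,q}),
\]
which is the desired identity for rational $\alpha$.

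To pass to arbitrary $\alpha \in [0, \tfrac{1}{2}]$ with $\Delta_{T_{p,q}}(e^{4\pi i \alpha}) \neq 0$, I would argue that both sides are locally constant on the open complement of the discrete set of roots of $\alpha \mapsto \Delta_{T_{p,q}}(e^{4\pi i \alpha})$. On the right, this is the defining property of the Tristram-Levine signature. On the left, a direct enumeration of $SU(2)$-representations of $\langle a, b \mid a^{p} = b^{q}\rangle$ with prescribed meridional trace yields a count that can change only when two irreducible representations collide, an event that forces $\Delta_{T_{p,q}}(e^{4\pi i \alpha}) = 0$. Density of the rationals then transfers the identity to every admissible $\alpha$, including the endpoints where both sides vanish.

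The main obstacle is establishing the odd-grading claim uniformly in $\alpha$. It amounts to a spectral flow computation along a path of twisted odd signature operators interpolating the reducible $\theta$ and each irreducible critical point, which must produce an odd integer. The strategy I have in mind is to anchor the computation at the monotone value $\alpha = 1/4$ using Collin-Steer's spectral flow calculation \cite{collin1999instanton} for torus knot representations, and then to propagate in $\alpha$: the $\mathbb{Z}/4$-grading of a continuous family of critical points can jump only when the family encounters a degeneracy, which in our setting again coincides with a zero of $\alpha \mapsto \Delta_{T_{p,q}}(e^{4\pi i \alpha})$, so the parity is preserved on each component of the admissible range.
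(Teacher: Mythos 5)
Your reduction to the odd-grading claim is where the argument breaks down. The continuation-in-$\alpha$ you propose only controls the $\mathbb{Z}/4$-grading along a connected family of irreducible critical points on which both the irreducible and the reducible $\theta_{\alpha}$ remain non-degenerate; anchored at $\alpha=\tfrac{1}{4}$, this determines the grading only on those arcs of $\mathcal{R}^{*}(S^{3}\setminus T_{p,q},SU(2))$ whose range of meridional holonomy actually contains the traceless value. For general torus knots these arcs do not exhaust the character variety: the number of arcs is $\tfrac{1}{2}(p-1)(q-1)$, while the number of traceless irreducible classes is $-\tfrac{1}{2}\sigma_{1/4}(T_{p,q})$, and these differ already for $T_{3,7}$ (six arcs versus four traceless classes). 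Hence there are generators, for suitable admissible $\alpha$, that your spectral-flow propagation never reaches, and their parity is left unproven; the anchor computation itself is also not available in \cite{collin1999instanton} in the form you need. Note that the paper's logical order is the opposite of yours: the statement that $C^{\alpha}_{*}(T_{p,q};\Delta_{\mathscr{S}})$ is supported in odd degrees is \emph{deduced from} Theorem \ref{absolute counting} together with Herald's signed count, not used as an input.

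There is also a circularity concern in invoking (\ref{Euler char}): in the paper that formula is obtained in Subsection \ref{eulerchar} from the connected sum and structure theorems together with Proposition \ref{h=1}, whose input ${\rm rank}\,C^{\alpha}_{*}(T_{2,2k+1};\Delta_{\mathscr{S}})=1$ is itself a (two-bridge) instance of the counting statement you are proving; at a minimum you would have to supply that count independently. The paper's own proof avoids gradings and Floer-theoretic Euler characteristics entirely: for $\alpha=\tfrac{l}{2r}$ it combines the two-to-one lifting correspondence to the branched cover (Proposition \ref{lift}, Lemma \ref{P2}), the Fintushel--Stern identity $|\mathcal{R}^{*}(\Sigma(p,q,r),SU(2))|=-\tfrac{1}{4}\sigma(M(p,q,r))$ \cite{FS90}, Viro's equivariant signature formula \cite{Viro}, and Herald's inequality $|\sigma_{\alpha}(K)|\leq 2|\mathcal{R}^{*}_{\alpha}(S^{3}\setminus K,SU(2))|$ \cite{her97}, and then squeezes term by term using $\sigma_{\frac{l}{2r}}(T_{p,q})\leq 0$ (Proposition \ref{T1}); only the final passage to arbitrary admissible $\alpha$ via transversality in the pillowcase and local constancy coincides with the last step of your write-up.
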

Here $|S|$ for a set $S$ denotes the size of this set.
In \cite{her97}, Herald introduced the {\it signed count} of elements in the character variety $\mathcal{R}^{*}_{\alpha}(S^3\setminus K, SU(2))$ for a general knot $K$ with a fixed holonomy parameter.
One first perturbs $\mathcal{R}^{*}_{\alpha}(S^3\setminus K, SU(2))$ into a discrete set $\mathcal{R}^{*,h}_{\alpha}(S^{3}\setminus K, SU(2))$ and then associates a sign to each element of this set.
The sum of these signs is Herald's signed count of $\mathcal{R}^{*}_{\alpha}(S^3\setminus K, SU(2))$, which we denote by $\#\mathcal{R}^{*}_{\alpha}(S^3\setminus K, SU(2))$.
In general, we have:
\[\#\mathcal{R}^{*}_{\alpha}(S^{3}\setminus K, SU(2))=-\frac{1}{2}\sigma_{\alpha}(K)\]
 See \cite[Corollary 0.2 ]{her97} and \cite{Lin} for the case $\alpha=\frac{1}{4}$.
In the case $K=T_{p, q}$, the character variety $\mathcal{R}^{*}_{\alpha}(S^3\setminus T_{p, q}, SU(2))$ is already discrete and one does not make any perturbation. Theorem \ref{absolute counting} implies that  all elements of $\mathcal{R}^{*}_{\alpha}(S^{3}\setminus T_{p, q}, SU(2))$ have positive signs.

 Theorem \ref{absolute counting} implies that $C^{\alpha}_{*}(T_{p, q};\Delta_{\mathscr{S}})$ is supported only on odd graded components.
In particular, its homology groups are isomorphic to chain complexes,\[I^{\alpha}_{*}(T_{p, q};\Delta_{\mathscr{S}})\cong C^{\alpha}_{*}(T_{p, q};\Delta_{\mathscr{S}})\]
since all differentials of chain complexes are trivial.
This can be  interpreted as the counterpart of the computation of instanton homology of Brieskorn homology $3$-spheres in \cite{FS90}.

Theorem \ref{Froyshov} implies that ${\rm rank}C^{\alpha}_{*}(T_{p, q};\Delta_{\mathscr{S}})= h_{\mathscr{S}}^{\alpha}(T_{p, q})$ and by the definition of the invariant $h_{\mathscr{S}}^{\alpha}$ we also have the following statement 

\begin{thm}\label{sub thm}

Let $\mathscr{S}$ be an algebra over $\mathscr{R}_{\alpha}$.
For $\alpha\in(0, \frac{1}{2})\cap\mathbb{Q}$ with $\Delta_{T_{p, q}}(e^{4\pi i \alpha})\neq 0$, there is an isomorphism,
\[I^{\alpha}_{*}(T_{p, q}; \Delta_{\mathscr{S}})\cong \mathscr{S}_{(1)}^{\lceil-\sigma_{\alpha}(T_{p, q})/4\rceil}\oplus \mathscr{S}_{(3)}^{\lfloor-\sigma_{\alpha}(T_{p, q})/4\rfloor}\]
as a $\mathbb{Z}/4$-graded abelian group.
 
\end{thm}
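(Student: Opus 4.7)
The plan is to deduce the statement by combining Theorem \ref{absolute counting} with Theorem \ref{Froyshov}: the first input forces the complex to be supported in odd $\mathbb{Z}/4$-degrees with vanishing differential, and the remaining input is an explicit computation of the Floer grading of each irreducible generator.

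First I would use Theorem \ref{absolute counting} to conclude that the irreducible singular flat $SU(2)$-connections on $(S^{3}, T_{p,q})$ with holonomy parameter $\alpha$ form a finite set of size $N := -\frac{1}{2}\sigma_{\alpha}(T_{p,q})$ whose Herald signed count coincides with its absolute count. Hence every critical point carries sign $+1$, and via the standard identification of Herald's sign with the mod $2$ reduction of the Floer $\mathbb{Z}/4$-grading, every generator of $C^{\alpha}_{*}(T_{p,q};\Delta_{\mathscr{S}})$ lies in odd degree. Because the Floer differential has $\mathbb{Z}/4$-degree $-1$ and no even-degree generators are available as targets, it vanishes identically, so
\[
I^{\alpha}_{*}(T_{p,q};\Delta_{\mathscr{S}}) \;\cong\; C^{\alpha}_{*}(T_{p,q};\Delta_{\mathscr{S}})
\]
as $\mathbb{Z}/4$-graded $\mathscr{S}$-modules, of total rank $h^{\alpha}_{\mathscr{S}}(T_{p,q}) = N$ by Theorem \ref{Froyshov} combined with the identification of $h^{\alpha}_{\mathscr{S}}$ with this rank recorded just above the statement.

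To split the $N$ generators between $\mathbb{Z}/4$-degrees $1$ and $3$, I would compute the grading of each irreducible flat singular connection explicitly. Using the presentation $\pi_{1}(S^{3}\setminus T_{p,q}) = \langle a,b \mid a^{p}=b^{q}\rangle$, the irreducible representations with trace condition $\alpha$ are enumerated by pairs $(j,k)$ with $0<j<p$, $0<k<q$ subject to a congruence depending on $\alpha$. Ordering them by increasing Chern-Simons value and tracking the mod $4$ spectral flow from the reducible $\theta$ along a generic path of singular flat connections, using the Kirk-Klassen formula together with the Collin-Steer computation on the Seifert-fibered double branched cover (a Brieskorn homology sphere), should show that consecutive generators differ by $\pm 2 \bmod 4$ in their Floer degree and that the one of smallest Chern-Simons value lies in degree $1$; counting then produces $\lceil N/2 \rceil$ generators in degree $1$ and $\lfloor N/2\rfloor$ in degree $3$.

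The principal obstacle is the refined mod $4$ bookkeeping: Herald's positivity pins down the mod $2$ parity of the Floer grading for free, but resolving degree $1$ versus degree $3$ requires careful control of the spectral flow through each bifurcation of the singular character variety as $\alpha$ varies, including the $\eta$-invariant-type correction on the Seifert-fibered cover.
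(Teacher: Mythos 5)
The first half of your argument (all generators in odd $\mathbb{Z}/4$-degree by Theorem \ref{absolute counting}, hence vanishing differential, $I^{\alpha}_{*}\cong C^{\alpha}_{*}$, total rank $N=-\tfrac12\sigma_{\alpha}(T_{p,q})$) is exactly what the paper does. The second half, however, has a genuine gap: the entire content of the theorem beyond parity is the split of the $N$ generators into $\lceil N/2\rceil$ in degree $1$ and $\lfloor N/2\rfloor$ in degree $3$, and your proposed mechanism for this — that the mod $4$ gradings alternate by $\pm 2$ along the Chern--Simons ordering of $\mathcal{R}^{*}_{\alpha}(S^{3}\setminus T_{p,q},SU(2))$ and that the generator of smallest Chern--Simons value sits in degree $1$ — is asserted (``should show'') rather than proved. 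Neither claim is obvious; establishing them would require carrying out the Kirk--Klassen spectral-flow computation with the equivariant $\rho/\eta$-type corrections on the Brieskorn cover, which you explicitly defer, and there is no a priori reason the gradings organize themselves by alternation with respect to Chern--Simons values. As written, the degree $1$ versus degree $3$ count — the actual statement to be proved — is not established.

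The paper avoids this computation entirely and argues algebraically. By Theorem \ref{Froyshov}, $h^{\alpha}_{\mathscr{S}}(T_{p,q})=-\tfrac12\sigma_{\alpha}(T_{p,q})=N$, and the very definition of the Fr{\o}yshov invariant produces a cycle $\beta$ with $\delta_{1}v^{N-1}(\beta)\neq 0$ and $\delta_{1}v^{i}(\beta)=0$ for $i\leq N-2$. Since $\delta_{1}$ is supported in degree $1$ and $v$ has degree $-2$, the tower $\beta, v\beta,\dots, v^{N-1}\beta$ yields the lower bounds ${\rm rank}\, I^{\alpha}_{1}\geq \lceil N/2\rceil$ and ${\rm rank}\, I^{\alpha}_{3}\geq \lfloor N/2\rfloor$ (with the ceiling landing in degree $1$ because the top of the tower pairs nontrivially with $\delta_{1}$). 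Since the total rank is exactly $N$ by your first half, these inequalities are equalities, which is the theorem. If you want to salvage your route, you must either prove the alternation and bottom-degree claims by an honest index computation, or replace that step by this Fr{\o}yshov-invariant argument, which uses only inputs you already invoked (Theorems \ref{Froyshov} and \ref{absolute counting}) together with the algebraic structure of the $\mathcal{S}$-complex.
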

Theorem \ref{sub thm} describes the grading of generators of Floer chain and it is independent of the choice of local coefficient systems. 
A similar structure theorem holds for singular instanton knot homology introduced by \cite{E19}.
\\

Theorem \ref{str} imply that  $\mathcal{S}$-complexes for knots are determined by the Tristram-Levine signature without the $\mathbb{Z}\times \mathbb{R}$-grading structure. 
On the other hand, the $\mathbb{R}$-grading from the Chern-Simons filtration can be expected to have stronger information of the knot concordance.
In an upcoming work \cite{DISST} and relying on the results of the present paper, we will introduce a generalization of the $\Gamma$-invariant of \cite{DS1} for rational holonomy parameters, which can be regarded as a gauge theoretic refinement of the Tristram-Levine signature.
The techniques from this paper are also used in a future work of Daemi and Scaduto to construct families of hyperbolic knots that are minimal with respect to the ribbon partial order \cite{Gor}.

\subsection{Outline of the paper}
In Section 2, we review the background of $SU(2)$-singular gauge theory for rational holonomy parameters. 
We also introduce the generalized definition of negative definite cobordism in this section.
In Section 3, we construct Floer chain groups and $\mathcal{S}$-complex parametrized by holonomy parameters $\alpha$ and introduce the Fr{\o}yshov type invariant. The argument is almost parallel to \cite{DS1} and \cite{DS2}, however, we need a careful choice of the local coefficient systems if we introduce bigraded structure on the Floer chain complex.
The proof of Theorem \ref{Froyshov prp} is also contained. 
In Section 4, we prove the Tristram-Levine signature formula for torus knots (Theorem \ref{absolute counting}). 
In the proof of Theorem \ref{absolute counting}, we use the correspondence of singular flat connections and non-singular flat connections over the branched covering space.
We also use the pillowcase picture of the $SU(2)$-character variety for the knot complement space.
In Section 5, we prove Theorem \ref{Froyshov}, \ref{str}, and \ref{sub thm} in Subsection \ref{eulerchar}, and finally we give the proof of our main theorem (Theorem \ref{main thm}) in Subsection \ref{self-con}. 
The bigraded structure of $\mathcal{S}$-complex plays an important role in the proof of the main theorem. 
Appendix consists of the proof of the connected sum theorem (Theorem\ref{connected sum S-cpx}).
\\

{\bf Acknowledgments.}
The author would like to thank Aliakbar Daemi for his introduction to singular instanton knot homology, helpful suggestions, and answering many questions on papers \cite{DS1} and \cite{DS2}. The author would also like to thank Kouki Sato and  Masaki Taniguchi for helpful discussions.
This work is supported by JSPS KAKENHI Grant Number JP 21J20203.

\section{Backgrouds on singular instantons}
In this section, we review singular gauge theory mainly developed by Kronheimer-Mrowka \cite{KM11}. 
We also give a generalization of the setting of singular $SU(2)$-gauge theory adopted by Daemi-Scaduto \cite{DS1, DS2}.

\subsection{The space of singular connections}\label{sp of sing conn}
We review the construction of singular instantons over a closed pair of 4-manifold and surface. Let $X$  be a closed oriented smooth  4-manifold and $S$ be a closed oriented embedded surface in $X$. Let $N$ be a tubular neighborhood of $S$ in $X$.  We identify $N$ with a disk bundle over $S$, and $\partial N$ with a circle bundle over $S$. Let $\eta$ be a connection 1-form on a circle bundle $\partial N$. This means that $\eta$ is $U(1)$-invariant. We fix the decomposition of the $SU(2)$-bundle $E\rightarrow X$ over the embedded surface $S$, as $E|_{S}=L\oplus L^*$, where $L$ is a $U(1)$-bundle over $S$. This decomposition extends to $N$.
We define two topological invariants,
$$k=c_{2}(E)[X]$$
$$l=-c_{1}(L)[S]$$
$k$ is called the instanton number, and $l$ is called the monopole number. 
\\

Next, we fix a connection $A_{0}$ over $X$ of the form 
\[
A_{0}|_{N}=\left[\begin{array}{cc}
b&0\\
0&-b

\end{array}
\right]
\]
Here $b$ is a connection over $L$. This means that $A^{0}$ reduces to a $U(1)$-connection over $S$.  We give the polar coordinate $(r, \theta)\in D^{2}$ on each fiber of $N$. Let $\eta$ be a 1-form obtained by pulled-back 1-form on  $\partial N$ which coincides $d\theta$ on each fiber, and $\psi$ be a cut-off function supported on $N$. We define the singular base connection $A^{\alpha}$ as follows:
\[A^{\alpha}=A_{0}+ i\psi\left[\begin{array}{cc}
\alpha&0\\
0&-\alpha

\end{array}\right]\eta,
\]
where $\alpha\in (0, \frac{1}{2})$. $\alpha$ is called the holonomy parameter. Recall that $\eta$ is defined only on $N\setminus S$, but extends $0$ to $X\setminus S$ after cutting off by $\psi$. $A^{\alpha}$ is a connection over $X\setminus S$. Let $\mathfrak{g}_{E}$ be the adjoint bundle of $E$. For $a\in \Omega^{1}(X\setminus S, \mathfrak{g}_{E})$, $A^{\alpha}+a$ is called a singular connection. 
\\
Before defining the space of singular connections, we have to introduce functional spaces.
We fix the orbifold metric on $X$, which can be written in the form
$$g^{\nu}=du^{2}+dv^{2}+dr^{2}+\frac{r^{2}}{\nu^{2}}d\theta^{2}$$on $N$, where $u, v$ are local coordinate of $S$. We say that this orbifold metric has a cone angle $\frac{2\pi}{\nu}$. $(X,g^{\nu})$  has a local structure $U/\mathbb{Z}_{\nu}$ near the singular locus $S$, where $U$ is an open set in $\mathbb{R}^{4}$.   The model connection $A^{\alpha}$ induces an $SO(3)$-adjoint connection on $\mathfrak{g}_{E}$.  We define covariant derivative $\check{\nabla}_{A^{\alpha}}$ on bundle $\Lambda^{m}\otimes \mathfrak{g}_{E}$ using the adjoint connection of $A^{\alpha}$ and Levi-Civita connection with respect to  the metric $g^{\nu}$.
Let $F\rightarrow X$ be a orbifold vector bundle.
The Sobolev space $\check{L}^{p}_{m, A^{\alpha}}(X\setminus S, F)$ is defined as the completion of spaces of smooth sections of $F\rightarrow X$ by the norm
$$\|s\|^{p}_{\check{L}^{p}_{m, A^{\alpha}}}=\sum_{i=0}^{m}\int_{X\setminus S}|\check{\nabla}_{A^{\alpha}}s|^{p}{\rm dvol}_{g^{\nu}}.$$
If we use orbifold metrics, the "Fredholm package" works. 
Let $d^{+}_{A^{\alpha}}:\Omega^{1}(X\setminus S, \mathfrak{g}_{E})\rightarrow \Omega^{+}(X\setminus S, \mathfrak{g}_{E})$ be a linearized anti-self-dual operator defined by the metric $g^{\nu}$,  and $d^{*}_{A^{ \alpha}}: \Omega^{1}(X\setminus S, \mathfrak{g}_{E})\rightarrow \Omega^{0}(X\setminus S, \mathfrak{g}_{E})$ be the formal adjoint of the covariant derivative for the metric $g^{\nu}$. Consider the elliptic operator $D_{A^{\alpha}}=-d^{*}_{A^{ \alpha}}\oplus d^{+}_{A^{ \alpha}}$  acting on the Sobolev space
\begin{equation}\check{L}^{p}_{m, A^{ \alpha}}(X\setminus S, \Lambda^{1}\otimes \mathfrak{g}_{E})\rightarrow \check{L}^{p}_{m-1, A^{ \alpha}}(X\setminus S, (\Lambda^{0}\oplus \Lambda^{+})\otimes \mathfrak{g}_{E}).\label{deform}
\end{equation}
\begin{prp}\label{Fredholm}
Let $\alpha$ be a rational holonomy parameter of the form $\alpha=p/q\in (0, \frac{1}{2})\cap \mathbb{Q}$. Choose cone angle $\frac{2\pi}{\nu}$ of orbifold metric so that $\frac{2\nu p}{q}\in \mathbb{Z}$. 
Then the operator $D_{A^{\alpha}}$ and its formal adjoint are Fredholm,  and the Fredholm alternative holds.
\end{prp}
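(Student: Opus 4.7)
The plan is to reduce the statement to classical elliptic theory on a smooth Riemannian $4$-manifold by constructing a local $\nu$-fold branched cover of $N$ along $S$ on which the orbifold metric $g^\nu$ and, after a gauge transformation, the singular connection $A^\alpha$ both become genuinely smooth. Away from $S$ the operator $D_{A^\alpha}$ is already a smooth Dirac-type elliptic operator, so the whole content of the proposition concerns the behavior near the singular locus, and the integrality condition $2\nu p/q \in \mathbb{Z}$ is exactly what I would use to carry out the local smoothing.

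First I would set up the local uniformization: on a tubular chart of $N$ with coordinates $(u,v,r,\theta)$, the orbifold chart $U \to U/\mathbb{Z}_\nu$ replaces $\theta$ by $\phi = \theta/\nu$, so $g^\nu$ pulls back to the smooth flat model $du^2+dv^2+dr^2+r^2 d\phi^2$ and the 1-form $\eta$ pulls back to $\nu\,d\phi$. The pull-back of the model connection thus acquires the singular piece $i\,\mathrm{diag}(\nu\alpha,-\nu\alpha)\,d\phi$. Because $\nu\alpha \in \tfrac{1}{2}\mathbb{Z}$ under our hypothesis, the adjoint holonomy around the loop $\phi \mapsto \phi + 2\pi$ upstairs is the identity in $SO(3)$, and the gauge transformation $u = \mathrm{diag}(e^{i\nu\alpha\phi}, e^{-i\nu\alpha\phi})$ is single-valued as an $SO(3)$-gauge transformation on the cover. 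Applying $u$ converts the pull-back of $A^\alpha$ into a smooth adjoint connection that extends across the branch locus, and correspondingly $\mathfrak{g}_E$ acquires a smooth extension over the uniformizing chart.

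With this smoothing in place, the lifted operator $\widetilde D$ is a smooth elliptic operator on the uniformization chart, and the weighted Sobolev spaces $\check L^p_{m, A^\alpha}$ correspond precisely to the $\mathbb{Z}_\nu$-invariant ordinary Sobolev sections of the lifted bundles under this uniformization. A standard parametrix for $\widetilde D$ descends by averaging over $\mathbb{Z}_\nu$ to a parametrix for $D_{A^\alpha}$ on the orbifold neighborhood, and this can be patched via a partition of unity with the smooth parametrix on $X \setminus N$ to give a global parametrix of $D_{A^\alpha}$ modulo compact operators. Fredholmness and the Fredholm alternative then follow from the standard functional-analytic argument. The main obstacle will be the $\mathbb{Z}_\nu$-equivariance of the gauge transformation $u$ and its compatibility with the decomposition $E|_S = L \oplus L^*$; this is precisely where the hypothesis $2\nu p/q \in \mathbb{Z}$ is essential, since any violation would produce a nontrivial residual holonomy upstairs and force one into genuinely weighted Sobolev analysis rather than smooth orbifold theory.
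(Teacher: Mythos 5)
Your approach is essentially the paper's: the integrality hypothesis $2\nu p/q\in\mathbb{Z}$ makes the adjoint of the pulled-back connection have trivial holonomy around small linking circles in the local $\nu$-fold uniformizing chart, so it extends smoothly there and $A^{\alpha}_{\rm ad}$ becomes an orbifold connection, after which the paper simply asserts that all the analysis reduces to the orbifold setting --- your explicit gauge transformation and the $\mathbb{Z}_{\nu}$-averaged parametrix patched with the interior one are the standard way to carry out that reduction. The equivariance issue you flag at the end is not a real obstacle: the discrepancy $u(\phi+2\pi/\nu)=u(\phi)\,\mathrm{diag}(e^{2\pi i\alpha},e^{-2\pi i\alpha})$ is exactly the fiberwise $\mathbb{Z}_{\nu}$-action defining the orbifold bundle, so downstairs sections correspond to invariant sections for this twisted action and the argument closes.
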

Let $A^{\alpha}_{\rm ad}$ be the  adjoint of a singular connection $A^{\alpha}$ and $\pi :U\rightarrow U/\mathbb{Z_{\nu}}$ be an orbifold chart with respect to the orbifold metric $g^{\nu}$.
If $\nu\in \mathbb{Z}_{>0}$ is chosen as in Proposition \ref{Fredholm}, the lift of adjoint connection of $\pi^{*}A^{\alpha}$ has an asymptotically trivial holonomy along a small linking  of the singular locus. Thus $\pi^{*}A^{\alpha}$  extends smoothly over $U$. This means that $A^{\alpha}_{\rm ad}$ defines an orbifold connection.  All analytical argument reduces to orbifold setting.
From now on, we always fix $\nu$ as in Proposition \ref{Fredholm} for a given rational holonomy parameter. 

Assume that $m>2$. The space of singular connections with a holonomy parameter $\alpha\in (0, \frac{1}{2})$ is given by,
$$\mathcal{A}(X, S, \alpha)=\{ A^{\alpha}+ a| a \in \check{L}^{2}_{m, A^{\alpha}}(X\setminus S, \Lambda^{1}\otimes\mathfrak{g}_{E})\}.$$
This is an affine space as non-singular case. Notice that $\mathcal{A}(X, S, \alpha)$ is independent of the choice of the base connection $A^{\alpha}$.
We also introduce the group of gauge transformations,
$$\mathcal{G}(X, S)=\{g \in {\rm Aut}(E)|\  g \in \check{L}^{2}_{m+1, A^{\alpha}}(X\setminus S, {\rm End}(E))\}.$$
  There is a smooth action of $\mathcal{G}(X, S)$ on $\mathcal{A}(X, S, \alpha)$ and we can take the quotient.
$$\mathcal{B}(X, S, \alpha)=\mathcal{A}(X, S, \alpha)/\mathcal{G}(X, S).$$
A singular connection with 0-dimensional stabilizer for the action of $\mathcal{G}(X, S)$ is called irreducible connection.
A singular connection  is called reducible if it is not irreducible.
The quotient space $\mathcal{B}(X, S,\alpha)$ has a smooth Banach manifold structure except for orbits of reducible connections.  
The set of gauge equivalence classes of solutions for the anti-self-dual equation
\[M^{\alpha}(X, S)=\{[A]\in \mathcal{B}(X, S,\alpha)|F^{+}_{A}=0\}\]
is called the moduli space of singular anti-self-dual connections.
$M^{\alpha}(X, S)_{d}$ denotes the subset of $M^{\alpha}(X, S)$ with expected dimension $d$.
For a generic orbifold metric with a fixed cone angle, the irreducible part of $M^{\alpha}(X, S)_{d}$ is a smooth  manifold of dimension $d={\rm ind} (d^{*}_{A}\oplus d^{+}_{A})$, where $[A]\in M^{\alpha}(X, S)_{d}$.
If $M^{\alpha}(X, S)_{d}$ consists of reducible connections, we modify the dimension of moduli space so that $d={\rm ind}(d^{*}_{A}\oplus d^{+}_{A})+{\rm dim}H^{0}_{A}$, where $H^{i}_{A}$ is an $i$-th cohomology group of the deformation complex.
The index of the ASD-operator $d^{*}_{A}\oplus d^{+}_{A}$ is given by 
\[{\rm ind }(d^{*}_{A}\oplus d^{+}_{A})=8k+4l-3(1-b^{1}+b^{+})-2(g(S)-1),\]
where $g(S)$ is the genus of the surface $S$.
The index formula for a closed pair $(X, S)$ does not depend on the holonomy parameter $\alpha$.
On the other hand, the energy integral $\kappa(A)=\|F_{A}\|_{\check{L}^{2}}$  for an ASD-connection $A$ is given by
\[\kappa(A)=k+2\alpha l-\alpha^{2}S\cdot S.\]
Throughout this paper, we assume that an integer $\nu>0$ is chosen large enough for a fixed holonomy parameter $\alpha\in \mathbb{Q}\cap (0, \frac{1}{2})$, under the condition $2\alpha \nu\in \mathbb{Z}$. 
Such choice of $\nu$ is related to the bubbling and compactification of moduli spaces.
The details are described in \cite{KM93, KM95}.

\subsection{The Chern-Simons functional\label{chern-simons}}
We discuss singular connections over 3-manifolds.
Let $Y$ be an oriented integral homology 3-sphere and $K$ be an oriented knot in $Y$. Let $E$ be an $SU(2)$-bundle over $Y$. This is always topologically trivial. We fix the reduction of $E$ to a line bundle over $K$ as $E|_{K}=L\oplus L^{*}$.  We fix orbifold metric $g^{\nu}$ along $K$as the similar way in subsection \ref{sp of sing conn}. 
For a fixed $\alpha\in \mathbb{Q}\cap (0, \frac{1}{2})$, we choose $\nu$ as in Proposition \ref{Fredholm}.
 We can similarly define the space of singular connections and gauge transformations.
$$\mathcal{A}(Y, K, \alpha)=\{ A^{\alpha}+ a|\  a \in \check{L}^{2}_{m, A^{\alpha}}(Y\setminus K, \mathfrak{g}_{E})\},$$
$$\mathcal{G}(Y, K)=\{g \in {\rm Aut}(E)|\  g \in \check{L}^{2}_{m+1, A^{\alpha}}(Y\setminus K, {\rm End}(E))\}.$$

We define the quotient
$$\mathcal{B}(Y, K,\alpha)=\mathcal{A}(Y, K, \alpha)/\mathcal{G}(Y, K).$$
We use the notation $\mathcal{A}_{m}(Y, K, \alpha)$ if we emphasize that the space of singular connection is defined by completion of Sobolev norm of $\check{L}^{2}_{m}$.

We  describe the topology of $\mathcal{G}(Y, K)$ and $\mathcal{B}(Y, K, \alpha)$.
There are other two kinds of group of gauge transformation,
$$\mathcal{G}_{K}=\{\ g\in {\rm Aut}(L|_{K})\},$$
$$\mathcal{G}^{K}(Y, K)=\{g\in {\rm Aut }(E)|\ g|_{K}={\rm id}\}.$$
Then there is an exact sequence,
$$1\rightarrow \mathcal{G}^{K}(Y, K)\rightarrow  \mathcal{G}(Y, K)\rightarrow \mathcal{G}_{K}\rightarrow 1.$$
There is a map $\mathcal{G}(Y, K)\rightarrow \mathbb{Z}\oplus \mathbb{Z}$ given by $d(g)=(k, l)$, where $k={\rm deg}(g:Y\rightarrow SU(2))$ and $l={\rm deg}(g|_{K}:K\rightarrow U(1))$, and this map induces an isomorphism $$\pi_{0}(\mathcal{G}(Y, K))\cong\mathbb{Z}\oplus\mathbb{Z}.$$
Using the homotopy exact sequence induced from a fibration $\mathcal{G}(Y, K)\rightarrow \mathcal{A}(Y, K,\alpha)\rightarrow \mathcal{B}(Y, K,\alpha)$, we also have an isomorphism $$\pi_{1}(\mathcal{B}(Y, K,\alpha))\cong \mathbb{Z}\oplus \mathbb{Z}.$$ 

We define a  $L^{2}$-inner product on tangent spaces of $\mathcal{A}(Y, K, \alpha)$ as follows:
$$\langle a, b\rangle =\int_{{Y}\setminus K}-{\rm tr}(a\wedge *b).$$The $*$-operator is given by the orbifold metric $g^{\nu}$.
The Chern-Simons functional $CS :\mathcal{A}(Y, K, \alpha)\rightarrow \mathbb{R}$ is given by the formal gradient
$${\rm grad}(CS)_{A}=\frac{1}{4\pi^2}*F_{A}$$  with respect to the above $L^{2}$-inner product on $T\mathcal{A}(Y, K, \alpha)$. This uniquely determines $CS$ up to constant. $A\in \mathcal{A}(Y, K, \alpha)$ is a critical point of $CS$ if only if $F_{A}=0$. The critical point set of $CS$ is a space of flat connections on $Y\setminus K$ such that their holonomy along the meridian are conjugate to
\[\left[\begin{array}{cc}
e^{2\pi i \alpha}&0\\
0&e^{-2\pi i \alpha}
\end{array}\right].
\]
Let ${\rm Crit}$ be a critical point set of the Chern-Simons functional $CS:\mathcal{A}(Y, K, \alpha)\rightarrow \mathbb{R}$ and ${\rm Crit}^{*}={\rm Crit}\cap \mathcal{A}^{*}(Y, K,  \alpha)$. Let $\mathfrak{C}(Y, K, \alpha)$ and $\mathfrak{C}^{*}(Y, K, \alpha)$  be images of  ${\rm Crit}$ and ${\rm Crit}^{*}$ by the natural projection $\mathcal{A}(Y, K, \alpha)\rightarrow \mathcal{B}(Y, K, \alpha)$. Then 

$$\mathfrak{C}(Y, K,\alpha)=\mathcal{R}_{\alpha}(Y\setminus K, SU(2)),$$
$$\mathfrak{C}^{*}(Y,K,\alpha)=\mathcal{R}^{*}_{\alpha}(Y\setminus K, SU(2))$$by the holonomy correspondence of flat connections and representations of the fundamental group. 

We have to perturb  Chern-Simons functional to achieve transversality. 
This is done by introducing a cylinder function associated to a perturbation $\pi \in \mathcal{P}$
$$f_{\pi}: \mathcal{A}(Y, K, \alpha) \rightarrow \mathbb{R},$$ 
which we will describe its construction in the subsection \ref{holp}. 
Let ${\rm Crit}_{\pi}$ be a critical point set of $CS+f_{\pi}$ and ${\rm Crit}^{*}_{\pi}={\rm Crit}_{\pi }\cap \mathcal{A}^{*}(Y, K, \alpha)$.
Their orbit of gauge transformations are denoted by $\mathfrak{C}_{\pi}(Y, K, \alpha)$ and $\mathfrak{C}^{*}_{\pi}(Y, K, \alpha)$. 

 We define (perturbed) topological energy $\mathcal{E}_{\pi}(\gamma)$ of a path $\gamma: [0, 1]\rightarrow \mathcal{A}(Y, K, \alpha)$ as
 
 \begin{equation}
\mathcal{E}_{\pi}(\gamma)=2\{(CS+f_{\pi})(\gamma(1))-(CS+f_{\pi})(\gamma(0))\}.\label{energy}
\end{equation}
 We also define (perturbed) Hessian of $A\in \mathcal{A}(Y, K, \alpha)$ as
 $${\rm Hess}_{A,\pi}(a)=*d_{A}a+DV_{\pi}|_{A}(a),$$where $V_{\pi}$ is a gradient of $f_{\pi}$, and $DV_{\pi}|_{A}$ is its derivative at $A$.

  For each $A\in \mathcal{A}(Y, K, \alpha)$, we can regard Hessian as an operator, 
  $${\rm Hess}_{A,\pi}: \check{L}^{2}_{m, A^{\alpha}}({Y}\setminus K, {\Lambda }^{1}\otimes{\mathfrak{g}}_{E})\rightarrow \check{L}^{2}_{m-1, A^{\alpha}}({Y}\setminus K,{ \Lambda}^{1}\otimes {\mathfrak{g}}_{E}). $$
 \begin{dfn}
  $A\in {\rm Crit}^{*}_{\pi}$ is called non-degenerate if ${\rm Hess}_{A,\pi}|_{{\rm Ker}(d_{A}^{*})}$ is invertible.
 \end{dfn}
  This means that Hessian is non-degenerate to the vertical direction of the gauge orbit.
 For irreducible critical points of unperturbed Chern-Simons functional, there is a following criterion for the non-degeneracy condition

\begin{prp}{\rm ( \cite[Lemma 3.13]{KM11})}\label{grcoh non-deg}
A critical point  $A\in {\rm Crit}^{*}$ is non-degenerate if only if the kernel of the map
$$H^{1}(Y\setminus K; {\rm ad}{\rho})\rightarrow H^{1}(\mu_{K}; {\rm ad}{\rho} )$$is zero, where this map is induced by the natural embedding $\mu_{K}\hookrightarrow Y\setminus K$ and $\rho: \pi_{1}(Y\setminus K)\rightarrow SU(2)$ is a representation corresponding to a flat connection $A$.
\end{prp}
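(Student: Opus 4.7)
The plan is to combine orbifold Hodge theory with a local model computation near the singular locus $K$. First I would pass to the analytic side: since we are at a flat critical point with unperturbed Chern--Simons functional, $\mathrm{Hess}_{A,0}=*d_A$, so the kernel of $\mathrm{Hess}_{A,0}|_{\ker d_A^*}$ inside $\check L^2_{m,A^\alpha}(Y\setminus K,\Lambda^1\otimes\mathfrak{g}_E)$ coincides with the space of orbifold $L^2$--harmonic $1$--forms with values in $\mathfrak{g}_E$. Applying the Hodge decomposition for the orbifold metric $g^\nu$ (which is legitimate by Proposition \ref{Fredholm}, since $\nu$ is chosen so that $A^\alpha$ lifts to a smooth orbifold connection), this kernel is naturally isomorphic to the first $L^2$--cohomology group $H^1_{L^2,A}(Y\setminus K;\mathfrak{g}_E)$ of the twisted de Rham complex associated with $A$.

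Next I would compare this analytic cohomology with the topological cohomology $H^1(Y\setminus K;\operatorname{ad}\rho)$. The natural way is through a Mayer--Vietoris or long-exact-sequence argument for the pair $(Y\setminus K,\, \nu K)$, where $\nu K$ is a deleted tubular neighborhood of $K$. The pair contributes the term $H^1(\mu_K;\operatorname{ad}\rho)$ via the inclusion of a meridian, and one gets a map
\begin{equation*}
H^1_{L^2,A}(Y\setminus K;\mathfrak{g}_E)\longrightarrow H^1(Y\setminus K;\operatorname{ad}\rho)\longrightarrow H^1(\mu_K;\operatorname{ad}\rho),
\end{equation*}
so the claim reduces to identifying the image of $H^1_{L^2,A}$ with the kernel of the right-hand map.

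The key step, and the main analytical obstacle, is a local model computation on a neighborhood $\mu_K\times D^2$ of the singular locus. The adjoint action of the meridian holonomy $\operatorname{diag}(e^{2\pi i\alpha},e^{-2\pi i\alpha})$ decomposes $\mathfrak{g}_E|_{\mu_K}$ into three line bundles of weights $0,\pm 4\pi i\alpha$. For the weighted bundles, a careful inspection of the indicial roots of $d_A+d_A^*$ in polar coordinates $(r,\theta)$ — using that $2\alpha\nu\in\mathbb Z$ with $\alpha\in(0,\tfrac12)$ — shows that every orbifold $L^2$ section is forced to vanish at $r=0$, so the nonzero weight summands contribute nothing. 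For the zero-weight summand the $L^2$ condition is vacuous and reproduces the ordinary de Rham cohomology, but the restriction of a class to the meridian must vanish, otherwise the corresponding form fails to be in $\check L^2_{m,A^\alpha}$. Assembling these computations yields precisely
\begin{equation*}
H^1_{L^2,A}(Y\setminus K;\mathfrak{g}_E)\;\cong\;\ker\bigl(H^1(Y\setminus K;\operatorname{ad}\rho)\to H^1(\mu_K;\operatorname{ad}\rho)\bigr),
\end{equation*}
and combined with the first paragraph this finishes the proof.

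The hardest part is the indicial-root analysis near $K$: one needs to verify that the chosen Sobolev completion $\check L^2_{m,A^\alpha}$ (with orbifold weights) realizes exactly the cohomological kernel and not some nearby variant, which would change the answer. Once that local $L^2$-versus-topological translation is carried out, the rest of the argument is formal.
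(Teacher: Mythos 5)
The paper itself gives no proof of Proposition \ref{grcoh non-deg}: it is imported, with the holonomy parameter generalized, from Kronheimer--Mrowka, so there is no internal argument to compare yours against. Your outline follows the same route as the cited source's: at an unperturbed flat critical point the Hessian on the slice is $*d_{A}$, so non-degeneracy is equivalent to the vanishing of the $\check{L}^{2}_{m,A^{\alpha}}$-harmonic space of twisted $1$-forms, and the hypothesis $2\alpha\nu\in\mathbb{Z}$ from Proposition \ref{Fredholm} (equivalently, passing to the local $\nu$-fold branched cover, where the adjoint connection extends smoothly and the harmonic space is the $\mathbb{Z}_{\nu}$-invariant part) reduces the identification of that space with $\ker\bigl(H^{1}(Y\setminus K;{\rm ad}\rho)\to H^{1}(\mu_{K};{\rm ad}\rho)\bigr)$ to a local weight-decomposition computation near $K$. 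So the approach is the correct and standard one.

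Two points in your sketch need tightening. First, the claim that the $L^{2}$ condition is ``vacuous'' on the zero-weight summand contradicts your next clause and is backwards: the zero-weight summand is exactly where the condition bites, because the meridional class is represented near $K$ by a multiple of $\eta\sim d\theta$, whose pointwise norm with respect to $g^{\nu}$ is comparable to $\nu/r$ while the volume form is comparable to $r\,dr\,d\theta\,du$; hence it (and its covariant derivatives) fails to lie in $\check{L}^{2}_{m,A^{\alpha}}$, which is precisely the statement that deformations of the holonomy parameter are excluded from the configuration space. Second, the asserted isomorphism requires both inclusions: besides checking that the nonzero-weight summands contribute no local cohomology (topologically because $e^{4\pi i\alpha}\neq 1$ kills $H^{*}$ of the meridian circle, analytically because the indicial exponents stay away from zero since $0<2\alpha\nu<\nu$), you must also produce an $\check{L}^{2}_{m}$ harmonic representative for every class in the kernel of the restriction map and rule out extra $L^{2}$ classes; this is where the Fredholm alternative of Proposition \ref{Fredholm}, or the invariant-cohomology computation on the branched cover, has to be invoked, and it is the step your proposal names as the hardest part but does not actually carry out. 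With that step supplied, the argument is complete.
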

We say that $[A]\in \mathfrak{C}_{\pi}$ is non-degenerate if one of its representatives $A\in {\rm Crit}_{\pi}$ (and hence all) are non-degenerate.

The non-degeneracy condition at the reducible critical point  is given by a constrain on the holonomy parameter. 
Let $\theta_{\alpha}$ be a gauge equivalence class of reducible flat connection corresponding to a conjugacy class of $SU(2)$-representation of $\pi_{1}(Y\setminus K)$ which factors through the abelianization $H_{1}(Y\setminus K, \mathbb{Z})$ and has a holonomy parameter $\alpha$.
Since $Y$ is an integral homology $3$-sphere, such $\theta_{\alpha}$ uniquely exists.
The following is obtained as a corollary of \cite[Lemma 3.13]{KM11}
\begin{prp}{\rm (\cite[Lemma 15]{E19})}\label{non-deg red}
The unique flat reducible $\theta_{\alpha}$ is isolated and non-degenerate if only if $\Delta_{(Y, K)}(e^{4\pi i \alpha})\neq 0$. 
\end{prp}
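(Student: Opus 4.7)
The plan is to reduce the non-degeneracy condition to a twisted cohomology computation on the knot exterior, and then recognize the relevant cohomology group as the one controlled by the Alexander polynomial via Milnor's theorem.

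First I would apply Proposition \ref{grcoh non-deg}, or rather its analogue for reducibles: since $\theta_{\alpha}$ is reducible, its infinitesimal deformations split according to the isotropy. More precisely, the stabilizer of $\theta_{\alpha}$ in the gauge group is a $U(1)$, so the adjoint bundle $\mathfrak{g}_{E}$ equipped with the connection induced by $\theta_{\alpha}$ decomposes on $Y\setminus K$ as $\underline{\mathbb{R}}\oplus L_{2\alpha}$, where $\underline{\mathbb{R}}$ carries the trivial flat connection and $L_{2\alpha}$ is the rank-two real flat bundle whose complexification is the $U(1)$-local system $\mathbb{C}_{t}$ with monodromy $t=e^{4\pi i\alpha}$ along the meridian $\mu_{K}$. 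Non-degeneracy and isolatedness of $\theta_{\alpha}$ (inside the reducible stratum being irrelevant because the stabilizer is constant) are equivalent to the vanishing of the kernel of the restriction
\begin{equation*}
H^{1}(Y\setminus K;\mathfrak{g}_{\theta_{\alpha}})\longrightarrow H^{1}(\mu_{K};\mathfrak{g}_{\theta_{\alpha}}).
\end{equation*}

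Next I would analyze each summand separately. For the trivial summand $\underline{\mathbb{R}}$, the map $H^{1}(Y\setminus K;\mathbb{R})\to H^{1}(\mu_{K};\mathbb{R})$ is an isomorphism because $Y$ is an integral homology 3-sphere: both sides are one-dimensional and the meridian generates $H_{1}(Y\setminus K;\mathbb{Z})$. Thus the trivial summand contributes nothing to the kernel, and the entire problem reduces to showing that
\begin{equation*}
\ker\!\Bigl(H^{1}(Y\setminus K;\mathbb{C}_{t})\to H^{1}(\mu_{K};\mathbb{C}_{t})\Bigr)=0 \iff \Delta_{(Y,K)}(t)\neq 0,
\end{equation*}
where $t=e^{4\pi i\alpha}$. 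Because $t\neq 1$ (as $\alpha\in(0,\tfrac{1}{2})$ and the condition $\Delta(t)\neq 0$ in particular rules out $t=1$ whenever $\Delta(1)=\pm 1$), the restriction $H^{*}(\mu_{K};\mathbb{C}_{t})$ vanishes identically. Consequently the kernel in question equals $H^{1}(Y\setminus K;\mathbb{C}_{t})$ itself, and the claim becomes: $H^{1}(Y\setminus K;\mathbb{C}_{t})=0$ iff $\Delta_{(Y,K)}(t)\neq 0$.

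Finally, I would invoke the standard Milnor-type computation of twisted cohomology of the complement of a knot in an integral homology sphere. Since $Y$ is a $\mathbb{Z}HS^{3}$ and $t\neq 1$, the twisted Euler characteristic $\chi(Y\setminus K;\mathbb{C}_{t})$ vanishes and $H^{0}(Y\setminus K;\mathbb{C}_{t})=0$, so $\dim H^{1}(Y\setminus K;\mathbb{C}_{t})=\dim H^{2}(Y\setminus K;\mathbb{C}_{t})$; moreover by Milnor's theorem on the Alexander invariant, the order of $H_{1}$ of the infinite cyclic cover with $\mathbb{C}[\mathbb{Z}]$-coefficients, specialized at $t$, is precisely $\Delta_{(Y,K)}(t)$ up to a unit. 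Hence $H^{1}(Y\setminus K;\mathbb{C}_{t})=0$ exactly when $\Delta_{(Y,K)}(t)\neq 0$. Combining this with the reduction above gives the desired equivalence.

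The main obstacle I expect is bookkeeping: verifying carefully that (i) the correct character on $\pi_{1}(Y\setminus K)$ determined by the adjoint action of $\theta_{\alpha}$ is indeed $t=e^{4\pi i\alpha}$ and not some other power of $e^{2\pi i\alpha}$, and (ii) that the cohomological non-degeneracy criterion of \cite[Lemma 3.13]{KM11}, stated there for irreducible flat connections, extends to the reducible $\theta_{\alpha}$ after quotienting out the $U(1)$-stabilizer, so that the relevant kernel is exactly the one computed above. Once these identifications are in place, the rest is a direct application of standard twisted (co)homology of knot complements.
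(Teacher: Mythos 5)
Your argument is correct and is essentially the intended derivation behind the statement: the paper itself gives no proof but cites \cite[Lemma 15]{E19} as a corollary of \cite[Lemma 3.13]{KM11}, and that derivation is exactly your reduction — split $\mathrm{ad}\,\theta_{\alpha}$ into $\underline{\mathbb{R}}\oplus\mathbb{C}_{t}$ with $t=e^{4\pi i\alpha}$, note the trivial summand restricts isomorphically to the meridian while $H^{*}(\mu_{K};\mathbb{C}_{t})=0$ for $t\neq 1$, and identify the vanishing of $H^{1}(Y\setminus K;\mathbb{C}_{t})$ with $\Delta_{(Y,K)}(t)\neq 0$ via the Milnor/Alexander-module computation. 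The only cosmetic point is that $t\neq 1$ already follows from $\alpha\in(0,\tfrac{1}{2})$ alone, so the parenthetical appeal to $\Delta(1)=\pm 1$ is unnecessary.
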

Let us fix the definition of the Chern-Simons functional.
We fix a reducible flat connection $\tilde{\theta}_{\alpha}$ which represents $\theta_{\alpha}$ and put a condition $CS(\tilde{\theta}_{\alpha})=0$.
Then the $\mathbb{R}$-valued functional $CS$  is determined up to the choice of a representative of $\theta_{\alpha}$.
From now on, we fix a representative $\tilde{\theta}_{\alpha}$ for each pair $(Y, K)$.

\subsection{The flip symmetry}
The flip symmetry is an involution which acts on a family of configuration spaces $\bigcup_{\alpha\in(0,\frac{1}{2})\cap\mathbb{Q}}\mathcal{B}(Y, K, \alpha)$. 
The flip symmetry changes holonomy conditions as $\alpha\mapsto \frac{1}{2}-\alpha$. 
The four dimensional version is introduced in \cite{KM93}, and the three dimensional version is similarly defined in \cite{DS1}.  
We generalize the three dimensional version of flip symmetry as follows. 
Let $\chi\in H^{1}(Y\setminus K, \mathbb{Z}_{2})\cong\mathbb{Z}_{2}$ be a generator. Since $H^{1}(Y\setminus K,\mathbb{Z}_{2})={\rm Hom}(\pi_{1}(Y\setminus K), \mathbb{Z}_{2})$,  we can regard $\chi$ as a representation $\chi: \pi(Y\setminus K)\rightarrow \mathbb{Z}_{2}$.
  The representation $\chi$ satisfies $\chi(\mu_{K})=-1$. 
  This representation forms a flat line bundle  $L_{\chi}$ over $Y\setminus K$ with a flat connection corresponding to $\chi$. Since $L_{\chi}$ is a trivial line bundle and there is an isomorphism $E|_{Y\setminus K}\cong E|_{Y\setminus K}\otimes L_{\chi}$, we regard a connection $A\otimes \chi$ on $ E|_{Y\setminus K}\otimes L_{\chi}$ as a connection  on $E|_{Y\setminus K}$. Thus the action of $\chi \in H^{1}(Y\setminus K, \mathbb{Z}_{2})$ on $\bigcup_{\alpha\in(0,\frac{1}{2})\cap\mathbb{Q}}\mathcal{B}(Y, K, \alpha)$ is defined by
  $$\chi[A]=[A\otimes \chi].$$
  This action is called the flip symmetry. 
  The flip symmetry gives the identification  \[\mathcal{B}(Y,K,\alpha)\cong \mathcal{B}(Y, K,\frac{1}{2}-\alpha),\] in particular, it defines the involution on $\mathcal{B}(Y, K,\frac{1}{4})$.
  
  The flip symmetry can be restricted to the space $\bigcup_{\alpha\in (0, \frac{1}{2})\cap\mathbb{Q}}\mathcal{R}_{\alpha}(Y\setminus K, SU(2))$.
  In this case, the action of $\chi \in H^{1}(Y\setminus K, \mathbb{Z}_{2})$ is simply described as 
  $\chi[\rho]=[\rho\cdot \chi]$ where $ \rho\cdot \chi : \pi_{1}(Y\setminus K)\rightarrow SU(2)$ is an $SU(2)$-representation defined as $(\rho\cdot \chi)(g):=\rho(g)\chi(g)$ for $g\in \pi_{1}(Y\setminus K)$.
  If $\rho$ satisfies \[\rho(\mu_{K})\sim
  \left[\begin{array}{cc}
  e^{2\pi i \alpha}&0\\
  0&e^{-2\pi i \alpha}
  \end{array}\right]
  \]
  then 
 \[(\rho\cdot \chi)(\mu_{K})\sim\left[
 \begin{array}{cc}
 e^{2\pi i (\frac{1}{2}-\alpha)}&0\\
 0&e^{-2\pi i (\frac{1}{2}-\alpha)}
 \end{array}\right].
 \]

\subsection{Holonomy perturbations\label{holp}}
In this subsection, we review the constructions and properties of the perturbation term of the Chern-Simons functional introduced by Kronheimer-Mrowka \cite{KM11} and also used in the construction by Daemi-Scaduto \cite{DS1}.  

Let $q: S^{1}\times D^{2}\rightarrow Y\setminus K $ be a smooth immersion of a solid torus. $(s, z)\in S^{1}\times D^{2}$ denotes its coordinate regarding $S^{1}$ as $\mathbb{R}/\mathbb{Z}$ and $D^{2}$ as the unit disk in $\mathbb{C}$. Let $G_{E}\rightarrow Y$ be a bundle of group whose sections are gauge transformations of $E$. This is defined by $G_{E}=P\times_{SU(2)} SU(2)$  where $P$ is a corresponding $SU(2)$-bundle and $SU(2)$ act as obvious way on $P$ and conjugate on $SU(2)$-factor. ${\rm Hol}_{q}(A): D^{2}\rightarrow q^{*}(G_{E}) $ is a section of $q^{*}(G_{E})$ which assigns the holonomy ${\rm Hol}_{q(-, z)}(A)$ of connection $A\in \mathcal{A}(Y, K, \alpha)$ along the loop $q(-, z): S^{1}\rightarrow Y\setminus K$ to each $z\in D^{2}$. 
Next, we repeat above constructions for $r$-tuple of  smooth immersions of solid torus.
$$\mathbf{q}=(q_{1}, \cdots, q_{r}).$$
Assume that  there is a positive number $\eta >0$ such that 
\begin{equation}
q_{i}(s, z)=q_{j}(s, z)\  {\rm for\  all}\  (s, z)\in [-\eta, \eta]\times D^{2}. \label{cond1}
\end{equation}
Then there are identifications,
$$q_{i}^{*}(G_{E})\cong q_{j}^{*}(G_{E})$$
over $[-\eta, \eta]\times D^{2}$. 
$q^{*}(G^{r}_{E})$ denotes fiber product of $q_{1}^{*}(G_{E}),\cdots , q_{r}^{*}(G_{E})$ over $[-\eta, \eta]\times D^{2}$. Then we can construct a section  ${\rm Hol}_{\mathbf{q}}(A): D^{2}\rightarrow q^{*}(G_{E}^{r})$ which assigns $$({\rm Hol}_{q_{1}(-,z)}(A),\cdots , {\rm Hol}_{q_{r}(-, z)}(A)) \in SU(2)^{r}$$ for each $z \in D^{2}$. 
Next, we choose a smooth function on $SU(2)^{r}$ which is invariant under  the diagonal action of $SU(2)$ on $SU(2)^{r}$ by the adjoint action on each factors. Then this smooth function induces $h: q^{*}(G_{E})\rightarrow \mathbb{R}$.
\begin{dfn}
${\rm Hol}_{\mathbf{q}}(A)$ and $h$ are as above. Let $\mu$ be a $2$-form on $D^{2}$ such that $\int_{D^{2}}\mu =1$.
A smooth function $f: \mathcal{A}(Y, K, \alpha) \rightarrow \mathbb{R}$  of the form
$$f(A)=\int _{D^{2}}h({\rm Hol}_{\mathbf{q}}(A))\mu$$
is called a cylinder function.
\end{dfn}
Cylinder functions are determined by choices of $r$-tuples $\mathbf{q}$ and functions $h$. Note that the construction of cylinder functions is gauge invariant.
Let $\mathcal{P}$ be space of perturbations.  See \cite{KM11} for the detailed description.
For each $\pi \in \mathcal{P}$, we can associate a cylinder function $f_{\pi}$.
We call $f_{\pi}$ the holonomy perturbation and  $CS+f_{\pi}$ the perturbed Chern-Simons functional.

\begin{prp}\label{non-deg}
There is a residual subset of the Banach space of perturbations $\mathcal{P}'\subset\mathcal{P}$ such that, for  any sufficiently small $\pi \in \mathcal{P}'$, the perturbed Chern-Simons functional $CS+f_{\pi}$ has non-degenerate critical point set ${\rm Crit}^{*}_{\pi}$ and its image $\mathfrak{C}^{*}_{\pi}$ in $\mathcal{B}^{*}(Y, K,\alpha)$ is a finite point set.
Moreover,  the reducible critical point $\theta_{\alpha}$ is unmoved under the perturbation and is non-degenerate if $\Delta_{(Y, K)}(e^{4\pi i \alpha})\neq 0$.
\end{prp}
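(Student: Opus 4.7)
The strategy is a standard Sard-Smale argument in the irreducible sector combined with a separate, more explicit treatment of the reducible critical point.

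First I would work on the irreducible part. Consider the universal parametrized zero locus
\[
\mathcal{Z}^{*} = \{(A,\pi)\in \mathcal{A}^{*}(Y,K,\alpha)\times \mathcal{P} : *F_{A}+V_{\pi}(A)=0\}/\mathcal{G}(Y,K).
\]
The plan is to show that $\mathcal{Z}^{*}$ is a Banach submanifold, which reduces to verifying that the linearization $(a,\dot\pi)\mapsto \mathrm{Hess}_{A,\pi}(a)+DV_{\dot\pi}(A)$ is surjective at every solution, modulo the tangent to the gauge orbit (and after imposing the Coulomb slice $d_{A}^{*}a=0$). By self-adjointness of the Hessian in the Coulomb slice, surjectivity is equivalent to showing that if $b\in \ker(\mathrm{Hess}_{A,\pi})\cap \ker(d_{A}^{*})$ is nonzero, then there exists an admissible holonomy perturbation $\dot\pi$ with $\langle DV_{\dot\pi}(A),b\rangle\neq 0$. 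This is the separating property of cylinder functions built from holonomy along sufficiently dense families of immersed loops, established in \cite{KM11} and \cite{DS1}; the only point to check in our rational-holonomy setting is that these loops can be taken disjoint from $K$ and that orbifold Fredholm theory (Proposition \ref{Fredholm}) still applies. Once $\mathcal{Z}^{*}$ is a Banach manifold, applying the Sard-Smale theorem to the proper Fredholm projection $\mathcal{Z}^{*}\to\mathcal{P}$ produces a residual set $\mathcal{P}'\subset \mathcal{P}$ of regular values, and for each $\pi\in\mathcal{P}'$ the set $\mathfrak{C}^{*}_{\pi}$ consists of non-degenerate critical points.

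For the reducible $\theta_{\alpha}$, I would restrict the perturbation class to those cylinder functions whose gradient vanishes at $\theta_{\alpha}$. Concretely, this is achieved by taking the class functions $h$ on $SU(2)^{r}$ that vanish to second order along the abelian locus containing the holonomies of $\theta_{\alpha}$; such perturbations form a dense subspace of $\mathcal{P}$, so replacing $\mathcal{P}$ by this subspace and intersecting with $\mathcal{P}'$ does not destroy residuality. Under this restriction, $\theta_{\alpha}$ remains a critical point of $CS+f_{\pi}$ for every admissible $\pi$. Non-degeneracy of $\theta_{\alpha}$ then follows by appealing to Proposition \ref{non-deg red}, which gives non-degeneracy for the unperturbed Hessian under the hypothesis $\Delta_{(Y,K)}(e^{4\pi i \alpha})\neq 0$, and then invoking openness of non-degeneracy: since $\mathrm{Hess}_{\theta_{\alpha},\pi}$ depends continuously on $\pi$ and the unperturbed operator is invertible on $\ker(d_{\tilde\theta_{\alpha}}^{*})$, invertibility persists for all sufficiently small $\pi$.

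Finally, for finiteness of $\mathfrak{C}^{*}_{\pi}$: non-degenerate critical points are isolated in $\mathcal{B}^{*}(Y,K,\alpha)$, and for sufficiently small $\pi$ the perturbed critical set lies in a $\check{L}^{2}_{m}$-bounded neighborhood of the unperturbed critical set. Combined with the orbifold Uhlenbeck compactness of the flat moduli space (fixed cone angle $2\pi/\nu$ with $2\alpha\nu\in\mathbb{Z}$, as in Subsection \ref{sp of sing conn}), isolatedness plus compactness implies that $\mathfrak{C}^{*}_{\pi}$ is a finite set.

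The main obstacle is the transversality step for irreducibles. One must verify that, in the Coulomb slice at an arbitrary irreducible singular flat connection, the image of the map $\dot\pi\mapsto DV_{\dot\pi}(A)$ is $L^{2}$-dense, subject to the additional constraint that the perturbations have vanishing gradient at $\theta_{\alpha}$. This is the analogue of \cite[Proposition 3.17]{KM11} adapted to rational holonomy parameters, and it is where the orbifold analytic framework of Subsection \ref{sp of sing conn} enters essentially, since all standard Fredholm and unique continuation inputs must be justified in the orbifold setting rather than the monotone $\alpha=\tfrac{1}{4}$ setting used in \cite{DS1,DS2}.
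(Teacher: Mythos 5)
Your proposal is correct and follows essentially the same route as the paper: transversality at irreducibles via the separating property of holonomy perturbations (the paper phrases this through density of cylinder functions on compact finite-dimensional submanifolds, following Donaldson, rather than a parametrized Sard--Smale, but these are the same standard argument), keeping $\theta_{\alpha}$ fixed by choosing the invariant functions $h$ as in \cite[Subsection 2.4]{DS1}, non-degeneracy of $\theta_{\alpha}$ from Proposition \ref{non-deg red} plus persistence under small perturbation, and finiteness from compactness together with isolation as in \cite[Lemma 3.8]{KM11}. The point you flag as the main obstacle (transversality within the restricted class of perturbations vanishing at $\theta_{\alpha}$) is exactly what the cited adaptation of \cite{DS1} handles, so no further argument is needed.
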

\begin{proof}
The finiteness property follows from a similar argument in \cite[Lemma 3.8]{KM11}.
The non-degeneracy condition follows from the fact that $f_{\pi}$ is dense in $C^{\infty}(S)$ for any compact finite dimensional submanifold $S\subset \mathcal{B}^{*}(Y, K,\alpha)$. See \cite[Section 5]{Don} for the details.
The argument in \cite[Subsection 2.4]{DS1} is  adapted to show that, for suitable choices of $SU(2)$-invariant smooth function $h$, the unique flat reducible $\theta_{\alpha}$ is unmoved under small perturbations.
By Proposition \ref{non-deg red}, the unique flat reducible $\theta_{\alpha}$ is still isolated and non-degenerate for such perturbations under the condition $\Delta_{(Y, K)}(e^{4\pi i \alpha})\neq 0$. 
\end{proof}

\subsection{The moduli space over the cylinder}
We discuss trajectories for perturbed gradient flow.
Let $(Z, S)=\mathbb{R}\times (Y, K)$ be a cylinder equipped with a product metric $g^{\nu}_{Y}+dt$. We introduce moduli spaces of  instantons over a cylinder. $\mathbb{E}$ denotes the pullback of the $SU(2)$-bundle $E\rightarrow Y$ by the projection $\mathbb{R}\times Y\rightarrow Y$. Consider  a connection $A$ on $\mathbb{E}$ of the form $A=B(t)+Cdt$, where $B({t})$ is a $t$-dependent singular connection on $Y\setminus K$.
Let $\beta_{0}$ and $\beta_{1}$ be elements in $\mathfrak{C}^{*}_{\pi}$. Let $B_{0}$ and $B_{1}$ be their representative in gauge equivariant classes.
Consider a singular connection $A_{0}$ over a cylinder $(Z, S)$ such that
$$A_{0}|_{(Y\setminus K)\times \{t\}}=B_{1}\ {\rm for}\ t \gg0,$$
$$A_{0}|_{(Y\setminus K)\times\{t\}}=B_{0}\  {\rm for}\  t\ll0.$$
$A_{0}$ defines a path $\gamma: \mathbb{R}\rightarrow \mathcal{B}(Y, K, \alpha)$ by sending $t$ to $[B(t)]$. $z$ denotes its relative homotopy class in $\pi_{1}(\mathcal{B}(Y, K, \alpha);\beta_{0}, \beta_{1})$.

Then we define a space of singular connection indexed by $z$.
$$\mathcal{A}_{z}(Z, S; B_{0}, B_{1})=\{A| A-A_{0} \in \check{L}^{2}_{m,A_{0}}({Z}\setminus S, \Lambda^{1}\otimes \mathfrak{g}_{\mathbb{E}})\}$$We also define the group of  gauge transformations,
$$\mathcal{G}_{z}(Z, S)=\{g\in {\rm Aut}(\mathbb{E})|\nabla^{k}_{A_{0}}g\in \check{L}^{2}(Z\setminus S, {\rm End}(\mathbb{E})),\ k=1, \cdots , m+1\}.$$

The group $\mathcal{G}_{z}(Z, S)$ acts on $\mathcal{A}_{z}(Z, S)$. Taking the quotient, we get the configuration space
$\mathcal{B}_{z}(Z, S; \beta_{0}, \beta_{1})$. 
We introduce the moduli space of (perturbed) instantons over a cylinder associated to the perturbed Chern-Simons functional $CS+f_{\pi}$. This is the moduli space of solutions of perturbed ASD-equation
$$M^{\pi}_{z}(\beta_{0}, \beta_{1})=\{[A]\in \mathcal{B}_{z}(Z, S;\beta_{0}, \beta_{1})| F^{+}_{A}+\hat{V_{\pi}}(A)=0\}.$$
Here $\hat{V}_{\pi}$ is a term arising from the perturbation $f_\pi $. 
The perturbed version of ASD-complex is given by
$$\Omega^{0}({Z\setminus S},\mathfrak{g}_{\mathbb{E}})\xrightarrow{d_{A}}\Omega^{1}({Z\setminus S},\mathfrak{g}_{\mathbb{E}}) \xrightarrow{d_{A}^{+}+D\hat{V}_{\pi}}\Omega^{+}({Z\setminus S},\mathfrak{g}_{\mathbb{E}}). $$
We consider a Fredholm operator 
$${D}_{A, \pi}: \check{L}^{2}_{m, A_{0}}({Z}\setminus S, \Lambda^{1}\otimes\mathfrak{g}_{\mathbb{E}})\rightarrow \check{L}^{2}_{m-1, A_{0}}({Z}\setminus S, (\Lambda^{0}\oplus \Lambda^{+})\otimes\mathfrak{g}_{\mathbb{E}})$$
given by ${D}_{A,\pi}=-d_{A}^{*}\oplus (d^{+}_{A}+D\hat{V}_{\pi})$.
We define the relative $\mathbb{Z}$-grading for $\beta_{0}, \beta_{1}\in\mathfrak{C}^{*}_{\pi}(Y, K, \alpha)$ as follows:
$${\rm gr}_{z}(\beta_{0}, \beta_{1})= {\rm ind}({D}_{A,\pi}),$$
where $z$ is a path represented by $A$.
Note that ${\rm ind}D_{A, \pi}$ is independent of the choice of the perturbation $\pi$ since the term $D\hat{V}_{\pi}$ is a compact operator.
The following proposition gives the well-defined mod 4 grading on the critical point set.
\begin{prp}{\rm ( \cite[Lemma3.1]{KM11})}
Let $z\in \pi_{1}(\mathcal{B}(Y, K,\alpha))$ be a homotopy class represented by a path which connects $B$ and $g^{*}(B)$, where $\beta=[B]$ and $d(g)=(k, l)$.
For $\beta\in \mathfrak{C}^{*}$ and a homotopy class $z\in \pi_{1}(\mathcal{B}(Y, K, \alpha); \beta)$, we have $${\rm gr}_{z}(\beta, \beta)=8k+4l.$$
\end{prp}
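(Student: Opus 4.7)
The plan is to reduce the cylinder index to a closed-manifold index via the clutching/closing-up construction and then apply the already-stated four-dimensional index formula from Subsection~\ref{sp of sing conn}. First, I note that the perturbation term $D\hat V_\pi$ is a compact operator on the relevant Sobolev spaces, so $\mathrm{ind}(D_{A,\pi})=\mathrm{ind}(-d_A^*\oplus d_A^+)$ for any representative singular connection $A$ realising the path class $z$. Thus it is enough to compute the unperturbed index.

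Next, I would close up the cylinder. Choose a representative $A$ on $\mathbb{R}\times(Y\setminus K)$ that equals $B$ for $t\ll0$ and $g^*(B)$ for $t\gg0$. Applying the gauge transformation $g$ on the positive end identifies $g^*(B)$ with $B$, so after rescaling $\mathbb{R}$ to a bounded interval and gluing the ends one obtains a singular connection $\bar A$ on a closed pair $(\bar X,\bar S)=S^1\times(Y,K)$ whose bundle data $(\bar E,\bar L)$ is obtained by clutching $E\oplus L$ along $g$. By the very definition of $d(g)=(k,l)$ we have
\[
c_2(\bar E)[\bar X]=k,\qquad -c_1(\bar L)[\bar S]=l,
\]
since the degree of $g:Y\to SU(2)$ measures the second Chern class of the clutched $SU(2)$-bundle and the degree of $g|_K:K\to U(1)$ measures the first Chern class of the clutched $U(1)$-reduction along $\bar S$.

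Now I would apply excision for the index of the ASD deformation operator. Because $\beta=[B]$ is non-degenerate (Proposition~\ref{grcoh non-deg}), the Fredholm operator on the cylinder with translation-invariant endpoints is, by the standard Atiyah--Patodi--Singer type argument adapted to the orbifold setting of Subsection~\ref{sp of sing conn}, index-equivalent to the closed-manifold operator on $(\bar X,\bar S)$: the two spectral contributions at $\pm\infty$ cancel because the limiting operators on the two ends coincide (both are $\mathrm{Hess}_{B,\pi}$) and no eigenvalue lies on the imaginary axis by non-degeneracy. Hence
\[
\mathrm{gr}_z(\beta,\beta)=\mathrm{ind}(-d_{\bar A}^*\oplus d_{\bar A}^+)\text{ on }(\bar X,\bar S).
\]

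Finally, I apply the closed-pair index formula from Subsection~\ref{sp of sing conn}:
\[
\mathrm{ind}(-d_{\bar A}^*\oplus d_{\bar A}^+)=8k+4l-3(1-b^1(\bar X)+b^+(\bar X))-2(g(\bar S)-1).
\]
For $\bar X=S^1\times Y$ with $Y$ an integral homology 3-sphere one has $b^1(\bar X)=1$, $b^+(\bar X)=0$, and for $\bar S=S^1\times K$ one has $g(\bar S)=1$, so every correction term vanishes and the formula collapses to $8k+4l$. The main obstacle in turning this sketch into a rigorous argument is the excision/closing-up step: one must verify that the clutched orbifold bundle and metric satisfy the hypotheses of Proposition~\ref{Fredholm} (which is where the integrality condition $2\alpha\nu\in\mathbb{Z}$ matters) and that the compactly supported perturbation of the path between $B$ and $g^*B$ does not affect the Fredholm index, but both are routine once the orbifold setup of Subsection~\ref{sp of sing conn} is in place.
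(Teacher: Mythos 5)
The paper offers no proof of this proposition---it is quoted directly from \cite[Lemma 3.1]{KM11}---and your argument is exactly the standard one behind that citation: observe that the perturbation term is compact, close up the cylinder via $g$ to the clutched pair $(S^1\times Y,\,S^1\times K)$ carrying charges $(k,l)$, and apply the closed-pair index formula, whose correction terms vanish because $b^1(S^1\times Y)=1$, $b^+(S^1\times Y)=0$ and $g(S^1\times K)=1$. Your excision step is correct for the reason you give, with the one point worth stating explicitly that irreducibility of $\beta$ gives $H^0_\beta=0$ in addition to $H^1_\beta=0$ from non-degeneracy, so no stabilizer correction appears when the ends are matched---in contrast with the paper's later additivity formula for paths factoring through the reducible $\theta_\alpha$, which acquires an extra $+1$ precisely from its $S^1$ stabilizer.
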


The mod 4 of $\mathbb{Z}$-grading is independent of the choice of the homotopy class $z$ and hence we can write
$${\rm gr}(\beta_{0}, \beta_{1})\equiv{\rm gr}_{z}(\beta_{0}, \beta_{1})\ {\rm (mod 4)}.$$
We also define the absolute $\mathbb{Z}$-grading by 
$${\rm gr}_{z}(\beta, \theta_{\alpha})={\rm ind}({D}_{A,\pi}:\phi\check{L}^{2}_{m }\rightarrow \phi\check{L}^{2}_{m-1}),$$
where $\phi\check{L}^{2}_{m}$ is a weighted Sobolev space with a weight function $\phi$ which agree with $e^{-\delta|t|}$ over two ends of the cylinder.  Here $\delta>0$ is chosen small enough. 
Similarly, we can define the mod 4 grading
$${\rm gr}(\beta)\equiv{\rm gr}_{z}(\beta, \theta_{\alpha}) \ {\rm (mod 4)}.$$
The moduli space $M^{\pi}_{z}(\beta_{0},\beta_{1})$ is called regular if the operator ${D}_{A,\pi}$ is surjective for all $[A]\in M^{\pi}_{z}(\beta_{0}, \beta_{1})$. For a generic choice of a perturbation, the moduli space  $M^{\pi}_{z}(\beta_{0}, \beta_{1})$ is regular and smooth manifold of dimension ${\rm gr}_{z}(\beta_{0}, \beta_{1})$. 
We explicitly write $M^{\pi}_{z}(\beta_{0}, \beta_{1})_{d}$  if the moduli space $M^{\pi}_{z}(\beta_{0}, \beta_{1})$ has dimension $d$.
We also write $\breve{M}^{\pi}_{z}(\beta_{0}, \beta_{1})_{d-1}=M^{\pi}_{z}(\beta_{0}, \beta_{1})_{d}/\mathbb{R}$.
The argument in \cite[Section 5]{Don} is adapted to our situation, and we have the following properties:
\begin{prp}\label{regular pert}
Let $\alpha\in (0, \frac{1}{2})$ be a holonomy parameter with $\Delta_{K}(e^{4\pi i \alpha})\neq 0$ and $\pi_{0}\in \mathcal{P}'$ be a small perturbation such that $\mathfrak{C}_{\pi_{0}}=\{\theta_{\alpha}\}\sqcup \mathfrak{C}^{*}_{\pi_{0}}$ consists of finitely many non-degenerate points. Then there is a small perturbation $\pi \in \mathcal{P}'$ such that
\item[\rm (i)]$f_{\pi}=f_{\pi_{0}}$ in a neighborhood of $\mathfrak{C}_{\pi_{0}}$,
\item[\rm (ii)] $\mathfrak{C}_{\pi}=\mathfrak{C}_{\pi_{0}}$,
\item[\rm (iii)] $M^{\pi}_{z}(\beta_{1}, \beta_{2})$ is regular for all homotopy class $z$ and critical points $\beta_{1}, \beta_{2}$.
\end{prp}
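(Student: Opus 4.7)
The plan is to follow the standard parametrized transversality scheme of Donaldson \cite[Section 5]{Don}, as adapted to the singular setting by Kronheimer--Mrowka \cite{KM11} and to this $\mathcal{S}$-complex framework by Daemi--Scaduto \cite{DS1}. First I would fix an open neighborhood $\mathcal{U}'\Subset \mathcal{U}$ of the finite set $\mathfrak{C}_{\pi_0}$ in $\mathcal{B}(Y,K,\alpha)$ so small that $\overline{\mathcal{U}}$ contains no other critical orbits and so that the perturbed gradient of $CS+f_{\pi_0}$ is bounded away from zero on $\mathcal{U}\setminus \mathcal{U}'$; this uses the non-degeneracy given by Proposition \ref{non-deg} together with Proposition \ref{non-deg red} for the reducible $\theta_\alpha$. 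I would then search for $\pi$ in the form $\pi=\pi_0+\pi'$ with $f_{\pi'}$ supported in the complement of $\mathcal{U}'$, so that properties \textup{(i)} and \textup{(ii)} are automatic.

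Next I would introduce the universal moduli space. For each quadruple $(\beta_0,\beta_1,z,d)$ with $\beta_0,\beta_1\in\mathfrak{C}_{\pi_0}$, $z\in\pi_1(\mathcal{B}(Y,K,\alpha);\beta_0,\beta_1)$, and $d={\rm gr}_z(\beta_0,\beta_1)$, set
\[
\mathcal{M}^{\mathrm{univ}}_z(\beta_0,\beta_1)=\bigl\{([A],\pi')\in \mathcal{B}_z(Z,S;\beta_0,\beta_1)\times\mathcal{P}'_0\ \big|\ F^+_A+\hat V_{\pi_0+\pi'}(A)=0\bigr\},
\]
where $\mathcal{P}'_0$ is a Banach space of secondary perturbations whose cylinder-function data are cut off outside $\mathcal{U}'$. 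The task is to prove that the linearization of the universal equation,
\[
(a,\pi')\longmapsto d^+_A a+D\hat V_{\pi_0}|_A(a)+D_{\pi'}\hat V(A),
\]
is surjective at every irreducible $[A]$. Given a nonzero cokernel element $\phi\in\ker(d^+_A+D\hat V_{\pi_0})^*$, unique continuation for the perturbed ASD equation (compare \cite[Section 3]{KM11}) forces $\phi$ to be nonzero on an open dense subset of the trajectory; since any non-constant flow line must traverse the region outside $\mathcal{U}'$ (its Chern--Simons values at the ends differ, and inside $\mathcal{U}'$ the gradient vanishes to first order only at the isolated critical points), I can apply the construction of \cite[Proposition 3.18]{KM11}, which produces holonomy cylinder functions whose linearized variation pairs nontrivially against $\phi$ at an interior irreducible point. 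This shows $\mathcal{M}^{\mathrm{univ}}_z(\beta_0,\beta_1)$ is a smooth Banach manifold over its irreducible locus.

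Then I would invoke Sard--Smale on the projection $\mathcal{M}^{\mathrm{univ}}_z(\beta_0,\beta_1)\to\mathcal{P}'_0$ to obtain a residual set of $\pi'$ for which $M^{\pi_0+\pi'}_z(\beta_0,\beta_1)$ is regular at irreducibles. Intersecting the resulting residual sets over the countable family of $(\beta_0,\beta_1,z)$ and all dimensions produces a residual set of parameters; any sufficiently small element $\pi'$ of this intersection gives the desired $\pi$ in \textup{(iii)}.

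The step I expect to be the main obstacle is achieving regularity along flow lines that interact with the reducible $\theta_\alpha$, since secondary perturbations must be kept $SU(2)$-equivariant in a neighborhood of $\theta_\alpha$ (to preserve property \textup{(ii)} for the reducible) yet still be rich enough to cut out transversality in the normal directions. I would handle this by restricting to perturbation data whose cylinder functions are $SU(2)$-invariant as in \cite[Subsection 2.4]{DS1}, and by exploiting the fact that any flow line with one end at $\theta_\alpha$ becomes irreducible immediately after leaving any sufficiently small gauge-invariant neighborhood of the reducible orbit, where the perturbation space is free to vary. The remaining compactness inputs --- exponential decay at the ends, absence of factored bubbles off the singular locus for our rational $\alpha$ with $2\nu\alpha\in\mathbb{Z}$, and the usual codimension count for reducible trajectories --- are standard and follow directly from the analytic setup of Subsection \ref{sp of sing conn}.
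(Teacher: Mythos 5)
Your proposal follows essentially the same route as the paper's proof: the paper also takes secondary perturbations supported away from the (finite, non-degenerate) critical set $\mathfrak{C}_{\pi_{0}}$ so that (i) and (ii) hold automatically, achieves regularity for each fixed homotopy class $z$ by the transversality argument of \cite[Section 5]{Don} (which is exactly the universal-moduli-space/Sard--Smale scheme you spell out), and then obtains a single $\pi$ satisfying (iii) by a Baire-category intersection over the countable family of homotopy classes. Your write-up simply supplies the details (unique continuation, holonomy cylinder functions, equivariance near $\theta_{\alpha}$) that the paper compresses into citations of \cite{Don}, \cite{KM11} and \cite{DS1}.
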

\begin{proof}
First, we fix a perturbation $\pi_{0}\in \mathcal{P}$ as in Proposition \ref{non-deg}.
Then, for each homotopy class $z$, we can find a perturbation $\pi_{z}\in \mathcal{P}$ which is supported away from critical points and  the corresponding moduli space is regular.
This essentially follows from the argument in \cite[Section 5]{Don}.
Since the subset $\mathcal{P}_{z}$ of regular perturbations as above forms open dense subset in $\mathcal{P}$, we can find a desired perturbation  $\pi$ in the countable intersection $\cap_{z}\mathcal{P}_{z}$.
\end{proof}
From now on, we assume that perturbation $\pi \in \mathcal{P}$ is always taken so that it satisfies properties in Proposition \ref{regular pert} and we drop $\pi$ from the notation $M^{\pi}_{z}(\beta_{1}, \beta_{2})$.

\subsection{Compactness}
Consider a relative homotopy class $z\in \pi_{1}(\mathcal{B}(Y,K, \alpha), \beta_{1}, \beta_{2})$.
If $\beta_{1}=\beta_{2}$ then we assume that $z$ is a non-trivial homotopy class. 
Elements in $\breve{M}_{z}(\beta_{1}, \beta_{2})$ are called unparamtrized trajectories. 
\begin{dfn}
A collection of unparametrized trajectories $([A_{1}], \cdots, [A_{l}]) \in \breve{M}_{z_{1}}(\beta_{1}, \beta_{2})\times \cdots \times \breve{M}_{z_{l}}(\beta_{l-1}, \beta_{l})$ is called broken trajectories from $\beta_{1}$ to $\beta_{l}$. If the composition of paths $z_{1}\circ\cdots \circ z_{l}$ is contained in the homotopy class $z$, then $\breve{M}^{+}_{z}(\beta_{1}, \beta_{l})$ denotes the space of unparametrized broken trajectories from $\beta_{1}$ to $\beta_{l}$.
\end{dfn}
The compactness property of moduli spaces are follows. 
See also \cite[Proposition 3.22]{KM11}. 
\begin{prp}
Let $\beta_{1}, \beta_{2}\in \mathfrak{C}_{\pi}$ and assume that ${\rm dim}{M}_{z}(\beta_{1}, \beta_{2})<4$. Then the space of unparametrized broken trajectories $\breve M^{+}_{z}(\beta_{1}, \beta_{2})$ is compact.
\end{prp}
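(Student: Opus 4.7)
The plan is to adapt the standard Uhlenbeck-style compactification argument for instanton Floer trajectories to the singular setting, using the dimension hypothesis to rule out bubbling.

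First, I would take a sequence $\{[A_n]\} \subset M_z(\beta_1, \beta_2)$ and study possible limits. Since the holonomy parameter $\alpha\in \mathbb{Q}\cap(0,\tfrac{1}{2})$ is chosen together with $\nu$ as in Proposition \ref{Fredholm}, the orbifold formalism of Subsection \ref{sp of sing conn} applies: pulling back to local orbifold charts $\pi:U\to U/\mathbb{Z}_\nu$ along the singular locus $\mathbb{R}\times K$ turns $A_n$ into a sequence of genuine (orbifold) ASD connections on $U$, for which the usual Uhlenbeck compactness theorem gives, after gauge transformations and passage to a subsequence, $C^\infty_{\rm loc}$-convergence on $(\mathbb{R}\times Y)\setminus(\Sigma\cup(\mathbb{R}\times K))$ to a limiting singular connection, where $\Sigma$ is a finite set of interior bubble points plus possibly points on $\mathbb{R}\times K$ at which a codimension-two (monopole) bubble concentrates. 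Energy is lost into a finite sum of charges $(k_i, l_i)$ at the bubble points and into trajectories that escape along the two cylindrical ends.

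Next I would rule out bubbling using the dimension hypothesis. As recalled at the end of Subsection \ref{sp of sing conn}, each interior bubble at $\Sigma$ carries at least one unit of instanton charge and costs at least $8$ in index, while each bubble concentrating on $\mathbb{R}\times K$ carries positive monopole charge and costs at least $4$ in index; both cost strictly positive topological energy as well. Combined with the additivity of ${\rm gr}_z$ under gluing and non-negativity of the dimensions of the Fredholm pieces away from bubble points, the hypothesis ${\rm dim}\,M_z(\beta_1,\beta_2) < 4$ forces the bubble set $\Sigma$ to be empty. Hence the convergence is strong on compact subsets of the cylinder away from the ends, and no energy is lost into bubbles.

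The remaining step is the standard broken-trajectory analysis on the cylindrical ends. Because all critical points in $\mathfrak{C}_\pi$ are non-degenerate by Proposition \ref{non-deg} (the irreducibles by genericity of $\pi$, the reducible $\theta_\alpha$ by $\Delta_{(Y,K)}(e^{4\pi i\alpha})\neq 0$ via Proposition \ref{non-deg red}), the exponential decay estimates of Kronheimer-Mrowka for the gradient flow of the perturbed Chern-Simons functional apply in the orbifold Sobolev spaces $\check{L}^2_{m,A^\alpha}$. Using these decay estimates together with the energy identity \eqref{energy}, a sequence of time-translates of $A_n$ converges on each end to a trajectory in some $\breve M_{z_i}(\beta_{i-1},\beta_i)$ with intermediate $\beta_i\in\mathfrak{C}_\pi$, and the composed homotopy class $z_1\circ\cdots\circ z_l$ equals $z$. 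This exhibits the limit as an element of $\breve M^+_z(\beta_1,\beta_2)$, yielding sequential compactness.

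The main obstacle is the first step: controlling bubbling along the singular surface $\mathbb{R}\times K$ under a non-monotone holonomy parameter, since codimension-two bubbles with monopole charge $l$ contribute energy $2\alpha l - \alpha^2 S\cdot S$ rather than an integer multiple of a fixed quantum. I would handle this by passing to the orbifold metric $g^\nu$ and quoting Kronheimer-Mrowka's removable singularities and bubble-tree analysis for singular ASD connections; the combinatorial bookkeeping for $(k_i,l_i)$ at the bubble points, together with the strict positivity of the index contribution of every non-trivial bubble, then yields the dimension-counting argument used above.
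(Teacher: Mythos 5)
Your proposal is correct and follows essentially the same route as the paper, which does not argue this point itself but simply invokes Kronheimer--Mrowka's compactness result (\cite[Proposition 3.22]{KM11}): fixing the homotopy class $z$ bounds the energy, Uhlenbeck/orbifold compactness produces chain limits with possible bubbles, and since every bubble (interior or along $\mathbb{R}\times K$) drops the index by at least $4$, the hypothesis ${\rm dim}\,M_{z}(\beta_{1},\beta_{2})<4$ excludes bubbling and yields convergence to an unparametrized broken trajectory. Your handling of the non-monotone parameter via the orbifold metric and the index bookkeeping for $(k_i,l_i)$ is exactly the content of that cited argument.
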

We can assign the energy $\mathcal{E}_{\pi }(z)$ to a homotopy class $z$. 
In  singular gauge theory for general holonomy parameters, the counting $\# \bigcup_{z} \breve{M}_{z}(\beta_{1}, \beta_{2})$ with ${\rm gr}_{z}(\beta_{1}, \beta_{2})=1$ can be infinite.  Instead, we use the following finiteness result. 
\begin{prp}{\rm (\cite[Proposition 3.23]{KM11})}
For a given constant $C>0$, there are only finitely many critical points $\beta_{1}, \beta_{2}$ and homotopy classes $z\in \pi_{1}(\mathcal{B}; \beta_{1}, \beta_{2})$ such that the moduli space $M_{z}(\beta_{1}, \beta_{2})$ is non-empty and $\mathcal{E}_{\pi}(z)<C$. 
\end{prp}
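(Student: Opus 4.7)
The strategy is to leverage the finiteness of the critical point set $\mathfrak{C}_{\pi}$ established in Proposition \ref{non-deg}, together with a careful analysis of how the topological energy $\mathcal{E}_{\pi}(z)$ varies as one ranges over homotopy classes $z$ between a fixed pair of critical points. Since $\mathfrak{C}_{\pi}$ is finite, it suffices to prove that for each fixed pair $(\beta_{1}, \beta_{2})$ of critical points, there are only finitely many $z \in \pi_{1}(\mathcal{B}(Y,K,\alpha); \beta_{1}, \beta_{2})$ for which $M_{z}(\beta_{1}, \beta_{2})$ is non-empty and $\mathcal{E}_{\pi}(z) < C$.

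For such a pair, I would first recall that the set of relative homotopy classes $\pi_{1}(\mathcal{B}(Y,K,\alpha); \beta_{1}, \beta_{2})$ forms a torsor over $\pi_{1}(\mathcal{B}(Y,K,\alpha)) \cong \mathbb{Z} \oplus \mathbb{Z}$, where the two $\mathbb{Z}$-factors correspond respectively to the instanton number $k$ and the monopole number $l$, as discussed in Subsection \ref{chern-simons}. Fixing a reference class $z_{0}$, any other class has the form $z = z_{0} \cdot (k, l)$, and by the period formula for the Chern-Simons functional, the energy shifts as $\mathcal{E}_{\pi}(z) - \mathcal{E}_{\pi}(z_{0}) = 2(k + 2\alpha l)$. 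The upper bound $\mathcal{E}_{\pi}(z) < C$ therefore gives the linear constraint $k + 2\alpha l < C'$ for a constant $C'$ depending on $C$, $z_{0}$, and the perturbation.

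The core step is then to bound $k$ and $l$ \emph{separately} for classes with non-empty moduli space. For a solution $A$ of the perturbed ASD equation representing $z$, the total $L^{2}$-norm $\|F_{A}\|_{L^{2}}^{2}$ is controlled by the topological energy up to an error determined by the perturbation term $\hat{V}_{\pi}$, which is uniformly bounded since $f_{\pi}$ is a cylinder function constructed from holonomies. Decomposing the curvature into its contribution on compact subsets of $Z \setminus S$ and its distributional contribution concentrated along $S$, one sees that the flux of $A$ across small meridians of $S$ is topologically tied to $l$ while the bulk $L^{2}$-energy, together with the Chern-Weil formula $\kappa(A) = k + 2\alpha l - \alpha^{2} S \cdot S$ for the cylindrical pair (for which $S \cdot S = 0$), pins down $k$. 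Combined with nonnegativity of each of these curvature contributions for a solution of the perturbed ASD equation (up to perturbation-dependent error), this yields both lower and upper bounds on $k$ and $l$ individually, confining $(k, l)$ to a finite rectangle in $\mathbb{Z}^{2}$.

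The main obstacle in the plan is precisely the failure of monotonicity for general rational holonomy parameters $\alpha \in \mathbb{Q} \cap (0, \tfrac{1}{2})$: since $k + 2\alpha l$ vanishes on a nontrivial sublattice of $\mathbb{Z}^{2}$ when $\alpha = p/q$, an energy bound by itself cannot confine $(k, l)$ to a finite set. This is what forces the separate estimate of $k$ and $l$ through the curvature decomposition near $S$, rather than a direct argument from energy alone. Once one verifies that the holonomy perturbations of Subsection \ref{holp}, being constructed from compactly supported immersions of solid tori in $Y \setminus K$, do not alter the monopole charge and contribute only a bounded perturbation to the bulk energy, the separate boundedness of $k$ and $l$ follows as in \cite[Proposition 3.23]{KM11}, which carries over to our setting without further modification.
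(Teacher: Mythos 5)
The paper does not actually prove this proposition: it is quoted directly from \cite[Proposition 3.23]{KM11}, so the only argument in the text is that citation. Your reduction to a fixed pair of critical points (using the finiteness of $\mathfrak{C}_{\pi}$ from Proposition \ref{non-deg}), the identification of the relative homotopy classes as a torsor over $\pi_{1}(\mathcal{B}(Y,K,\alpha))\cong\mathbb{Z}\oplus\mathbb{Z}$ with energy period $2(k+2\alpha l)$, and your diagnosis that the failure of monotonicity for rational $\alpha$ is exactly why an energy bound alone cannot confine $(k,l)$, all match the structure of the cited argument.

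However, the mechanism you propose in place of that argument does not work as stated. The splitting of the curvature into a ``bulk'' part and a part ``concentrated along $S$'' is not a splitting into separately nonnegative, separately topological contributions: for the singular connections used here $F_{A}$ is an honest $L^{2}$ form on the complement of the surface in the cylinder $\mathbb{R}\times(Y,K)$, $\kappa(A)$ is its total (nonnegative) energy, and the monopole number $\nu(A)=\frac{i}{\pi}\int_{S}\Omega$ is the integral of the diagonal curvature component over the surface, a quantity with no sign; so ``nonnegativity of each of these curvature contributions'' neither holds nor could it yield two-sided bounds on $l$. What is actually needed --- and what Kronheimer--Mrowka prove --- is an a priori estimate of the shape $|\nu(z)|\le \mathrm{const}\,(\mathcal{E}_{\pi}(z)+1)$ for classes $z$ with nonempty moduli space, obtained by estimating $\int_{S}|\Omega|$ against the $L^{2}$ norm of $F_{A}$ on a tubular neighbourhood of the surface (Cauchy--Schwarz on unit slabs of the cylinder, together with exponential decay at the two ends coming from nondegeneracy of the limiting critical points, the perturbation being supported away from $K$). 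Once $\nu(z)$, hence $l$, is bounded, $k$ is confined by the requirement that $\kappa(z)=k+2\alpha l$ lie in a bounded interval. Your closing sentence defers to \cite[Proposition 3.23]{KM11} for exactly this monopole-number estimate, which is in effect where the paper itself starts; as an independent argument, your sketch has a genuine gap at that point.
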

Thus $$\bigcup_{\substack{z\\ \mathcal{E}_{\pi}(z)<C}}\breve{M}_{z}(\beta_{1}, \beta_{2})_{0}$$is a finite point set for any $C>0$.

The gluing formula of index tells us 
\begin{equation}
{\rm gr}_{z}(\beta_{0}, \theta_{\alpha})+1+{\rm gr}_{z'}(\theta_{\alpha}, \beta_{1})={\rm gr}_{z'\circ z}(\beta_{0},\beta_{1})\label{adding formula for grading}
\end{equation}
 since $\theta_{\alpha}$ has a stabilizer $S^{1}$. 
From this relation, we conclude that any broken trajectories in $M^{+}_{z}(\beta_{0}, \beta_{1})_{d}$ do not factor through $\theta_{\alpha}$ if the dimension of $M_{z}(\beta_{0}, \beta_{1})_{d}$ is less than $3$.

\subsection{Cobordisms}\label{cobordism}
Let $(W, S)$ be a pair of oriented 4-manifold and embedded oriented surface such that $\partial W=Y'\sqcup(-Y)$ and $\partial S=K\sqcup K'$.
We call  $(W, S)$ the cobordism of pairs and write $(W, S):(Y, K)\rightarrow (Y', K')$.
Set 
\[(W^{+}, S^{+}):=\mathbb{R}_{\leq 0}\times (Y, K)\cup (W, S)\cup\mathbb{R}_{\geq 0} \times (Y', K').\]
We fix a metric on $W^{+}\setminus S^{+}$ with a cone angle $\frac{2\pi}{\nu}$ and cylindrical forms on each ends. 
Let $\beta\in \mathcal{B}(Y, K, \alpha)$ and $\beta'\in\mathcal{B}(Y', K')$ be given connections and  we choose a singular $SU(2)$-connection $A_{0}$ on $(W^{+}, S^{+})$ which has a limiting connection $\beta, \beta'$(up to gauge transformations) on each ends of $(W^{+}, S^{+})$. $z$ denotes a homotopy class of $A$.
We define a space of connections and the group of gauge transformations as follows:
\[\mathcal{A}_{z}(W, S;\beta,\beta'):=\{A|A-A_{0}\in \check{L}^{2}_{m-1, A_{0}}(W^{+}\setminus S^{+},\mathfrak{g}_{E}\otimes \Lambda^{1}) \},\]
\[\mathcal{G}_{z}(W, S):=\{g\in {\rm Aut}(E)|\nabla_{A_{0}}^{i}g\in\check{L}^{2}(W^{+}\setminus S^{+}; {\rm End}(E)), i=1\cdots m \}.\]
We also define the quotient 
\[\mathcal{B}_{z}(W, S;\beta, \beta'):=\mathcal{A}_{z}(W, S;\beta, \beta')/\mathcal{G}_{z}(W, S).\]
$\mathcal{B}(W, S;\beta, \beta')$ denotes the union of $\mathcal{B}_{z}(W, S;\beta, \beta')$ for all paths.
The perturbed ASD equation on $(W, S)$ has the form $F_{A}^{+}+U_{\pi_{W}}=0$ where $U_{\pi_{W}}$ is a $t$-dependent perturbation. More concretely this can be described as the following. (The argument is based on \cite{KM11}.)
Let $\pi,\pi_{0}\in \mathcal{P}_{Y}$ be two holonomy perturbations on $\mathbb{R}\times (Y, K)$. The perturbed ASD equation on $\mathbb{R}_{\leq 0}\times (Y, K)$ has the following form
\[F_{A}^{+}+\psi(t)\hat{V}_{\pi}+\psi_{0}(t)\hat{V}_{\pi_{0}}=0\]
 where  $\psi(t)$ is a smooth cut-off function such that $\psi(t)=1$ if $t<-1$ and $\psi(t)=0$ at  $t=0$. $\psi_{0}$ is a smooth function supported on $(-1, 0)\times Y$.
We choose $\pi\in \mathcal{P}$ so that $\mathfrak{C}_{\pi}$ satisfies properties in Proposition \ref{non-deg} and Proposition \ref{regular pert}. The perturbation terms can be described similarly on another end.
For generic choices of $\pi_{0}$ and $\pi_{0}'\in \mathcal{P}_{Y'}$, the irreducible part of the perturbed ASD-moduli space 
\[M_{z}(W, S; \beta, \beta')\subset \mathcal{B}_{z}(W, S;\beta, \beta')\]is a smooth manifold.
Consider the ASD-operator
\begin{equation}{D}_{A}=-d^{*}_{A}\oplus d^{+}_{A}:\phi\check{L}^{2}_{m,A_{0}}(W\setminus S, \mathfrak{g}_{{E}}\otimes\Lambda^{1})\rightarrow \phi\check{L}^{2}_{m-1,A_{0}}(W\setminus S, \mathfrak{g}_{{E}}\otimes(\Lambda^{0}\oplus \Lambda^{+}))\label{ASD-ope on cob}
\end{equation}
where $\phi$ is a weight function.  
If one of limiting connection of $A_{0}$ is irreducible then we choose $\phi\equiv1$ on that end of $(W^{+}, S^{+})$.
If  $A_{0}$ has a reducible limiting connection then we choose $\phi= e^{-\delta|t|}$ on that end, where $\delta>0$ is small enough.
$M(W, S;\beta, \beta')_{d}$ denotes the union of the moduli spaces $M_{z}(W, S;\beta, \beta')$ with ${\rm ind}D_{A}=d$.

\begin{dfn}

We define the  topological energy of $A\in \mathcal{B}(W,S;\beta, \beta')$ as
$$\kappa(A):=\frac{1}{8\pi^{2}}\int_{W^{+}\setminus S^{+}}{\rm Tr}(F_{A}\wedge F_{A}),$$
and the monopole number of $A$ as
$$\nu(A):=\frac{i}{\pi}\int_{S^{+}}\Omega-2\alpha S\cdot S.$$where \[F_{A}|_{S^{+}}=\left[\begin{array}{cc}
\Omega&0\\
0&-\Omega
\end{array}\right].
\]
\end{dfn}
For the cylinder $(W, S)=[0, 1]\times (Y, K)$ and trivial perturbation $\pi=0$, the topological energy $\kappa$ is related to energy $\mathcal{E}$ of Chern-Simons functional as $2\kappa(A)=\mathcal{E}(A)$.
Consider an $SU(2)$ connection $B$ on $(Y, K)$ and a connection $A$ over the cylinder $\mathbb{R}\times(Y, K)$ which asymptotic to $B$ at  and a fixed reducible flat connection $\tilde{\theta}_{\alpha}$ such that $CS(\tilde{\theta}_{\alpha})=0$.
Then $CS(B)=\kappa(A)$ by the construction.

Similarly we define an $\mathbb{R}$-valued function ${\rm hol}_{K}:\mathcal{A}(Y, K,\alpha)\rightarrow \mathbb{R}$ as follows;
\begin{dfn}
Let $A$ be an $SU(2)$ connection over the cylinder $[0, 1]\times (Y, K)$ as above. We define
${\rm hol}_{K}(B):=-\nu(A).$
\end{dfn}

If $z$ is a path on $(W, S)$ which is represented by a connection $A$, then we write $\kappa(z)$ for $\kappa(A)$ and $\nu(z)$ for $\nu(A)$ since these numbers are independent of the choice of $A$.

Let $(X, \Sigma)$ be a pair of four-manifold and embedded surface with boundary $\partial X=Y$, $\partial \Sigma=K $ where $K$ is an oriented knot in an oriented integral homology 3-sphere $Y$.  We assume that $[\Sigma]=0$. Let $\Theta_{\alpha}$ be a singular flat reducible  connection with a holonomy parameter $\alpha=\frac{n}{m}$ and whose  lift to the $m$-fold  cyclic branched covering $\tilde{X}_{m}$ is a trivial connection.
We  write $H^{i}(X\setminus \Sigma; \Theta_{\alpha})$ for $i$-th cohomology of $X\setminus \Sigma$ with the local coefficient system twisted by $\Theta_{\alpha}$. 
\begin{lem}\label{sig formula}
We define $\chi(X\setminus \Sigma;\Theta_{\alpha})=\sum_{i}(-1)^{i}{\rm dim}H^{i}(X\setminus \Sigma; \Theta_{\alpha})$ and $\sigma(X\setminus \Sigma;\Theta_{\alpha})={\rm dim}H^{+}(X\setminus \Sigma; \Theta_{\alpha})-{\rm dim}H^{-}(X\setminus \Sigma;\Theta_{\alpha})$. 

Then
\[\chi(X\setminus \Sigma; \Theta_{\alpha})=\chi(X)-\chi(\Sigma),\]
\[\sigma(X\setminus\Sigma; \Theta_{\alpha})=\sigma(X)+\sigma_{\alpha}(Y, K).\]

\end{lem}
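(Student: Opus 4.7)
The strategy is to pass to the $m$-fold cyclic branched cover $p: \tilde{X}_{m} \to X$ along $\Sigma$, which exists as a smooth 4-manifold because $[\Sigma]=0$ in $H_{2}(X;\mathbb{Z})$. By hypothesis $p^{*}\Theta_{\alpha}$ is trivial on $\tilde{X}_{m}\setminus\tilde{\Sigma}$, so the $\mathbb{Z}/m$ deck action decomposes complex cohomology into eigenspaces, yielding an isomorphism
\begin{equation*}
H^{*}(\tilde{X}_{m}\setminus\tilde{\Sigma};\mathbb{C}) \cong \bigoplus_{k=0}^{m-1} H^{*}(X\setminus\Sigma;\Theta_{k/m}),
\end{equation*}
where $\Theta_{k/m}$ is the rank-one local system with meridional monodromy $e^{2\pi i k/m}$. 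The invariant eigenspace ($k=0$) recovers $H^{*}(X\setminus\Sigma;\mathbb{C})$, and the eigenspace labelled by our specific $\alpha = n/m$ realizes $H^{*}(X\setminus\Sigma;\Theta_{\alpha})$.

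\textbf{Euler characteristic.} Since $\Theta_{\alpha}$ is a rank-one local system, $\chi(X\setminus\Sigma;\Theta_{\alpha}) = \chi(X\setminus\Sigma)$. Decomposing $X = (X\setminus N(\Sigma)) \cup N(\Sigma)$, the inclusion-exclusion identity gives $\chi(X) = \chi(X\setminus N(\Sigma)) + \chi(N(\Sigma)) - \chi(\partial N(\Sigma))$. The disk bundle deformation retracts to $\Sigma$ so $\chi(N(\Sigma)) = \chi(\Sigma)$, while its circle boundary is a circle bundle over a surface and therefore has vanishing Euler characteristic. Because $X\setminus N(\Sigma)$ is homotopy equivalent to $X\setminus\Sigma$, this yields $\chi(X\setminus\Sigma) = \chi(X) - \chi(\Sigma)$ as required.

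\textbf{Signature.} I would apply the Atiyah--Singer G-signature theorem to the $\mathbb{Z}/m$-action on $\tilde{X}_{m}$, in the version adapted to 4-manifolds with boundary (via Atiyah--Patodi--Singer, with $\eta$-invariants correcting the right-hand side). For each character $\chi_{k}$ of $\mathbb{Z}/m$, the $\chi_{k}$-isotypic signature $\sigma_{\chi_{k}}(\tilde{X}_{m})$ equals $\sigma(X)$ plus a fixed-point contribution from $\tilde{\Sigma}$ that depends on $\chi_{k}$ and $[\Sigma]\cdot[\Sigma]$. The hypothesis $[\Sigma]=0$ forces the self-intersection term to vanish, leaving only the boundary contribution, which I identify with $\sigma_{\alpha}(Y,K)$ by the classical identification of the $\eta$-invariant of the twisted signature operator on $Y\setminus K$ with the Tristram--Levine signature at $e^{4\pi i\alpha}$ (this is essentially Viro's formula). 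Evaluating on the eigenspace corresponding to $\Theta_{\alpha}$ gives $\sigma(X\setminus\Sigma;\Theta_{\alpha}) = \sigma(X) + \sigma_{\alpha}(Y,K)$.

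\textbf{Main obstacle.} The Euler characteristic statement is purely formal. The real work is the signature formula, where the difficulty lies in handling the boundary $Y\setminus K$ of $X\setminus\Sigma$: one must justify that the eta-invariant of the signature operator twisted by the $U(1)$-flat connection of monodromy $e^{4\pi i\alpha}$ on $Y\setminus K$ equals the Tristram--Levine signature $\sigma_{\alpha}(Y,K)$, and verify that the fixed-point defect of the G-signature formula vanishes under $[\Sigma]=0$. Once these standard (but nontrivial) identifications are invoked, the rest of the argument assembles as above.
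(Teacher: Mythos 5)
Your proposal is correct and follows essentially the same route as the paper: pass to the $m$-fold cyclic branched cover, identify the $\Theta_{\alpha}$-twisted cohomology with the $e^{4\pi i n/m}$-eigenspace of the deck transformation, and appeal to Viro's formula for the (equivariant) signature of a branched cover of a $4$-manifold with boundary. The only cosmetic differences are that the paper obtains the Euler-characteristic identity from an equivariant cell decomposition of the cover rather than your equally valid rank-one-local-system plus inclusion--exclusion argument, and that you phrase the signature step through the $G$-signature/APS $\eta$-invariant formalism, which is an analytic repackaging of the same Viro-type input the paper cites directly.
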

\begin{proof}
Consider a rational holonomy parameter of the form $\alpha=\frac{n}{m}\in\mathbb{Q}$. We take a $m$-fold branched covering $\pi:\tilde{X}_{m}\rightarrow X$ whose branched locus is $\Sigma$. 
The pull-back of singular flat connection $\Theta_{\alpha}$ extends as a trivial flat connection over $\tilde{X}_{m}$. Let $\tau:\tilde{X}_{m}\rightarrow \tilde{X}_{m}$ be a covering transformation. Then its induced action $\tilde{\tau}$ on the pulled-back bundle $ \underline{\mathbb{C}}$ is the multiplication by $e^{\frac{4\pi i n}{m}}$. 
Since the index of the twisted de Rham operator \[d_{\Theta_{\alpha}}+d^{*}_{\Theta_{\alpha}}:\Omega^{\rm even}(X\setminus \Sigma;\Theta_{\alpha})\rightarrow \Omega^{\rm odd}(X\setminus \Sigma; \Theta_{\alpha})\]
coincides with the index of \begin{equation}d+d^{*}:\Omega^{\rm even}(\tilde{X}_{m};\underline{\mathbb{C}})^{\tilde{\tau}}\rightarrow \Omega^{\rm odd}(\tilde{X}_{m};\underline{\mathbb{C}})^{\tilde{\tau}}\label{equiv de rham}\end{equation}
where $\Omega^{*}(\tilde{X}_{m};\underline{\mathbb{C}})^{\tilde{\tau}}=\{\omega\in \Omega^{*}(\tilde{X}_{m};\underline{\mathbb{C}})|\omega(\tau(x))=\tilde{\tau}(\omega(x))\}$. 
The index of (\ref{equiv de rham}) is given by $\chi(X)-\chi(\Sigma)$. 
This can be seen by taking cell complex $C_{*}(\tilde{X}_{m})$ of $\tilde{X}_{m}$ in $\tau$-equivariant way. Then there are decompositions
\[C_{*}(\tilde{X}_{m})=C_{*}(\Sigma)\oplus C_{*}(\tilde{X}_{m}\setminus \Sigma),\]
\[C_{*}(\tilde{X}_{m}\setminus \Sigma)=\bigoplus_{i=1}^{n}C_{i},\]where each $C_{i}$ is isomorphic to a copy of $C_{*}(X\setminus \Sigma)$. 
Since $\tau_{*}$ acts as the identity on $C_{*}(\Sigma)$ and a cyclic way on $C_{*}(\tilde{X}_{m}\setminus \Sigma)=\bigoplus_{i=1}^{n}C_{i}$, all eigenspaces  of the action $\tau_{*}$ on $C_{*}(\tilde{X}_{m}\setminus \Sigma)$ are isomorphic. 
On the other hand, there is the identity $\chi(\tilde{X}_{m})=m\chi(X)+\chi(\Sigma)$. 
Thus the $\tau$-invariant  index of de Rham operator is given by $\chi(X)-\chi(\Sigma)$.

Similarly, the index of  signature operator twisted by local coefficient $\Theta_{\alpha}$ coincides with the index of the
signature operator over $\tilde{X}_{m}$ which is restricted to $e^{\frac{4\pi i n}{m}}$ -eigenspaces. 
Such signature is equal to $\sigma(X)+\sigma_{\frac{n}{m}}(Y, K)$ by the formula in \cite{Viro}.
\end{proof}
\begin{prp}\label{index formula}
Let $(W, S):(Y, K)\rightarrow (Y', K')$ be a cobordism of pairs  and $[A]$ be an element of $\mathcal{B}(W,S ;\theta_{\alpha}, \theta'_{\alpha})$.  
Then the index of the ASD operator ${D}_{A}$ is given by
$${\rm ind}{D}_{A}=8\kappa(A)+2(4\alpha-1)\nu(A)-\frac{3}{2}(\sigma(W)+\chi(W))+\chi(S)+8\alpha^2S\cdot S+\sigma_{\alpha}(Y, K)-\sigma_{\alpha}(Y', K')-1.$$
\end{prp}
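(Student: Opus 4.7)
The plan is to reduce to the closed-pair index formula recalled in Subsection~\ref{sp of sing conn} by capping off both ends of $(W, S)$ with null-cobordisms along which the reducible ends $\theta_\alpha$ and $\theta'_\alpha$ extend as singular flat reducible connections. Concretely, choose pairs $(X, \Sigma):(\emptyset,\emptyset)\to (Y, K)$ and $(X', \Sigma'):(Y', K')\to (\emptyset,\emptyset)$ satisfying $[\Sigma]=0$ and $[\Sigma']=0$, so that the hypotheses of Lemma~\ref{sig formula} are met and $\theta_\alpha$, $\theta'_\alpha$ can be extended as singular flat reducible connections $\Theta_\alpha$ and $\Theta'_\alpha$ on the caps. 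Gluing produces a closed pair $(\hat W, \hat S)$ together with a singular connection $\hat A$ extending $A$ that is reducible and flat on each cap.

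\textbf{Closed index formula and additivity.} First apply the closed-pair index formula:
\[
\mathrm{ind}(D_{\hat A})=8\hat k+4\hat l-3\bigl(1-b^1(\hat W)+b^+(\hat W)\bigr)-2\bigl(g(\hat S)-1\bigr).
\]
Using the identities $-3(1-b^1+b^+)=-\tfrac{3}{2}(\chi(\hat W)+\sigma(\hat W))$ and $-2(g(\hat S)-1)=\chi(\hat S)$, and then replacing $\hat k$ and $\hat l$ by the energy $\kappa(\hat A)=\hat k+2\alpha\hat l-\alpha^2\hat S\cdot \hat S$ and the monopole number $\nu(\hat A)$, the right-hand side becomes an expression in $\kappa(\hat A)$, $\nu(\hat A)$, and topological invariants of $(\hat W, \hat S)$. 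Next, invoke additivity of the Fredholm index: with the weighted Sobolev setup so that the weight $\phi=e^{-\delta|t|}$ is concentrated near each reducible end (as in the definition of $D_A$),
\[
\mathrm{ind}(D_{\hat A})=\mathrm{ind}(D_{\hat A}|_X)+\mathrm{ind}(D_A)+\mathrm{ind}(D_{\hat A}|_{X'}).
\]
Additivity of $\kappa$ and $\nu$, Novikov additivity of signature, and the inclusion--exclusion formula for Euler characteristics then distribute the closed-pair expression consistently over the three pieces.

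\textbf{Computing the cap contributions.} On a cap $(X, \Sigma)$ carrying the reducible flat $\Theta_\alpha$, the adjoint bundle splits as $\mathfrak g_E\cong \underline{\mathbb R}\oplus \mathcal L_\alpha$, where $\mathcal L_\alpha$ is the real rank-$2$ flat local system associated to the $U(1)$-part of $\Theta_\alpha$. The weighted ASD operator respects this decomposition: the trivial summand contributes the usual $-\tfrac{1}{2}(\chi(X)+\sigma(X))$ (adjusted by the weight correction at the reducible boundary), while the twisted summand contributes a term governed by the twisted Euler characteristic and signature of $X\setminus \Sigma$. Applying Lemma~\ref{sig formula} rewrites those as $\chi(X)-\chi(\Sigma)$ and $\sigma(X)+\sigma_\alpha(Y, K)$, so the cap contribution is a sum of intrinsic invariants of $X$ and $\Sigma$ together with $\sigma_\alpha(Y, K)$. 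The analogous computation on $(X', \Sigma')$ produces $-\sigma_\alpha(Y', K')$, the sign coming from the reversed orientation of the boundary. Subtracting the two cap contributions from the closed expression cancels every reference to the auxiliary $(X, \Sigma)$ and $(X', \Sigma')$, leaving the stated formula for $\mathrm{ind}(D_A)$.

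\textbf{Principal obstacle.} The main subtlety is book-keeping at the reducible ends: the operator $D_A$ uses the weight $\phi=e^{-\delta|t|}$ precisely because of the $U(1)$-stabilizer at each $\theta_\alpha$, and switching between weighted and unweighted Sobolev spaces shifts the index by the dimension of the relevant kernel and cokernel of the asymptotic Hessian. Tracking these $\dim H^0=1$ corrections at each reducible end is what produces the final constant $-1$ in the formula, and is the only part of the derivation that is not a direct combination of the closed-pair formula with Lemma~\ref{sig formula}.
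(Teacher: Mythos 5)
Your proposal is correct and follows essentially the same route as the paper: cap off $(W,S)$ with pairs $(X,\Sigma)$, $(X',\Sigma')$ built from Seifert surfaces (so $[\Sigma]=0$ and Lemma~\ref{sig formula} applies), apply the closed-pair index formula, split the cap operator at the reducible into its trivial and twisted summands, and recover the boundary terms $\sigma_{\alpha}(Y,K)$, $-\sigma_{\alpha}(Y',K')$ from Lemma~\ref{sig formula}. The reducible-end bookkeeping you flag as the principal obstacle is exactly what the paper makes explicit, via the gluing relation ${\rm ind}\,D_{A'}={\rm ind}\,D_{A_{1}}+{\rm ind}\,D_{A}+{\rm ind}\,D_{A_{2}}+2$ together with the $-\tfrac{1}{2}$ contribution of the trivial summand on each cap, which is where the constant in the statement comes from.
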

\begin{proof}
Let $X$ be a compact four-manifold with $\partial X=Y$ and  $\Sigma\subset Y$ be a Seifert surface of a knot $K$. 
Pushing $\Sigma$ into the interior of $X$, we obtain a pair $(X,\Sigma )$ whose boundary is $(Y, K)$. Moreover $[\Sigma]=0$ in $H_{2}(X;\mathbb{Z})$. Similarly we can construct another pair $(X', \Sigma')$ such that $(\partial X', \partial\Sigma')=(Y', K')$. 

Set
\[(\bar{W}, \bar{S}):=(X, \Sigma)\cup_{(Y, K)}(W, S)\cup_{(Y', K')}(X', \Sigma').\]$(\bar{W}, \bar{S})$ is a closed pair of four-manifold and embedded surface. Let $A_{1}$ and $A_{2}$ be singular  flat reducible connections over $(X, \Sigma)$ and $(X, \Sigma')$ which are extensions of $\theta_{\alpha}$ and $\theta'_{\alpha}$ respectively. 
Let $A$ be a connection which represents an element of $\mathcal{B}(W, S; \theta_{\alpha}, \theta'_{\alpha})$. We consider a  connection $A'=A_{1}\#_{\theta_{\alpha}}A\#_{\theta'_{\alpha}}A_{2}$ over $(\bar{W}, \bar{S})$ obtained by the gluing.

Using the gluing formula for index, we have 
$${\rm ind}{D}_{A'}={\rm ind}{D}_{A_{1}}+{\rm ind}{D}_{A}+{\rm ind}{D}_{{A}_{2}}+2$$
where $A'$ is a singular connection on the closed pair $(\bar{W}, \bar{S})$ obtained by gluing $A_{1}$, $A_{2}$, and $A$.
Since $A_{1}$ is reducible, there is a decomposition  $A_{1}=\mathbf{1}\oplus B_{\alpha}$ with respect to a decomposition of adjoint bundle $\underline{\mathbb{R}}\oplus L^{\otimes 2}$, where $\mathbf{1}$ denotes the trivial connection. The deformation complex for ${D}_{A_{1}}$ decompose into the following two parts
\begin{equation}
\Omega^{0}(X)\xrightarrow{d}\Omega^{1}(X)\xrightarrow{d^{+}}\Omega^{+}(X) \label{trivial asd cpx}
\end{equation}
and
\begin{equation}
\Omega^{0}(X\setminus \Sigma; {\rm ad}{B_{\alpha}})\xrightarrow{d_{B_{\alpha}}}\Omega^{1}(X\setminus \Sigma;{\rm ad}{{B}_{\alpha}})\xrightarrow{d^{+}_{B_{\alpha}}}\Omega^{+}(X\setminus \Sigma; {\rm ad}{B_{\alpha}}).\label{asd w loc coeff}
\end{equation}
The index of (\ref{trivial asd cpx}) is given by $-\frac{1}{2}(\sigma(X)+\chi(X))-\frac{1}{2}$. On the other hand, the index of (\ref{asd w loc coeff}) is given by $-\sigma(X\setminus \Sigma; B_{\alpha})-\chi(X\setminus \Sigma; B_{\alpha})$. Using two formulae $\sigma(X\setminus \Sigma; B_{\alpha})=\sigma(X)+\sigma_{\alpha}(Y, K)$ and $\chi(X\setminus \Sigma,B_{\alpha})=\chi(X)-\chi(\Sigma)$ in Lemma \ref{sig formula}, we obtain
\[
{\rm ind}{D}_{A_{1}}=-\frac{3}{2}(\sigma(X)+\chi(X))-\sigma_{\alpha}(Y, K)+\chi(\Sigma)-\frac{1}{2}.
\]
Similarly we have 
\[{\rm ind}{D}_{A_{2}}=-\frac{3}{2}(\sigma(X')+\chi(X'))+\sigma_{\alpha}(Y',K')+\chi(\Sigma')-\frac{1}{2}
\]since $\sigma_{\alpha}(-Y', K')=-\sigma_{\alpha}(Y', K')$.
The index formula for a closed pair in \cite{KM93} gives
\[
{\rm ind}{D}_{\bar{A}}=8\kappa(\bar{A})+2(4\alpha-1)\nu(\bar{A})-\frac{3}{2}(\sigma(\bar{W})+\chi(\bar{W}))+\chi(\bar{S})+8\alpha^{2}S\cdot S+2.
\]Hence we have a desired formula
\[{\rm ind}{D}_{A}=8\kappa(A)+2(4\alpha-1)\nu(A)-\frac{3}{2}(\sigma(W)+\chi(W))+\chi(S)+8\alpha^2S\cdot S+\sigma_{\alpha}(Y, K)-\sigma_{\alpha}(Y', K')-1.\]

\end{proof}
\begin{rmk}
{\rm The index formula in Proposition \ref{index formula} recovers \cite[Lemma 2.26]{DS1} when $\alpha=
\frac{1}{4}$.}
\end{rmk}
Let $A_{L}$ be a reducible connection corresponding to a decomposition $E=L\oplus L^{*}$.
\begin{dfn}
We call $A_{L}$ minimal if it minimizes ${\rm ind}{D}_{A_{L}}$. 
\end{dfn}
Our definition of minimal reducible coincides with \cite[Subsection 2.3]{DS2} if $\alpha=\frac{1}{4}$.

Let us describe relations between $CS$ and $\kappa$, and $\nu$ and ${\rm hol}_{K}$ over cobordisms.
Consider a connection $A$ over a cobordism $(W, S):(Y,K)\rightarrow (Y', K')$ whose limiting  connections are $B$ on $(Y, K)$ and $B'$ on $(Y', K')$.
Then the following statement holds.
\begin{lem}\label{kappa and nu}
Fix a reducible connection $A_{L}$ over $(W, S)$. Then there exist $k, l\in \mathbb{Z}$ such that
\begin{eqnarray*}\kappa(A)-\kappa(A_{L})&=&CS(B)-CS(B')+k+2\alpha l,\\
\nu(A)-\nu(A_{L})&=& {\rm hol}_{K'}(B')-{\rm hol}_{K}(B)-2l.
\end{eqnarray*}
\end{lem}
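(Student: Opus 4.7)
The plan is to compare $A$ and $A_L$ by first attaching interpolating cylinders on the ends of $(W^+,S^+)$ so as to match the asymptotic limits of the two connections, and then to quantize the remaining difference using the topological classification of connections with fixed boundary. Let $\theta$ and $\theta'$ denote the asymptotic limits of $A_L$ on $(Y,K)$ and $(Y',K')$ respectively. I would choose cylinder connections $A_-$ on $[0,1]\times(Y,K)$ interpolating between $\theta$ and $B$, and $A_+$ on $[0,1]\times(Y',K')$ interpolating between $B'$ and $\theta'$, and form the glued connection $\tilde A:=A_-\#_B A\#_{B'}A_+$, which has the same asymptotic data as $A_L$.

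By additivity of the Chern-Weil integrands defining $\kappa$ and $\nu$, together with the relations between cylinder $\kappa,\nu$ and the values of $CS,{\rm hol}_K$ at the endpoints (which follow directly from the definitions recalled just before the lemma, in particular $CS(B)=\kappa(A_{\mathrm{cyl}})$ and ${\rm hol}_K(B)=-\nu(A_{\mathrm{cyl}})$ for a cylinder asymptotic to $\tilde\theta_\alpha$ and $B$), I can express $\kappa(\tilde A)-\kappa(A)$ and $\nu(\tilde A)-\nu(A)$ as explicit differences of $CS$ and ${\rm hol}_K$ values evaluated at $B,\theta,B',\theta'$. The comparison then reduces to that of $\tilde A$ versus $A_L$, two connections on $(W^+,S^+)$ sharing the same asymptotic data.

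For this latter comparison, the group $\pi_0$ of the gauge group preserving the asymptotic limits on both ends is generated by the instanton and monopole gauge transformations from Subsection~\ref{chern-simons}, giving a $\mathbb{Z}\oplus\mathbb{Z}$ worth of homotopy classes. A Chern-Weil computation of the kind underlying Proposition~\ref{index formula} shows that a transformation of bidegree $(k_0,l_0)$ shifts $(\kappa,\nu)$ by $(k_0+2\alpha l_0,\,-2l_0)$. Hence $\kappa(\tilde A)-\kappa(A_L)=k+2\alpha l$ and $\nu(\tilde A)-\nu(A_L)=-2l$ for some integers $k,l$. Combining this with the cylinder contributions and absorbing the residual constants $CS(\theta)-CS(\tilde\theta_\alpha)\in\mathbb{Z}+2\alpha\mathbb{Z}$ and ${\rm hol}_K(\theta)-{\rm hol}_K(\tilde\theta_\alpha)\in 2\mathbb{Z}$ (together with their primed analogues) into the pair $(k,l)$ would yield the two displayed formulas.

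The main obstacle will be verifying this quantization statement: that two connections on $(W^+,S^+)$ with identical asymptotic data have $(\kappa,\nu)$ differing by an element of the lattice $\{(k+2\alpha l,-2l):k,l\in\mathbb{Z}\}$, with exactly these coefficients. The coefficient $2\alpha$ on $l$ in $\kappa$ should come from the $2(4\alpha-1)\nu$ contribution in the index formula (Proposition~\ref{index formula}) and reflects the singular holonomy; the coefficient $-2$ on $l$ in $\nu$ should come from the observation that a monopole gauge transformation of degree $l_0$ twists the reduction $L\to S$ by $l_0$, changing $-c_1(L)[S]$ by $-l_0$ and hence $\frac{i}{\pi}\int_S\Omega$ by $-2l_0$. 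Tracking the orientation of each cylinder relative to the cobordism convention $(W,S):(Y,K)\to(Y',K')$ and the sign conventions for $CS,{\rm hol}_K$ carefully is essential for reproducing the precise signs in the two asserted identities.
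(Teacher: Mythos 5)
Your proposal is correct and follows essentially the same route as the paper: the paper also deduces the additivity relations directly from the construction of $CS$ and ${\rm hol}_{K}$ via a reference reducible $A_{L_{0}}$ asymptotic to the fixed lifts $\tilde{\theta}_{\alpha},\tilde{\theta}'_{\alpha}$, and then obtains the residual terms $k+2\alpha l$ and $-2l$ from the change of homotopy class of the reducible under gauge transformations with $d(g)=(k,l)$. The only small correction is that the $2\alpha l$ shift in $\kappa$ comes from the Chern--Weil energy formula $\kappa=k+2\alpha l-\alpha^{2}S\cdot S$ of Subsection~\ref{sp of sing conn} rather than from the index formula, while your account of the $-2l$ shift in $\nu$ via the monopole number is exactly the intended one.
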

\begin{proof}
Recall that $\mathbb{R}$-valued functions $CS$ and ${\rm hol}$ are fixed by choosing reducible flat connections $\tilde{\theta}_{\alpha}$ and $\tilde{\theta'}_{\alpha}$ over each pairs $(Y, K)$ and $(Y',K')$.
If we choose reducible connection $A_{L_{0}}$ so that it has two reducible limits  $\tilde{\theta}_{\alpha}$ and $\tilde{\theta'}_{\alpha}$, then we have
\begin{eqnarray*}
\kappa(A)+CS(B')&=&CS(B)+\kappa(A_{L_{0}})\\
\nu(A)-{\rm hol}_{K'}(B')&=&-{\rm hol}_{K}(B)+\nu(A_{L_{0}})
\end{eqnarray*}
by the construction.
If we change $A_{L}$ to other homotopy classes of reducible connections, terms $k+2\alpha l$ and $-2l$ appear by the gauge transformation.
\end{proof}
For a cobordism of pairs $(W, S)$ and fixed holonomy parameter $\alpha$,  we introduce real values $\kappa_{0}(W, S,\alpha)$ and $\nu_{0}(W, S,\alpha)$ as follows.
\begin{dfn}
\begin{eqnarray*}
\kappa_{0}(W, S,\alpha)&:=&\min\{\kappa(A_{L})\vert A_{L}\   \text{{\rm minimal reducible}}\}\\
\nu_{0}(W, S,\alpha)&:=&\begin{cases}
\nu(A_{L})\  \text{\rm  where }A_{L} \text{\rm  is a minimal reducible with } \kappa_{0}=\kappa(A_{L})& (\alpha\neq \frac{1}{4}),\\
\min\{\nu(A_{L})|A_{L} \text{{\rm minimal reducible}}\}&(\alpha=\frac{1}{4}).
\end{cases}\end{eqnarray*}
\end{dfn}
Note that the homotopy class of  path $z:\theta_{\alpha}\rightarrow \theta'_{\alpha}$ represented by a minimal reducible $A_{L}$ with $\kappa_{0}=\kappa(A_{L}) $ is uniquely determined when $\alpha\neq\frac{1}{4}$. 
If $\alpha=\frac{1}{4}$ then homotopy classes of paths represented by minimal reducibles are not unique but finitely exists.
In particular, $\nu_{0}(W, S,\alpha)$ is well-defined.
\begin{rmk}{\rm
If the cobordism of pairs $(W, S)$ has a flat minimal reducible with a holonomy parameter $\alpha$ then $\kappa_{0}(W, S, \alpha)=\nu_{0}(W, S,\alpha)=0$.}
\end{rmk}
We write $\kappa_{0}=\kappa_{0}(W, S,\alpha)$ and $\nu_{0}=\nu_{0}(W, S,\alpha)$ for short.

\begin{dfn}\label{negativedef}
Let $(W, S):(Y, K)\rightarrow (Y', K')$ be a cobordism of pairs where $ K$ and $K'$ are oriented knots in integral homology 3-spheres $Y$, $Y'$. Let $\mathscr{S}$ be an integral domain over $\mathscr{R}_{\alpha}$.
A cobordism of pairs  $(W, S)$ is called negative definite  over $\mathscr{S}$ if it satisfies the followings
\begin{enumerate}
\item $b^{1}(W)=b^{+}(W)=0$,
\item The index of the minimal reducibles are $-1$,
\item \[\eta^{\alpha}(W, S):=\sum_{A_{L}: {\rm minimal}}(-1)^{c_{1}(L)^{2}}\lambda^{\kappa_{0}-\kappa (A_{L})}T^{\nu(A_{L})-\nu_{0}}\]  is a non-zero element in $\mathscr{S}$.
\end{enumerate}
\end{dfn}
\begin{rmk}
{\rm Our definition of negative definite cobordism coincides with that of \cite{DS2} when $\alpha=\frac{1}{4}$ since instantons have minimal energy if only if they have minimal index.}
\end{rmk}
Let $(W_{1}, S_{1}):(Y_{1}, K_{1})\rightarrow (Y', K')$ and $(W_{2}, S_{2}):(Y', K')\rightarrow (Y_{2}, K_{2})$ be negative definite cobordisms. 
Note that their composition $(W_{2}\circ W_{1}, S_{2}\circ S_{1}):(Y_{1}, K_{1})\rightarrow (Y_{2}, K_{2})$  is also a negative definite cobordism.

A cylinder $[0, 1]\times (Y, K)$ and a homology concordance $(Y, K)\rightarrow (Y', K')$ are examples of negative definite cobordisms.
The following is also a basic example of negative definite cobordism.
Let $K_{+}$ be a knot in $S^3$ which has at least one positive crossing.
Let  $K_{-}$ be a knot which is obtained by replacing one positive crossing in the knot $K_{+}$ by one negative crossing. See Figure \ref{fig:Kpm}. 

\begin{figure}
    \centering
    \includegraphics[scale=0.55]{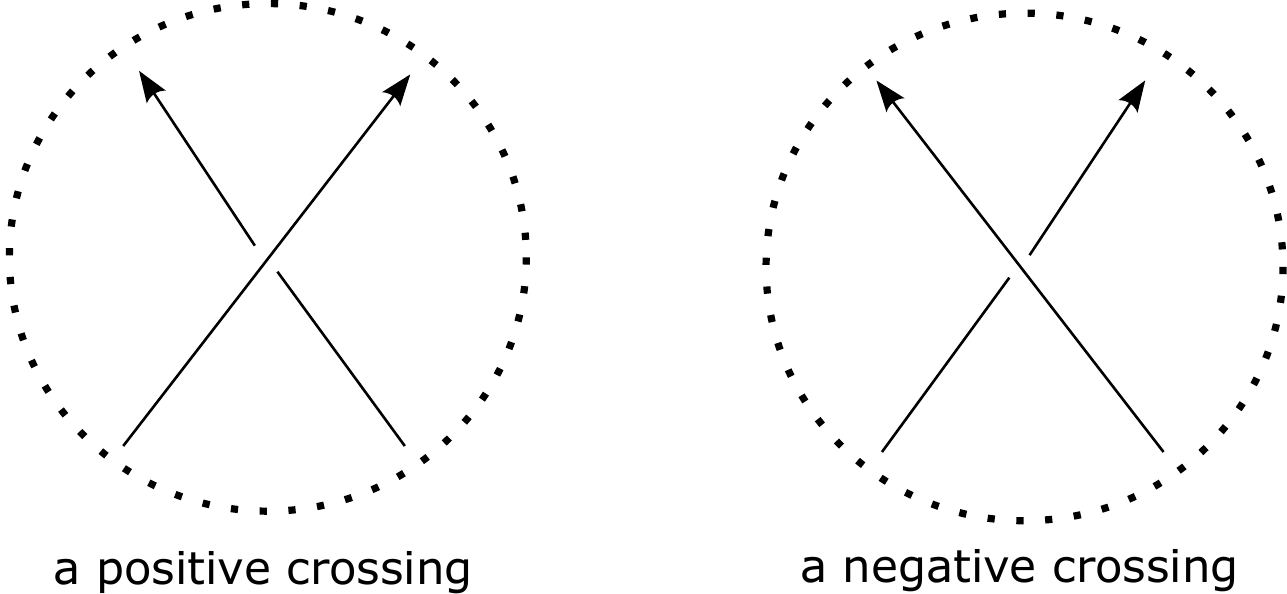}
    \caption{Crossings of knot}
    \label{fig:Kpm}
\end{figure}
 Since $S^{3}$ is simply connected, $K_{+}$ and $K_{-}$ are homotopic. Approximating homotopy from $K_{-}$ to $K_{+}$ by a smooth map, we get a smoothly  immersed surface $S\subset [0, 1]\times S^{3}$ such that $S\cap \{0\}\times S^{3}=K_{+}$ and $S\cap \{1\}\times S^{3}=K_{+}$. 
 Furthermore, we assume that $S$ has a transversal self-intersection point.
 Let ${S'}:K_{+}\rightarrow K_{-}$ be an inverse cobordism of $S$. 
 $S$ has a positive self-intersection point in $[0,1]\times S^{3}$. 
 Blowing up this self-intersection point, we obtain a new cobordism of pairs
 \begin{eqnarray}(\overline{\mathbb{CP}^{2}}\#([0,1]\times S^{3}), \overline{S}): (S^{3}, K_{-})\rightarrow (S^{3}, K_{+}).\label{cob1}\end{eqnarray}
 $\overline{S}$ is an embedded surface in $\overline{\mathbb{CP}^{2}}\#([0,1]\times S^{3})$ obtained by resolving a self-intersection of $S$ and it represents a homology class $$2e\in H^{2}(\overline{\mathbb{CP}^{2}};\mathbb{Z})\cong H^{2}(\overline{\mathbb{CP}^{2}}\#([0,1]\times S^{3});\mathbb{Z}),$$ where $e$ is an element represented by an exceptional curve.
 Similarly, we obtain a  cobordism of pairs
 \begin{equation}(\overline{\mathbb{CP}^{2}}\#([0, 1]\times S^{3}),\overline{S}'):(S^{3},K_{+})\rightarrow (S^{3}, K_{-}).\label{Cob2}
 \end{equation}
 Cobordisms of pairs $(W, \overline{S}):(S^{3}, K_{-})\rightarrow (S^{3}, K_{+})$ and $(W', \overline{S}'):(S^{3}, K_{+})\rightarrow (S^{3},K_{-})$  constructed as above are called {\it cobordism of positive /negative crossing change} respectively.
 \begin{prp}\label{neg def pair}
 Fix a holonomy parameter $\alpha\in (0, \frac{1}{2})\cap\mathbb{Q}$ with $\Delta_{K_{+}}(e^{4\pi i \alpha})\neq0$ and $\Delta_{K_{-}}(e^{4\pi i \alpha})\neq 0$.
 Let $\mathscr{S}$ be an integral domain over $\mathscr{R}_{\alpha}$.
 We assume that $\sigma_{\alpha}(K_{+})=\sigma_{\alpha}(K_{-})$.
 Then cobordisms of positive and negative crossing change are negative definite over $\mathscr{S}$.
 \end{prp}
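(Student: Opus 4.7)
The plan is to verify the three conditions of Definition~\ref{negativedef} for the crossing-change cobordism $(W,\overline{S})$ with $W=\overline{\mathbb{CP}^2}\#([0,1]\times S^3)$; the argument for the negative-crossing-change cobordism is identical by orientation symmetry. The relevant topological data are $\sigma(W)=-1$, $\chi(W)=1$, $\chi(\overline{S})=0$ (the proper transform of the immersed annulus is again an annulus), and $\overline{S}\cdot \overline{S}=-4$ (since $[\overline{S}]=2e$ and $e\cdot e=-1$).

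Condition (1) is immediate from $H^\ast(W;\mathbb{Z})\cong H^\ast(\overline{\mathbb{CP}^2};\mathbb{Z})$, and the same computation shows that reducible $SU(2)$-connections on $(W,\overline{S})$ with the prescribed flat limits $\theta_\alpha$ on both ends are parametrized, modulo the integer gauge ambiguities of Lemma~\ref{kappa and nu}, by a single integer $n$ via $c_1(L)=ne$. For condition~(2), I would fix a reference reducible $A_{L_0}$ with $L_0$ trivial and use Lemma~\ref{kappa and nu} together with the standard Chern--Weil expressions for $\kappa$ and $\nu$ of a singular reducible to write $\kappa(A_L)$ and $\nu(A_L)$ for the class $n$ as explicit polynomials in $n$ and $\alpha$. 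Substituting into the index formula of Proposition~\ref{index formula} and invoking the hypothesis $\sigma_\alpha(K_+)=\sigma_\alpha(K_-)$ so that the boundary signature defect vanishes, the expression ${\rm ind}\,D_{A_L}$ becomes a quadratic in $n$ whose integer minimum equals $-1$, attained at a unique $n$ for $\alpha\ne 1/4$ and at two consecutive integers for $\alpha=1/4$.

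For condition~(3), once the minimizers are identified the sum $\eta^\alpha(W,\overline{S})$ reduces to a single monomial $\pm 1$ (unique minimal reducible) or a binomial $\pm 1 \pm T^{c}$ with $c\ne 0$ (two minimal reducibles with distinct monopole numbers, which occurs when $\alpha=1/4$). In either case this is a non-zero element of $\mathscr{R}_\alpha$, and hence of any integral domain $\mathscr{S}$ over $\mathscr{R}_\alpha$.

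The main obstacle is the careful bookkeeping of $\kappa(A_L)$ and $\nu(A_L)$ as functions of $n$ modulo the gauge shifts, since a loose choice of reference reducible would shift the minimizing integer and the constant term of the quadratic. The role of the hypothesis $\sigma_\alpha(K_+)=\sigma_\alpha(K_-)$ is precisely to ensure that after these reductions the constant term in the index quadratic is $-1$, reflecting that for a crossing change with no signature jump there is no parity obstruction to the existence of a minimal reducible of the desired index.
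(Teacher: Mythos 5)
Your overall strategy is the paper's own (enumerate the reducibles $A_m$ with $c_1(L)=me$, compute $\kappa,\nu$, plug into Proposition \ref{index formula}, and check the three conditions of Definition \ref{negativedef}), but the key step of your condition (2)/(3) analysis is wrong. If you actually carry out the substitution you propose, the $\alpha$-dependence cancels: with $\kappa(A_m)=(m+2\alpha)^2$, $\nu(A_m)=-4m$, $\overline{S}\cdot\overline{S}=-4$, one gets
\[
{\rm ind}\,D_{A_m}=8m(m+1)+\sigma_\alpha(K_-)-\sigma_\alpha(K_+)-1,
\]
so under the hypothesis $\sigma_\alpha(K_+)=\sigma_\alpha(K_-)$ the index equals $8m(m+1)-1$ for \emph{every} $\alpha$, and the minimum $-1$ is attained at the two classes $m=0$ and $m=-1$ for all $\alpha$, not "at a unique $n$ for $\alpha\neq 1/4$." You have conflated index-minimality (which is what "minimal reducible" means in Definition \ref{negativedef} and is $\alpha$-independent here) with energy-minimality: it is $\kappa(A_m)=(m+2\alpha)^2$ whose minimizer is unique for $\alpha\neq\frac14$ and tied at $\alpha=\frac14$, and that dichotomy only enters through the normalization $\kappa_0,\nu_0$. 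Consequently your description of condition (3) fails: $\eta^{\alpha}(W,\overline{S})$ is never a single monomial $\pm1$ for the positive crossing change; it is the two-term element $1-\lambda^{4\alpha-1}T^{4}$ (for $\alpha\le\frac14$; the reciprocal version for $\alpha>\frac14$), and its non-vanishing in an arbitrary integral domain $\mathscr{S}$ over $\mathscr{R}_\alpha$ is not automatic but follows from the ring structure: writing $\lambda^{4\alpha-1}T^4=\xi_\alpha^{2}\lambda^{-1}$ one sees this element is a unit of $\mathscr{R}_\alpha$ when $\alpha\neq\frac14$, and nonzero when $\alpha=\frac14$. This is exactly the point the paper has to argue, and your proposal skips it on the strength of a false uniqueness claim.

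A secondary inaccuracy: the negative crossing change is not handled "identically by orientation symmetry." For that cobordism the resolved surface $\overline{S}'$ is null-homologous in $W$, so there is a single (trivial) minimal reducible, its index is $-1$, and $\eta^{\alpha}(W,\overline{S}')=1$; the two cases genuinely differ (class $2e$ with two minimal reducibles versus class $0$ with one), even though both end up negative definite.
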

 \begin{proof}
Firstly, we show that (\ref{cob1}) is a negative definite pair. Put $W=\overline{\mathbb{CP}^{2}}\#([0, 1]\times S^{3})$. 
Then it is clear that $W$ satisfies condition (1) in Definition \ref{negativedef} since $H^{1}(W;\mathbb{Z})=0$ and $H^{2}(W;\mathbb{Z})=\mathbb{Z}$.
Let $A_{m}$ be a $U(1)$-reducible instanton corresponding to an element $m\in \mathbb{Z}=H^{2}(W;\mathbb{Z})$. 
Then 
\begin{eqnarray*}
\bar{\kappa}(A_{m})&=&-(c_{1}(L_{m})+\alpha\overline{S})^{2}\\
&=&(m+2\alpha)^{2}
\end{eqnarray*}
where $L_{m}$ be a line bundle such that $c_{1}(L_{m})[e]=-m$. We also have 
\begin{equation*}
\nu(A_{m})=2c_{1}(L_{m})[\overline{S}]=-4m.
\end{equation*}
The index computation yields that
\begin{eqnarray*}
{\rm ind}({D}_{A_{m}})&=&8(m+2\alpha)^{2}+2(4\alpha-1)\cdot (-4m)-32\alpha^{2}+\sigma_{\alpha}(K_{+})-\sigma_{\alpha}(K_{-})-1\\
&=&8m(m+1)+\sigma_{\alpha}(K_{-})-\sigma_{\alpha}(K_{+})-1.
\end{eqnarray*}Thus $m=0, -1$ minimizes ${\rm ind}{D}_{A_{m}}$ , and this means that $A_{0}$ and $ A_{-1}$ are minimal reducibles.
Since $\sigma_{\alpha}(K_{+})=\sigma_{\alpha}(K_{-})$ by our assumption, the index for minimal instantons are $-1$ for the first case. Thus $(W, \overline{S})$ satisfies condition (2) in Definition \ref{negativedef}. Since $\bar{\kappa}(A_{m})=m^{2}+4\alpha m+4\alpha^{2}$,  we have 
\[
\eta^{\alpha}(W, \overline{S})=\begin{cases}
1-\lambda^{4\alpha-1}T^{4}&(\alpha\leq \frac{1}{4})\\
\lambda^{1-4\alpha}T^{-4}-1&(\alpha>\frac{1}{4}).
\end{cases}
\]
Since $1-\lambda^{4\alpha-1}T^{4} $ is invertible when $\alpha\neq \frac{1}{4}$ and non-zero when $\alpha=\frac{1}{4}$ by the assumption, $\eta^{\alpha}(W, \overline{S})$ is non-zero in $\mathscr{S}$. Hence $(W, \overline{S})$ satisfies condition (3) in Definition \ref{negativedef} and $(W, \overline{S})$ is a negative definite pair. 

It is also obvious that $(W, \overline{S}')$ satisfies condition (1) in Definition \ref{negativedef}. Since $\overline{S}'$ has a trivial homology class in $H_{2}(W;\mathbb{Z})$, minimal reducibles are only trivial one with index $-1$. Hence $\eta^{\alpha}(W, \overline{S}')=1\neq 0\in \mathscr{S}$. 
\end{proof}
Next, we discuss transversality of moduli spaces at reducibles.
Following \cite{Kr05, CDX}, we introduce the perturbation supported on the interior of cobordism.
Let $\mathcal{I}$ be an infinite countable set of indexes and consider the following data.
\begin{itemize}
\item{} A collection of  embedded 4-balls $\{B_{i}\}_{i\in \mathcal{I}}$ in $W^{+}\setminus S^{+}$.
\item{} A  collection of submersions $q_{i}:S^{1}\times B_{i}\rightarrow W^{+}\setminus S^{+}$ such that $q_{i}(1, \cdot)$ is the identity.
\item  For any $x\in W\setminus S$, the set $\{q_{i, x}\vert i\in \mathcal{I}, x\in B_{i}\} $ is a $C^{1}$-dense subset in the space of loops based at $x\in W\setminus S$.
\end{itemize}

For each $i\in \mathcal{I}$, consider a  self-dual 2-form $\omega_{i}$ on $B_{i}$ with ${\rm supp}(\omega_{i})\subset B_{i}$.
These self-dual 2-forms $\omega_{i}$ can be regarded as a self-dual 2-forms on $W^{+}\setminus S^{+}$.
We define $V_{\omega_{i}}:\mathcal{A}_{z}(W, S; \beta, \beta')\rightarrow \Omega^{+}(W^{+}\setminus S^{+}; \mathfrak{su}(2))$ as the following way,
\[V_{\omega_{i}}(A):=\pi(\omega_{i}\otimes {\rm Hol}_{q_{i}}(A)),\]
where $\pi: SU(2)\rightarrow \mathfrak{su}(2)$ is a map given by $g\mapsto g-\frac{1}{2}{\rm tr}(g)1$.
The argument similar to \cite{Kr05} shows that there are constants $K_{n, i}$ and differentials of $V_{\omega_{i}}$ are satisfies the following inequality:
\[\|D^{n}V_{\omega_{i}}|(a_{1},\cdots, a_{n})\|_{\check{L}^{2}_{m, A_{0}}}\leq K_{n, i}\|\omega\|_{C^{l}}\prod_{i=1}^{n} \|a_{i}\|_{\check{L}^{2}_{m, A_{0}}},\]
where $A_{0}$ is a singular connection which represents the homotopy class $z$ and $l\geq3$.
We choose a family of positive constants $\{C_{i}\}$ so that 
\[C_{i}\geq {\sup}\{K_{n, i}|0\leq n\leq i\}.\]
Consider a family of self-dual 2-forms $\{\omega_{i}\}$ such that $\sum_{i\in \mathcal{I}}C_{i}\|\omega_{i}\|_{C^{l}}$ converges.
For such a choice of $\{\omega_{i}\}$, $V_{\mathbf{\omega}}:=\sum_{i\in \mathcal{I}}V_{\omega_{i}}\omega_{i}$ defines a smooth map \[\mathcal{A}_{z}(W,S;\beta, \beta')\rightarrow \phi\check{L}^{2}_{m}(W^{+}\setminus S^{+}, \Lambda^{+}\otimes \mathfrak{su}(2))\]
between Banach manifolds.

We define $\mathcal{J}:=\{(i, j)\in \mathcal{I}\times \mathcal{I}|i\neq j, B_{i,j}:=B_{i}\cap B_{j}\neq \emptyset\}$ and $q_{i, j}:B_{i, j}\rightarrow W^{+}\setminus S^{+}$ by 
\[q_{i, j}|_{\{x\}\times S^{1}}:=q_{i, x}*q_{j, x}*q_{i, x}^{-1}*q_{j, x}^{-1}\]
for each $(i, j)\in\mathcal{J}$. 
We choose a family of constants $\{C_{i, j}\}_{(i, j)\in \mathcal{J}}$ as before.
Let $\omega_{i, j}$ be a self-dual2-form on $B_{i, j}$.
We introduce a Banach space $\mathcal{W}$ which consists of sequences of self-dual 2-forms $\omega=\{\omega_{i}\}_{i\in \mathcal{I}}\cup\{\omega_{i, j}\}_{(i, j)\in \mathcal{J}}$ with the following weighted $l^{1}$-norm:
\[\|\omega\|_{\mathcal{W}}:=\sum_{i\in \mathcal{I}}C_{i}\|\omega_{i}\|_{C^{l}}+\sum_{(i, j)\in \mathcal{J}}C_{i, j}\|\omega_{i, j}\|_{C^{l}}\]
For each $\omega\in \mathcal{W}$, we define a perturbation term 
\[V_{\omega}(A):=\sum_{i\in \mathcal{I}}V_{\omega_{i}}(A)\otimes\omega_{i}+\sum_{(i, j)\in\mathcal{J}}V_{\omega_{i, j}}(A)\otimes\omega_{i, j}\]
which defines a smooth map $V_{\omega}:\mathcal{A}_{z}(W,S;\beta, \beta')\rightarrow \check{L}^{2}_{m, \epsilon}(W^{+}\setminus S^{+}, \Lambda^{+}\otimes \mathfrak{su}(2)).$
We call 
\[F^{+}_{A}+U_{\pi_{W}}(A)+V_{\omega}(A)=0\]
the secondary perturbed ASD-equation over a cobordism of pairs $(W, S):(Y, K)\rightarrow (Y', K')$. 
$M^{\pi_{W}, \omega}(W, S;\beta, \beta')$ denotes the moduli space of solutions for the secondary perturbed ASD-equation.
\begin{prp}\label{perturbation red}
Let $(W, S)$ be a cobordism of pairs such that $b^{1}(W)=b^{+}(W)=0$. 
Assume that a perturbation $\pi_{W}$ is chosen so that the perturbed ASD-equation
\[F^{+}_{A}+U_{\pi_{W}}(A)=0\]
cuts out  the irreducible part of the moduli space transversely.  
Let $A^{\rm ad}=\mathbf{1}\oplus B$  be an adjoint connection of abelian reducible ASD connection $[A]\in M(W, S; \theta_{\alpha}, \theta'_{\alpha})_{2d+1}$ with ${\rm ind}(d^{*}_{B}\oplus d^{+}_{B})\geq 0$.
Then for a small generic perturbation $\omega \in \mathcal{W}$, the secondary  perturbed ASD-equation  cuts out the irreducible part of the moduli space transversely.   Moreover, $M^{\pi_{W, \omega}}(W, S; \theta_{\alpha}, \theta'_{\alpha})_{2d+1}$ is regular at $[A]$ and has a neighborhood of $[A]$ which is homeomorphic to a cone on $\pm\mathbb{CP}^{d}$.
\end{prp}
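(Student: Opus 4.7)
\emph{Proof strategy.} The plan is to build the $S^1$-equivariant Kuranishi model at the reducible ASD connection $A$ and identify a neighborhood of $[A]$ with the cone on $\pm\mathbb{CP}^d$, after a generic choice of $\omega \in \mathcal{W}$. First decompose along the reduction: writing $A^{\mathrm{ad}} = \mathbf{1} \oplus B$, the adjoint bundle splits as $\mathfrak{g}_E = \underline{\mathbb{R}} \oplus L^{\otimes 2}$ with $L^{\otimes 2}$ a complex line bundle, and the ASD deformation operator splits accordingly as ${D}_A = {D}_{\mathbf{1}} \oplus D_B$, where $D_B = d_B^* \oplus d_B^+$ acts on $L^{\otimes 2}$-valued forms. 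The stabilizer $\mathrm{Stab}(A) \cong S^1$ acts trivially on the first factor and by complex scalar multiplication on the second.

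Next, eliminate the abelian contribution using $b^1(W) = b^+(W) = 0$. Together with the weight $\phi = e^{-\delta|t|}$ imposed at the reducible limits $\theta_\alpha, \theta_\alpha'$, this forces $D_{\mathbf{1}}$ on the weighted Sobolev spaces to have one-dimensional kernel equal to the stabilizer Lie algebra $i\mathbb{R}$ and vanishing cokernel. Consequently $2d+1 = \mathrm{ind}(D_A) = -1 + \mathrm{ind}(d_B^* \oplus d_B^+)$, giving $\mathrm{ind}(d_B^* \oplus d_B^+) = 2(d+1) \ge 0$ by assumption; this matches the expected complex dimension $d+1$ of the non-abelian piece of $H^1$, provided the obstruction $H^+_B$ can be arranged to vanish by perturbation.

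The heart of the argument is the transversality step on the non-abelian summand, using $V_\omega$. Consider the parametrized smooth map
\[
\Psi : \mathcal{A}^*_z(W, S; \theta_\alpha, \theta_\alpha') \times \mathcal{W} \longrightarrow \phi\check{L}^2_{m-1}\bigl(W^+\setminus S^+, \Lambda^+ \otimes \mathfrak{g}_E\bigr), \qquad \Psi(A', \omega) := F^+_{A'} + U_{\pi_W}(A') + V_\omega(A').
\]
Although $\pi(\mathrm{Hol}_{q_{i,x}}(A))$ itself lies in the abelian summand when $A$ is abelian, its linearization $D(\pi \circ \mathrm{Hol}_{q_{i,x}})|_A$ applied to a non-abelian variation $a \in \check{L}^2_m(W^+\setminus S^+, \Lambda^1 \otimes L^{\otimes 2})$ produces a non-abelian self-dual 2-form. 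The $C^1$-density of $\{q_{i,x}\}$ among loops in $W^+\setminus S^+$, together with the Sard--Smale argument of \cite{Kr05} as adapted to the orbifold setting in \cite{CDX}, then furnishes a residual subset of $\omega \in \mathcal{W}$ for which the cokernel of $d_B^+ + DV_\omega|_A$ vanishes. Combined with the already-transverse abelian part, this yields surjectivity of the full linearization of the secondary perturbed ASD equation at $A$ modulo the stabilizer.

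Finally, with the obstruction killed, $H^1_B \cong \mathbb{C}^{d+1}$ and the standard $S^1$-equivariant Kuranishi theorem identifies a neighborhood of $[A]$ in $M^{\pi_W, \omega}(W, S; \theta_\alpha, \theta_\alpha')_{2d+1}$ with $H^1_B / S^1 \cong \mathbb{C}^{d+1}/S^1$, which is the cone on $\mathbb{CP}^d$; the sign $\pm$ records the orientation at the reducible determined by the choice of complex structure on $L^{\otimes 2}$ relative to the orientation convention fixed for the moduli space. The main obstacle is the third step: $V_\omega(A)$ itself takes values in the abelian summand when $A$ is abelian, so the perturbation can only reach the non-abelian obstruction through its linearization in non-abelian directions, and one must choose the loops $q_{i,x}$ with nontrivial linking relative to $S^+$ and to one another so that the resulting variations span the cokernel of $d_B^+$. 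Proposition~\ref{Fredholm} secures the required orbifold Fredholm theory, and the density/Sard--Smale portion then follows the template of \cite{Kr05, CDX}.
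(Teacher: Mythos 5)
Your proposal follows essentially the same route as the paper: the paper's proof simply adapts the transversality-at-reducibles argument of \cite[Section 7]{CDX} (based on the interior holonomy perturbations of \cite{Kr05}), takes a countable intersection over the homotopy classes $z$, and cites \cite[Proposition 4.3.20]{DK90} for the cone-on-$\pm\mathbb{CP}^{d}$ local model, which is exactly the skeleton you flesh out with the splitting $\mathfrak{g}_{E}=\underline{\mathbb{R}}\oplus L^{\otimes 2}$, the index bookkeeping from $b^{1}(W)=b^{+}(W)=0$, and the observation that $V_{\omega}$ reaches the non-abelian cokernel only through its linearization in off-diagonal directions. Only note the small slip in your second step: for $D_{\mathbf 1}=-d^{*}\oplus d^{+}$ on $1$-forms the stabilizer Lie algebra $i\mathbb{R}$ sits in $H^{0}$, i.e.\ in the cokernel rather than the kernel, which is what makes $\mathrm{ind}\,D_{\mathbf 1}=-1$ and hence $\mathrm{ind}(d^{*}_{B}\oplus d^{+}_{B})=2d+2$ as you use.
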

\begin{proof}
For each connected component $M_{z}^{\pi_{W}, \omega}(W,S; \beta, \beta')$ of moduli spaces, the argument of \cite[Section 7]{CDX} is adapted to our case and reducible points are regular for generic perturbations.
Taking countable intersections of these subsets of  regular perturbations in $W$, we can  find  generic perturbations $\omega\in \mathcal{W}$ such that  the statement holds.
The claim about local structures around reducibles can be refined using a standard argument. 
See  \cite[Proposition 4.3.20]{DK90}, for example.
\end{proof}
The essentially same argument is used in \cite{DS2}.
From now on, we assume that perturbations over a cobordism of pairs $(W, S)$ are chosen so that they satisfy the statement of  Proposition \ref{perturbation red}.

\subsection{Orientation}
We see the orientation of moduli spaces over a cylinder based on \cite{KM11,DS1}. 
Consider a reference connection $A_{0}$ on $(W^{+}, S^{+})$ as described in subsection \ref{cobordism} and the ASD-operator (\ref{ASD-ope on cob}).
 If the weight function $\phi$ has the form $e^{-\delta |t|}$ on one end, the functional space $\phi\check{L}^{2}_{m, A_{0}}$ consists of  exponential decaying functions on that end.
 On the other hand, if  the weight function $\phi$ has the form $e^{\delta |t|}$ on one end, the functional space $\phi\check{L}^{2}_{m, A_{0}}$ allows exponential growth functions.
 The index of the operator $D_{A}$ depends on these choices of weighted functions.
 To distinct these two situations,  $\theta_{\alpha, \pm}$ denote the reducible flat limit $\theta_{\alpha}$ with weighted functions $e^{\pm \delta |t|}$.
  Let $z$ be a path along $(W, S)$ between two critical limits $\beta, \beta'$ on $(Y, K)$, $(Y', K')$. 
  The family index of $D_{\{A\}}$ defines a trivial line bundle ${\rm det \ ind}(D_{A})$ on each $\mathcal{B}_{z}(W, S,\beta, \beta')$.
  Let $\mathscr{O}_{z}[W, S;\beta_{0}, \beta_{1}]$ be a two point set of the orientation of the determinant line bundle ${\rm det\ ind}(D_{\{A\}})$.
 $\mathscr{O}_{z}[W, S; \beta, \beta']$ is a set of orientation of the moduli space $M^{\alpha}_{z}(W, S;\beta, \beta')$. 
 There is a transitive and faithful $\mathbb{Z}_{2}$-action on $\mathscr{O}_{z}[W, S;\beta, \beta']$. 
 For a composition of cobordisms $(W_{2}, S_{2})\circ(W_{1}, S_{1})$, there is  a pairing \[\Phi:\mathscr{O}_{z_{1}}[W_{1},S_{1};\beta, \beta']\otimes_{\mathbb{Z}_{2}}\mathscr{O}_{z_{2}}[W_{2}, S_{2};\beta',\beta'']\rightarrow \mathscr{O}_{z_{2}\circ z_{1}}[W_{2}\circ W_{1}, S_{2}\circ S_{1};\beta,\beta'']\] which is induced from the gluing formula of index. 
If we consider a gluing operation along the reducible connection $\theta_{\alpha}$, we choose $\beta'=\theta_{\alpha, +}$ at the first component and $\theta_{\alpha, -}$ at the second component.
Since there is a natural isomorphism between $\mathscr{O}_{z}[W, S;\beta, \beta']$ and $\mathscr{O}_{z'}[W, S; \beta, \beta']$, we omit $z$ from the above notations. We call an element of $\mathscr{O}[W, S;\theta_{\alpha \ +}, \theta'_{\alpha\ -}]$ a homology orientation of $(W, S)$.
For a given knot in integral homology 3-sphere $(Y, K)$, we use the following notations:
\[\mathscr{O}[\beta]:=\mathscr{O}[Y\times I, K\times I;\beta, \theta_{\alpha -} ]\]
if $\beta$ is irreducible, and
\[\mathscr{O}[\theta_{\alpha}]:=\mathscr{O}[Y\times I, K\times I;\theta_{\alpha +}, \theta_{\alpha -}].\]
There is an isomorphism  \[\mathscr{O}[W, S;\theta_{\alpha, +},\theta_{\alpha, -}]\vert_{[A_{0}]}\cong \Lambda^{\rm top}(H^{1}(W)\oplus H^{+}(W))\]and elements $o_{W}\in\mathscr{O}[W, S;\theta_{\alpha, +},\theta_{\alpha, -}] $ is called homology orientation.

Now, we describe how the orientation of the moduli space $M(W, S;\beta, \beta')$ is defined. 
Let $o_{W}\in \mathscr{O}[W, S;\theta_{\alpha, +}, \theta_{\alpha, -}]$ be a given homology orientation for $(W, S)$.
We fix elements $o_{\beta}\in\mathscr{O}[\beta]$ and $o_{\beta'}\in \mathscr{O}[\beta']$. Then the orientation $o_{(W, S;\beta, \beta')}\in \mathscr{O}[W, S;\beta,\beta']$ is fixed so that 
\[\Phi(o_{\beta}\otimes o_{W})=\Phi(o_{(W, S;\beta, \beta')}\otimes o_{\beta'}).\]

The moduli space $\breve{M}_{z}(\beta_{0}, \beta_{1})$ is oriented as the following way.
First, we fix orientations ${o}_{\beta_{0}} \in \mathscr{O}[\beta_{0}], {o}_{\beta_{1}}\in \Lambda[\beta_{1}]$. Then the orientation of $M(\beta_{0}, \beta_{1})$ is determined as above way. Note that there is an $\mathbb{R}$-action on $M_{z}(\beta_{1}, \beta_{2})$. Let $\tau_{s}(t, y)=(t-s, y)$ be a transition on the cylinder $(Y, K)\times \mathbb{R}$. Then the $\mathbb{R}$-action on $M(\beta_{0}, \beta_{1})$ is given by the pull-back $[A]\mapsto [\tau^{*}A]$. Finally, we orient $\breve{M}(\beta_{1}, \beta_{2})$ so that $\mathbb{R}\times \breve{M}(\beta_{1}, \beta_{2})=M(\beta_{1}, \beta_{2})$  is orientation preserving.

The boundary of moduli spaces are oriented so that the outward normal vector sits at the first place in the tangent space.

\section{$\mathcal{S}$-complexs and Fr{\o}yshov type invariants\label{S-complex}}
In this section, we extend the construction of $\mathcal{S}$-complexes $\tilde{C}_{*}(Y, K)$ for $(Y, K)$ in \cite{DS1} to general holonomy parameters.
We also introduce $\mathbb{Z}\times \mathbb{R}$-bigrading of $\mathcal{S}$-complex with rational holonomy parameters for the specific choice of coefficient and its filtered subcomplex based on \cite{nozaki2019filtered}.
\subsection{ A Review on $\mathcal{S}$-complexs and Fr{\o}yshov invariants}
\label{4-1}Firstly, we review the $\mathcal{S}$-complex and the Fro{\o}yshov type invariant introduced by \cite{DS1} and \cite{DS2}. They are  defined by using purely algebraic objects. 

\begin{dfn}\label{Scpx}
Let $R$ be an integral domain, and $\tilde{C}_{*}$ be a finitely generated and graded free $R$-module. The triple $(\tilde{C}_{*}, \tilde{d},\chi)$ is called the $\mathcal{S}$-complex if 
\begin{enumerate}
\item $\tilde{d}:\tilde{C}_{*}\rightarrow \tilde{C}_{*}$ is a degree $-1$ homomorphism.
\item $\chi: \tilde{C}_{*}\rightarrow \tilde{C}_{*}$ is a degree $+1$ homomorphism.
\item $\tilde{d}$ and $\chi$ satisfy 
\begin{itemize}
\item $\tilde{d}^{2}=0$, $\chi^{2}=0$, and $\tilde{d}\chi+\chi\tilde{d}=0$.
\item ${\rm Ker}(\chi)/{\rm Im}(\chi)\cong R_{(0)}$, where $R_{(0)}$ is a copy of $R$ in $\tilde{C}_{0}$
\end{itemize}
\end{enumerate}
\end{dfn}
If $(C_{*}, d)$ is a given chain complex with coefficient ring $R$, we can form  $\mathcal{S}$-complex as follows
$$\tilde{C}_{*}=C_{*}\oplus C_{*-1} \oplus R,$$
\begin{equation}
\tilde{d}=\left[\begin{array}{ccc}
d& 0& 0\\
v&-d&\delta_{2}\\
\delta_{1}&0&0
\end{array}\right],  \ 
\chi=\left[\begin{array}{ccc}
0&0&0\\
1&0&0\\
0&0&0
\end{array}\right],\label{mtx1}
\end{equation}

where $\delta_{1}:C_{*}\rightarrow R$ , $\delta_{2}:R\rightarrow C_{*-1}$ and $v: C_{*}\rightarrow C_{*-2}$. Since there are conditions on $\tilde{d}$ and $ \chi$ in (\ref{Scpx}), the components in $\tilde{d}$ and  $\chi$ have to satisfy the following relations
\begin{eqnarray}
\delta_{1}d=0,\ d\delta_{2}=0,\ dv-vd-\delta_{2}\delta_{1}=0.\label{rel d chi}
\end{eqnarray}
Conversely, if the $\mathcal{S}$-complex $(\tilde{C}, \tilde{d}, \chi)$ is given then there is a decomposition of the form $\tilde{C}_{*}=C_{*}\oplus C_{*-1}\oplus R$. 
\\

There is also a notion of $\mathcal{S}$-morphism, which is a morphism of $\mathcal{S}$-complexes. 
\begin{dfn}
Let $(\tilde{C}_{*}, \tilde{d}, \chi)$ and $(\tilde{C}'_{*}, \tilde{d}', \chi')$ be $\mathcal{S}$-complexes. Fix decompositions  $\tilde{C}_{*}=C_{*}\oplus C_{*-1}\oplus R$ and $\tilde{C}'=C'_{*}\oplus C'_{*-1}\oplus R$.  A chain map $\tilde{m}:\tilde{C}_{*}\rightarrow \tilde{C}'_{*}$ is called $\mathcal{S}$-morphism if it has the form
\begin{equation}
\tilde{m}=\left[\begin{array}{ccc}
m&0&0\\
\mu&m&\Delta_{2}\\
\Delta_{1}&0&\eta
\end{array}
\right]\label{mtx2}
\end{equation}
where $\eta\neq 0\in R$.
\end{dfn}
The condition $\tilde{m}$ is a chain map is equivalent to the following relations.
\begin{eqnarray*}
m d-d m =0\\
\Delta_{1}d+\eta\delta_{1}-\delta'_{1}m=0\\
d'\Delta_{2}-\delta_{2}'\eta+m\delta_{2}=0\\
\mu d+m v-\Delta_{2}\delta_{1}-v'm+d'\mu-\delta_{2}'\Delta_{1}=0
\end{eqnarray*}
\begin{dfn}
Let $\tilde{m}, \tilde{m}': \tilde{C}_{*}\rightarrow \tilde{C}'_{*}$ be two $\mathcal{S}$-morphisms. An $\mathcal{S}$-chain homotopy of $\tilde{m}$ and $\tilde{m'}$ is a degree 1 map $\tilde{h}:\tilde{C}_{*}\rightarrow \tilde{C}'_{*}$ such that
$$\tilde{d}'\tilde{h}+\tilde{h}\tilde{d}=\tilde{m}-\tilde{m}', \ \chi' \tilde{h}+\tilde{h}\chi=0.$$
Two $\mathcal{S}$-complex $\tilde{C}_{*}$ and $\tilde{C}'_{*}$ are called $\mathcal{S}$-chain homotopy  equivariant if there are $\mathcal{S}$-morphisms $\tilde{m}:\tilde{C}_{*}\rightarrow \tilde{C}'_{*}$ and $\tilde{m}':\tilde{C}'_{*}\rightarrow \tilde{C}_{*}$ such that $\tilde{m}\tilde{m}'$
and $\tilde{m}'\tilde{m}$ are $\mathcal{S}$-chain homotopic to the identity. 
\end{dfn}
\begin{rmk}\label{S chain unit}
{\rm Consider  $\mathcal{S}$-morphisms $\tilde{m}:\tilde{C}_{*}\rightarrow \tilde{C}'_{*}$ and $\tilde{m}': \tilde{C}'_{*}\rightarrow \tilde{C}_{*}$. If there are unit elements $c$ and $c'$ in the coefficient ring $R$, and two $\mathcal{S}$-chain homotopies 
\[\tilde{m}'\tilde{m}\sim c{\rm id}_{\tilde{C}_{*}},\ \tilde{m}\tilde{m}'\sim c'{\rm id}_{\tilde{C}'_{*}}\]
then two $\mathcal{S}$-complexes $\tilde{C}_{*}$ and $\tilde{C}'_{*}$ are $\mathcal{S}$-chain homotopy equivalent since both $c^{-1}\tilde{m}$ and $c'^{-1}\tilde{m}$ are   $\mathcal{S}$-chain homotopic to the composition $c^{-1}c'^{-1}\tilde{m}\tilde{m}'\tilde{m}$.
}
\end{rmk}
The Fr{\o}yshov type invariant is defined from $\mathcal{S}$-complex. It assigns an integer $h(\tilde{C}_{*})$ to each $\mathcal{S}$-complex $\tilde{C}_{*}$.
\begin{dfn}{\rm ( \cite[Proposition 4.15]{DS1})}
\begin{itemize}
\item $h(\tilde{C}_{*})>0 $ if only of there is an element $\beta \in C_{*}$ such that $d\beta=0$ and $\delta_{1}\beta\neq 0$.
\item If $h(\tilde{C}_{*})=k>0$ then $k$ is the largest integer such that there exists $\beta \in C_{*}$ satisfying the following properties; $$d\beta=0,\ \delta_{1}v^{k-1}(\beta)\neq 0, \delta_{1}v^{i}\beta=0\ {\rm for }\ i\leq k-2.$$
\item If $h(\tilde{C}_{*})=k\leq0$  then there are elements $a_{0}, \cdots, a_{-k}\in R$ and $\beta \in C_{*}$ such that $$d\beta=\sum_{i=0}^{-k}v^{i}\delta_{2}(a_{i}).$$
\end{itemize}

\end{dfn}
The followings are basic properties of the Fr{\o}yshov type invariant.
\begin{prp}{\rm (\cite[Corollary 4.14 ]{DS1})}\label{Froyshov ineq}

If there is an $\mathcal{S}$-morphism $\tilde{m}:\tilde{C}_{*}\rightarrow \tilde{C}'_{*}$ then $h(\tilde{C}_{*})\leq h(\tilde{C}'_{*})$.
\end{prp}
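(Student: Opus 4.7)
The plan is to push a witness of the Fr{\o}yshov invariant for $\tilde{C}_{*}$ through the $\mathcal{S}$-morphism $\tilde{m}$ and show that the image, after a cycle-preserving correction, realizes the same lower bound for $h(\tilde{C}'_{*})$. I will split into the cases $h(\tilde{C}_{*})>0$ and $h(\tilde{C}_{*})\leq 0$, and repeatedly invoke the chain-map identities extracted from $\tilde{m}\tilde{d}=\tilde{d}'\tilde{m}$, namely $md=d'm$, $\delta'_{1}m=\eta\delta_{1}+\Delta_{1}d$, and $m\delta_{2}=\delta'_{2}\eta-d'\Delta_{2}$, together with the homotopy relation
\[
v'm-mv=d'\mu+\mu d-\Delta_{2}\delta_{1}-\delta'_{2}\Delta_{1}.
\]

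For the positive case $h(\tilde{C}_{*})=k\geq 1$, take a witness $\beta\in C_{*}$ with $d\beta=0$, $\delta_{1}v^{i}\beta=0$ for $i\leq k-2$, and $\delta_{1}v^{k-1}\beta\neq 0$, and set $\beta':=m\beta$. Then $d'\beta'=0$ by $md=d'm$, and $\delta'_{1}\beta'=\eta\delta_{1}\beta+\Delta_{1}d\beta=\eta\delta_{1}\beta=0$. I would prove by induction on $0\leq i\leq k-1$ that
\[
\delta'_{1}v'^{i}m\beta=\eta\,\delta_{1}v^{i}\beta,
\]
using the homotopy relation to commute $v'$ past $m$ at the cost of $d'$-boundaries and terms in $\Delta_{2}\delta_{1}$ and $\delta'_{2}\Delta_{1}$, then simplifying via $\delta'_{1}d'=0$, $dv=vd+\delta_{2}\delta_{1}$, and the vanishing of $\delta_{1}v^{j}\beta$ for $j<i$. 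Since $\eta\neq 0$ and $R$ is an integral domain, $\delta'_{1}v'^{k-1}m\beta\neq 0$, giving $h(\tilde{C}'_{*})\geq k$.

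For the non-positive case $h(\tilde{C}_{*})=k\leq 0$, start with $\beta$ and $a_{0},\dots,a_{-k}\in R$ satisfying $d\beta=\sum_{i=0}^{-k}v^{i}\delta_{2}(a_{i})$. Apply $m$ and rewrite each $mv^{i}\delta_{2}(a_{i})$ as $v'^{i}\delta'_{2}(\eta a_{i})$ modulo $d'$-exact terms, using $m\delta_{2}=\delta'_{2}\eta-d'\Delta_{2}$ and the iterated form of the homotopy relation for $v'^{i}m$. Collecting the $d'$-exact corrections into $\beta'':=m\beta+(\text{correction})$ and replacing $a_{i}$ by $\eta a_{i}$ yields the analogous equation in $\tilde{C}'_{*}$, hence $h(\tilde{C}'_{*})\geq k$.

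The main obstacle is the inductive bookkeeping needed to convert the raw homotopy relation into a closed form for $v'^{i}m\beta$ whose correction terms either (i) involve $\delta_{1}v^{j}\beta$ with $j<i$ and hence vanish by hypothesis, or (ii) are $d'$-boundaries and images of $\delta'_{2}$, which can be absorbed by adjusting $\beta'$ within its $d'$-homology class. Because $\delta'_{1}\delta'_{2}$ need not vanish abstractly, one must rely on the stepwise vanishing of $\delta_{1}v^{j}\beta$ at each stage rather than any universal cancellation. Conceptually, the argument shows that an $\mathcal{S}$-morphism respects the filtered pieces of $\tilde{C}_{*}$ indexed by the order to which the reducible obstruction $\delta_{1}v^{j}$ remains nonzero, and these pieces together define $h$.
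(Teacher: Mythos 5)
There is a genuine gap at the heart of both of your inductions. The identity you need, $\delta_1'v'^{i}m\beta=\eta\,\delta_1 v^{i}\beta$, already fails at $i=1$: from $v'm=mv+\mu d+d'\mu-\Delta_2\delta_1-\delta_2'\Delta_1$ together with $d\beta=0$, $\delta_1\beta=0$ and $\delta_1'm=\eta\delta_1+\Delta_1 d$, one gets $\delta_1'v'm\beta=\eta\,\delta_1 v\beta-\delta_1'\delta_2'\Delta_1\beta$. The error term involves $\Delta_1\beta\in R$ (a component of the morphism, not of $\tilde d$) composed with $\delta_1'\delta_2'$, and neither factor is controlled by your hypotheses: the stepwise vanishing of $\delta_1 v^{j}\beta$ says nothing about $\Delta_1 v^{j}\beta$, and the $\mathcal{S}$-complex relations $\delta_1 d=0$, $d\delta_2=0$, $dv-vd=\delta_2\delta_1$ do not force $\delta_1'\delta_2'=0$. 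Your fallback, that such corrections are ``$d'$-boundaries and images of $\delta_2'$'' and can be ``absorbed by adjusting $\beta'$ within its $d'$-homology class,'' fails for the same reason: elements of $\operatorname{im}\delta_2'$ are not $d'$-exact, and the quantities $\delta_1'v'^{j}(\cdot)$ are not invariant under adding either $d'$-boundaries (e.g.\ $\delta_1'v'd'\gamma=-\delta_1'\delta_2'\delta_1'\gamma$) or elements of $\operatorname{im}\delta_2'$, precisely because $\delta_1'\delta_2'$ may be nonzero; such adjustments can destroy the conditions $\delta_1'v'^{j}\beta'=0$ already arranged at earlier stages. The same uncontrolled terms ($\delta_1'v'^{a}\delta_2'(\Delta_1 v^{b}\beta)$, $\Delta_2\,\delta_1 v^{j}\delta_2(a_i)$, and the corrections produced when $v'$ hits previously discarded $d'$-exact terms) reappear when you commute $m$ past $v^{i}\delta_2$ in the non-positive case, where in addition you would need the converse direction of the $h\le 0$ characterization (with its normalization on the leading coefficient), which merely exhibiting a solution of $d'\beta''=\sum_i v'^{i}\delta_2'(\eta a_i)$ does not supply. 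In effect your argument establishes the inequality only when $h(\tilde{C}_*)\le 1$.

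Note also that the present paper does not prove this proposition; it is quoted from \cite{DS1}, and the proof there is not a direct manipulation of the $\delta_1 v^{k}$-conditions. In \cite{DS1} the invariant $h$ is defined through the equivariant homology theories attached to an $\mathcal{S}$-complex, monotonicity under $\mathcal{S}$-morphisms is essentially formal functoriality of those theories (using only $\eta\neq 0$ in an integral domain), and the concrete characterization you are invoking is derived afterwards from that definition. A self-contained proof along your lines would have to track or eliminate the $\delta_1'\delta_2'$-type terms explicitly, which is exactly the bookkeeping your sketch elides; as written, the inductive step is not valid.
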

For given two $\mathcal{S}$-complexes $(\tilde{C}_{*}, \tilde{d}, \chi)$ and $(\tilde{C}'_{*}, \tilde{d}', \chi')$, the product of $\mathcal{S}$-complex $(\tilde{C}^{\otimes}_{*}, \tilde{d}^{\otimes}, \chi^{\otimes})$ is defined as follows,
\begin{eqnarray*}
\tilde{C}^{\otimes}_{*}&=&\tilde{C}_{*}\otimes \tilde{C}'_{*}\\
\tilde{d}^{\otimes}&=&\tilde{d}\otimes 1+\epsilon \otimes \tilde{d}'\\
{\chi}^{\otimes}&=&\chi\otimes 1+\epsilon\otimes \chi'
\end{eqnarray*}
where $\epsilon:\tilde{C}'_{*}\rightarrow \tilde{C}'_{*}$ is given by $\epsilon(\beta')=(-1)^{{\rm deg}(\beta')}\beta'$ on elements of  homogeneous degree. 
Let $d^{\otimes}, v^{\otimes}, \delta_{1}^{\otimes }$ and $\delta^{\otimes}_{2}$ be components of $\tilde{d}^{\otimes}$ with respect to the splitting $\tilde{C}^{\otimes}=C^{\otimes}_{* }\oplus C^{\otimes}_{*-1}\oplus R$. Using the decomposition ${C}^{\otimes}_{*}=(C\otimes C')_{*}\oplus (C\otimes C')_{*-1}\oplus C_{*}\oplus C'$, these maps are represented by as follows.
\[
d^{\otimes}=\left[\begin{array}{cccc}
d\otimes 1+\epsilon \otimes d'&0&0&0\\
-\epsilon v\otimes 1+\epsilon \otimes v'&d\otimes 1-\epsilon \otimes d'&\epsilon \otimes \delta'_{2}&-\delta'_{2}\otimes 1\\
\epsilon\otimes \delta'_{1}&0&d&0\\
\delta'_{1}\otimes 1&0&0&d'
\end{array}\right],
\]
\[v^{\otimes}=\left[\begin{array}{cccc}
v\otimes 1&0&0&\delta_{2}\otimes 1\\
0&v\otimes 1&0&0\\
0&0&v&0\\
0&\delta_{1}\otimes 1&0&v'
\end{array}\right],\]
\[\delta^{\otimes}_{1}=[0,0,\delta_{1}, \delta'_{1}], \ \delta^{\otimes}_{2}=[0,0, \delta_{2}, \delta'_{2}]^{\rm T}.
\]
The  Fr{\o}yshov type invariant behaves additively for the product of $\mathcal{S}$-complexes as follows.
\begin{prp}\label{h sum}{\rm ( \cite[Corollary 4.28 ]{DS1})}
\[h(\tilde{C}^{\otimes}_{*})=h(\tilde{C}_{*})+h(\tilde{C}'_{*}).\]
\end{prp}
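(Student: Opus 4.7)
The plan is to prove the equality $h(\tilde{C}^{\otimes}_*) = h(\tilde{C}_*) + h(\tilde{C}'_*)$ by a direct construction of witness elements, splitting into cases according to the signs of $h := h(\tilde{C}_*)$ and $h' := h(\tilde{C}'_*)$. The two inequalities $h(\tilde{C}^{\otimes}_*) \geq h+h'$ and $h(\tilde{C}^{\otimes}_*) \leq h+h'$ will be obtained separately, the former by exhibiting a specific witness and the latter by a dual obstruction argument.

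For the inequality $h(\tilde{C}^{\otimes}_*) \geq h+h'$ in the case $h, h' > 0$, I take witnesses $\beta \in C_*$ and $\beta' \in C'_*$ with $d\beta = 0$, $d'\beta' = 0$, $\delta_1 v^i \beta = 0$ for $i \leq h-2$, $\delta_1 v^{h-1}\beta \neq 0$, and analogous conditions on $\beta'$. The candidate witness in $\tilde{C}^{\otimes}_*$ is the element $\beta \otimes \beta'$ lying in the summand $(C \otimes C')_*$ of $C^{\otimes}_*$. Using the matrix formula for $d^{\otimes}$ together with $d\beta = 0$ and $d'\beta' = 0$, a line-by-line verification shows $d^{\otimes}(\beta \otimes \beta') = 0$. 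Next I compute $(v^{\otimes})^n(\beta \otimes \beta')$ inductively: the upper-left block $v \otimes 1$ and the cross-terms $\delta_2 \otimes 1$ and $\delta_1 \otimes 1$ in $v^{\otimes}$ combine so that after $n$ applications the component in the $(C\otimes C')_*$ summand has the form $\sum_{i+j=n} (\pm) v^i\beta \otimes v'^j\beta'$ plus terms supported in the $C_*$ and $C'_*$ summands. Applying $\delta_1^{\otimes} = [0,0,\delta_1,\delta'_1]$, these latter contributions never reach the first two slots and only the $v^i\beta \otimes v'^j\beta'$ terms survive through the pairing $\delta_1(v^i\beta)\cdot \delta'_1(v'^j\beta')$; by the hypothesis this product vanishes for $i < h-1$ or $j < h'-1$, and is nonzero exactly when $i = h-1$, $j = h'-1$. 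Thus $\delta_1^{\otimes}(v^{\otimes})^{h+h'-1}(\beta\otimes\beta') \neq 0$ while all lower powers annihilate, giving $h(\tilde{C}^{\otimes}_*) \geq h + h'$.

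For the nonpositive case $h \leq 0$, $h' \leq 0$, I use the witnesses $d\beta = \sum_{i=0}^{-h} v^i\delta_2(a_i)$ and $d'\beta' = \sum_{j=0}^{-h'} v'^j \delta'_2(a'_j)$. The candidate is $\beta^{\otimes} := \beta \otimes \beta' + (\text{correction})$ in $\tilde{C}^{\otimes}_*$, with the correction chosen in the $C_*$ and $C'_*$ slots so that the off-diagonal contributions of $d^{\otimes}$ (namely $\epsilon \otimes \delta'_2$, $-\delta'_2\otimes 1$, and the signed Leibniz terms) are absorbed; a direct matching of coefficients rewrites $d^{\otimes}\beta^{\otimes}$ as $\sum_{k=0}^{-(h+h')}(v^{\otimes})^k\delta_2^{\otimes}(b_k)$ for an appropriate family $b_k$ built bilinearly from the $a_i$ and $a'_j$. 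The mixed case $h > 0 \geq h'$ is handled similarly, with the witness of the positive side being nested inside the relation provided by the negative side. For the reverse inequality $h(\tilde{C}^{\otimes}_*) \leq h+h'$, I argue by contradiction: if there were a strictly better witness $\tilde{\gamma}$ in $\tilde{C}^{\otimes}_*$, then decomposing it along the four summands of $C^{\otimes}_*$ and pairing against the dual witnesses of $\tilde{C}_*$ and $\tilde{C}'_*$ given by Proposition \ref{Froyshov ineq} applied to projection $\mathcal{S}$-morphisms would force an element witnessing $h(\tilde{C}_*) > h$ or $h(\tilde{C}'_*) > h'$, contradicting maximality.

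The main obstacle is the bookkeeping of the off-diagonal entries in $v^{\otimes}$ and $d^{\otimes}$: the cross terms $\delta_2 \otimes 1$, $\delta_1 \otimes 1$, and the signs coming from the operator $\epsilon$. In particular, $(v^{\otimes})^n(\beta\otimes\beta')$ is not simply a sum $\sum v^i\beta\otimes v'^j\beta'$ but acquires extra components in the $C_*$ and $C'_*$ slots; I need to check that these extra components do not spoil the leading-order computation of $\delta_1^{\otimes}$ and that the correction terms in the nonpositive case genuinely close up into the required form. This requires a careful induction on $n$ tracking which summand each intermediate term lies in, but no new conceptual input beyond the matrix formulas already recorded.
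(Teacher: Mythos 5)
There is a genuine gap, and it appears at the very first step of your main case. With the decomposition $C^{\otimes}_{*}=(C\otimes C')_{*}\oplus (C\otimes C')_{*-1}\oplus C_{*}\oplus C'_{*}$ and the matrix for $d^{\otimes}$ recorded in the paper, the element $\beta\otimes\beta'$ placed in the first summand is \emph{not} a $d^{\otimes}$-cycle when $d\beta=0$ and $d'\beta'=0$: the first column of $d^{\otimes}$ also produces the components $(-\epsilon v\otimes 1+\epsilon\otimes v')(\beta\otimes\beta')$ in $(C\otimes C')_{*-1}$, $(\epsilon\otimes\delta'_{1})(\beta\otimes\beta')=\pm\,\delta'_{1}(\beta')\,\beta$ in $C_{*}$ and $(\delta_{1}\otimes 1)(\beta\otimes\beta')=\delta_{1}(\beta)\,\beta'$ in $C'_{*}$. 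In the simplest case $h=h'=1$ your witnesses satisfy $\delta_{1}\beta\neq 0$ and $\delta'_{1}\beta'\neq 0$, so these extra components are certainly nonzero, and your "line-by-line verification" fails. The subsequent induction is also wrong in mechanism: the first column of $v^{\otimes}$ is $(v\otimes 1,0,0,0)^{\mathrm T}$, so $(v^{\otimes})^{n}$ of an element supported in the $(C\otimes C')_{*}$ slot stays in that slot, and since $\delta_{1}^{\otimes}=[0,0,\delta_{1},\delta'_{1}]$ vanishes there, you get $\delta_{1}^{\otimes}(v^{\otimes})^{n}(\beta\otimes\beta')=0$ for every $n$; the pairing $\delta_{1}(v^{i}\beta)\cdot\delta'_{1}(v'^{j}\beta')$ you want can only arise from components pushed into the $C_{*}$ and $C'_{*}$ slots by the off-diagonal entries, i.e.\ you have the roles of the slots reversed. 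A correct witness must carry corrections in those slots from the outset, which is exactly the bookkeeping you deferred.

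The reverse inequality is also not established: there is no "projection $\mathcal{S}$-morphism" $\tilde{C}_{*}\otimes\tilde{C}'_{*}\to\tilde{C}_{*}$ in general, because forgetting the $\tilde{C}'$-factor is not a chain map (the entries $\delta'_{1},\ v',\ \delta'_{2}$ of $\tilde{d}^{\otimes}$ mix the summands), so Proposition \ref{Froyshov ineq} cannot be invoked this way. Note also that the paper itself offers no proof to compare with: the statement is quoted from Daemi--Scaduto, where the equality is obtained by combining the inequality $h(\tilde{C}^{\otimes}_{*})\geq h(\tilde{C}_{*})+h(\tilde{C}'_{*})$ (proved with corrected witnesses of the kind indicated above) with a duality argument, using that the dual $\mathcal{S}$-complex satisfies $h(\tilde{C}^{\dagger}_{*})=-h(\tilde{C}_{*})$ and that dualizing commutes with tensor products; some replacement for that step, rather than a nonexistent projection, is what your argument needs.
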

\subsection{Floer homology groups with local coefficients}\label{Floer homology}
In this subsection, we construct summands $C_{*}$ in $\mathcal{S}$-complex as a Floer chain group with local coefficient. 
Let $(Y, K)$ be a oriented knot in an integral homology 3-sphere.
We fix  a holonomy parameter $\alpha$ so that $\Delta_{(Y, K)}(e^{4\pi i \alpha})\neq 0$ to isolate unique flat reducible connection $\theta_{\alpha}$.
We assign an abelian group $\Delta_{[B]}$, for each elements $[B]$ in the configuration space $\mathcal{B}(Y, K, \alpha)$ and an isomorphism $\Delta_{z}:\Delta_{[B_{0}]}\rightarrow \Delta_{[B_{1}]}$ for each homotopy class $z\in \pi_{1}(\mathcal{B}(Y, K,\alpha),[B_{0}], [B_{1}])$.
If such assignment is functorial, a Floer chain complex with local coefficient $\Delta$ is defined as follows:
\[C^{\alpha}_{*}(Y, K,\Delta)=\bigoplus_{\beta\in \mathfrak{C}_{\pi}^{*}(Y, K,{\alpha})}\Delta_{\beta}\mathscr{O}[\beta],\]
\[\langle d(\beta_{0}),\beta_{1}\rangle=\sum_{z:\beta_{0}\rightarrow \beta_{1}}\sum_{[\breve{A}]\in \breve{M}_{z}(\beta_{0}, \beta_{1})}\epsilon([\breve{A}])\otimes \Delta_{z}.\]
The $\mathbb{Z}/4$-grading of $C^{\alpha}_{*}(Y, K,\Delta)$ is defined by mod $4$ grading for critical points.
Consider a subring $\mathscr{R}_{\alpha}$ in the Novikov ring $\Lambda^{\mathbb{Z}[T^{-1}, T]]}$ which is introduced in Subsection \ref{summary} .

Lemma \ref{kappa and nu} enables us to define a local coefficient system $\Delta=\Delta_{\mathscr{R}_{\alpha}}$ as follows:

\begin{eqnarray*}
\Delta_{\mathscr{R}_{\alpha},[B]}&:=&\mathscr{R}_{\alpha}\cdot\lambda^{{CS}(B)} T^{{\rm hol}_{K}(B)},\\
\Delta_{\mathscr{R}_{\alpha}, z}&:=&\#\breve{M}_{z}(\beta, \beta_{1})_{0}\lambda^{-{\kappa}(z)}T^{\nu(z)}.
\end{eqnarray*}

Note that this definition is independent of  choices of representatives of $[B]$ and $\theta_{\alpha}$. 
 We denote $C^{\alpha}_{*}(Y, K;\Delta_{\mathscr{R}_{\alpha}})$ for a chain complex with the coefficient system over $\mathscr{R}_{\alpha}$. 
 For any algebra $\mathscr{S}$ over $\mathscr{R}_{\alpha}$, we can extend the above construction to the coefficient $\mathscr{S}$.
 \begin{dfn}
Let $(Y, K)$ be a oriented knot in a integral homology 3-sphere and $\mathscr{S}$ be an algebra over $\mathscr{R}_{\alpha}$. 
Fix $\alpha\in (0, \frac{1}{2})\cap\mathbb{Q}$ so that $\Delta_{(Y,K)}(e^{4\pi i \alpha})\neq 0$.
The homology group of the  $\mathbb{Z}/4$-graded chain complex $(C^{\alpha}_{*}(Y,K;\Delta_{\mathscr{S}}),d)$ is denoted by $I^{\alpha}_{*}(Y, K;\Delta_{\mathscr{S}})$.
We call $I^{\alpha}_{*}(Y, K;\Delta_{\mathscr{S}})$ the irreducible instanton knot homology for a holonomy parameter $\alpha$ with a local coefficient system $\Delta_{\mathscr{S}}$. 
\end{dfn}

Let $(W,S): (Y, K)\rightarrow (Y', K')$ be a negative definite cobordism over $\mathscr{S}$.
We define an induced morphism  $m={m}_{(W, S)}; C^{\alpha}_{*}(Y, K;\Delta_{\mathscr{S}})\rightarrow C^{\alpha}_{*}(Y', K';\Delta_{\mathscr{S}})$ by
\[m(\beta)=\sum_{\beta'\in \mathfrak{C}^{*}(Y, K,\alpha)}\sum_{z:\beta\rightarrow \beta'}\#M_{z}(W, S;\beta, \beta')_{0}\lambda^{\kappa_{0}-\kappa(z)}T^{\nu(z)-\nu_{0}} \beta'.\]
Counting boundary of one-dimensional moduli space $M^{+}_{z}(W, S; \beta, \beta')_{1}$ for each homotopy class $z$, we obtain a relation
\[dm-md'=0.\]

We remark that 
\begin{itemize}
\item For a composition of negative definite cobordism $(W, S):=(W_{0}, S_{0})\circ (W_{1}, S_{1})$, there is a map $\phi$ such that\[d\phi-\phi d=m_{(W_{1}, S_{1})}\circ m_{(W_{0}, S_{0})}-m_{(W, S)}.\] where metrics and perturbation data on $(W, S)$ is given by the composition of that of $(W_{0}, S_{0})$ and $(W_{1}, S_{1})$.
\item If $m_{(W, S)}$ and $m'_{(W, S)}$ are defined by different perturbation and metric date on the interior domain of $(W, S)$, they are chain homotopic.
\item If $(W, S)=[0, 1]\times (Y, K)$ then $m_{(W, S)}$ is chain homotopic to the identity map.  
\end{itemize}
Thus $C^{\alpha}_{*}(Y, K;\Delta_{\mathscr{S}})$ is an invariant of $(Y, K)$ up to chain homotopy.
We denote $I^{\alpha}_{*}(Y, K;\Delta_{\mathscr{S}})$ for its homology group and this is an invariant for $(Y, K)$.

Next, we introduce the filtered construction for the Floer chain complex based on Nozaki-Sato-Taniguchi \cite{nozaki2019filtered}.
For the filtered construction, we have to introduce the lift of critical points.
Let $\mathcal{G}_{(0, 0)}$ be a normal subgroup of the gauge group $\mathcal{G}(Y, K)$ which is given by
\[\mathcal{G}_{(0, 0)}:=\{g |k(g)=l(g)=0\}.\]Consider the quotient of the space of singular connections
\[\widetilde{\mathcal{B}}(Y, K,\alpha):=\mathcal{A}(Y, K,\alpha)/\mathcal{G}_{(0, 0)}.\]The Chern-Simons functional descends on $\tilde{\mathcal{B}}(Y, K,\alpha)$ as an $\mathbb{R}$-valued function, and we still use the same notation.
The normal subgroup $\mathcal{G}_{(0, 0)}$ is a connected component of the full gauge group $\mathcal{G}(Y, K)$ which corresponds to $(0, 0)\in \mathbb{Z}\oplus \mathbb{Z}\cong \pi_{0}(\mathcal{G}(Y, K))$.
Thus there is an action of $\pi_{0}(\mathcal{G}(Y, K))\cong \mathbb{Z}\oplus \mathbb{Z}$ on $\tilde{\mathcal{B}}(Y, K,\alpha)$ as a covering  transformation, and hence $\widetilde{\mathcal{B}}(Y,K,\alpha)$ is a covering space over $\mathcal{B}(Y, K, \alpha)$ with a fiber $\mathbb{Z}\oplus \mathbb{Z}$.
\begin{dfn}
A lift of  $[B]\in \mathcal{B}(Y,K,\alpha)$ to the covering space $\widetilde{\mathcal{B}}(Y, K,\alpha)$ is called a lift of $[B]$, and denoted by $\widetilde{[B]}$. 
\end{dfn}
For a fixed lift $\widetilde{[B]}$ of $[B]\in \mathcal{B}(Y, K, \alpha)$, the fiber of the projection $\widetilde{\mathcal{B}}(Y,  K, \alpha)\rightarrow \mathcal{B}(Y, K,\alpha)$ over a point $[B]$ can be described as \[\mathscr{L}_{[B]}:=\{g^{*}(\widetilde{[B]})\in \widetilde{\mathcal{B}}(Y,  K, \alpha)|g\in\pi_{0}(\mathcal{G}(Y, K))\}.\]
The fiber $\mathscr{L}_{[B]}$ can be seen a set of lifts of an element  $[B]$. 
There is another description of lifts.
Let $\tilde{\theta}_{\alpha}$ be a lift of reducible flat connections $\theta_{\alpha}$.
Then  a lift $\tilde{\beta}$ of $\beta\in \mathfrak{C}^{*}_{\pi}$ is fixed by choosing a path $z:\beta\rightarrow \theta_{\alpha}$ of connections over a cylinder whose endpoint is $\tilde{\theta}_{\alpha}$.

We again choose the coefficient ring $\mathscr{R}_{\alpha}$ and fix a lift $\tilde{\beta}$ for each critical points $\beta\in \mathfrak{C}_{\pi}(Y, K,\alpha)$.

Then we modify the   local coefficient system $\Delta_{\mathscr{R}_{\alpha}}$ so that

\[\Delta_{\mathscr{R}_{\alpha},{\beta}}=\mathscr{R}_{\alpha}\cdot\lambda^{CS(\tilde{\beta})}T^{{\rm hol}_{K}(\tilde{\beta})}.\]with the same map $\Delta_{\mathscr{R}_{\alpha}, z}$.
Once we fix an orientation of $\beta$,  each summand  $\Delta_{{\beta}}\mathscr{O}[{\beta}]$ in the chain complex are generated (over $\mathbb{Z}$) by the elements of the form $\lambda^{k}\xi_{\alpha}^{l}\tilde{\beta}=\lambda^{k+2\alpha l}T^{2l}\tilde{\beta}$ where $(k, l)\in \mathbb{Z}\oplus \mathbb{Z}$.   
The action of $\lambda^{k}\xi^{l}_{\alpha}$ corresponds to the action of the gauge transformation with $d(g)=(k, l)$.
Such elements can be identified with the set of lifts $\mathscr{L}_{\beta}$ of a critical point $\beta$.
Hence,  the chain complex $C^{\alpha}_{*}(Y, K;\Delta_{\mathscr{R}_{\alpha}})$ can be seen a $\mathbb{Z}$-module generated by the all lift of $\mathfrak{C}_{\pi}(Y, K, \alpha)$ under the modification as above. 

Once we fix lifts of generators, the chain  complex $C^{\alpha}_{*}(Y, K;\Delta_{\mathscr{R}_{\alpha}})$ admits $\mathbb{Z}\times \mathbb{R}$-bigrading as in \cite{DS2}, that is we can associate a pair of values which is defined as follows:

For a lift $\tilde{\beta}$ of a critical point $\beta\in \mathfrak{C}_{\pi}$, we define ${\rm deg}_{\mathbb{Z}}(\tilde{\beta}):={\rm gr}_{z}(\beta)$ where $z$ is a path corresponding to the lift $\tilde{\beta}$. Then we extend ${\rm deg}_{\mathbb{Z}}$ as 
\[{\rm deg}_{\mathbb{Z}}(\lambda^{i}\xi^{j}_{\alpha}\tilde{\beta})=8i+4j+{\rm deg}_{\mathbb{Z}}(\tilde{\beta}).\]
Next we define ${\rm deg}_{\mathbb{R}}$.
For a lift $\tilde{\beta}$ of a critical point $\beta\in \mathfrak{C}_{\pi}$, we define ${\rm deg}_{\mathbb{R}}(\tilde{\beta}):={CS}(\tilde{\beta})$.
This extends to elements of the form $\lambda^{i}\xi^{j}_{\alpha}\tilde{\beta}$ as 
\begin{equation}{\rm deg}_{\mathbb{R}}(\lambda^{i}\xi^{j}_{\alpha}\tilde{\beta})= i+2\alpha j+{\rm deg}_{\mathbb{R}}(\tilde{\beta}).\label{R-grading}
\end{equation}
In general,  an element $\gamma\in C^{\alpha}_{*}(Y, K, \alpha)$ has the form $\gamma=\sum_{i}a_{i}\gamma_{i}$ where $\gamma_{i}\in \bigcup_{\beta\in \mathfrak{C}_{\pi}}\mathscr{L}_{\beta}$. 
This is possibly an infinite sum. 
We define
\[{\rm deg}_{\mathbb{R}}(\gamma)={\rm max}\{{\rm deg}_{\mathbb{R}}(\gamma_{i})|a_{i}\neq 0\}\]
for $\gamma\neq 0$ and ${\rm deg}_{\mathbb{R}}(0)=-\infty$.

In summary, we have the following proposition.
\begin{prp}
Once we fix a lifts of critical points of the Chern-Simons functional, a chain complex $(C^{\alpha}_{*}(Y, K, \Delta_{\mathscr{R}_{\alpha}}), d)$ admits the $\mathbb{Z}\times {\mathbb{R}}$-bigrading.
\end{prp}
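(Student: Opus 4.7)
The plan is to verify the two assertions implicit in the statement: (a) the functions $\deg_{\mathbb{Z}}$ and $\deg_{\mathbb{R}}$ defined on the generating set $\bigsqcup_\beta \mathscr{L}_\beta$ extend consistently to the full $\mathscr{R}_\alpha$-module structure of $C^{\alpha}_{*}(Y, K;\Delta_{\mathscr{R}_\alpha})$, and (b) the Floer differential $d$ respects the bigrading, dropping $\deg_{\mathbb{Z}}$ by one on each nonzero term and being non-increasing on $\deg_{\mathbb{R}}$. Although (b) is not literally part of the statement, it is the substantive content that makes the bigrading useful, so I would include it.

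For (a), having fixed a lift $\tilde{\beta}$ for each critical point, every lift in $\mathscr{L}_\beta$ is uniquely of the form $\lambda^i \xi_\alpha^j \tilde{\beta}$ with $(i,j)\in\mathbb{Z}\oplus\mathbb{Z}$, so these elements furnish a $\mathbb{Z}$-basis of $C^{\alpha}_{*}(Y, K;\Delta_{\mathscr{R}_\alpha})$. Consistency of $\deg_{\mathbb{Z}}(\lambda^i\xi_\alpha^j\tilde{\beta})=8i+4j+\mathrm{gr}_{z}(\beta)$ under the action of $\mathscr{R}_\alpha$ reduces to the loop identity $\mathrm{gr}_{z}(\beta,\beta)=8k+4l$ for a gauge transformation of degree $(k,l)$, which is the content of Proposition~2.16 (i.e.~\cite[Lemma~3.1]{KM11}), together with the identification of multiplication by $\lambda$ and $\xi_\alpha$ with gauge transformations of degrees $(1,0)$ and $(0,1)$. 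The analogous consistency for $\deg_{\mathbb{R}}$ reduces to the transformation law $CS(g^{*}\tilde{B})=CS(\tilde{B})+k+2\alpha l$, which follows directly from evaluating the singular Chern-Simons functional on a gauge orbit and the normalization $CS(\tilde{\theta}_\alpha)=0$ fixed in Subsection~2.2.

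For (b), the matrix coefficient $\langle d\tilde{\beta}_0,\tilde{\beta}_1\rangle=\sum_{z}\#\breve{M}_{z}(\beta_0,\beta_1)_{0}\,\lambda^{-\kappa(z)}T^{\nu(z)}$ sums over homotopy classes $z$ with $\mathrm{gr}_{z}(\beta_0,\beta_1)=1$. Each path $z$ starting at $\tilde{\beta}_0$ lifts to $\widetilde{\mathcal{B}}(Y,K,\alpha)$ and terminates at some lift $\tilde{\beta}_1^{z}$ of $\beta_1$. Using Lemma~\ref{kappa and nu} applied to the cylinder, the coefficient $\lambda^{-\kappa(z)}T^{\nu(z)}$ coincides, in the basis induced by $\tilde{\beta}_0,\tilde{\beta}_1$, with the gauge-translation $\lambda^{-k}\xi_\alpha^{-l}$ that sends $\tilde{\beta}_1$ to $\tilde{\beta}_1^{z}$. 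The $\mathbb{Z}$-drop is then immediate:
\[
\deg_{\mathbb{Z}}(\lambda^{-\kappa(z)}T^{\nu(z)}\tilde{\beta}_1)=\deg_{\mathbb{Z}}(\tilde{\beta}_1^{z})=\deg_{\mathbb{Z}}(\tilde{\beta}_0)-\mathrm{gr}_{z}(\beta_0,\beta_1)=\deg_{\mathbb{Z}}(\tilde{\beta}_0)-1.
\]
For the $\mathbb{R}$-filtration, the identity $\kappa(A)=CS(\tilde{B}_0)-CS(\tilde{B}_1^{z})$ for a representing connection $A$ with the lifts endpoints of $z$, combined with the positivity $\kappa(z)\geq 0$ of the Yang-Mills energy, gives $\deg_{\mathbb{R}}(\lambda^{-\kappa(z)}T^{\nu(z)}\tilde{\beta}_1)=CS(\tilde{\beta}_1^{z})=CS(\tilde{\beta}_0)-\kappa(z)\leq CS(\tilde{\beta}_0)$. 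Taking the max over terms and passing to general $\gamma=\sum a_i\gamma_i$ via the definition of $\deg_{\mathbb{R}}$ yields $\deg_{\mathbb{R}}(d\gamma)\leq \deg_{\mathbb{R}}(\gamma)$.

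The main obstacle I expect is the clean identification in (b) of the formal weight $\lambda^{-\kappa(z)}T^{\nu(z)}$ (which, in the Novikov ring, carries real exponents depending on the analytic quantities $\kappa(z),\nu(z)$) with an integer-exponent element $\lambda^{-k}\xi_\alpha^{-l}$ of $\mathscr{R}_\alpha$. This step is where Lemma~\ref{kappa and nu} is used essentially, and it hinges on tracking the convention for the basepoint lift $\tilde{\theta}_\alpha$ consistently at both critical points; once this bookkeeping is set up, the remaining index and energy inequalities are routine.
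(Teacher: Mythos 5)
Your proposal is correct and follows essentially the same route as the paper: the proposition there is just a summary of the preceding construction, and your part (a) — reducing consistency of $\deg_{\mathbb{Z}}$ and $\deg_{\mathbb{R}}$ under the $\mathscr{R}_{\alpha}$-action to the loop-grading formula ${\rm gr}_{z}(\beta,\beta)=8k+4l$ and the transformation law $CS(g^{*}\tilde{B})=CS(\tilde{B})+k+2\alpha l$ together with the normalization $CS(\tilde{\theta}_{\alpha})=0$ — is exactly the justification implicit in the text. Your part (b) goes beyond the stated proposition (the paper only needs it later, when the filtered subcomplex is introduced), and there the appeal to $\kappa(z)\geq 0$ should strictly be to non-negativity of the \emph{perturbed} energy $\mathcal{E}_{\pi}(z)$, but this does not affect the statement being proved.
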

We write $C^{\alpha}_{*}(Y, K;\Delta_{\mathscr{R}_{\alpha}})^{[-\infty,\infty]}$ for the chain complex $C^{\alpha}_{*}(Y, K;\Delta_{\mathscr{R}_{\alpha}})$ with $\mathbb{Z}\times \mathbb{R}$-bigrading.

Let $\mathscr{C}^{*}\subset\mathbb{R}$ be a subset defined by $\mathscr{C}^{*}:=CS({\rm Crit}^{*})$.
For $R\in \mathbb{R}\setminus \mathscr{C}^{*}$, we define a subset
\[C^{\alpha}_{*}(Y, K;\Delta_{\mathscr{R}_{\alpha}})^{[-\infty,R]}:=\{\gamma\in C^{\alpha}_{*}(Y,K;\Delta_{\mathscr{R}_{\alpha}})^{[-\infty, \infty]}|{\rm deg}_{\mathbb{R}}(\gamma)<R\}.\]
This defines a subcomplex of $C^{\alpha}_{*}(Y, K;\Delta_{\mathscr{R}_{\alpha}})^{[-\infty, \infty]}$.
For two numbers $R_{0}, R_{1}\in (\mathbb{R}\setminus\mathscr{C}^{*})\cup\{\pm \infty\} $ such that $R_{0}\leq R_{1}$, we define a quotient complex as follows,
\begin{eqnarray*}C^{\alpha}_{*}(Y, K;\Delta_{\mathscr{R}_{\alpha}})^{[R_{0}, R_{1}]}&:=&C^{\alpha}_{*}(Y, K;\Delta_{\mathscr{R}_{\alpha}})^{[-\infty,R_{1}]}/C^{\alpha}_{*}(Y, K;\Delta_{\mathscr{R}_{\alpha}})^{[-\infty,R_{0}]}
\end{eqnarray*}
\begin{dfn}
For $R_{0}, R_{1}\in\mathbb{R}\cup\{\pm \infty\}$ such that $R_{0}\leq R_{1}$ and $R_{0}, R_{1}\notin \mathscr{C}^{*}\cup\mathscr{C'}^{*}$,
we call $C^{\alpha}_{*}(Y, K;\Delta_{\mathscr{R}_{\alpha}})$ a $[R_{0}, R_{1}] $-filtered chain complex. 
\end{dfn} 
Consider a negative definite cobordism $(W, S):(Y, K)\rightarrow (Y', K')$ with $\kappa_{0}=0$. 
A cobordism map $m_{(W, S)}$ on $C^{\alpha}_{*}(Y, K;\Delta_{\mathscr{R}_{\alpha}})$ induces a map  \[C^{\alpha}_{*}(Y, K;\Delta_{\mathscr{R}_{\alpha}})^{[-\infty,R]}\rightarrow C^{\alpha}_{*}(Y', K';\Delta_{\mathscr{R}_{\alpha}} )^{[-\infty,R]}\] by the restriction, and hence this induces a map \[m_{(W, S)}^{[R_{0}, R_{1}]}:C^{\alpha}_{*}(Y, K;\Delta_{\mathscr{R}_{\alpha}})^{[R_{0},R_{1}]}\rightarrow C^{\alpha}_{*}(Y', K'; \Delta_{\mathscr{R}_{\alpha}})^{[R_{0}, R_{1}]}.\]

As we described before,  the covering transformation on $\tilde{\mathcal{B}}(Y, K,\alpha)$ is generated by multiplications of elements $\lambda^{\pm 1}$and $\xi_{\alpha}^{\pm 1}$.
We also introduce other generators which fit to $\mathbb{Z}\times {\mathbb{R}}$-bigrading on $C^{\alpha}_{*}(Y, K;\Delta_{\mathscr{R}_{\alpha}})$.
\begin{figure}
    \centering
    \includegraphics[scale=0.75]{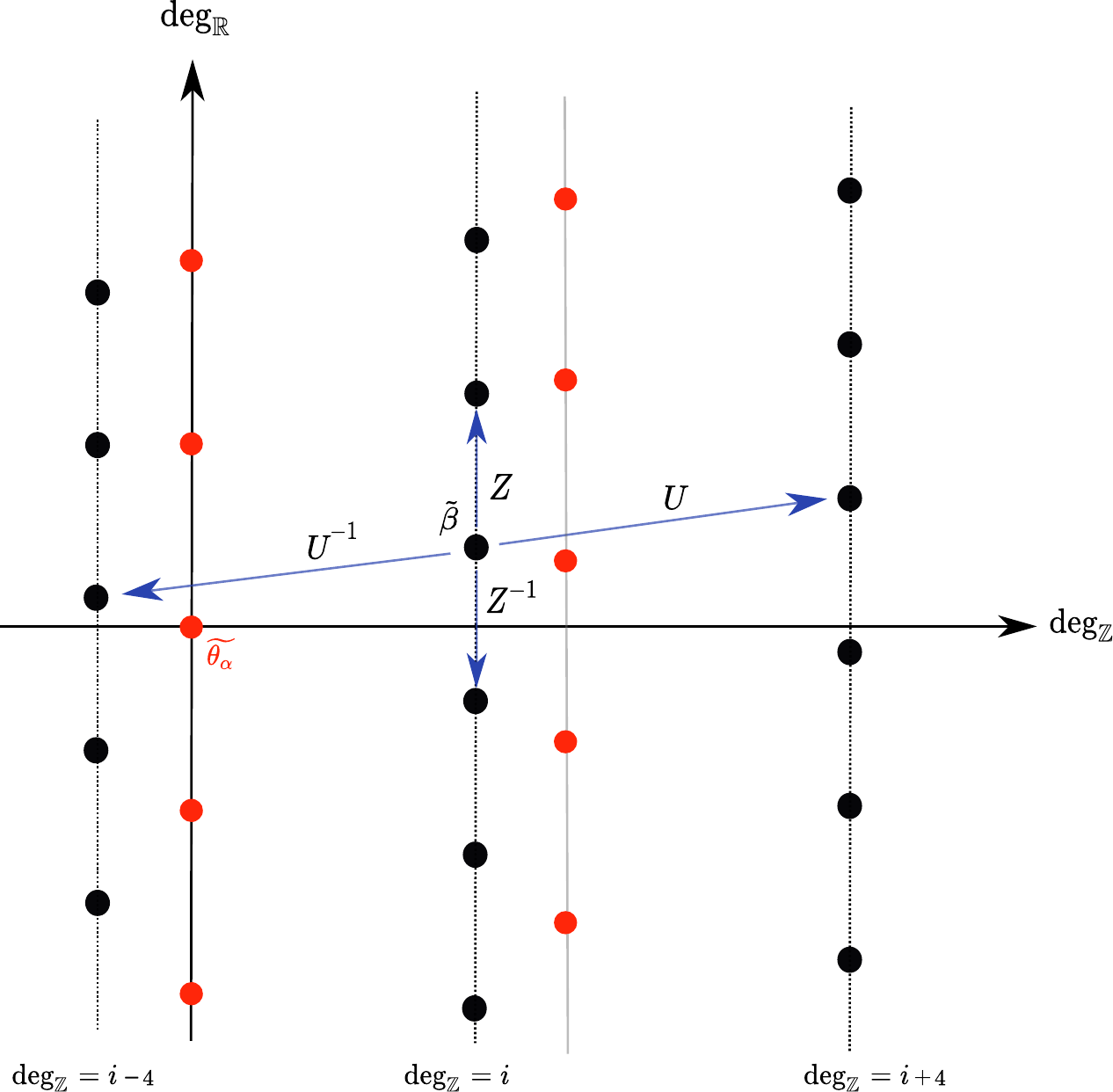}
    \caption{Black dots represent lifts of the irreducible flat connection $\beta$ and red dots represent lift of the reducible flat connection $\theta_{\alpha}$. }
    \label{lifts}
\end{figure}
Let us introduce two operators on $C^{\alpha}_{*}(Y, K;\Delta_{\mathscr{R}_{\alpha}})^{[-\infty, \infty]}$,
\[Z^{\pm 1}:=(\lambda^{1-4\alpha}T^{-4})^{\pm 1}, U^{\pm 1}:=(\lambda^{2\alpha}T^{2})^{\pm 1}.\]
These operators changes $\mathbb{Z}\times {\mathbb{R}}$-bigrading as follows,
\begin{itemize}
    \item{Operator $Z$:}
\begin{eqnarray}{\rm deg}_{\mathbb{Z}}(Z^{i}\tilde{\beta})&=&{\rm deg}_{\mathbb{Z}}(\tilde{\beta})\\{\rm deg}_{\mathbb{R}}(Z^{i}\tilde{\beta})&=&{\rm deg}_{\mathbb{R}}(\tilde{\beta})+(1-4\alpha)i.\end{eqnarray}

\item{Operator $U$:}
\begin{eqnarray}
{\rm deg}_{\mathbb{Z}}(U^{i}\tilde{\beta})&=&{\rm deg}_{\mathbb{Z}}(\tilde{\beta})+ 4i,\\
{\rm deg}_{\mathbb{R}}(U^{i}\tilde{\beta})&=&{\rm deg}_{\mathbb{R}}(\tilde{\beta})+ 2\alpha i.
\end{eqnarray}
\end{itemize}
See Figure \ref{lifts} for the case $\alpha<\frac{1}{4}$. 
Since $\lambda=ZU^{2}$, actions of two operations $Z$ and $U$ (and their inverses) on lifted critical points generate $C^{\alpha}_{*}(Y, K;\Delta_{\mathscr{R}_{\alpha}})^{[-\infty, \infty]}$.

\subsection{Maps $\delta_{1}$, $\delta_{2}$, $\Delta_{1}$, and $\Delta_{2}$}\label{delta}

We introduce operators which are defined by counting instantons on cylinder or  cobordism with a reducible limit. 
We remark that the sign convention of counting moduli spaces in this subsection is the same as that of \cite{DS1}.
Let $(Y, K)$ and $(Y', K')$ be two knots in integral homology $3$-spheres.
Let $\mathscr{S}$ be an integral domain over $\mathscr{R}_{\alpha}$.
In this subsection, we assume that the holonomy parameter $\alpha$ is chosen so that it $\Delta_{(Y, K)}(e^{4\pi i \alpha})\neq 0$ and $\Delta_{(Y', K')}(e^{4\pi i \alpha})\neq 0$.
\begin{dfn}
We define chain maps $\delta_{1}:C^{\alpha}_{*}(Y, K;\Delta_{\mathscr{S}})\rightarrow \mathscr{S}$ and $\delta_{2}:\mathscr{S}\rightarrow C^{\alpha}_{-2}(Y, K;\Delta_{\mathscr{S}})$ as follows.
\[\delta_{1}(\beta):=\sum_{z:\beta\rightarrow \theta_{\alpha}}\#\breve{M}_{z}(\beta, \theta_{\alpha})_{0}\lambda^{-\kappa(z)}T^{\nu(z)},
\]\[\delta_{2}(1):=\sum_{\substack{\beta\in \mathfrak{C}_{\pi}^{*}(Y, K,\alpha)\\ {\rm gr}(\beta)\equiv 2}}\sum_{z:\theta_{\alpha}\rightarrow \beta}\#\breve{M}_{z}(\theta_{\alpha},\beta)_{0}\lambda^{-\kappa(z)}T^{\nu(z)}\beta.\]

\end{dfn}
Since the compactified one dimensional moduli space $\breve{M}_{z}^{+}(\beta, \theta_{\alpha})_{1}$ has oriented boundaries
\[\bigcup_{\substack{\gamma\in \mathfrak{C}_{\pi}^{*}\\ {\rm gr}(\gamma)\equiv 1}}\bigcup_{\substack{z_{1}, z_{2}\\
z_{1}\circ z_{2}=z}}\breve{M}_{z_{1}}(\beta, \gamma)_{0}\times \breve{M}_{z_{2}}(\gamma, \theta_{\alpha})_{0},\]it is straightforward to check that $d\circ \delta_{1}=0$. Similarly, $\delta_{2}\circ d=0$ holds. 

Next, we define $\Delta_{1}:C^{\alpha}_{*}(Y, K;\Delta_{\mathscr{S}})\rightarrow \mathscr{S}$ and $\Delta_{2}:\mathscr{S}\rightarrow C^{\alpha}_{*}(Y', K';\Delta_{\mathscr{S}})$ for a cobordism of pairs $(W, S):(Y, K)\rightarrow (Y', K')$.
\begin{dfn}
\[\Delta_{1}(\beta):=\sum_{z}\#M_{z}(W,S;\beta, \theta'_{\alpha})_{0}\lambda^{\kappa_{0}-\kappa(z)}T^{\nu(z)-\nu_{0}},\]
\[\Delta_{2}(1):=\sum_{\beta'\in\mathfrak{C}_{\pi}^{*}(Y', K',\alpha)}\sum_{z}\#M_{z}(W,S;\theta_{\alpha}, \beta')_{0}\lambda^{\kappa_{0}-\kappa(z)}T^{\nu(z)-\nu_{0}}\beta'.\]
\end{dfn}

\begin{prp}\label{delt d}
Let $m=m_{(W, S)}:C_{*}^{\alpha}(Y, K;\Delta_{\mathscr{S}})\rightarrow C_{*}^{\alpha}(Y', K';\Delta_{\mathscr{S}})$ be a  cobordism map induced from a negative definite pair $(W,S): (Y, K)\rightarrow (Y', K')$.
Then the following relations:
\begin{enumerate}
\item[\rm (i)] $\Delta_{1}\circ d+\eta\delta_{1}-\delta_{1}'\circ m=0$,
\item[\rm (ii)]$d'\circ \Delta_{2}-\delta'_{2}\eta+m\circ \delta_{2}=0$.
\end{enumerate}
hold, where $\eta=\eta^{\alpha}(W, S)$ and $'$ denotes corresponding maps for the pair $(Y', K')$.
\end{prp}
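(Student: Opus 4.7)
I prove both relations by a standard counting argument for the oriented ends of appropriate one-dimensional moduli spaces on $(W,S)$ with one reducible endpoint. I treat (i) in detail; (ii) follows by an analogous argument with the roles of the incoming and outgoing ends reversed.

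For (i), fix $\beta\in \mathfrak{C}^*_\pi(Y,K,\alpha)$ and consider $\mathbb{M}_\beta := \bigcup_{z} M_z(W,S;\beta,\theta'_\alpha)_1$, with each component weighted by $\lambda^{\kappa_0-\kappa(z)}T^{\nu(z)-\nu_0}$. By Propositions \ref{regular pert} and \ref{perturbation red}, the irreducible part of $\mathbb{M}_\beta$ is a smooth oriented 1-manifold, and the Uhlenbeck/gluing analysis identifies its ends with three families of broken trajectories. Family (A) consists of cylindrical breaks on the incoming end at an irreducible critical point $\gamma$, namely $\breve{M}_{z_1}(\beta,\gamma)_0\times M_{z_2}(W,S;\gamma,\theta'_\alpha)_0$; their weighted count equals $\Delta_1(d\beta)$. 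Family (B) consists of cylindrical breaks on the outgoing end at an irreducible $\gamma'$, namely $M_{z_1}(W,S;\beta,\gamma')_0\times \breve{M}_{z_2}(\gamma',\theta'_\alpha)_0$; their weighted count equals $\delta'_1(m(\beta))$, with a sign opposite to (A) from the orientation convention on outgoing breaks. Family (C) consists of breaks on the incoming end at the reducible $\theta_\alpha$: the grading formula (\ref{adding formula for grading}) forces ${\rm gr}_{z_1}(\beta,\theta_\alpha)=1$ and hence that the residual factor on $(W,S)$ is a reducible $A_L$ with ${\rm ind}(D_{A_L})=-1$, i.e., precisely a minimal reducible from Definition \ref{negativedef}. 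Each such end arises from gluing the cylindrical index-$1$ trajectory to $A_L$ along the codimension-one reducible stratum near $\theta_\alpha$, and the orientation conventions of the preceding subsection assign it the sign $(-1)^{c_1(L)^2}$; summing over minimal reducibles produces exactly $\eta^\alpha(W,S)\cdot \delta_1(\beta) = \eta\cdot \delta_1(\beta)$. Breaks at $\theta'_\alpha$ on the outgoing end contribute nothing, since the required residual cylindrical factor $\breve{M}(\theta'_\alpha,\theta'_\alpha)$ would need index $-1$, excluded by non-degeneracy (Proposition \ref{non-deg red}). The vanishing of the signed boundary count of $\overline{\mathbb{M}}_\beta$ then yields $\Delta_1(d\beta) - \delta'_1(m(\beta)) + \eta\delta_1(\beta) = 0$, which is relation (i).

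For (ii), the same analysis applied to $\bigcup_z M_z(W,S;\theta_\alpha,\beta')_1$ for $\beta'\in \mathfrak{C}^*_\pi(Y',K',\alpha)$ produces three types of ends contributing $m(\delta_2(1))$ and $d'(\Delta_2(1))$ from irreducible cylindrical breaks on the incoming and outgoing sides, together with $\eta\cdot\delta'_2(1)$ from breaks at the reducible $\theta'_\alpha$ on the outgoing end; this last family uses the minimal reducibles $A_L$ on $(W,S)$ glued to cylindrical trajectories based at $\theta'_\alpha$. Summing with the appropriate signs yields $m\delta_2(1) + d'\Delta_2(1) - \eta\delta'_2(1) = 0$, which is relation (ii). The principal technical obstacle throughout is the orientation bookkeeping in family (C): verifying that each glued minimal reducible $A_L$ contributes to $\partial\overline{\mathbb{M}}_\beta$ exactly the weight $(-1)^{c_1(L)^2}\lambda^{\kappa_0-\kappa(A_L)}T^{\nu(A_L)-\nu_0}$ appearing in $\eta^\alpha(W,S)$. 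This requires combining the gluing isomorphism for determinant lines at a reducible break with the homology-orientation conventions from the preceding subsection, and tracking the signs of the reducible strata carefully through the Uhlenbeck compactification near $\theta_\alpha$ (respectively $\theta'_\alpha$) on the relevant cylindrical end.
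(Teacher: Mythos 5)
Your proposal is correct and follows essentially the same route as the paper: relation (i) is obtained by counting the oriented ends of the one-dimensional moduli spaces $M_{z}(W,S;\beta,\theta'_{\alpha})_{1}$, with breaks at irreducibles giving the $\Delta_{1}\circ d$ and $\delta'_{1}\circ m$ terms and the break through the reducible factoring as a cylindrical trajectory times a minimal reducible (index $-1$) whose signed weighted count is $\eta^{\alpha}(W,S)$, and relation (ii) is the analogous count for $M_{z}(W,S;\theta_{\alpha},\beta')_{1}$. Your sign bookkeeping differs from the paper's only by an overall convention and yields the same identities.
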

\begin{proof}
The relation $\rm (i)$ is given by counting ends of each component of the one-dimensional moduli space $M_{z}(W, S; \beta, \theta'_{\alpha})_{1}$ as in \cite[Proposition 3.10 ]{DS1}.  Boundary components of $M_{z}(W, S; \beta, \theta'_{\alpha})_{1}$ with the induced orientation are given by 
\begin{itemize}
\item[(a)]\[-\bigcup_{\beta_{1}\in \mathfrak{C}^{*}_{\pi}}\bigcup_{\substack{z_{1}, z_{2}\\z_{1}\circ z_{2}=z}}\breve{M}_{z_{1}}(\beta, \beta_{1})_{0}\times M_{z_{2}}(W, S; \beta_{2}, \theta'_{\alpha})_{0}, 
\]
\item[(b)] \[\bigcup_{\gamma'\in \mathfrak{C'}^{*}_{\pi}}\bigcup_{\substack{ z_{1}, z_{2}\\ z_{1}\circ z_{2}=z}} M_{z_{1}}(W, S;\beta, \beta')_{0}\times \breve{M}_{z_{2}}(\beta', \theta_{\alpha})_{0},\]
\item[(c)]\[-\bigcup_{\substack{ z_{1}, z_{2}\\z_{1}\circ z_{2}=z}}\breve{M}_{z_{1}}(\beta, \theta_{\alpha})_{0}\times  M_{z_{2}}(W, S; \theta_{\alpha}, \theta'_{\alpha})_{0}\]
\end{itemize}
Note that product orientations of $\breve{M}_{z_{1}}(\beta, \gamma)_{0}\times M_{z_{2}}(W, S; \gamma, \theta'_{\alpha})_{0}$ and $\breve{M}_{z'}(\beta, \theta_{\alpha})_{0}\times  M(W, S; \theta_{\alpha}, \theta'_{\alpha})_{0}$ are opposite to orientations induced as boundaries of $M_{z}(W, S;\beta, \theta_{\alpha})_{1}$.
The signed counting of boundary components of type (a) and type (b) contribute to $-\Delta_{1}\circ  d(\beta)$  and $\delta'_{1}\circ m(\beta)$ respectively. 
Since $M(W, S; \theta_{\alpha}, \theta'_{\alpha})_{0}$ consists of minimal reducible elements, the counting of (c) gives $-\eta\delta_{1}(\beta)$.  This proves (i).
The relation (ii) can be similarly proved considering end of the one-dimensional moduli space $M_{z}(W,S;\theta_{\alpha}, \beta')_{1}$.
 \end{proof}
 
 \subsection{Maps $v$ and $\mu$}\label{vmu}
In this subsection, we introduce maps induced from cobordism of pairs $(W,S)$ with an embedded curve $\gamma\subset S$. 
Our assumptions for the choice of holonomy  parameter $\alpha$ and the coefficient $\mathscr{S}$ are same as the previous subsection.
We remark that the sign convention of moduli spaces in this subsection is also same as that of \cite{DS1}. 
In particular, if $f:M\rightarrow N$ be a smooth map between oriented manifolds then $f^{-1}(y)$ for a regular value $y\in N$ is oriented so that 
\[T_{x}M=N_{x}f^{-1}(y)\oplus T_{x}f^{-1}(y)\]
is orientation preserving, where $N_{x}f^{-1}(y)$ is a fiber of normal bundle for $f^{-1}(y)$ and its orientation is induced from that of $N$.
The mapping degree ${\rm deg}(f)$ is defined by using this orientation.

Assume that $\gamma:[0, 1]\rightarrow S$ is a smoothly embedded loop.
Fix a regular neighborhood $N_{\gamma}(\epsilon)$ of $\gamma$ in $W$ with radius $\epsilon>0$ and fix a base point $x_{0}\in\partial N_{\gamma}(\epsilon)$. We take a Seifert framing $\tilde{\gamma}_{\epsilon}\subset \partial N_{\gamma}(\epsilon) $ of $\gamma$ so that it pass through a base point $x_{0}$. The bundle decomposition $E=L\oplus L^{*}$ over $S\subset W$ extends to $N_{\gamma}(\epsilon)$, and the holonomy of adjoint connection of $[A]\in \mathcal{B}(W, S;\beta, \beta')$ yields ${\rm Hol}_{\tilde{\gamma}_{\epsilon}}(A^{\rm ad})\in S^{1}$.
Put
\[h^{\gamma}_{\beta \beta'}(A):=\lim_{\epsilon\rightarrow 0}{\rm Hol}_{\tilde{\gamma}_{\epsilon}}(A^{\rm ad}).\]
 The construction above gives a map
\begin{equation}
h^{\gamma}_{\beta \beta'}:\mathcal{B}(W, S, \alpha;\beta,\beta')\rightarrow S^{1}.\label{holmap}
\end{equation}
Note that this map itself depends on the choice of the Seifert framing of $\gamma$ and orientations of $K$ and $S$. 
However, such dependence on auxiliary data can be ignored to define the following map.  
\begin{dfn}
Let $\beta$ and $\beta'$ be irreducible critical points of the (perturbed) Chern-Simons functional on $(Y, K)$ and $(Y', K')$ respectively.
We define a map $\mu=\mu_{(W,S, \gamma)}:C^{\alpha}_{*}(Y, K;\Delta_{\mathscr{S}})\rightarrow C^{\alpha}_{*}(Y', K';\Delta_{\mathscr{S}})$ by
\[\mu(\beta)=\sum_{\beta'\in \mathfrak{C}_{\pi}^{*}(Y, K, \alpha)}\sum_{z:\beta\rightarrow \beta'}{\rm deg}\left(h^{\gamma}_{\beta\beta'}|_{M_{z}(W,S;\beta, \beta')_{1}}\right)\lambda^{\kappa_{0}-\kappa(z)}T^{\nu(z)-\nu_{0}}\beta'\]
for each $\beta\in \mathfrak{C}_{\pi}^{*}(Y, K, \alpha)$.

\end{dfn}
The map $\mu$ satisfies the following relation.
\begin{prp}
\[d'\circ \mu-\mu\circ d=0.\]
\end{prp}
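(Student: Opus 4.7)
I plan to carry out the standard cobordism-style argument: analyze the signed boundary of the $1$-dimensional manifold $\Gamma_z := (h^{\gamma}_{\beta\beta'})^{-1}(p) \cap M_z(W, S; \beta, \beta')_2$ for a generic regular value $p \in S^1$, and extract the chain-map relation from this boundary count. Fixing $\beta \in \mathfrak{C}^*_\pi(Y, K, \alpha)$ and $\beta' \in \mathfrak{C}^*_\pi(Y', K', \alpha)$, I compute the pairing $\langle(d' \mu - \mu d)(\beta), \beta'\rangle$ by examining the $2$-dimensional moduli spaces $M_z(W, S; \beta, \beta')_2$ for all homotopy classes $z$ with $\mathrm{gr}_z(\beta, \beta') = 2$. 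Proposition \ref{perturbation red} together with our generic choice of perturbations ensures that each such moduli space is a smooth oriented manifold on its irreducible part, that $h^\gamma_{\beta\beta'}$ is smooth there, and (using Sard's theorem together with the finiteness of contributing $z$ below any fixed energy bound, as in the compactness and finiteness results preceding Section \ref{cobordism}) that a single $p$ can be chosen as a simultaneous regular value of $h^{\gamma}$ on all the relevant $1$- and $2$-dimensional moduli spaces.

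Next, I study the compactification $\overline{\Gamma}_z$, whose codimension-one boundary is inherited from the strata of $\overline{M}_z(W, S; \beta, \beta')_2$ and consists of three types of single-break configurations:
\begin{itemize}
\item[\rm(a)] breaks on the $(Y, K)$ end through an irreducible $\gamma_0 \in \mathfrak{C}^*_\pi(Y, K, \alpha)$, giving strata $\breve{M}_{z_1}(\beta, \gamma_0)_0 \times M_{z_2}(W, S; \gamma_0, \beta')_1$;
\item[\rm(b)] breaks on the $(Y', K')$ end through an irreducible $\gamma_0'$, giving $M_{z_1}(W, S; \beta, \gamma_0')_1 \times \breve{M}_{z_2}(\gamma_0', \beta')_0$;
\item[\rm(c)] breaks through the reducible $\theta_\alpha$ or $\theta'_\alpha$, carrying an extra $S^1$-parameter from the residual $U(1)$-stabilizer.
\end{itemize}
Intersecting (a) and (b) with $(h^{\gamma})^{-1}(p)$ and applying the orientation conventions of the Orientation subsection, the signed point counts yield (up to an overall sign) exactly the pairings $\langle \mu d(\beta), \beta'\rangle$ and $\langle d' \mu(\beta), \beta'\rangle$: in (a) the factor $\#\breve{M}_{z_1}(\beta, \gamma_0)_0$ is the structure coefficient of $d$ while $\deg(h^{\gamma}|_{M_{z_2}(W, S; \gamma_0, \beta')_1})$ is the structure coefficient of $\mu$, and symmetrically for (b).

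The crux of the proof is showing that strata of type (c) do \emph{not} contribute. A typical such piece has the form $\breve{M}_{z_1}(\beta, \theta_\alpha)_0 \times_{S^1} M_{z_2}(W, S; \theta_\alpha, \beta')_0$, a $1$-dimensional $S^1$-family of glued configurations; the index count $\mathrm{gr}_{z_1}(\beta,\theta_\alpha) + 1 + \mathrm{gr}_{z_2}(\theta_\alpha,\beta') = 2$ forces both pieces to be $0$-dimensional, and the $+1$ is precisely the residual $S^1$-gluing. This $S^1$ acts by the constant diagonal gauge transformation $g_\theta = \mathrm{diag}(e^{i\theta}, e^{-i\theta})$ with respect to the reducible splitting $E = L \oplus L^*$; along $N_\gamma \subset W$ this splitting agrees with the one used to define $h^\gamma$, the adjoint bundle decomposes as $\underline{\mathbb{R}} \oplus L^{\otimes 2}$, and $\mathrm{Ad}(g_\theta)$ acts on $L^{\otimes 2}$ by multiplication by $e^{2i\theta}$, which commutes with the abelian holonomy. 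Consequently $h^{\gamma}$ is \emph{constant} along the $S^1$-family, and for $p$ chosen to avoid this constant value, $(h^{\gamma})^{-1}(p)$ is disjoint from every reducible stratum.

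Combining these observations, the signed boundary count of $\overline{\Gamma}_z$ vanishes; summing over $z$ yields $\langle(\mu d - d'\mu)(\beta), \beta'\rangle = 0$ for all $\beta, \beta'$, whence $d' \circ \mu - \mu \circ d = 0$. The main obstacle is the analysis of (c): one must identify the local Kuranishi model at reducibles via Proposition \ref{perturbation red}, verify the $S^1$-structure of the gluing, and confirm the gauge-invariance of the $L^{\otimes 2}$-holonomy under the residual stabilizer. A secondary technical point is to ensure that the signed counts define elements of $\mathscr{S}$, which follows from the Novikov-type weighted structure of $\mathscr{R}_\alpha$ combined with the finiteness of contributing homotopy classes below any fixed energy bound.
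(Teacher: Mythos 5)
Your argument is essentially the paper's own proof: one cuts the compactified two-dimensional moduli spaces $M^{+}_{z}(W,S;\beta,\beta')_{2}$ down by $(h^{\gamma}_{\beta\beta'})^{-1}(s)$ for a generic regular value $s$ and counts the ends of the resulting $1$-manifold, with the faces having a $1$-dimensional cobordism factor producing exactly $\mu d$ and $d'\mu$. The only remarks worth adding are that the codimension-one boundary also contains faces of the form $\breve{M}_{z'}(\beta,\beta_{1})_{1}\times M_{z''}(W,S;\beta_{1},\beta')_{0}$ (and the mirror faces on the outgoing end), which your enumeration omits but which are excluded for the same reason as your type (c) — since $\gamma$ is compactly supported in the interior of $S$, the value of $h^{\gamma}$ on any stratum whose cobordism factor is $0$-dimensional (including the reducible strata with their $S^{1}$-gluing parameter) lies in a finite set, so a generic $s$ avoids it — and that your explicit treatment of the reducible breaks makes precise what the paper leaves implicit in the remark that $(h^{\gamma}_{\beta\beta'})^{-1}(s)$ only meets the faces with $i=1$.
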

\begin{proof}
Consider the compactified 2-dimensional moduli space $M_{z}^{+}(W,S;\beta, \beta')_{2}$ which has the oriented boundary of the following types
\[-\bigcup_{\beta_{1}\in\mathfrak{C}^{*}_{\pi}(Y, K,\alpha)}\bigcup_{z'\circ z''=z}\breve{M}_{z'}^{+}(\beta, \beta_{1})_{i-1}\times M_{z''}^{+}(W,S;\beta_{1},\beta')_{2-i} \]
\[\bigcup_{\beta'_{1}\in \mathfrak{C}^{*}_{\pi}(Y', K',\alpha)}\bigcup_{z'\circ z''=z}M^{+}_{z'}(W, S;\beta, \beta'_{1})_{2-i}\times \breve{M}^{+}_{z''}(\beta'_{1},\beta')_{i-1}\]where $i=1$ or $2$. We count the boundary of the 1-dimensional submanifold $(h^{\gamma}_{\beta\beta'})^{-1}(s)\subset M^{+}(W, S; \beta, \beta')$ for a regular value $s\in S^{1}$. Since the closed loop $\gamma$ is supported on a compact subset of $S$, $(h^{\gamma}_{\beta\beta'})^{-1}(s)$ intersects faces of the boundary of $M^{+}_{z}(W, S;\beta,\beta')$ with $i=1$. Thus we have
\[\#((h^{\gamma}_{\beta\beta'})^{-1}(s)\cap \partial M^{+}_{z}(W, S; \beta,\beta')_{2})=d'\circ \mu-\mu\circ d=0.\]

\end{proof}

We consider the case when $(W, S)=\mathbb{R}\times (Y,K)$ and $\gamma\subset S$ is a curve $\mathbb{R}\times\{y_{0}\}$ where $y_{0}$ is a fixed base point in $K$. Taking holonomy along $\gamma$, we obtain a map
\[h_{\beta_{1}\beta_{2}}:\mathcal{B}(Y, K,\alpha;\beta_{1},\beta_{2})\rightarrow S^{1}\]
by the similar way as (\ref{holmap}), where $\beta_{i}\ (i=1, 2)$ are irreducible critical points of the Chern-Simons functional.

The holonomy map $h_{\beta_{1}\beta_{2}}$  be modified to extend broken trajectories as in \cite{Don}.
Such modification of  $h_{\beta_{1}\beta_{2}}$ near the broken trajectories gives the maps
 \[H_{\beta_{1}\beta_{2}}:\breve{M}(\beta_{1},\beta_{2})_{d}\rightarrow S^{1}\]
 with the following properties. 
 \begin{itemize}
 \item[(i)] $H_{\beta_{1}\beta_{2}}=h_{\beta_{1}\beta_{2}}$ on the complement of a small neighborhood of $\partial \breve{M}^{+}(\beta_{1}, \beta_{2})_{d}$.
 \item[(ii)]$H_{\beta_{1}\beta_{3}}([A_{1}], [A_{2}])=H_{\beta_{1}\beta_{2}}([A_{1}])\cdot H_{\beta_{2}\beta_{3}}([A_{2}])$ on an unparametrized broken trajectory $\breve{M}^{+}(\beta_{1}, \beta_{2})_{i-1}\times \breve{M}^{+}({\beta_{2}, \beta_{3}})_{d-i}$,
 \item[(iii)] $H_{\beta_{1}\beta_{2}}=1$ if ${\rm dim}\breve{M}(\beta_{1}, \beta_{2})=0$. 
 \end{itemize}
 \begin{dfn}
 We define the $v$-map $v:C^{\alpha}_{*}(Y, K;\Delta_{\mathscr{S}})\rightarrow C^{\alpha}_{*}(Y, K;\Delta_{\mathscr{S}})$ by
 \[v(\beta_{1})=\sum_{\beta_{2}\in \mathfrak{C}^{*}_{\pi}}\sum_{z:\beta_{1}\rightarrow \beta_{2}}{\rm deg}\left(H_{\beta_{1}\beta_{2}}|_{\breve{M}_{z}(\beta_{1}, \beta_{2})_{1}}\right)\lambda^{-\kappa(z)}T^{\nu(z)}\beta_{2}.\]
\end{dfn}
$v$-map does not commute with the differential of the chain complex. However, the following relation holds. 
\begin{prp}\label{DonFur}
\[dv-vd-\delta_{2}\delta_{1}=0.\]
\end{prp}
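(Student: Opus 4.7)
The plan is to prove the relation by counting, with signs, the boundary of a generic oriented $1$-manifold inside a $2$-dimensional moduli space. Fix irreducible critical points $\beta_1,\beta_2\in\mathfrak{C}^{*}_{\pi}$ and a homotopy class $z$ with $\mathrm{gr}_z(\beta_1,\beta_2)=3$, so that $\breve{M}_z(\beta_1,\beta_2)$ has dimension $2$. Pick a regular value $s\in S^{1}$ of $H_{\beta_1\beta_2}$; the preimage $H_{\beta_1\beta_2}^{-1}(s)\subset\breve{M}_z(\beta_1,\beta_2)_2$ is a $1$-manifold, and its boundary (counted with signs, after compactifying inside $\breve{M}^+_z$) is zero. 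The coefficient of $\beta_2$ in $(dv-vd-\delta_2\delta_1)(\beta_1)$, weighted by $\lambda^{-\kappa(z)}T^{\nu(z)}$, will be identified with this boundary count.

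To carry this out, I first enumerate the codimension-one strata of $\partial \breve{M}^+_z(\beta_1,\beta_2)_2$. Using the additivity of the grading, the faces through an irreducible critical point $\gamma\in\mathfrak{C}^{*}_{\pi}$ are
\[
\breve{M}_{z_1}(\beta_1,\gamma)_a\times\breve{M}_{z_2}(\gamma,\beta_2)_b,\qquad (a,b)\in\{(0,1),(1,0)\},\ z_1\circ z_2=z,
\]
while the face through the reducible $\theta_\alpha$, using the shift by $+1$ in (\ref{adding formula for grading}) coming from the $S^{1}$-stabilizer, takes the form
\[
\breve{M}_{z_1}(\beta_1,\theta_\alpha)_0\times \breve{M}_{z_2}(\theta_\alpha,\beta_2)_0\times S^{1}.
\]
On each face I evaluate $H$ via the multiplicativity property (ii) and the triviality property (iii). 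On the face of type $(0,1)$, $H_{\beta_1\gamma}\equiv 1$ by (iii), so $H_{\beta_1\beta_2}$ reduces to $H_{\gamma\beta_2}$ on the second factor; its preimage of $s$ contributes $\#\breve{M}_{z_1}(\beta_1,\gamma)_0\cdot \deg(H_{\gamma\beta_2}|_{\breve{M}_{z_2}(\gamma,\beta_2)_1})$, which is exactly the $\beta_2$-coefficient in $v(d(\beta_1))$ in the class $z$. Symmetrically, the face of type $(1,0)$ contributes the $\beta_2$-coefficient in $d(v(\beta_1))$, with the opposite sign coming from the standard orientation convention on the boundary of a broken trajectory.

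The remaining step, and the main obstacle, is the reducible face. On the $S^{1}$-factor from the stabilizer of $\theta_\alpha$, the adjoint holonomy of a glued configuration rotates with total degree one (this is the reason the factor is present in the first place: it parametrizes the freedom of the gluing phase at $\theta_\alpha$, which is exactly where the holonomy lifts the rotation). Combined with (ii) and (iii) on the two $0$-dimensional factors, $H_{\beta_1\beta_2}$ restricted to $\breve{M}_{z_1}(\beta_1,\theta_\alpha)_0\times \breve{M}_{z_2}(\theta_\alpha,\beta_2)_0\times S^{1}$ is pulled back from the $S^{1}$-factor by a degree $1$ map, so $H^{-1}(s)$ meets this face in $\#\breve{M}_{z_1}(\beta_1,\theta_\alpha)_0\cdot \#\breve{M}_{z_2}(\theta_\alpha,\beta_2)_0$ points, with a uniform sign. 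Summing over $z_1,z_2$ with $z_1\circ z_2=z$ and weighting by $\lambda^{-\kappa(z)}T^{\nu(z)}$ produces exactly $\delta_2(1)\cdot\delta_1(\beta_1)$ in the $\beta_2$-coefficient, using the additivity $\kappa(z)=\kappa(z_1)+\kappa(z_2)$ and $\nu(z)=\nu(z_1)+\nu(z_2)$.

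Putting the three contributions together with their signs and using that the total signed boundary of the compact oriented $1$-manifold $H^{-1}_{\beta_1\beta_2}(s)$ is zero yields
\[
\langle (dv-vd-\delta_2\delta_1)(\beta_1),\beta_2\rangle=0
\]
for every $\beta_2$. Finally I would verify that the sum over $z$ is well-defined in the coefficient ring $\mathscr{S}$ by invoking the energy-bounded finiteness of contributing trajectories; this is the same mechanism used throughout the Floer-theoretic constructions in this paper and in \cite{DS1}. The delicate point, and the only nontrivial input beyond bookkeeping, is the degree-one computation of $H$ on the stabilizer $S^{1}$ at $\theta_\alpha$; once this is established the rest is a routine boundary count.
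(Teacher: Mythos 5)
Your proposal is correct and follows essentially the same route as the paper: cut down the two-dimensional moduli space $\breve{M}_z(\beta_1,\beta_2)_2$ by $H_{\beta_1\beta_2}^{-1}(s)$, and count the three types of codimension-one faces, with the breaks at irreducibles giving the $dv$ and $-vd$ terms and the break through $\theta_\alpha$ giving $-\delta_2\delta_1$. The paper makes your key degree-one claim at the reducible precise via the ungluing map onto $(0,\infty)\times\breve{M}_{z'}(\beta_1,\theta_\alpha)_0\times S^1\times\breve{M}_{z''}(\theta_\alpha,\beta_2)_0$, under which the cut-down locus is exactly the slice $\{s\}$ of the gluing circle --- the same mechanism you describe.
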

\begin{proof}
We consider the 1-dimensional moduli space \[\breve{M}_{\gamma, z}(\beta_{1}, \beta_{2})_{1}:=\breve{M}_{z}(\beta_{1}, \beta_{2})_{2}\cap (H_{\beta_{1}\beta_{2}})^{-1}(s)\]
for a generic $s\in S^{1}\setminus \{1\}$.
As the argument in the proof of Proposition 3.16 in \cite{DS1}, the boundary of  $\breve{M}_{z}(\beta_{1}, \beta_{2})$ consists of unparametrized broken trajectory of the form,
\[\mathfrak{a}=([A_{1}],[A_{2}])\]
and there are following cases.

\item[(I)] $\mathfrak{a}\in \bigcup_{z'\circ z''=z}\breve{M}_{z'}(\beta_{1}, \beta_{3})_{0}\times\breve {M}_{z''}(\beta_{3},\beta_{2})_{1}$ where $\beta_{3}\in \mathfrak{C}^{*}_{\pi }(Y, K, \alpha)$,
\item[(II)] $\mathfrak{a}\in \bigcup_{z'\circ z''=z}\breve{M}_{z'}(\beta_{1}, \beta_{3})_{1}\times\breve{M}_{z''}(\beta_{3},\beta_{2})_{0}$ where $\beta_{3}\in \mathfrak{C}^{*}_{\pi}(Y, K,\alpha)$,
\item[(III)]$[A]\in\breve{M}_{z}(\beta_{1}, \beta_{2})$ factors through the reducible critical point $\theta_{\alpha}$.

 For the case (I), the corresponding oriented boundary components of $(H_{\beta_{1}\beta_{2}})^{-1}(s)\cap\breve{M}^{+}_{z}(\beta_{1}, \beta_{2})_{2}$  are 
 \begin{eqnarray*} (H_{\beta_{1}\beta_{2}})^{-1}(s)\cap-\left(\bigcup_{\beta_{3}\in \mathfrak{C}^{*}_{\pi}(Y, K, \alpha)}\bigcup_{z'\circ z''=z}\breve{M}_{z'}(\beta_{1}, \beta_{3})_{0}\times \breve{M}_{z''}(\beta_{3}, \beta_{2})_{1}\right)\\
 =-\bigcup_{\beta_{3}\in \mathfrak{C}^{*}_{\pi}(Y, K, \alpha)}\bigcup_{z'\circ z''=z}\breve{M}_{z'}(\beta_{1}, \beta_{3})_{0}\times (H_{\beta_{3}\beta_{2}})^{-1}(s)\cap\breve{M}_{z''}(\beta_{3}, \beta_{2})_{1}.
 \end{eqnarray*}since $H_{\beta_{1}\beta_{3}}=1$. This contributes the term $-\langle vd(\beta_{1}), \beta_{2}\rangle$. 
 For the case (II), a similar argument shows that this contributes the term $\langle dv(\beta_{1}),\beta_{2}\rangle$.
The case (III) requires gluing theory at reducible. Let $U$ be an open subset of $\breve{M}(\beta_{1}, \beta_{2})$ which is given by
\[U=\{[A]\in \breve{M}(\beta_{1}, \beta_{2})|\|A-\pi^{*}\theta_{\alpha}\|_{L^{2}_{1}((-1, 1)\times (Y\setminus K))}<\epsilon\}.\]
$U_{z}$ denotes the restriction of $U$ to $M_{z}(\beta_{1}, \beta_{2})$.
There is a 'ungluing' map 
\[\breve{M}_{z}(\beta_{1}, \beta_{2})\supset U_{z}\xrightarrow{\psi} (0, \infty)\times \bigcup_{z'\circ z''=z} \breve{M}_{z'}(\beta_{1}, \theta_{\alpha})_{0}\times S^{1}\times \breve{M}_{z''}(\theta_{\alpha}, \beta_{3})_{0}.\]
For $T>0$ large enough, consider a subset  $U_{z,T}=\psi^{-1}(\{(t, [A_{1}],s,[A_{2}])\in U_{z}|t>T\})$ of $U_{z}$. Then \[\psi(M_{z}(\beta_{1}, \beta_{2})\cap U_{z,T})=(T,\infty)\times \bigcup_{z'\circ z''=z} \breve{M}_{z'}(\beta_{1}, \theta_{\alpha})\times \{s\}\times \breve{M}_{z''}(\theta_{\alpha}, \beta_{2}).\]
Thus corresponding boundaries of one-manifold $H_{\beta_{1}\beta_{2}}^{-1}(s)\cap\breve{M}^{+}(\beta_{1}, \beta_{2})_{2}$ with induced orientations are given by
\[ -\bigcup_{z'\circ z''=z} \breve{M}_{z'}(\beta_{1}, \theta_{\alpha})_{0}\times \breve{M}_{z''}(\theta_{\alpha}, \beta_{2})_{0}\]The sign counting of this contributes to the term $-\langle \delta_{2}\delta_{1}(\beta_{1}), \beta_{2}\rangle$. 
Finally, we obtain the relation $\langle(dv-vd-\delta_{2}\delta_{1})(\beta_{1}),\beta_{2}\rangle=0$.
\end{proof}
 
 Next, we consider a negative definite pair 
  $(W, S):(Y,K)\rightarrow (Y',K')$ with an embedded curve $\gamma:[0,1]\rightarrow S$ such that $\gamma(0)=p\in K$  and $\gamma(1)=p'\in K'$. We identify $\gamma$ with its image. We define \[\gamma^{+}=(-\infty, 0]\times \{p\}\cup\gamma \cup  [0, \infty)\times\{p'\}\subset S^{+} .\]
Assume that $\beta\in \mathfrak{C}^{*}_{\pi}(Y, K, \alpha)$ and $\beta' \in \mathfrak{C}^{*}_{\pi}(Y', K', \alpha)$.  For each $A\in \mathcal{A}(W, S;\beta, \beta')$, taking the holonomy of $A^{\rm ad}$ along a path $\gamma^{+}$ and we obtain a map
 \[h^{\gamma}_{\beta\beta'}:\mathcal{B}(W,S; \beta, \beta')\rightarrow S^{1}\]
 and its modification \[H^{\gamma}_{\beta\beta'}:M^{+}(W,S;\beta, \beta')_{d}\rightarrow S^{1}\]
so that $H^{\gamma}_{\beta\beta'}=1$ on 0-dimensional unparametrized broken trajectories.
\begin{dfn}
We define a map $\mu=\mu_{(W, S, \gamma)}:C^{\alpha}_{*}(Y,K;\Delta_{\mathscr{S}})\rightarrow C^{\alpha}_{*}(Y',K';\Delta_{\mathscr{S}})$ by
\[\mu(\beta)=\sum_{\beta' \in \mathfrak{C}_{\pi}^{*}(Y', K',\alpha)}\sum_{z:\beta\rightarrow \beta'}{\rm deg}\left(H^{\gamma}_{\beta\beta'}|_{M_{z}(W,S,\beta,\beta')_{1}}\right)\lambda^{\kappa_{0}-\kappa(z)}T^{\nu(z)-\nu_{0}}\beta'.\]
\end{dfn}
\begin{prp}\label{rel mu}
Let $(W, S):(Y, K)\rightarrow (Y', K')$ be a negative definite pair.
$m$ and $ \mu$ denote its corresponding maps as above.
Then
\[d'\mu+\mu d+\Delta_{2}\delta_{1}-\delta'_{2}\Delta_{1}-v'm+m v=0,\]
where $'$ denotes corresponding maps for the pair $(Y', K')$.
\end{prp}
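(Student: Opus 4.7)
The plan is to argue as in Propositions \ref{delt d} and \ref{DonFur}: count the signed boundary points of a $1$-manifold obtained by cutting a $2$-dimensional moduli space with a holonomy level set. Fix irreducible critical points $\beta\in\mathfrak{C}^{*}_{\pi}(Y,K,\alpha)$ and $\beta'\in\mathfrak{C}^{*}_{\pi}(Y',K',\alpha)$, a homotopy class $z$ with ${\rm ind}\,D_A=2$, and a generic value $s\in S^{1}\setminus\{1\}$. Set
\[
N_{z,s}\ :=\ (H^\gamma_{\beta\beta'})^{-1}(s)\cap M^+_{z}(W,S;\beta,\beta')_2.
\]
Since $N_{z,s}$ is a compact $1$-manifold with boundary, its signed boundary count vanishes; summing these identities over $z$, $\beta$, $\beta'$, and encoding the resulting weights via Lemma \ref{kappa and nu}, will produce the asserted relation.

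I would then enumerate the six classes of boundary components of $N_{z,s}$. Broken trajectories at the incoming end have the form $\breve{M}_{z_1}(\beta,\beta_1)_a\times M_{z_2}(W,S;\beta_1,\beta')_b$ with $a+b=1$ and $\beta_1\in\mathfrak{C}^*_\pi(Y,K,\alpha)$. Using the multiplicativity $H^\gamma_{\beta\beta'}=H_{\beta\beta_1}\cdot H^\gamma_{\beta_1\beta'}$ together with the normalization $H=1$ on $0$-dimensional unparametrized cylinders, the case $(a,b)=(0,1)$ reduces the cut to $H^\gamma_{\beta_1\beta'}=s$ on the cobordism factor and yields $\mu\circ d$, whereas the case $(a,b)=(1,0)$ reduces it to $H_{\beta\beta_1}=s$ on the cylinder factor and yields $m\circ v$. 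Symmetric broken trajectories at the outgoing end produce $d'\circ\mu$ and $v'\circ m$. Comparing induced boundary orientations against product orientations, as in the proof of Proposition \ref{DonFur}, accounts for the signs of the four terms $+d'\mu$, $+\mu d$, $-v'm$, $+mv$.

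The remaining terms $+\Delta_2\delta_1$ and $-\delta'_2\Delta_1$ come from trajectories broken through the reducibles $\theta_\alpha$ and $\theta'_\alpha$. The grading formula (\ref{adding formula for grading}) forces both cylindrical and cobordism factors to be $0$-dimensional in these breakings; gluing $\breve{M}_{z_1}(\beta,\theta_\alpha)_0\times M_{z_2}(W,S;\theta_\alpha,\beta')_0$ through $\theta_\alpha$ produces a $1$-parameter end of $M^+_z(W,S;\beta,\beta')_2$ parametrized by the $S^1$ stabilizer of $\theta_\alpha$, in direct analogy with case (III) of the proof of Proposition \ref{DonFur}. The main obstacle is verifying that $H^\gamma$ descends on this gluing family to an $S^1$-equivariant degree-one map to $S^1$, so that $(H^\gamma)^{-1}(s)$ consists of a single signed point in each component; this uses that $\gamma^+$ links the singular locus $S^+$, so rotating by the $S^1$ stabilizer of $\theta_\alpha$ twists the framed holonomy ${\rm Hol}_{\tilde{\gamma}_{\epsilon}}(A^{\rm ad})$ equivariantly. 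Once this degree-one property is established, the reducible end contributes $\Delta_2\delta_1$, the outgoing reducible contributes $-\delta'_2\Delta_1$ by the identical argument, and the six signed boundary contributions sum to zero, giving the stated identity.
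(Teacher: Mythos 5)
Your proposal is correct and follows essentially the same route as the paper: cut the two-dimensional moduli space $M^{+}_{z}(W,S;\beta,\beta')_{2}$ by the level set $(H^{\gamma}_{\beta\beta'})^{-1}(s)$, identify the faces broken at irreducibles (giving $\mu d$, $mv$, $d'\mu$, $v'm$ exactly as you sort them by which factor is one-dimensional), and treat the ends broken at $\theta_{\alpha}$, $\theta'_{\alpha}$ via the $S^{1}$-gluing-parameter/ungluing description of Proposition \ref{DonFur}, which yields $\Delta_{2}\delta_{1}$ and $-\delta'_{2}\Delta_{1}$. The "degree-one on the gluing circle" point you flag is precisely what the paper's case (III) argument supplies, so there is no gap.
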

\begin{proof}
Consider a 2-dimensional moduli space $M_{z}^{+}(W, S;\beta, \beta')_{2}$ and its codimension 1 faces.
Firstly, there are two types of end of $M_{z}(W, S;\beta, \beta')_{2}$ in which $[A]\in M(W, S;\beta, \beta')_{2}$ broken at irreducible critical points. 
\item[(I)]
\[\breve{M}^{+}_{z'}(\beta, \beta_{1})_{i-1}\times M^{+}_{z''}(W, S,\beta_{1}, \beta')_{2-i},\]\item[(II)] \[M^{+}_{z'}(W, S, \beta, \beta')_{2-i}\times \breve{M}^{+}_{z''}(\beta_{1}, \beta')_{1-i},\]
where $i=1, 2$. Since
\begin{eqnarray*}&&(H^{\gamma}_{\beta\beta'})^{-1}(s)\cap \bigcup_{\beta_{1}}\bigcup_{z'\circ z''=z}\breve{M}_{z}(\beta, \beta_{1})_{0}\times M^{+}_{z''}(W, S,\beta_{1}, \beta')_{1}\\
&=&\bigcup_{\beta_{1}}\bigcup_{z'\circ z''}\breve{M}_{z'}(\beta, \beta_{1})_{0}\times (H^{\gamma}_{\beta_{1}\beta'})^{-1}(s)\cap M_{ z''}(W, S; \beta_{1}, \beta')_{1},
\end{eqnarray*}the signed counting of the points in $\partial ((H^{\gamma}_{\beta\beta'})^{-1}(s))\cap M^{+}(W,S, \beta, \beta')_{2})$  which are contained in codimension 1 faces of type $\rm (I)$ with $i=1$ contributes to the term $-\langle\mu d(\beta),\beta'\rangle$.
Next, we consider the case that type $\rm (I)$ with $i=2$. Since 
\begin{eqnarray*}
&&(H^{\gamma}_{\beta\beta'})^{-1}(s)\cap \bigcup_{\beta_{1}}\bigcup_{z'\circ z''=z}\breve{M}_{z'}(\beta, \beta_{1})_{1}\times M_{z}(W, S:\beta_{1}, \beta')_{0}\\
&=&\bigcup_{\beta_{1}}\bigcup_{z'\circ z''=z}(H_{\beta\beta_{1}})^{-1}(s)\cap \breve{M}_{z'}(\beta, \beta_{1})_{1}\times M(W, S; \beta_{1}, \beta')_{0},
\end{eqnarray*}
the signed counting of the points in $\partial ((H^{\gamma}_{\beta\beta'})^{-1}(s)\cap M^{+}(W,S;\beta, \beta')_{2})$ which are contained in codimension $1$ faces of type (I) with $i=2$ contributes to  the term $-\langle m v (\beta), \beta'\rangle$.
Similarly, a collection of codimension 1 faces of type (II) contributes to the term $-\langle d'\mu(\beta), \beta'\rangle$ if $i=1$ and $\langle v' m (\beta), \beta'\rangle$ if $i=2$.
Finally, we consider ends of $M_{z}(W, S; \beta, \beta')_{2}$ which  break at reducibles. Such ends are described as in the poof of Proposition \ref{DonFur} and contributes  to the term $-\langle (\Delta_{2}\delta_{1}-\delta_{2}\Delta_{1}(\beta), \beta'\rangle$.

\end{proof}
\begin{cor}\label{def of S-cpx}
$(\tilde{C}^{\alpha}_{*}(Y, K;\Delta_{\mathscr{S}}), \tilde{d}, \chi)$ where \[\tilde{C}^{\alpha}_{*}(Y, K;\Delta_{\mathscr{S}})=C^{\alpha}_{*}(Y, K;\Delta_{\mathscr{S}})\oplus C^{\alpha}_{*-1}(Y, K;\Delta_{\mathscr{S}})\oplus \mathscr{S},\]
\[\tilde{d}=\left[\begin{array}{ccc}
d&0&0\\
v&-d&\delta_{1}\\
\delta_{2}&0&0
\end{array}\right],\  \chi=\left[\begin{array}{ccc}
0&0&0\\
1&0&0\\
0&0&0\end{array}\right]\]forms an $\mathcal{S}$-complex. Moreover, if $(W, S):(Y, K)\rightarrow (Y', K')$ is a given negative definite cobordism and $\alpha$  satisfies $\Delta_{(Y, K)}(e^{4\pi i \alpha})\cdot \Delta_{(Y', K')}(e^{4\pi i \alpha})\neq 0$ then
\[\tilde{m}_{(W, S)}=\left[\begin{array}{ccc}
m&0&0\\
\mu& m&\Delta_{2}\\
\Delta_{1}&0&\eta
\end{array}
\right]\]defines an $\mathcal{S}$-morphism $\tilde{m}_{(W, S)}:\tilde{C}^{\alpha}_{*}(Y, K;\Delta_{\mathscr{S}})\rightarrow \tilde{C}_{*}^{\alpha}(Y',K';\Delta_{\mathscr{S}})$.
\end{cor}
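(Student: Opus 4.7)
The statement is essentially the formal packaging of all the identities established in Subsections~\ref{delta} and \ref{vmu} into the block-matrix language of $\mathcal{S}$-complexes, so my plan is to verify the defining identities entry-by-entry rather than prove anything new. I will proceed in three blocks: (i) the $\mathcal{S}$-complex axioms for $(\tilde{C}^{\alpha}_{*}(Y,K;\Delta_{\mathscr{S}}),\tilde{d},\chi)$, (ii) the chain map axiom $\tilde{d}'\tilde{m}_{(W,S)}-\tilde{m}_{(W,S)}\tilde{d}=0$ for the $\mathcal{S}$-morphism, and (iii) the non-vanishing of the bottom-right entry~$\eta$.

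For block (i), I will expand $\tilde{d}^{2}$ in the splitting $C^{\alpha}_{*}\oplus C^{\alpha}_{*-1}\oplus\mathscr{S}$ and collect the resulting four non-trivial identities, which match exactly: $d^{2}=0$, which follows from standard counting of the boundary of $\breve{M}^{+}_{z}(\beta_{0},\beta_{1})_{1}$ together with the grading observation after (\ref{adding formula for grading}) (so that no 1-dimensional trajectory breaks through the reducible $\theta_{\alpha}$); $\delta_{1}\circ d=0$ and $d\circ\delta_{2}=0$, established in Subsection~\ref{delta} by counting boundaries of $\breve{M}^{+}_{z}(\beta,\theta_{\alpha})_{1}$ and $\breve{M}^{+}_{z}(\theta_{\alpha},\beta)_{1}$; and the homotopy identity $dv-vd-\delta_{2}\delta_{1}=0$, which is exactly Proposition~\ref{DonFur}. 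The conditions $\chi^{2}=0$ and $\tilde{d}\chi+\chi\tilde{d}=0$ are one-line block computations from the explicit matrix form of $\chi$, and ${\rm Ker}(\chi)/{\rm Im}(\chi)\cong\mathscr{S}_{(0)}$ is immediate since $\ker\chi=C^{\alpha}_{*-1}\oplus\mathscr{S}$ and ${\rm im}\,\chi=C^{\alpha}_{*-1}$.

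For block (ii), I will expand $\tilde{d}'\tilde{m}_{(W,S)}-\tilde{m}_{(W,S)}\tilde{d}$ as a $3\times 3$ block matrix and read off four scalar identities. Three of them are routine or already proved: $md-d'm=0$ (cobordism chain-map identity from counting boundaries of $M^{+}_{z}(W,S;\beta,\beta')_{1}$ that break at irreducibles), $\Delta_{1}d+\eta\delta_{1}-\delta'_{1}m=0$, and $d'\Delta_{2}-\delta'_{2}\eta+m\delta_{2}=0$, which are Proposition~\ref{delt d}(i) and~(ii). The fourth, the mixed identity
\[
d'\mu+\mu d+\Delta_{2}\delta_{1}-\delta'_{2}\Delta_{1}-v'm+mv=0,
\]
is precisely the content of Proposition~\ref{rel mu}. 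Block (iii) is immediate: $\eta=\eta^{\alpha}(W,S)\neq 0$ in $\mathscr{S}$ is condition~(3) in Definition~\ref{negativedef}.

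The only genuinely non-formal part of the argument is hidden in $d^{2}=0$: we must know that the infinite sums over homotopy classes $z$ in the Novikov coefficient $\mathscr{S}$ actually converge and that the corresponding compactification gives an honest boundary count. This is exactly what the energy-filtration finiteness (Proposition~3.23, cited in the text) together with the completeness of $\mathscr{R}_{\alpha}$ are designed to ensure: at each fixed energy level only finitely many classes contribute, and the $\lambda$-filtration assembles these into a well-defined element of $\mathscr{S}$. Once this analytical input is granted, every identity needed is already a labelled proposition above, and the proof of the corollary is a mechanical repackaging into matrix form.
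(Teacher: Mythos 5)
Your proposal is correct and follows essentially the same route as the paper: the corollary is proved by assembling the identities of Subsections \ref{delta} and \ref{vmu} (in particular Propositions \ref{delt d}, \ref{DonFur}, and \ref{rel mu}) into the block-matrix axioms, with $\eta=\eta^{\alpha}(W,S)\neq 0$ supplied by Definition \ref{negativedef}. The only point the paper makes explicit that you fold into a side remark is the appeal to Proposition \ref{perturbation red} (regularity of the moduli spaces at reducibles over $(W,S)$) to ensure the count defining $\eta$ is well defined, alongside the energy-filtration finiteness you correctly invoke for convergence in the Novikov coefficients.
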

\begin{proof}
The argument in Subsection \ref{delta} and Proposition \ref{DonFur} show that $(\tilde{C}^{\alpha}_{*}(Y, K;\Delta_{\mathscr{S}}),\tilde{d}, \chi)$ is an $\mathcal{S}$-complex.
For a generic perturbation, moduli spaces over the negative definite pair $(W, S)$ are regular at reducible points by Proposition  \ref{perturbation red}, and hence the counting of reducibles $\eta=\eta^{\alpha}(W, S)$ is well defined.
The argument in Subsection \ref{Floer homology}, Proposition \ref{delt d}, and \ref{rel mu} show that $\tilde{m}_{(W, S)}$ is an $\mathcal{S}$-morphism.
\end{proof}

The $\mathcal{S}$-complex  $\tilde{C}^{\alpha}_{*}(Y, K;\Delta_{\mathscr{S}})$ itself depends on the choices of metric and perturbations. 
However, the standard argument (see  \cite[Theorem 3.33]{DS1}) shows that its $\mathcal{S}$-chain homotopy class  is a topological invariant of pairs $(Y, K, p)$ with $\Delta_{(Y, K)}(e^{4\pi i \alpha})\neq 0$, where $K\subset Y$ is an oriented knot in an integer homology $3$-sphere and $p\in K$ is a base point.
The $\mathcal{S}$-chain homotopy type of $\mathcal{S}$-complex itself depends on the choice of a base point, however,
there is a canonical isomorphism between two homology groups of $\mathcal{S}$-complexes which are defined by different choices of base points.

\begin{dfn}
We call 
\[h_{\mathscr{S}}^{\alpha}(Y, K):=h(\tilde{C}^{\alpha}_{*}(Y, K;\Delta_{\mathscr{S}}))\]the Fr{\o}yshov invariant for $(Y, K)$ over $\mathscr{S}$ with a holonomy parameter $\alpha$.
\end{dfn}

The $\mathcal{S}$-complex $\tilde{C}^{\alpha}_{*}(Y,K;\Delta_{\mathscr{S}})$ admits the following connected sum theorem.
\begin{thm}\label{connected sum}
Let $(Y, K)$ and $(Y', K')$ be two oriented knots in integral homology spheres and $\alpha$ be a holonomy parameter such that $\Delta_{(Y, K)}(e^{4\pi i \alpha})\cdot\Delta_{(Y', K')}(e^{4\pi i \alpha})\neq 0$. 
Then
\[\tilde{C}^{\alpha}_{*}(Y\#Y',K\#K';\Delta_{\mathscr{S}})\simeq \tilde{C}^{\alpha}_{*}(Y,K;\Delta_{\mathscr{S}})\otimes_{\mathscr{S}} \tilde{C}^{\alpha}_{*}(Y',K';\Delta_{\mathscr{S}}),\]
where $\simeq$ denotes an $\mathcal{S}$-chain homotopy equivalence.
\end{thm}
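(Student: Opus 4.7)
The plan is to adapt the connected-sum argument of \cite[Theorem 6.1]{DS1} to the non-monotone setting, essentially by constructing cobordism $\mathcal{S}$-morphisms in both directions and invoking Remark \ref{S chain unit}. First, choose base points $p \in K$, $p' \in K'$ and form the natural boundary-connected-sum cobordism
\[
(W_+, F_+) : (Y, K) \sqcup (Y', K') \longrightarrow (Y \# Y', K \# K')
\]
by attaching a 4-dimensional 1-handle to the disjoint cylinders $([0,1]\times Y) \sqcup ([0,1]\times Y')$ near $\{1\}\times p$ and $\{1\}\times p'$, with the surface $F_+$ obtained by joining $[0,1]\times K$ to $[0,1]\times K'$ through a band inside the 1-handle. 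Time-reversing gives $(W_-, F_-) : (Y \# Y', K \# K') \to (Y, K) \sqcup (Y', K')$.

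Next I would verify that both $(W_\pm, F_\pm)$ are negative definite over $\mathscr{S}$ in the sense of Definition \ref{negativedef}. Condition (1) holds because attaching a 4-dimensional 1-handle contributes nothing to $b^1$ or $b^+$. For (2) and (3), observe that the flat reducibles $\theta_\alpha$ on $(Y,K)$ and $(Y',K')$ glue across the 1-handle to a single flat reducible on $W_\pm$, since $\pi_1$ of the 1-handle is trivial and the meridians of $K$ and $K'$ are identified with the meridian of $K\#K'$. A direct index computation via Proposition \ref{index formula}, together with the additivity of Tristram--Levine signature under connected sum, shows that this minimal reducible has index $-1$. Because the reducible is flat, $\kappa_0 = 0$ and $\nu_0 = 0$, giving $\eta^\alpha(W_\pm, F_\pm) = 1 \in \mathscr{S}$. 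Corollary \ref{def of S-cpx} then yields $\mathcal{S}$-morphisms
\[
\tilde m_{+} : \tilde C^\alpha_*(Y,K;\Delta_\mathscr{S}) \otimes_\mathscr{S} \tilde C^\alpha_*(Y',K';\Delta_\mathscr{S}) \longrightarrow \tilde C^\alpha_*(Y\#Y', K\#K';\Delta_\mathscr{S})
\]
and $\tilde m_-$ in the reverse direction, where the tensor product carries the $\mathcal{S}$-complex structure specified in Subsection \ref{4-1}. (Some care is needed to check that the Floer complex of a disjoint union matches the tensor product on the nose; this is standard once critical sets, moduli spaces, and the reducible $\theta_\alpha\otimes \theta'_\alpha$ are analyzed.)

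The heart of the argument is to show that $\tilde m_- \circ \tilde m_+$ and $\tilde m_+ \circ \tilde m_-$ are each $\mathcal{S}$-chain homotopic to the identity. I would argue this by neck-stretching: the composed cobordism $W_- \circ W_+$ is diffeomorphic rel boundary to a cylinder with a cancelling 1-handle/2-handle pair, and a one-parameter family of metrics interpolating between the stretched-neck end (computing the composition) and the collapsed end (computing the identity cobordism map on the cylinder) provides a parametrized moduli space whose boundary count gives the desired chain homotopy. The bubbling analysis at reducibles, regularized by Proposition \ref{perturbation red}, together with the Fredholm package of Subsection 2.1, identifies all the homotopy terms.

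The main obstacle, and the reason the proof of \cite[Theorem 6.1]{DS1} does not apply verbatim, is the loss of monotonicity: for general $\alpha$ the moduli spaces $M_z(W,S;\beta,\beta')$ of fixed expected dimension may be non-empty for infinitely many homotopy classes $z$ of differing topological energy $\kappa(z)$ and monopole number $\nu(z)$, so naive counts over $\mathbb{Z}$ diverge. The Novikov structure of $\mathscr{R}_\alpha$ is designed precisely for this situation: every contribution is weighted by $\lambda^{\kappa_0-\kappa(z)} T^{\nu(z)-\nu_0}$, and the finiteness result quoted from \cite[Proposition 3.23]{KM11} guarantees that in each fixed energy window only finitely many classes contribute, so all coefficients are bona fide Novikov series with support bounded above. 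The subtle point is that the parametrized moduli spaces over the neck-stretching family must also enjoy this $\kappa$-stratified compactness so that the chain homotopies lie in $\mathscr{S}$; this is where the specific choice of the subring $\mathscr{R}_\alpha$ (with $\xi_\alpha^{\pm 1}$ placed in the correct half-plane relative to $\alpha \lessgtr \tfrac14$) is essential to ensure convergence of the resulting Novikov sums.
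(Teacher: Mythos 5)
Your overall scaffold (two handle-attachment cobordisms, chain homotopies from neck-stretching, Remark \ref{S chain unit} to absorb units) is the same skeleton as the paper's appendix, but there is a genuine gap at the step you dismiss as ``standard'': you cannot get the maps $\tilde m_\pm$ from Corollary \ref{def of S-cpx}. That corollary produces an $\mathcal{S}$-morphism only for a cobordism between two single pairs $(Y,K)\rightarrow(Y',K')$, whereas here one end is the disjoint union $(Y\sqcup Y',K\sqcup K')$, and the target/source of the desired morphism is the algebraic tensor product $\tilde C^\alpha_*(Y,K)\otimes_{\mathscr S}\tilde C^\alpha_*(Y',K')$, whose structure maps $d^\otimes, v^\otimes,\delta_1^\otimes,\delta_2^\otimes$ mix the $v$, $\delta_1$, $\delta_2$ of the two factors in a specific matrix form with respect to the splitting $(C\otimes C')_*\oplus(C\otimes C')_{*-1}\oplus C_*\oplus C'_*$. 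The disjoint-union end is not ``the $\mathcal{S}$-complex of a pair'' in the sense of the framework (the reducible $\theta_\alpha\times\theta'_\alpha$ has a larger stabilizer, and there are mixed strata $\beta\times\theta'_\alpha$, $\theta_\alpha\times\beta'$), so one must construct the morphism by hand: the paper defines some sixteen component maps ($m_1,\dots,m_4$, $\mu_1,\dots,\mu_4$, $\Delta_{1,1},\dots,\Delta_{1,4}$, $\Delta_2$, and their primed analogues) by counting moduli spaces with three cylindrical ends, several of them cut down by holonomy maps along the chosen paths $\gamma,\gamma',\gamma^{\#}$ (resp.\ $\sigma,\sigma',\sigma^{\#}$) on the cobordism surface, and then verifies the matrix identities that make the result an $\mathcal{S}$-morphism. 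This is the heart of the proof and is not a formality one can wave through; likewise your appeal to Proposition \ref{index formula} and ``negative definiteness'' of $(W_\pm,F_\pm)$ does not literally apply, since both are stated for cobordisms between single pairs.

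The homotopy step also needs more than you provide. The composite $W_-\circ W_+$ (from the connected sum back to itself) is not a cylinder with a cancelling handle pair rel boundary: it contains an essential $(S^1\times S^2,\,S^1\times 2\mathrm{pt})$, along which the paper stretches; the relevant critical set there is a circle of flat connections, so the gluing is Morse--Bott rather than the isolated-regular gluing regularized by Proposition \ref{perturbation red}, and an extra $H^1(W^o;\mathbb{Z}_2)$-involution on the moduli spaces must be quotiented out. The outcome is the identity only up to multiplication by a unit of the form $c=2^{-1}\cdot 2\bigl(1+\sum_{k<0}c_kZ^k\bigr)$-type Novikov series with leading coefficient $1$ --- this is exactly where non-monotonicity enters and where the choice of $\mathscr R_\alpha$ is used, not merely in the finiteness of counts as you suggest. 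The other composition is handled by stretching along a separating $(S^3,S^1)$, again via Morse--Bott gluing at the unique reducible. So while your route is recognizably the paper's route in outline, the two essential ingredients --- the explicit holonomy-cut-down construction of the $\mathcal{S}$-morphisms to and from the tensor product, and the Morse--Bott neck-stretching analysis along $(S^1\times S^2, S^1\times 2\mathrm{pt})$ and $(S^3,S^1)$ yielding identity up to an explicit unit --- are missing, and the shortcuts proposed in their place (Corollary \ref{def of S-cpx} and a cancelling-handle diffeomorphism) do not work as stated.
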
 
The strategy of proof is essentially same as \cite[Section 6 ]{DS1}. 
We will give the proof of Theorem \ref{connected sum} in the appendix.  

The following corollary gives the proof of Theorem\ref{Froyshov prp}.
\begin{cor}\label{connected sum h}
Let $(Y, K)$ and $(Y', K')$ be knots in integral homology 3-spheres and $\alpha$ be a holonomy parameter such that $\Delta_{(Y, K)}(e^{4\pi i \alpha})\cdot\Delta_{(Y', K')}(e^{4\pi i \alpha})\neq 0$.
Then
\[h_{\mathscr{S}}^{\alpha}(Y\#Y', K\#K')=h_{\mathscr{S}}^{\alpha}(Y, K)+h_{\mathscr{S}}^{\alpha}(Y', K').\]Moreover, if there are two negative definite cobordism $(W, S):(Y, K)\rightarrow (Y', K')$ and $(W', S'):(Y', K')\rightarrow (Y, K)$, then
\[h^{\alpha}_{\mathscr{S}}(Y, K)=h^{\alpha}_{\mathscr{S}}(Y', K').\]
\end{cor}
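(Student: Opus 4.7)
The proof proposal is to combine the structural results already established in the excerpt with the two purely algebraic properties of the Fr{\o}yshov invariant recalled in Subsection~\ref{4-1}, namely Proposition~\ref{Froyshov ineq} and Proposition~\ref{h sum}. Both halves of the corollary reduce, once translated into the algebra of $\mathcal{S}$-complexes, to a one-line verification.

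For the additivity statement, the plan is to invoke Theorem~\ref{connected sum}, which gives an $\mathcal{S}$-chain homotopy equivalence
\[
\tilde{C}^{\alpha}_{*}(Y\#Y', K\#K';\Delta_{\mathscr{S}})\simeq \tilde{C}^{\alpha}_{*}(Y, K;\Delta_{\mathscr{S}})\otimes_{\mathscr{S}} \tilde{C}^{\alpha}_{*}(Y', K';\Delta_{\mathscr{S}}).
\]
Since the Fr{\o}yshov invariant $h$ depends only on the $\mathcal{S}$-chain homotopy class of the underlying $\mathcal{S}$-complex (this is implicit in its definition and in Proposition~\ref{Froyshov ineq} applied to an $\mathcal{S}$-chain homotopy equivalence in both directions), applying Proposition~\ref{h sum} to the right hand side yields the desired identity $h^{\alpha}_{\mathscr{S}}(Y\#Y', K\#K')=h^{\alpha}_{\mathscr{S}}(Y, K)+h^{\alpha}_{\mathscr{S}}(Y', K')$. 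The only hypothesis to check is that the Alexander polynomial condition used in Theorem~\ref{connected sum} is exactly the one we have assumed, which is immediate.

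For the concordance-type statement, I would feed the two negative definite cobordisms $(W, S):(Y, K)\to (Y', K')$ and $(W', S'):(Y', K')\to (Y, K)$ into Corollary~\ref{def of S-cpx}, which yields $\mathcal{S}$-morphisms
\[
\tilde{m}_{(W, S)}:\tilde{C}^{\alpha}_{*}(Y, K;\Delta_{\mathscr{S}})\to \tilde{C}^{\alpha}_{*}(Y', K';\Delta_{\mathscr{S}}),\qquad \tilde{m}_{(W', S')}:\tilde{C}^{\alpha}_{*}(Y', K';\Delta_{\mathscr{S}})\to \tilde{C}^{\alpha}_{*}(Y, K;\Delta_{\mathscr{S}}).
\]
Proposition~\ref{Froyshov ineq} then gives the two inequalities $h^{\alpha}_{\mathscr{S}}(Y, K)\leq h^{\alpha}_{\mathscr{S}}(Y', K')$ and $h^{\alpha}_{\mathscr{S}}(Y', K')\leq h^{\alpha}_{\mathscr{S}}(Y, K)$, whose combination is the claimed equality. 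Note that a homology concordance (as in the original statement of Theorem~\ref{Froyshov prp}) produces such cobordisms in both directions by reversing the orientation of the cylinder, so this covers the intended geometric application.

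The only point that requires some attention, rather than being purely mechanical, is making sure the $\mathcal{S}$-morphisms produced by Corollary~\ref{def of S-cpx} are genuinely admissible in the sense of Definition~\ref{Scpx} (in particular, that the diagonal entry $\eta^{\alpha}(W, S)$ is nonzero in $\mathscr{S}$): this is exactly condition (3) in Definition~\ref{negativedef}, so it is built into the hypothesis that the cobordisms are negative definite over $\mathscr{S}$. No harder analytic input is needed; the content of the corollary is entirely algebraic once Theorem~\ref{connected sum} and Corollary~\ref{def of S-cpx} are in hand.
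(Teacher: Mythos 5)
Your proposal is correct and follows essentially the same route as the paper: the additivity is obtained from the connected sum theorem (Theorem \ref{connected sum}) combined with Proposition \ref{h sum}, and the second statement from the $\mathcal{S}$-morphisms of Corollary \ref{def of S-cpx} together with the monotonicity of Proposition \ref{Froyshov ineq} applied in both directions. Your extra remarks (invariance of $h$ under $\mathcal{S}$-chain homotopy equivalence and nonvanishing of $\eta^{\alpha}(W,S)$ from negative definiteness) are exactly the points the paper relies on implicitly.
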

\begin{proof}
The first statement follows from Theorem \ref{connected sum} and Proposition \ref{h sum}.
The later half statement follows from Corollary \ref{def of S-cpx} and Proposition \ref{Froyshov ineq}.
\end{proof}

The filtered construction can be applied to $\mathcal{S}$-complex for the coefficient $\mathscr{R}_{\alpha}$.
A fixed lift $\tilde{\theta}_{\alpha}$ of reducible flat connection can be identified with $1\in \mathscr{R}_{\alpha}$, and $\mathscr{R}_{\alpha}$ itself can be identified with all set of lifts of $\theta_{\alpha}$.
We extend the $\mathbb{R}$-grading  to $\tilde{C}^{\alpha}_{*}(Y, K;\Delta_{\mathscr{R}_{\alpha}})$.
First, we define 
\[{\rm deg}_{\mathbb{R}}(\delta)=\begin{cases}\max\{r\vert a_{r}\neq 0\}& \text{if}\ \delta\neq 0\\
-\infty &\text{if}\ \delta=0
\end{cases}\]
for  $\delta=\sum_{r} a_{r}\lambda^{r}\in \mathscr{R}_{\alpha}$, $a_{r}\in \mathbb{Z}[T^{-1}, T]\!]$ .
Then for $(\beta,\gamma, \delta)\in C^{\alpha}_{*}(Y,K;\Delta_{\mathscr{R}_{\alpha}})\oplus C^{\alpha}_{*-1}(Y, K;\Delta_{\mathscr{R}_{\alpha}})\oplus \mathscr{R}_{\alpha}$, we define 
\[\widetilde{\rm deg}_{\mathbb{R}}(\beta, \gamma, \delta):=\max\{{\rm deg}_{\mathbb{R}}(\beta),{\rm deg}_{\mathbb{R}}(\gamma), {\rm deg}_{\mathbb{R}}(\delta) \}.\]
Obviously, we have the following proposition.
\begin{prp}
If we fix lifts of each critical points  of the Chern-Simons functional, then the $\mathcal{S}$-complex $\tilde{C}^{\alpha}_{*}(Y, K;\Delta_{\mathscr{R}_{\alpha}})$  admits $\mathbb{Z}\times \mathbb{R}$-bigrading.
\end{prp}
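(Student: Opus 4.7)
The plan is a direct verification. We have already (by the preceding paragraph) placed a $\mathbb{Z}\times\mathbb{R}$-bigrading on the underlying chain complex $C^{\alpha}_{*}(Y,K;\Delta_{\mathscr{R}_{\alpha}})$ once lifts of critical points are fixed, and we have extended $\widetilde{\rm deg}_{\mathbb{R}}$ to all triples in $\tilde{C}^{\alpha}_{*}(Y,K;\Delta_{\mathscr{R}_{\alpha}})$. What remains is to (a) specify the $\mathbb{Z}$-grading on each of the three summands, and (b) verify that the blocks of $\tilde{d}$ and the map $\chi$ are homogeneous of the expected bidegree.

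For (a), I would grade the first summand $C^{\alpha}_{*}$ by the existing bigrading, grade the shifted copy $C^{\alpha}_{*-1}$ by translating the $\mathbb{Z}$-grading down by one (keeping the $\mathbb{R}$-grading intact), and on $\mathscr{R}_{\alpha}$ identify $\lambda^{i}\xi^{j}_{\alpha}$ with the lift of $\theta_{\alpha}$ obtained by pulling $\tilde{\theta}_{\alpha}$ back by a gauge transformation $g_{i,j}$ with $d(g_{i,j})=(i,j)\in\mathbb{Z}\oplus\mathbb{Z}$. This forces ${\rm deg}_{\mathbb{Z}}(\lambda^{i}\xi^{j}_{\alpha})=8i+4j$ and ${\rm deg}_{\mathbb{R}}(\lambda^{i}\xi^{j}_{\alpha})=i+2\alpha j$, consistent with the convention ${\rm deg}_{\mathbb{Z}}(\tilde{\theta}_{\alpha})=0$ and with $\widetilde{\rm deg}_{\mathbb{R}}$ defined above.

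For (b), the block $d$ on $C^{\alpha}_{*}$ has bidegree $(-1,0)$ by the discussion in Subsection \ref{Floer homology}: the coefficient $\lambda^{-\kappa(z)}T^{\nu(z)}$ appearing in $\langle d\beta_{0},\beta_{1}\rangle$ records the change of lifts along the path $z\subset\widetilde{\mathcal{B}}(Y,K,\alpha)$, and Lemma \ref{kappa and nu} applied to the cylinder $[0,1]\times(Y,K)$ shows that this coefficient exactly compensates the change of $CS$ and ${\rm hol}_{K}$. The block $v$ is handled identically, since it is defined by the same weight $\lambda^{-\kappa(z)}T^{\nu(z)}$ applied to zero-dimensional cut-downs $H_{\beta_{1}\beta_{2}}^{-1}(s)\cap\breve{M}_{z}(\beta_{1},\beta_{2})_{1}$; the resulting $\mathbb{Z}$-drop of two on $C$ becomes $-1$ on $\tilde{C}$ after the middle-summand shift. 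For $\delta_{1}$ and $\delta_{2}$ the only additional point is the $+1$ correction from (\ref{adding formula for grading}) for paths factoring through the reducible, and this is absorbed by our convention ${\rm deg}_{\mathbb{Z}}(\tilde{\theta}_{\alpha})=0$ combined with the shift in the middle summand. Finally, $\chi$ is the identity embedding of the first summand into the middle summand, which manifestly has bidegree $(+1,0)$.

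The one point to watch, and the only thing more than routine bookkeeping, is the simultaneous interplay of the $+1$ correction in (\ref{adding formula for grading}), the $\mathbb{Z}$-shift in the middle summand, and the degree-zero placement of $\tilde{\theta}_{\alpha}$ in $\mathscr{R}_{\alpha}$. Once these are aligned, no additional analytic input is required and the bigraded structure of $\tilde{C}^{\alpha}_{*}(Y,K;\Delta_{\mathscr{R}_{\alpha}})$ follows.
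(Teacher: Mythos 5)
Your step (a) is exactly what the paper's (essentially definitional) argument amounts to, and it is all the proposition asks for: once lifts are fixed, the monomial generators $\lambda^{i}\xi^{j}_{\alpha}\tilde{\beta}$ of the two Floer summands carry the bidegree of Subsection \ref{Floer homology}, and the summand $\mathscr{R}_{\alpha}$ is identified with the set of lifts of $\theta_{\alpha}$ (with $\tilde{\theta}_{\alpha}=1$, ${\rm deg}_{\mathbb{Z}}(\lambda^{i}\xi^{j}_{\alpha})=8i+4j$, ${\rm deg}_{\mathbb{R}}(\lambda^{i}\xi^{j}_{\alpha})=i+2\alpha j$), after which $\widetilde{\rm deg}_{\mathbb{R}}$ of a general triple is taken to be the maximum over its components. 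So the bigrading of the underlying module is immediate, which is the content of the statement.

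Your step (b), however, contains a genuine error: the maps $d$, $v$, $\delta_{1}$, $\delta_{2}$ are \emph{not} homogeneous of $\mathbb{R}$-degree $0$, so $\tilde{d}$ does not have bidegree $(-1,0)$. Lemma \ref{kappa and nu} does show that the coefficient $\lambda^{-\kappa(z)}T^{\nu(z)}$ identifies $\langle d\beta_{0},\beta_{1}\rangle$ with a count of trajectories between honest lifts in $\widetilde{\mathcal{B}}(Y,K,\alpha)$, but the conclusion to draw from this is that the image generator is the \emph{endpoint lift} of the trajectory, whose Chern--Simons value differs from that of the starting lift by the (perturbed) energy: the $\mathbb{R}$-degree of each contribution is shifted by $-\kappa(z)$, which depends on $z$ and is generically strictly negative. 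The same applies to $v$, $\delta_{1}$, $\delta_{2}$ and to cobordism maps. This is precisely why the $\mathbb{R}$-grading only induces a \emph{filtration} (the subcomplexes $C^{\alpha}_{*}(Y,K;\Delta_{\mathscr{R}_{\alpha}})^{[-\infty,R]}$ and the quotients $\tilde{C}^{\alpha}_{*}(Y,K;\Delta_{\mathscr{R}_{\alpha}})^{[R_{0},R_{1}]}$), rather than making $\tilde{d}$ bihomogeneous; if $\tilde{d}$ really preserved ${\rm deg}_{\mathbb{R}}$, the Chern--Simons filtration would carry no information at all. Only the $\mathbb{Z}$-degree is respected by $\tilde{d}$ (degree $-1$, with the bookkeeping via (\ref{adding formula for grading}) that you describe), and $\chi$ is indeed of bidegree $(+1,0)$. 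So you should delete the claim that the blocks of $\tilde{d}$ are $\mathbb{R}$-homogeneous and replace it, if you want to say anything about $\tilde{d}$ at all, with the weaker (and later used) statement that $\tilde{d}$ does not increase $\widetilde{\rm deg}_{\mathbb{R}}$.
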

Note that the $\mathbb{R}$-grading of $\mathcal{S}$-complex extends to tensor products of $\mathcal{S}$-complexes in natural way.

The filtered $\mathcal{S}$-complex $\tilde{C}^{\alpha}_{*}(Y, K;\Delta_{\mathscr{R}_{\alpha}})^{[R_{0}, R_{1}]}$ for  $R_{0}, R_{1}\in(\mathbb{R}\cup\{\pm \infty\})\setminus\mathscr{C}^{*}$ with $R_{0}<R_{1}$ can be defined as follows.
Put $\tilde{C}^{\alpha}_{*}(Y, K;\Delta_{\mathscr{R}_{\alpha}})^{[-\infty, R]}:=\{(\beta, \gamma, \delta)\in \tilde{C}^{\alpha}_{*}(Y, K;\Delta_{\mathscr{R}_{\alpha}}) \vert\widetilde{{\rm deg}}_{\mathbb{R}}(\beta, \gamma, \delta)<R\}$ and
\[\tilde{C}^{\alpha}_{*}(Y, K;\Delta_{\mathscr{R}_{\alpha}})^{[R_{0}, R_{1}]}:=\tilde{C}^{\alpha}_{*}(Y, K;\Delta_{\mathscr{R}_{\alpha}})^{[-\infty, R_{1}]}/\tilde{C}^{\alpha}_{*}(Y, K;\Delta_{\mathscr{R}_{\alpha}})^{[-\infty, R_{0}]}\]
for $R_{0}<R_{1}$.

\subsection{ Cobordism maps for immersed surfaces}
Let $(W, S): (Y, K)\rightarrow (Y', S')$ be a cobordism of pairs where $S$ is possibly immersed.
Blowing up all double points of $S$, we obtain a cobordism of pairs $(\bar{W}, \bar{S})$ where $\bar{S}$ is a embedded surface.
\begin{dfn}
We say $(W, S)$ is negative definite if its blow up $(\bar{W}, \bar{S})$ is a negative definite.
We define a cobordism map for a negative definite cobordism $(W,S)$ where $S$ is possibly  immersed surface as follows,
\[\tilde{m}_{(W, S)}:=\tilde{m}_{(\bar{W},\bar{S})}.\]
\end{dfn}
Let us describe the relation between operations on immersed surface cobordisms and  induced $\mathcal{S}$-morphisms.

\begin{prp}\label{immersed map}
Let $\mathscr{S}$ be an integral domain over the ring $\mathscr{R}_{\alpha}$.
Assume that $(W,S)$ is a negative definite pair over $\mathscr
{S}$  where $S$ is a possibly immersed surface. 
Let $S^{*}$ be a surface obtained from $S$ by a positive, negative twist move or a finger move.
Then $\tilde{m}_{(W, S^{*})}$ is $\mathcal{S}$-chain homotopic to $\tilde{m}_{(W, S)}$ up to the multiplication of a unit element in $\mathscr{S}$.
\end{prp}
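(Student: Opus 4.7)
The plan is to reduce the comparison of $\tilde{m}_{(W,S^{*})}$ and $\tilde{m}_{(W,S)}$ to a local-model computation analogous to Proposition \ref{neg def pair}. Each of the three moves modifies $S$ inside a small $4$-ball $B\subset\mathrm{int}(W)$ that is disjoint from $\partial W$ and from the other self-intersections of $S$. Resolving the new double point(s) via the blow-up construction of Subsection \ref{cobordism} yields $\bar{W}^{*}=\bar{W}\#k\overline{\mathbb{CP}^{2}}$ with $k=1$ for a twist move and $k=2$ for a finger move. Sliding the blow-up region to the outgoing boundary along an arc from $B$ to a point on $Y'$, I would factor
\[(\bar{W}^{*},\bar{S}^{*})\cong(\bar{W},\bar{S})\cup_{(Y',K')}(W_{0},\Sigma_{0}),\]
where $(W_{0},\Sigma_{0}):(Y',K')\to(Y',K')$ is a local self-cobordism with underlying $4$-manifold $\overline{\mathbb{CP}^{2}}\#([0,1]\times Y')$ (twist case) or $2\overline{\mathbb{CP}^{2}}\#([0,1]\times Y')$ (finger case), and whose surface is the product cylinder $[0,1]\times K'$ together with a resolution sphere of class $2e$ (positive twist) or of trivial class (negative twist).

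The next step is to verify that $(W_{0},\Sigma_{0})$ is negative definite over $\mathscr{S}$. The conditions $b^{1}=b^{+}=0$ are clear, and since both ends carry the same pair $(Y',K')$ the Tristram--Levine terms cancel in Proposition \ref{index formula}, so the index computation reduces verbatim to that in Proposition \ref{neg def pair} and the minimal reducibles have index $-1$. A direct count then gives
\[\eta_{\mathrm{loc}}:=\eta^{\alpha}(W_{0},\Sigma_{0})=1-\lambda^{4\alpha-1}T^{4}\ (\alpha\leq 1/4)\quad\text{or}\quad\lambda^{1-4\alpha}T^{-4}-1\ (\alpha>1/4)\]
for a positive twist --- a unit in $\mathscr{R}_{\alpha}$ and hence in any $\mathscr{S}$; $\eta_{\mathrm{loc}}=1$ for a negative twist; and the product of the two for a finger move, which remains a unit.

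The core step is to show that $\tilde{m}_{(W_{0},\Sigma_{0})}\simeq\eta_{\mathrm{loc}}\cdot\mathrm{id}_{\tilde{C}^{\alpha}_{*}(Y',K';\Delta_{\mathscr{S}})}$. The strategy is to stretch the neck along the $S^{3}$ separating the $\overline{\mathbb{CP}^{2}}$-summand(s) from $[0,1]\times Y'$: in the limit the moduli spaces decompose as fibre products over the singular flat connections on $S^{3}$, the cylindrical piece contributes the identity up to chain homotopy (by the invariance properties listed after the definition of $m_{(W,S)}$ in Subsection \ref{Floer homology}), while the closed $\overline{\mathbb{CP}^{2}}$-piece equipped with its resolution sphere admits only abelian flat connections in its complement, so the only contribution at reducibles is $\eta_{\mathrm{loc}}$ and higher-dimensional irreducible moduli give chain homotopies. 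Combined with the composition rule of Subsection \ref{Floer homology}, this yields
\[\tilde{m}_{(W,S^{*})}=\tilde{m}_{(\bar{W}^{*},\bar{S}^{*})}\simeq\tilde{m}_{(W_{0},\Sigma_{0})}\circ\tilde{m}_{(\bar{W},\bar{S})}\simeq\eta_{\mathrm{loc}}\cdot\tilde{m}_{(W,S)}.\]

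The main obstacle will be this neck-stretching identification: rigorously decomposing the moduli spaces on $(W_{0},\Sigma_{0})$ and establishing the chain-homotopy vanishing of the irreducible $\overline{\mathbb{CP}^{2}}$-contributions requires adapting the gluing and excision arguments of \cite{KM11,DS2} to the non-monotone setting with rational holonomy parameters and local coefficients. Extra care will be needed at $\alpha=1/4$, where $\eta_{\mathrm{loc}}$ need not itself be a unit in $\mathscr{R}_{1/4}$ but should remain one in any $\mathscr{S}$ arising in the applications.
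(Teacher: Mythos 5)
Your treatment of the two twist moves is essentially the paper's argument: blow up the new double point, observe that the pair splits off a $(\overline{\mathbb{CP}^{2}},\text{sphere})$ summand along a separating $(S^{3},S^{1})$ whose $\alpha$-character variety is the single reducible $\{\theta_{\alpha}\}$, stretch the neck, check via the index formula that the $\overline{\mathbb{CP}^{2}}$-side instantons are minimal reducibles of index $-1$, and read off the factor $1-\lambda^{4\alpha-1}T^{4}$ (resp.\ $\lambda^{1-4\alpha}T^{-4}-1$, resp.\ $1$). Whether you perform the stretching in place, as the paper does, or first slide the local piece to the outgoing end and invoke the composition rule is a matter of packaging; the analytic content (unobstructed gluing along $\{\theta_{\alpha}\}$ and a one-parameter family of metrics providing the $\mathcal{S}$-chain homotopy) is the same.

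The finger move, however, is where your proposal has a genuine gap. A finger move pushes one sheet of $S$ through another, so the modification takes place in a ball that meets the surface in \emph{two} disk sheets; after blowing up the two new double points the proper transform still passes through that region with both sheets, and there is no separating $3$-sphere meeting $\bar{S}^{*}$ only in a single unknot (or disjoint from it), contrary to your picture of a self-cobordism whose surface is a product cylinder plus disjoint resolution spheres. The forced separating pair is $(S^{3},S^{1}\sqcup S^{1})$, whose $\alpha$-character variety is an arc $\cong[0,\pi]$ with irreducible interior and reducible endpoints, so the limit moduli spaces are fiber products over a positive-dimensional Morse--Bott family rather than over $\{\theta_{\alpha}\}$; one must verify unobstructedness ($\operatorname{ind}D_{A_{\rho}}=0$, $H^{2}_{A_{\rho}}=0$, via doubling/index arguments) and use the fact from \cite{Kr97} that the restriction map from the zero-dimensional moduli spaces lands in the irreducible stratum, none of which your reduction supplies. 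Moreover the local coefficient is not simply ``the product of the two twist units'': a general finger move is not a composite of a positive and a negative twist; the paper first proves $\tilde{m}_{(\bar{W},\bar{S}^{*})}=c\,\tilde{m}_{(W,S)}$ for some $c\in\mathscr{S}$ by the Morse--Bott fiber-product analysis, and only then identifies $c$ (and its invertibility) by evaluating on the special finger move that \emph{is} such a composite. Without this, the step ``$\eta_{\mathrm{loc}}=\text{product of the two}$'' is unjustified, and the chain-homotopy comparison for a general finger move does not follow from your decomposition.
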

The definition of positive (negative) twist move and finger move can be found in \cite{freedman2014topology}.
\begin{proof}
Since monotonicity condition cannot be assumed in our setting, we have to modify the argument in \cite{Kr97}.

\textit{(i) positive twist move.}
Consider a blow-up at a positive self-intersection point  $(\bar{W},\bar{S}^{*})=(W, S)\# (\overline{\mathbb{CP}^{2}}, S_{2})$ where $S_{2}$ is an embedded sphere whose homology class is $-2e\in H_{2}(\overline{\mathbb{CP}}^{2};\mathbb{Z})$.
Note that $\mathcal{R}_{\alpha}(S^{3}\setminus S^{1},SU(2))=\{\theta_{\alpha}\}$ for $\alpha\in (0, \frac{1}{2})$. 
Assume that $(W,S^{*})$ has a metric $g_{T}$ such that $(S^{3}, S^{1})$ has a neighborhood which is isometric to $[-T,T]\times(S^{3}, S^{1})$, where $T>0$ is large enough.
Let $A_{T}$ be an instanton on $\{(W, S^{*}),g_{T}\}$ which is contained in the $0$-dimensional moduli space. $A_{\infty}$ denotes the limiting instanton of $A_{T}$ with respect to $T\rightarrow \infty$, and $A_{1}$, $A_{2}$ denote its restriction to components obtained by attaching cylindrical ends on $(W,S)$ and $(\overline{\mathbb{CP}^{2}}, S_{2})$ respectively. 
Then we have \[{\rm ind}D_{A_{1}}+1+{\rm ind}D_{A_{2}}={\rm ind}D_{A_{\infty}}\leq 0.\]
The last inequality is essentially follows from  \cite[Corollary 8.4]{KM93} and our assumption.
The index formula for a closed pair $(\overline{\mathbb{CP}^{2}}, S_{2})$ shows that ${\rm ind}D_{A_{2}}\underset{(4)}{\equiv}-1$ and we have ${\rm ind}D_{A_{2}}=-1$.
By the perturbation, the instanton $A_{2}$ on $\overline{\mathbb{CP}^{2}}$ satisfies $H^{1}_{A_{2}}=H^{2}_{A_{2}}=0$, and the gluing along $\mathcal{R}_{\alpha}(S^{3}\setminus S^{1})=\{\theta_{\alpha}\}$ is unobstructed. 
The moduli space $M(W, S^{*};\beta, \beta')_{0}$ is diffeomorphic to
\[M(W, S;\beta, \beta')_{0}\times M^{\alpha}(\overline{\mathbb{CP}^{2}}, S_{2})_{0}. \]
Note that there is a diffeomorphism $M^{\alpha}(\overline{\mathbb{CP}^{2}}\setminus D^4 , S_{2}\setminus D^{2};\theta_{\alpha})_{0}\cong M^{\alpha}(\overline{\mathbb{CP}^{2}}, S_{2})_{0}$ by the removable singularity theorem.
Since ${\rm ind}D_{A_{2}}=-1$, $A_{2}$ is a minimal reducible. Moreover, minimal reducibles on $(\overline{\mathbb{CP}^{2}}, S_{2})$ define elements in $M^{\alpha}(\overline{\mathbb{CP}^{2}}, S_{2})_{0}$. 
Counting elements in moduli space $M(W, S;\beta, \beta')_{0}$ defined by limiting metric $\lim_{T\rightarrow \infty}g_{T}$
contributes the relation \[\langle\tilde{m}^{\infty}_{(\bar{W},\bar{S}^{*})}\beta, \beta'\rangle=\eta^{\alpha}(\overline{\mathbb{CP}}^{2}, S_{2})\langle\tilde{m}_{(W, S)}\beta, \beta'\rangle.\] Since
\[
\eta^{\alpha}(\overline{\mathbb{CP}^{2}}, S_{2})=
\begin{cases}
1-\lambda^{4\alpha-1}T^{4}, \  \alpha\leq {1/4}\\ 
\lambda^{1-4\alpha}T^{-4}-1, \  \alpha> {1/4},
\end{cases}
\]
 $\eta^{\alpha}(\overline{\mathbb{CP}}^{2}, S_{2})$ is a unit in $\mathscr{S}$.
 The $\mathcal{S}$-chain homotopy between $\tilde{m}^{\infty}_{(\bar{W}, \bar{S}^{*})}$ and $\tilde{m}_{(\bar{W},\bar{S}^{*})}$ is given by considering one parameter family of moduli spaces.
 In particular, $\eta^{\alpha}(\overline{\mathbb{CP}}^{2}, S_{2})$ has the top term $1$ and hence the statement follows. 
\textit{(ii) negative twist move.}
In this case, we changes $S_{2}$ in the above argument to an embedded sphere $S_{0}$ whose homology class is trivial.  Thus we obtain $\eta^{\alpha}(\overline{\mathbb{CP}^{2}}, S_{0})=1$.

\textit{(iii) finger move.}
Consider a decomposition $(W, S)=(W_{1}, S_{1})\cup (W_{2}, S_{2})$ where $W_{2}=D^{4}$ and $S_{2}=D^{2}\sqcup D^{2}$.
Let $(\bar{W},\bar{S^{*}})=(W_{1}, S_{1})\cup (W'_{2}, S'_{2})$ be a twice blow-up of $(W, S^{*})$. 
In this case, $W_{2}$ is a 4-manifold obtained by removing a disk from $\overline{\mathbb{CP}^{2}}\#\overline{\mathbb{CP}^{2}}$ and $S_{2}$ is two disjoint disks.
Note that $\mathcal{R}_{\alpha}:=\mathcal{R}_{\alpha}(S^{3}\setminus (S^{1}\sqcup S^{1}), SU(2))\cong [0, \pi]$ for fixed $\alpha\in (0, \frac{1}{2})$, the interior of $\mathcal{R}_{\alpha}$ consists of irreducible flat connections and two end points are reducible.
Moreover, the end point map $r_{1}:M(W_{1},S_{1};\beta, \beta')_{0}\rightarrow \mathcal{R}_{\alpha}$ has its image in the irreducible part of $\mathcal{R}_{\alpha}$. 
See \cite[Lemma 3.2]{Kr97} for details.

We claim that the counting of two moduli spaces $M(W_{1}, S_{1};\beta,\beta')_{0}$ and $M(W, S;\beta, \beta')_{0}$ can be identified up to the multiplication by a unit element  in $\mathscr{S}$.
Firstly, we define a $\mathcal{S}$-morphism $\tilde{m}_{(W_{1}, S_{1})}$ as
\[\langle {m}_{(W_{1}, S_{1})}\beta,\beta' \rangle=\sum_{z}\#M_{z}(W_{1}, S_{1};\beta, \beta')\lambda^{\kappa_{0}-\kappa(z)}T^{\nu(z)-\nu_{0}}\beta'\] and similarly for other components in $\tilde{m}_{(W_{1}, S_{1})}$. 
Here $\kappa(z)$, $\kappa_{0}$, $\nu(z)$, and $\nu_{0}$ are similarly defined as in Subsection \ref{cobordism}.
We have to modify the argument in \cite{Kr97} which is related to that the unobstructed gluing along  the pair $(S^{3}, S^{1}\sqcup S^{1})$.
For $\rho \in \mathcal{R}_{\alpha}$ which is in the image of $r_{1}$, we take its extension  $A_{\rho}$ to $(D^{4}, D^{2}\sqcup D^{2})$. 
Consider a double $(S^{4},S^{2}\sqcup S^{2})=(D^{4}, D^{2}\sqcup D^{2})\cup_{(S^{3}, S^{1})}(D^{4},D^{2}\sqcup D^{2})$. 
Then ${\rm ind}D_{A_{\rho}\#A_{\rho}}=2{\rm ind}D_{A_{\rho}}+1$ by the  gluing formula.  
Consider the pair of connected sum $(S^{4}, S^{2}\sqcup S^{2})=(S^{4}, S^{2})\#_{(S^{3}, \emptyset)}(S^{4}, S^{2})$. 
Then ${\rm ind}D_{A_{\rho}\#A_{\rho}}=2{\rm ind}D_{A_{\rho}\#A_{\rho}}|_{(S^{4}, S^{2})}+3$ and the left hand side is equal to $1$. 
Hence we have ${\rm ind}D_{A_{\rho}}=0$.
Thus the relation 
\[{\rm ind}D_{A_{\rho}}+{\rm dim}H^{1}_{\rho}=-{\rm dim}H^{0}_{A_{\rho}}+{\rm dim}H^{1}_{A_{\rho}}-{\rm dim}H^{2}_{A_{\rho}}\]
tells us that $H^{2}_{A_{\rho}}=0$ since ${\rm dim}H^{0}_{A_{\rho}}=0$ and ${\rm dim}H^{1}_{A_{\rho}}=1$. 
Here $H^{1}_{\rho}$ is the cohomology with local coefficient system associated to the flat connection $\rho$.
Thus the Morse-Bott gluing of instantons over $(W_{1},S_{1})$ and $(W_{2}, S_{2})$ is unobstructed.
For a metric on $(W, S)$ with a long neck along the cylinder $[0, 1]\times (S^{3}, S^{1}\sqcup S^{1})$, we have a diffeomorphism,\[M(W, S;\beta,\beta')_{0}\cong M(W_{1}, S_{1};\beta,\beta')_{0}\ {}_{r}\times_{r'}M^{\alpha}(D^{4}, D^{2}\sqcup D^{2})_{1} \]where \[r: M(W_{1}, S_{1};\beta, \beta')_{0}\rightarrow \mathcal{R}_{\alpha}\] and \[r':M^{\alpha}(D^{4}, D^{2}\sqcup D^{2})_{1}\rightarrow \mathcal{R}_{\alpha} \] are restriction maps.
For simplicity, we consider the case $\alpha\leq \frac{1}{4}$.
Since flat connections on  $(S^{3},S^{1}\sqcup S^{1})$ uniquely extends to $(D^{4}, D^{2}\sqcup D^{2})$, the induced cobordism map has the form;
\[\tilde{m}_{(W, S)}=\left(1+\sum_{k>0}c_{k}Z^{-k}\right)\cdot \tilde{m}_{(W_{1}, S_{1})} \]
where $c_{k}\in \mathbb{Z}$ and $Z=\lambda^{1-4\alpha}T^{-4}$. 
Thus, $\tilde{m}_{(W, S)}$ and $\tilde{m}_{(W_{1}, S_{1})}$ are differed by the multiplication of a unit element in $\mathscr{S}$.

Assume that a cobordism of pairs $(\bar{W}, \bar{S}^{*})$ is equipped with a  metric such that $(\bar{W},\bar{S^{*}})$ has a long neck along $(S^{3}, S^{1}\sqcup S^{1})$.
Then the moduli space $M(\bar{W}, \bar{S}^{*}; \beta, \beta')_{0}$ decomposes into a union of  fiber products
\[M(W_{1}, S_{1}, \beta, \beta')_{d}\ {}_{r_{1}}{\times}_{r_{2}}M^{\alpha}(W'_{2}; S'_{2})_{d'}\]with $d+d'=1$, where
\[r_{1}:M(W_{1}, S_{1}, \beta, \beta')_{d}\rightarrow \mathcal{R}_{\alpha}\]
\[r_{2}:M^{\alpha}(W'_{2}, S'_{2})_{d'}\rightarrow \mathcal{R}_{\alpha} \]
are restriction maps.
Since $d'\underset{(4)}\equiv1 $ by the index formula, we have $d=0$ and $d'=1$. 
Thus there is the coefficient $c\in \mathscr{S}$ such that $\tilde{m}_{(\bar{W}, \bar{S^{*}})}=c\tilde{m}_{(W, S)}$.
Consider a special case of finger move which is a composition of one positive twist move and one negative twist move.
In this case, the coefficient $c$  is turned out to be $1-\lambda^{4\alpha-1}T^{4}$ for $\alpha\leq \frac{1}{4}$ and $\lambda^{1-4\alpha}T^{-4}-1$ for $\alpha> 1/4$ by the argument above.  
Finally, we conclude that there is a unit element $c\in \mathscr{S}$ such that $\tilde{m}_{(W, S^{*})}$ and $c\tilde{m}_{(W, S)}$ are $\mathcal{S}$-chain homotopic.
\end{proof}

\section{Generators of Floer chains\label{gen}}
Let us describe the outline of this subsection.
We will discuss conjugacy classes of representation
$$\rho: \pi_{1}(Y\setminus K) \rightarrow SU(2),$$
with the condition 
\[
\rho(\mu_{K})\sim\left[\begin{array}{cc}
e^{2\pi i \alpha }&0\\
0&e^{-2\pi i \alpha}

\end{array}\right].
\]

In this section, we write $[\rho]$ for its conjugacy class to distinct elements in ${\rm Hom}(\pi_{1}(Y\setminus K), SU(2))$ and $\mathcal{R}(Y\setminus K, SU(2))$.
Firstly, we introduce the method of taking cyclic branched covering. Considering a knot in an integral homology 3-sphere $(Y, K)$, we can take a cyclic branched covering $\tilde{Y}_{r}(K)$ over $Y$ branched along $K$. Let $N(K)$ be a tubular neighborhood of $K\subset Y$, and $V=Y\setminus {\rm int}(N(K))$ be its exterior. $\tilde{V}$ denotes $r$-fold unbranched covering over $V$ with $\pi_{1}(\tilde{V})$ is a kernel of $\pi_{1}(Y\setminus K)\rightarrow H_{1}(Y\setminus K, \mathbb{Z})\rightarrow \mathbb{Z}/r\mathbb{Z}$. $N(K)$ and $\tilde{V}$ have a torus boundary, and let $h: \partial N(K)\rightarrow \partial \tilde{V}$ be a gluing map which sends $\mu_{K}$ meridian of $K$ to its lift $\tilde{\mu_{K}}$. Then the $r$-fold cyclic branched covering over $Y$ is defined by
$$\tilde{Y}_{r}(K)=N(K)\cup_{h} \tilde{V}. $$
Let $\tau:\tilde{Y}_{r}\rightarrow \tilde{Y}_{r}$ be a covering transformation. $\tau$ induces action $\tau_{*}$ on $\pi_{1}(\tilde{Y}_{r})$ and on ${\rm Hom}^{*}(\pi_{1}(\tilde{Y}_{r}), SO(3))$ 
by $\tau^{*}(\rho)=\rho\circ \tau_{*}$. This also defines an action on $\mathcal{R}^{*}(\tilde{Y}_{r}, SO(3))$ by $\tau^{*}[\rho]=[\tau^{*}(\rho)]$. We define following subset of $\mathcal{R}^{*}(\tilde{Y}_{r}, SO(3))$.
$$\mathcal{R}^{*,\tau}(\tilde{Y}_{r}, SO(3))=\{[\rho]\in \mathcal{R}^{*}(\tilde{Y}_{r}, SO(3))|\tau^{*}[\rho]=[\rho]\}$$
$$\mathcal{R}^{*, \tau}(\tilde{Y}, SU(2))=\{[\rho] \in \mathcal{R}^{*}(\tilde{Y}, SU(2))|\ {\rm Ad}[\rho] \in \mathcal{R}^{*,\tau}(\tilde{Y}, SO(3))\}.$$
The aim of subsection \ref{1-1} is giving the construction of the lift map  $$\Pi: \bigsqcup_{1\leq l \leq r-1}\mathcal{R}^{*}_{\frac{l}{2r}}(Y\setminus K, SU(2))\rightarrow \mathcal{R}^{*, \tau}(\tilde{Y}_{r}, SU(2))$$which sends singular flat connections to non-singular flat connections on the cyclic branched covering  of knot $K\subset Y$. We will see that the lift map $\Pi$ satisfies the following proposition.
\begin{prp}\label{lift}
Assume that $r$-fold cyclic branched covering $\tilde{Y}_{r}$ of a knot $K$ in an integral homology 3-sphere $Y$  is integral homology 3-sphere. Then the lifting map $\Pi$ gives a two-to-one correspondence
$$\Pi:\bigsqcup_{1\leq l \leq r-1}\mathcal{R}^{*}_{\frac{l}{2r}}(Y\setminus K, SU(2))\rightarrow \mathcal{R}^{*, \tau}(\tilde{Y}_{r}, SU(2)).$$
\end{prp}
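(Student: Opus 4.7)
The plan is to exhibit $\Pi$ explicitly at the level of representations, check $\tau$-equivariance, and then analyze the fibers via the flip symmetry. First I would define $\Pi$ as follows: given a representative $\rho\colon\pi_1(Y\setminus K)\to SU(2)$ with $\rho(\mu_K)$ conjugate to $\mathrm{diag}(e^{i\pi l/r},e^{-i\pi l/r})$, the element $\rho(\mu_K^r)=(-1)^l I$ is central, so $\mathrm{Ad}\circ\rho|_{\pi_1(\tilde V)}$ descends along $\pi_1(\tilde V)\twoheadrightarrow\pi_1(\tilde Y_r)$ to a well-defined $SO(3)$-representation. The hypothesis that $\tilde Y_r$ is an integral homology $3$-sphere gives $H^1(\tilde Y_r;\mathbb{Z}/2)=H^2(\tilde Y_r;\mathbb{Z}/2)=0$, so this $SO(3)$-representation admits a unique $SU(2)$-lift $\tilde\rho$, and I set $\Pi([\rho]):=[\tilde\rho]$. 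The $\tau$-equivariance is then almost automatic: a generator of the deck group acts on $\pi_1(\tilde V)$ by conjugation by a lift of $\mu_K$, so $\rho|_{\pi_1(\tilde V)}\circ\tau_*=\mathrm{Ad}(\rho(\mu_K))\circ\rho|_{\pi_1(\tilde V)}$, and uniqueness of the $SU(2)$-lift transports this conjugacy to $\tilde\rho\circ\tau_*\sim\tilde\rho$.

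Next I would show that the flip symmetry accounts for the two-to-one structure. Sending $\rho\mapsto\rho\cdot\chi$ changes the holonomy parameter $l/(2r)$ to $(r-l)/(2r)$, but modifies $\rho|_{\pi_1(\tilde V)}$ only by a sign coming from linking number modulo $r$; this sign disappears on passage to $\mathrm{Ad}$, and uniqueness of the lift forces $\Pi(\chi\cdot[\rho])=\Pi([\rho])$. Conversely, if $\Pi([\rho_1])=\Pi([\rho_2])$, then after $SU(2)$-conjugation the restrictions of $\rho_1$ and $\rho_2$ to $\pi_1(\tilde V)$ agree, so $\rho_1(\mu_K)\rho_2(\mu_K)^{-1}$ commutes with the irreducible restriction and thus lies in $\{\pm I\}$; this pins $\rho_2$ to equal either $\rho_1$ or $\rho_1\cdot\chi$, so the fibers have size at most two.

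For surjectivity, starting from a $\tau$-invariant irreducible $[\tilde\rho]$, I pick $g\in SU(2)$ with $\tilde\rho\circ\tau_*=g\,\tilde\rho(\cdot)\,g^{-1}$; iterating $r$ times shows $g^r$ centralizes $\tilde\rho$, hence by irreducibility $g^r=\pm I$, so the eigenvalues of $g$ take the required form $e^{\pm i\pi l/r}$ for some $l\in\{1,\dots,r-1\}$ (the cases $g=\pm I$ would force $\tilde\rho$ to be the restriction of an abelian representation of $\pi_1(Y\setminus K)$, contradicting irreducibility). Combining $\rho(\mu_K):=g$ with (a possibly $\chi$-twisted choice of) lift of $\tilde\rho$ on $\pi_1(\tilde V)$ so that the relation $\rho(\mu_K^r)=g^r$ is consistent with the value of $\rho$ on $\mu_K^r\in\pi_1(\tilde V)$, one obtains a well-defined $SU(2)$-representation of $\pi_1(Y\setminus K)$ whose image under $\Pi$ is $[\tilde\rho]$.

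The main technical obstacle will be reconciling the choice of lift across $\pi_1(\tilde V)\twoheadrightarrow\pi_1(\tilde Y_r)$ in the surjectivity step — specifically, matching the sign of $g^r=\pm I$ to the value $\rho(\mu_K^r)\in\{\pm I\}$ dictated by the required holonomy parameter, which forces one to twist by $\chi$ when the parities mismatch. A secondary delicate point occurs at $\alpha=\tfrac{1}{4}$ (i.e.\ $l=r/2$ with $r$ even), where the flip preserves the parameter and one must verify that $\rho$ and $\rho\cdot\chi$ remain non-conjugate on irreducibles so that the $\mathbb{Z}/2$-action on fibers is free; here the integral homology sphere hypothesis on $\tilde Y_r$ together with uniqueness of the $SU(2)$-lift is what ultimately prevents the fibers from collapsing.
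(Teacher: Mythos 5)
Your overall strategy (restrict to the cover, use the homology-sphere hypothesis to lift uniquely to $SU(2)$, let the flip symmetry account for the two-to-one) is the same in outline as the paper's, but the key step of your fiber analysis contains a genuine error. From $\Pi([\rho_1])=\Pi([\rho_2])$ you only learn that the \emph{adjoint} representations descended to $\pi_1(\tilde{Y}_r)$ are conjugate; at the $SU(2)$ level this means that, after conjugation, $\rho_1|_{\pi_1(\tilde{V})}$ and $\rho_2|_{\pi_1(\tilde{V})}$ agree only up to a character $\pi_1(\tilde{V})\to\{\pm1\}$, and this character group is \emph{not} trivial: $\tilde{V}$ is the exterior of the lifted knot $\tilde{K}$ in a homology sphere, so $H^1(\tilde{V};\mathbb{Z}_2)\cong\mathbb{Z}_2$, generated by linking number with $\tilde{K}$ mod $2$. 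Your assertion that the restrictions literally agree is false even for the flip pair itself: when $r$ is odd, $\chi$ restricts to the nontrivial character of $\pi_1(\tilde{V})$, so $\rho$ and $\chi\cdot\rho$ (which by your own first step lie in the same fiber) have different restrictions. Hence the centralizer argument ("$\rho_1(\mu_K)\rho_2(\mu_K)^{-1}$ commutes with the irreducible restriction") does not run as written, and ruling out potential extra fiber elements coming from the twist by this intrinsic character (especially for $r$ even, where it is not the restriction of $\chi$) requires a uniqueness statement you do not have. The paper obtains exactly this by working at the $SO(3)$ level with the orbifold group $\pi_1^V(Y,K;r)$: an irreducible $SO(3)$-representation of $\pi_1(\tilde{Y}_r)$ has a \emph{unique} order-$r$ intertwiner $A_\sigma$, hence a unique extension to $\pi_1^V$ (Proposition \ref{bij}), so two elements of a fiber have conjugate adjoints on all of $\pi_1(Y\setminus K)$, and Proposition \ref{action} then confines the fiber to a single flip orbit. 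Note also that you never verify that $\Pi([\rho])$ is irreducible (needed for $\Pi$ to land in $\mathcal{R}^{*,\tau}(\tilde{Y}_r,SU(2))$, and tacitly used when you call the restriction "irreducible"); the paper proves this by observing that a reducible $SO(3)$-representation of a homology-sphere group is trivial, and triviality would force $\rho$ to factor through $\mathbb{Z}/r$, contradicting irreducibility.

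The two points you flag at the end are real, but your proposed resolutions are not the ones that work. For the freeness of the flip action when $2l=r$, "the homology sphere hypothesis together with uniqueness of the $SU(2)$-lift" is not an argument: the paper's mechanism is to identify flip-fixed classes as binary dihedral (Lemma \ref{fix}) and then to show, via the determinant homomorphism $O(2)\to\mathbb{Z}/2$ and the orbifold exact sequence, that a dihedral representation of $\pi_1^V$ pulls back to a \emph{reducible} representation of $\pi_1(\tilde{Y}_r)$ (Lemma \ref{dih}); hence flip-fixed classes cannot occur in a fiber over an irreducible point, and the $\mathbb{Z}_2$-action on each fiber is free. For surjectivity, your sign-matching fix "twist by $\chi$" cannot work when $r$ is even, because $\chi$ restricts trivially to $\pi_1(\tilde{V})$; the correct twist is by the intrinsic linking-number character of $\pi_1(\tilde{V})$, and since $\mu_K^r\in\pi_1(\tilde{V})$ (the sequence $1\to\pi_1(\tilde{V})\to\pi_1(Y\setminus K)\to\mathbb{Z}/r\to1$ does not split), one must check both the conjugation compatibility of the twisted lift with the chosen intertwiner $g$ and the relation $\rho(\mu_K)^r=\rho(\mu_K^r)$. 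The paper avoids all of this bookkeeping because at the $SO(3)$ level the corresponding sequence for $\pi_1^V(Y,K;r)$ does split (Proposition \ref{orbseq}) and $A_\sigma^r=1$, after which the passage back to $SU(2)$ is handled once and for all by Proposition \ref{action} and the uniqueness of lifts over the homology sphere $\tilde{Y}_r$.
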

This is a generalization of the argument in \cite{CSa}.

Let $X(K)$ be a complement of the tubular neighborhood of a knot $K\subset S^{3}$. Its boundary $\partial X(K)$ is a torus.
In Subsection \ref{non-deg res}, we will show that the restriction map $r:\mathcal{R}^{*}(S^{3}\setminus T_{p, q}, SU(2))\rightarrow \mathcal{R}(\partial X(T_{p, q}), SU(2))$ is a smooth immersion of one-manifold without any perturbation of flat connections, using the setting of gauge theory by Herald \cite{her94} and computations of group cohomology of $\pi_{1}$.
In Subsection \ref{TLsig}, we will give a proof of Theorem \ref{absolute counting} using the results in  The construction of the lifting map \label{1-1}

\subsection{ The construction of the lifting map\label{1-1}}

We assign  the second Stiefel-Whitney class $w\in H^{2}(Y,\mathbb{Z}_{2})$ to $[\rho] \in \mathcal{R}(Y, SO(3))$. We can construct a flat bundle $E=\tilde{Y}\times_{\rho} \mathbb{R}^{3}$ from $SO(3)$-representation $\rho$ and define $w([\rho]):=w_{2}(E)\in H^{2}(Y,\mathbb{Z}_{2})$, where $w_{2}(E)$ is a second Stiefel-Whitney class of $E$.  If $w(\rho)=0$ then $SO(3)$-bundle $E$ lifts to $SU(2)$-bundle $F$. Let $P$ and $Q$ be corresponding principal bundle of $E$ and $F$ respectively. The natural map $p:Q\rightarrow P$ is fiberwise double covering map. Let $\theta_{\rho}$ be a connection form on $P$ which corresponds to  flat connection $\rho$. Then $p^{*}\theta_{\rho}$ defines a flat connection on $Q$. Thus each element of $\mathcal{R}(Y, SO(3))$ lifts to $\mathcal{R}(Y, SU(2))$ if its second Stiefel-Whitney class vanishes.

\begin{prp}\label{action}
Let $X$ be $Y$ or $Y\setminus K$. Then there is an action of $H^{1}(X, \mathbb{Z}_{2})$ on $\mathcal{R}(X, SU(2))$ and the map ${\rm Ad}:\mathcal{R}(X, SU(2))\rightarrow \mathcal{R}^{0}(X, SO(3))$ induces a bijectiton $$\mathcal{R}(X, SU(2))/H^{1}(X,\mathbb{Z}_{2})\cong\mathcal{R}^{0}(X, SO(3)).$$ 
Here $\mathcal{R}^{0}(X, SO(3))$ denotes the set of conjugacy classes of $SO(3)$-representations whose second Stiefel-Whitney class vanishes. 
\end{prp}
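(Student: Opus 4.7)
The plan is to set up the action, observe that $\mathrm{Ad}$ factors through the orbit space, and then establish bijectivity by separately treating surjectivity (via an obstruction-theoretic lifting) and injectivity (by comparing two lifts of the same adjoint representation).

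First I would define the action. Since $\mathbb{Z}_2 = \{\pm I\} \subset SU(2)$ is central, for $\chi \in H^1(X,\mathbb{Z}_2) = \mathrm{Hom}(\pi_1(X),\mathbb{Z}_2)$ and $\rho:\pi_1(X)\to SU(2)$, the formula $(\chi\cdot\rho)(g):=\chi(g)\rho(g)$ is again a homomorphism, and because $\chi(g)$ is central the assignment commutes with $SU(2)$-conjugation, so it descends to an action on $\mathcal{R}(X,SU(2))$. Moreover $\mathrm{Ad}(\pm I)=\mathrm{id}$, so $\mathrm{Ad}(\chi\cdot\rho)=\mathrm{Ad}(\rho)$, showing $\mathrm{Ad}$ factors through $\mathcal{R}(X,SU(2))/H^1(X,\mathbb{Z}_2)$. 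The image lies in $\mathcal{R}^0$ because the adjoint bundle of a flat $SU(2)$-bundle admits a spin lift by construction, so its $w_2$ vanishes.

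Next I would check surjectivity onto $\mathcal{R}^0(X,SO(3))$. Given $[\rho']$ with $w_2(\rho')=0$, I will fix a set-theoretic lift $\widetilde{\rho}:\pi_1(X)\to SU(2)$ of $\rho'$; its failure to be a homomorphism is measured by the $\mathbb{Z}_2$-valued $2$-cocycle $c(g,h):=\widetilde{\rho}(g)\widetilde{\rho}(h)\widetilde{\rho}(gh)^{-1}\in\{\pm I\}$, whose class in $H^2(\pi_1(X),\mathbb{Z}_2)$ is precisely the Stiefel--Whitney obstruction to lifting the structure group from $SO(3)$ to $SU(2)$. For $X=Y\setminus K$ the knot complement is aspherical, so $H^2(\pi_1(X),\mathbb{Z}_2)\cong H^2(X,\mathbb{Z}_2)$ and this class coincides with $w_2([\rho'])=0$; for $X=Y$ an integral homology sphere, $H^2(Y,\mathbb{Z}_2)=0$ makes the obstruction vanish automatically. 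In either case $c=\partial b$ for some $b:\pi_1(X)\to\mathbb{Z}_2$, and $\rho(g):=b(g)\widetilde{\rho}(g)$ is a genuine lift with $\mathrm{Ad}\circ\rho=\rho'$.

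For injectivity I would proceed as follows. Suppose $\mathrm{Ad}\circ\rho_1$ and $\mathrm{Ad}\circ\rho_2$ are $SO(3)$-conjugate via some $\bar g\in SO(3)$; choose any $SU(2)$-lift $g$ of $\bar g$. Then $\chi(x):=g\rho_1(x)g^{-1}\rho_2(x)^{-1}$ takes values in the kernel $\{\pm I\}$ of $\mathrm{Ad}$, and centrality of $\pm I$ makes $\chi$ multiplicative, so $\chi\in H^1(X,\mathbb{Z}_2)$. This yields $[\rho_2]=\chi\cdot[g\rho_1 g^{-1}]=\chi\cdot[\rho_1]$ in $\mathcal{R}(X,SU(2))/H^1(X,\mathbb{Z}_2)$.

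The only non-formal step is the surjectivity, where one must identify the group-cohomological obstruction with the topological class $w_2$; the rest is bookkeeping with central elements. The key input that makes this identification clean in the cases of interest is asphericity of knot complements and $H^2(Y,\mathbb{Z}_2)=0$ for integral homology spheres, so no delicate spectral-sequence argument is required.
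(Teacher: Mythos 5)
Your definition of the action, the factorization of ${\rm Ad}$ through the quotient, and the injectivity step are essentially the paper's argument: the paper's proof also observes that two $SU(2)$-lifts of one $SO(3)$-representation differ by a character $\chi\in{\rm Hom}(\pi_1(X),\mathbb{Z}_2)$ and that the character action commutes with conjugation (your version, which also lifts the conjugating element $\bar g$ to $SU(2)$, is in fact slightly more careful at the level of conjugacy classes than the paper's, which compares lifts of the same homomorphism). Where you genuinely differ is surjectivity onto $\mathcal{R}^0$: the paper obtains it from the bundle-theoretic discussion immediately preceding the proposition --- if $w_2$ of the flat $SO(3)$-bundle vanishes, the bundle lifts to an $SU(2)$-bundle, the pulled-back connection form is flat, and its holonomy is the desired lift --- whereas you run the group-cohomological obstruction argument with an explicit $\{\pm 1\}$-valued $2$-cocycle. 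These are two standard incarnations of the same obstruction; yours is more self-contained at the level of representations, the paper's avoids having to compare group cohomology of $\pi_1(X)$ with the cohomology of $X$.

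That comparison is exactly where your write-up needs a repair: the identification of the cocycle class with $w_2$ ``via asphericity'' is not available in the stated generality. An integral homology sphere $Y$ need not be aspherical (e.g.\ the Poincar\'e sphere), and $Y\setminus K$ can also fail to be aspherical when $Y$ is a nontrivial connected sum with $K$ contained in one summand. The conclusion still holds, by either of two standard observations: (i) the map $H^2(\pi_1(X);\mathbb{Z}_2)\rightarrow H^2(X;\mathbb{Z}_2)$ induced by the classifying map is injective (since $B\pi_1(X)$ is obtained from $X$ by attaching cells of dimension at least $3$) and carries your obstruction class to $w_2$, which vanishes because $H^2(X;\mathbb{Z}_2)=0$ in both cases; or (ii) Hopf's formula gives $H_2(\pi_1(X);\mathbb{Z})=0$ and $H_1(\pi_1(X);\mathbb{Z})$ is $0$ or $\mathbb{Z}$, so $H^2(\pi_1(X);\mathbb{Z}_2)=0$ outright and the obstruction group itself is trivial. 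With ``asphericity'' replaced by one of these remarks, your proof is complete and correct.
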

\begin{proof}
Let $\rho:\pi_{1}(X)\rightarrow SO(3)$ be a representation whose second Stiefel-Whitney class vanishes and $\tilde{\rho}:\pi_{1}(X)\rightarrow SU(2)$
be its $SU(2)$-lift. 
Consider another lift $\tilde{\rho}':\pi_{1}(X)\rightarrow SU(2)$. Then there is a map $\chi: \pi_{1}(X)\rightarrow \{\pm1\}$ such that $\tilde{\rho}'(g)=\chi(g)\tilde{\rho}(g)$ for any $g\in \pi_{1}(X)$. We can directly check that $\chi$
 is a homomorphism and determine an element $\chi \in {\rm Hom}(\pi_{1}(X), \mathbb{Z}_{2})=H^{1}(X, \mathbb{Z}_{2})$. Conversely, two $SU(2)$-representation $\sigma_{1}, \sigma_{2}: \pi(X)\rightarrow SU(2)$ such that there exist $\chi \in {\rm Hom}(\pi_{1}(X), \mathbb{Z}_{2}) $ and satisfying $\sigma_{1}(g)=\chi(g)\sigma_{2}(g)$ for any $g \in \pi_{1}(X)$ induces same $SO(3)$-representation. We define an action of $H^{1}(X, \mathbb{Z}_{2})$ on ${\rm Hom}(\pi_{1}(X), SU(2))$ by $\sigma \mapsto \chi \cdot\sigma$, where $(\chi\cdot \sigma)(g)=\chi(g)\sigma(g)$ for $g\in\pi_{1}(X)$. The action of $\chi$ commutes with the conjugacy action and descends to $\mathcal{R}(X, SU(2))$.  Thus we get the statement.
\end{proof}
Note that the action of $H^{1}(Y\setminus K,\mathbb{Z}_{2})$ coincides with the flip symmetry.
From Proposition \ref{action}, we get the following corollary.
\begin{cor}
For an integral homology $3$-sphere $Y$, all elements in $\mathcal{R}(Y,SO(3))$ have a unique lift in $\mathcal{R}(Y, SU(2))$.
\end{cor}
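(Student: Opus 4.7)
The corollary is essentially a direct consequence of Proposition \ref{action}, and the plan is to verify the two cohomological hypotheses that make that proposition collapse to a bijection in the special case $X=Y$.

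First I would observe that for an integral homology 3-sphere $Y$ one has $H^{1}(Y;\mathbb{Z}_{2})=0$ and $H^{2}(Y;\mathbb{Z}_{2})=0$. Both vanishings follow from the universal coefficient theorem together with $H_{1}(Y;\mathbb{Z})=0$ and $H_{2}(Y;\mathbb{Z})=0$ (or, equivalently, via Poincar\'e duality applied to the first vanishing). The second vanishing is the key input: it guarantees that for every representation $\rho:\pi_{1}(Y)\rightarrow SO(3)$, the associated flat real rank-3 bundle satisfies $w_{2}(E_{\rho})=0\in H^{2}(Y;\mathbb{Z}_{2})$. Therefore $\mathcal{R}^{0}(Y,SO(3))=\mathcal{R}(Y,SO(3))$, i.e.\ \emph{every} $SO(3)$-conjugacy class admits at least one $SU(2)$-lift by the construction recalled just before Proposition \ref{action}.

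Next I would invoke Proposition \ref{action} with $X=Y$: the induced map
\[
\mathrm{Ad}:\mathcal{R}(Y,SU(2))/H^{1}(Y;\mathbb{Z}_{2})\longrightarrow \mathcal{R}^{0}(Y,SO(3))
\]
is a bijection. Since $H^{1}(Y;\mathbb{Z}_{2})=0$, the quotient on the left is simply $\mathcal{R}(Y,SU(2))$, and combining with the identification $\mathcal{R}^{0}(Y,SO(3))=\mathcal{R}(Y,SO(3))$ we conclude that $\mathrm{Ad}:\mathcal{R}(Y,SU(2))\to\mathcal{R}(Y,SO(3))$ is itself a bijection. Surjectivity gives existence of a lift, and injectivity gives uniqueness.

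There is no real obstacle: the entire content is a two-line application of Proposition \ref{action} once one records the vanishing of $H^{\ast}(Y;\mathbb{Z}_{2})$ in degrees $1$ and $2$. The only thing worth double-checking is that the ambiguity of $SU(2)$-lifts, which in general is controlled by $H^{1}(X;\mathbb{Z}_{2})$ via the twisting $\tilde{\rho}\mapsto \chi\cdot\tilde{\rho}$ described in the proof of Proposition \ref{action}, genuinely disappears here because that cohomology group is trivial.
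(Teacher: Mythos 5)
Your argument is correct and is essentially the paper's own proof: both rest on the vanishing of $H^{2}(Y;\mathbb{Z}_{2})$ (so $w_{2}$ vanishes and every $SO(3)$-class lifts) and of $H^{1}(Y;\mathbb{Z}_{2})$ (so the twisting ambiguity in Proposition \ref{action} is trivial, giving uniqueness). Nothing further is needed.
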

\begin{proof}
Since $H^{2}(Y,\mathbb{Z}_{2})=0$, the second Stiefel-Whitney class of $[\rho] \in \mathcal{R}(Y, SO(3))$ vanishes, and $\rho$ lifts to an $SU(2)$-representation. By Proposition \ref{action}, this lift is unique since $H^{1}(Y,\mathbb{Z}_{2})=0$.  
\end{proof}
If $[\rho] \in \mathcal{R}(Y\setminus K,SU(2))$ satisfies 
\[\rho(\mu_{K})\sim 
\left[\begin{array}{cc}
e^{2\pi i \alpha}&0\\
0&e^{-2\pi i \alpha}
\end{array}\right]
\]
then induced $SO(3)$-representation satisfies
\begin{equation}
{\rm Ad \rho}(\mu_{K})\sim
\left[\begin{array}{ccc}
1&0&0\\
0&{\rm cos}(4\pi \alpha)& -{\rm sin}(4\pi \alpha)\\
0&{\rm sin}(4\pi \alpha)&{\rm cos}(4\pi \alpha)
\end{array}
\right].\label{so3}
\end{equation}

Let $\mathcal{R}_{\alpha}(Y\setminus K, SO(3))$ be a subset of $\mathcal{R}(Y\setminus K, SO(3))$ whose elements are represented by $SO(3)$-representations of $\pi_{1} (Y\setminus K)$ such that their images of $\mu_{K}$ are conjugate to the right  hand side of (\ref{so3}).

Before proceeding the argument, we introduce orbifold fundamental group of $Y\setminus K$. (It appears in \cite{CSa}, \cite{collin1999instanton}, for example.)
\begin{dfn}
$\pi^{V}_{1}(Y, K; r):=\pi_{1}(Y\setminus K)/\langle \mu_{K}^{r} \rangle$ is called orbifold fundamental group of $Y\setminus K$.
\end{dfn}
\begin{prp}\label{orbseq}
$\pi^{V}_{1}(Y, K; r)$ admits the following split short exact sequence.
$$1\rightarrow \pi_{1}(\tilde{Y}_{r})\rightarrow \pi_{1}^{V}(Y, K; r)\rightarrow \mathbb{Z}/r\rightarrow 1.$$
\end{prp}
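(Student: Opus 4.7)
The plan is to build the sequence directly from the abelianization map of the knot complement and then identify its kernel with $\pi_{1}(\tilde{Y}_{r})$ via Van Kampen's theorem applied to the decomposition $\tilde{Y}_{r}=N(K)\cup_{h}\tilde{V}$. Since $Y$ is an integral homology $3$-sphere, $H_{1}(Y\setminus K;\mathbb{Z})\cong \mathbb{Z}$ is generated by $[\mu_{K}]$, so composing the abelianization with reduction mod $r$ gives a surjective homomorphism $\phi:\pi_{1}(Y\setminus K)\twoheadrightarrow\mathbb{Z}/r$ sending $\mu_{K}\mapsto 1$. Because $\phi(\mu_{K}^{r})=0$, the map $\phi$ descends to a surjection $\bar\phi:\pi_{1}^{V}(Y,K;r)\rightarrow \mathbb{Z}/r$. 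The section is immediate: the class $[\mu_{K}]\in\pi_{1}^{V}(Y,K;r)$ satisfies $[\mu_{K}]^{r}=1$ by the defining relation, so $k\mapsto [\mu_{K}]^{k}$ is a well-defined homomorphism $\mathbb{Z}/r\rightarrow \pi_{1}^{V}(Y,K;r)$ which splits $\bar\phi$.

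What remains is to identify $\ker\bar\phi$ with $\pi_{1}(\tilde Y_{r})$. By construction, $\pi_{1}(\tilde V)$ is precisely $\ker\phi$, so there is an induced inclusion $\pi_{1}(\tilde V)/N\hookrightarrow \pi_{1}^{V}(Y,K;r)$, where $N$ denotes the normal closure of $\mu_{K}^{r}$ in $\pi_{1}(Y\setminus K)$ (note $N\subset\pi_{1}(\tilde V)$ since $\phi$ vanishes on every conjugate of $\mu_{K}^{r}$). By Van Kampen applied to $\tilde Y_{r}=N(K)\cup_{h}\tilde V$ with intersection the torus $\partial \tilde V$, together with the fact that the meridional lift $\tilde\mu_{K}$ bounds the meridian disk of $N(K)$, we obtain $\pi_{1}(\tilde Y_{r})\cong \pi_{1}(\tilde V)/\langle\!\langle\tilde\mu_{K}\rangle\!\rangle_{\tilde V}$, the quotient by the normal closure inside $\pi_{1}(\tilde V)$. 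Since $\tilde\mu_{K}$ maps to $\mu_{K}^{r}$ under the inclusion $\pi_{1}(\tilde V)\hookrightarrow \pi_{1}(Y\setminus K)$, it suffices to verify $\langle\!\langle\mu_{K}^{r}\rangle\!\rangle_{\pi_{1}(Y\setminus K)}=\langle\!\langle\tilde\mu_{K}\rangle\!\rangle_{\pi_{1}(\tilde V)}$.

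The containment $\supseteq$ is trivial, and for $\subseteq$ one writes every $g\in\pi_{1}(Y\setminus K)$ as $\mu_{K}^{k}h$ with $h\in\pi_{1}(\tilde V)$, using that $[\mu_{K}]$ generates the quotient $\pi_{1}(Y\setminus K)/\pi_{1}(\tilde V)\cong\mathbb{Z}/r$. Then conjugation by $g$ is conjugation by $h$ followed by conjugation by $\mu_{K}^{k}$; the former is internal to $\pi_{1}(\tilde V)$, and the latter corresponds to the deck transformation $\tau_{*}$ of the cover $\tilde V\to V$. Since $\tau_{*}(\tilde\mu_{K})=\tilde\mu_{K}$ (as $\mu_{K}^{r}$ commutes with $\mu_{K}$), one checks $\tau_{*}$ preserves $\langle\!\langle\tilde\mu_{K}\rangle\!\rangle_{\pi_{1}(\tilde V)}$, giving the equality of normal closures. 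This will be the main technical step; the rest is formal.

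Assembling these pieces yields the split short exact sequence. The only genuinely delicate point is the identification of normal closures in the two distinct ambient groups, which is resolved by exploiting that the coset representatives can be taken as powers of $\mu_{K}$ and that conjugation by $\mu_{K}$ implements the deck transformation on $\pi_{1}(\tilde V)$.
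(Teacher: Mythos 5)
Your proof is correct and follows essentially the same route as the paper: the $\mathbb{Z}/r$ quotient coming from the mod-$r$ abelianization (equivalently, the regular cover $\tilde{V}\rightarrow V$), Van Kampen applied to $\tilde{Y}_{r}=N(K)\cup_{h}\tilde{V}$ to identify $\pi_{1}(\tilde{Y}_{r})$ with $\pi_{1}(\tilde{V})$ modulo the meridional lift, and the splitting given by $\mu_{K}$. The only difference is that you spell out the equality of the two normal closures $\langle\!\langle\mu_{K}^{r}\rangle\!\rangle_{\pi_{1}(Y\setminus K)}=\langle\!\langle\tilde{\mu}_{K}\rangle\!\rangle_{\pi_{1}(\tilde{V})}$ (hence injectivity of $\pi_{1}(\tilde{Y}_{r})\rightarrow\pi_{1}^{V}(Y,K;r)$), a point the paper asserts without detail; your argument for it is sound.
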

\begin{proof}
Let $\tilde{K}\subset\tilde{Y}_{r}$ be a branched locus. Since $\tilde{Y}_{r}\setminus \tilde{K}\rightarrow Y\setminus K$ is a regular covering, there is a exact sequence
$$1\rightarrow \pi_{1}(\tilde{Y}_{r}\setminus \tilde{K})\rightarrow \pi_{1}(Y\setminus K)\rightarrow \mathbb{Z}/r\rightarrow 1.$$
Applying Van-Kampen theorem to the decomposition $\tilde{Y}_{r}\setminus {\rm int}N(\tilde{K})\cup N(\tilde{K})$, we have $\pi_{1}(\tilde{Y}_{r})=\pi_{1}(\tilde{Y}_{r}\setminus\tilde{K})/\langle\mu_{\tilde{K}}\rangle$. Since $\pi_{1}(\tilde{Y}_{r}\setminus \tilde{K})\rightarrow \pi_{1}(Y\setminus K)$  maps $\mu_{\tilde{K}}$ to $\mu_{K}^{r}$, this induces
$1\rightarrow \pi_{1}(\tilde{Y}_{r})\rightarrow \pi^{V}_{1}(Y, K;r)$. Since $\pi_{1}(Y\setminus K)\rightarrow \mathbb{Z}/r$ maps $\mu_{K}^{r}\mapsto 1$, this induces $\pi_{1}^{V}(Y, K; r)\rightarrow \mathbb{Z}/r$ which sends $\mu_{K}$ to a generator of $\mathbb{Z}/r$. The spitting $\mathbb{Z}/r\rightarrow \pi^{V}_{1}(Y, K;r)$ sends a generator of $\mathbb{Z}/r$ to $\mu_{K}$.
\end{proof}
\begin{lem}\label{natcrr}
There is a natural one to one correspondence
$$\mathcal{R}^{*}(\pi^{V}_{1}(Y, K;r), SO(3))\cong \bigsqcup_{l=1}^{r-1}\mathcal{R}_{\frac{l}{2r}}^{*}(Y\setminus K,SO(3)).$$
\end{lem}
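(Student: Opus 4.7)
The plan is to unpack the defining description $\pi_1^V(Y, K; r) = \pi_1(Y\setminus K)/\langle \mu_K^r\rangle$ from Proposition \ref{orbseq} and use it to identify a homomorphism $\rho: \pi_1^V(Y, K; r) \to SO(3)$ with a homomorphism $\tilde\rho: \pi_1(Y\setminus K) \to SO(3)$ satisfying $\tilde\rho(\mu_K)^r = I$. This pullback correspondence is $SO(3)$-equivariant under conjugation and preserves irreducibility, so it descends to a bijection between the corresponding sets of conjugacy classes of irreducible representations.

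With this preliminary bijection in hand, I would then classify the possible meridian images: any element of $SO(3)$ of order dividing $r$ is conjugate to a rotation by angle $2\pi l/r$ for some $l \in \{0, 1, \ldots, r-1\}$. Comparing with the matrix in (\ref{so3}), this identification reads $4\pi\alpha = 2\pi l/r$, i.e., $\alpha = l/(2r)$, so that $\tilde\rho$ lies in $\mathcal{R}_{l/(2r)}(Y\setminus K, SO(3))$. Under the convention that $\mathcal{R}^{*}(\pi_1^V(Y, K;r), SO(3))$ records irreducible representations with nontrivial monodromy around the orbifold locus (so that the representation genuinely detects the orbifold structure and corresponds to the parameter range $\alpha \in (0, 1/2)$ used throughout), the case $l = 0$ is excluded, giving $l \in \{1, \ldots, r-1\}$. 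The inverse assignment sends an irreducible $\tilde\rho \in \mathcal{R}^*_{l/(2r)}$ back to a representation of $\pi_1^V(Y, K; r)$ via the quotient, which is well-defined precisely because $\tilde\rho(\mu_K)^r = I$.

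The main subtlety to address is the bookkeeping for the index $l$: since $R(2\pi l/r)$ and $R(2\pi(r-l)/r)$ are conjugate in $SO(3)$ (via a $\pi$-rotation about a perpendicular axis), the subsets $\mathcal{R}^*_{l/(2r)}$ and $\mathcal{R}^*_{(r-l)/(2r)}$ coincide inside $\mathcal{R}^*(Y\setminus K, SO(3))$. To obtain an honest disjoint union indexed by $l$, one must fix a canonical normal form — for instance, by choosing a preferred rotation axis once and for all so that the oriented angle in $(0, 2\pi)$ picks out a unique $l \in \{1, \ldots, r-1\}$ from each $SO(3)$-conjugacy class of meridian image. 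Once this convention is pinned down, the composite map sending $\rho$ to the pair (value of $l$, conjugacy class of $\tilde\rho$) is the claimed bijection.

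The bulk of the proof is a direct unpacking of definitions using Proposition \ref{orbseq}; the only delicate point is the normalization choice reconciling the $SO(3)$-conjugacy redundancy between $l$ and $r-l$ with the indexing of the right-hand side. This will be the main obstacle to presenting the argument cleanly, and I would expect to address it via an explicit convention rather than a conceptual argument.
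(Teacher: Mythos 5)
The first half of your plan coincides with the paper's own argument: representations of $\pi^{V}_{1}(Y,K;r)$ are exactly representations of $\pi_{1}(Y\setminus K)$ with $\rho(\mu_{K})^{r}=I$, and the meridian image is then conjugate to the rotation in (\ref{so3}) with $4\pi\alpha=2\pi l/r$, i.e. $\alpha=l/(2r)$; this is precisely how the paper proves Lemma \ref{natcrr} (and your exclusion of $l=0$ "by convention" mirrors an assumption the paper also leaves implicit: for a general homology sphere $Y$ an irreducible representation of $\pi^{V}_{1}$ could kill $\mu_{K}$ and factor through $\pi_{1}(Y)$, so this case is being excluded, not derived).

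The genuine problem is your proposed repair of the $l\leftrightarrow r-l$ ambiguity. A rotation by $2\pi l/r$ and a rotation by $2\pi(r-l)/r$ are conjugate in $SO(3)$, so the defining condition of $\mathcal{R}_{l/(2r)}(Y\setminus K, SO(3))$ is identical to that of $\mathcal{R}_{(r-l)/(2r)}(Y\setminus K, SO(3))$: these are literally the same subset of $\mathcal{R}(Y\setminus K, SO(3))$, and since elements are only defined up to conjugation there is no well-defined "oriented angle" of $\rho(\mu_{K})$ -- conjugating by a half-turn about an axis perpendicular to the rotation axis reverses the sense while staying in the same class. Hence no choice of preferred axis or normal form can produce a map $[\rho]\mapsto l$ distinguishing $l$ from $r-l$, and in fact a bijection onto the formal disjoint union indexed by $l=1,\dots,r-1$ is impossible in general: for $K=T_{p,q}$ in $S^{3}$ and $r$ odd, the formal disjoint union has twice the cardinality of $\mathcal{R}^{*}(\pi^{V}_{1},SO(3))$ (compare Lemma \ref{P2}, where the factor of two is accounted for on the $SU(2)$ side by the two-to-one map $\Pi$ and the flip symmetry). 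The statement should therefore be read with the identification $\mathcal{R}_{l/(2r)}(SO(3))=\mathcal{R}_{(r-l)/(2r)}(SO(3))$ understood, i.e. the right-hand side is the plain union over $l$ (equivalently, the set of irreducible classes whose meridian image is a nontrivial rotation of order dividing $r$); the honest disjoint-union bookkeeping is only carried out at the level of $SU(2)$ representations in Proposition \ref{lift}. Your write-up, by promising to make the union genuinely disjoint via a convention, commits to a step that cannot be carried out.
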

\begin{proof}
Let $\rho:\pi_{1}(Y\setminus K)\rightarrow SO(3)$ be a representation with $[\rho]\in \mathcal{R}_{\frac{l}{2r}}^{*}(Y\setminus K,SO(3))$. Then it factors through $\pi^{V}_{1}(Y, K; r)$. Conversely, any representation $\sigma:\pi^{V}_{1}(Y, K; r)\rightarrow SO(3)$ satisfies 
\[
\sigma(\mu_{K})\sim \left[\begin{array}{ccc}
1&0&0\\
0&{\rm cos}(4\pi \alpha)& -{\rm sin}(4\pi \alpha)\\
0&{\rm sin}(4\pi \alpha)&{\rm cos}(4\pi \alpha)
\end{array}
\right]
\]
where $\alpha=\frac{l}{2r}$  for some $0<l<r$. 
Thus $\sigma$ defines a representation of $\pi_{1}(Y\setminus K)$ with above condition. 
\end{proof}

\begin{prp}\label{bij}
There is a bijection
$$\bigsqcup_{l=1}^{r-1}\mathcal{R}^{*}_{\frac{l}{2r}}(Y\setminus K, SO(3))\cong \mathcal{R}^{*,\tau}(\tilde{Y}_{r}, SO(3)).$$
\end{prp}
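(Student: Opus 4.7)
The plan is to combine Lemma \ref{natcrr} with the split short exact sequence of Proposition \ref{orbseq}. By Lemma \ref{natcrr}, it suffices to produce a bijection
\[
\mathcal{R}^{*}(\pi_{1}^{V}(Y,K;r), SO(3)) \;\cong\; \mathcal{R}^{*,\tau}(\tilde{Y}_{r}, SO(3)),
\]
and I would build this as the restriction map along $\pi_{1}(\tilde{Y}_{r}) \hookrightarrow \pi_{1}^{V}(Y,K;r)$, with inverse constructed from the splitting $\mathbb{Z}/r \to \pi_{1}^{V}(Y,K;r)$, $1 \mapsto [\mu_{K}]$. Under this splitting the covering transformation $\tau_{*}$ is realised as conjugation by $\mu_{K}$ on $\pi_{1}(\tilde{Y}_{r})$, which is the crucial identification throughout.

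For the restriction map, the formula $\tau^{*}(\sigma|_{\pi_{1}(\tilde{Y}_{r})}) = \sigma(\mu_{K})\,(\sigma|_{\pi_{1}(\tilde{Y}_{r})})\,\sigma(\mu_{K})^{-1}$ gives $\tau$-invariance up to conjugation, so $[\sigma] \mapsto [\sigma|_{\pi_{1}(\tilde{Y}_{r})}]$ lands in $\mathcal{R}^{*,\tau}(\tilde{Y}_{r}, SO(3))$ once irreducibility is verified. For the inverse, starting from $[\tilde{\sigma}]$ I would pick a representative and the unique $X \in SO(3)$ (uniqueness by Schur applied to the irreducible $\tilde{\sigma}$) with $\tau^{*}\tilde{\sigma} = X\tilde{\sigma} X^{-1}$; applying $\tau^{*}$ iteratively and using $\tau^{r} = \mathrm{id}$ gives $X^{r}\tilde{\sigma} X^{-r} = \tilde{\sigma}$, whence $X^{r} = I$ by Schur again. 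Using the splitting, define $\sigma$ on $\pi_{1}^{V}(Y,K;r)$ by $\sigma|_{\pi_{1}(\tilde{Y}_{r})} = \tilde{\sigma}$ and $\sigma(\mu_{K}) = X$; the defining relation $\mu_{K}\gamma\mu_{K}^{-1} = \tau_{*}\gamma$ is respected by construction, and a change of representative $\tilde{\sigma} \mapsto g\tilde{\sigma} g^{-1}$ replaces $X$ by $gXg^{-1}$, so $[\sigma]$ only depends on $[\tilde{\sigma}]$. That these two maps are mutually inverse is then immediate from the constructions.

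The main obstacle I expect is preservation of irreducibility. For the extension map, any $\sigma$-invariant subspace is automatically $\tilde{\sigma}$-invariant, so the only way $\sigma$ could be reducible is via a $\tilde{\sigma}$-fixed line or plane; irreducibility of $\tilde{\sigma}$ rules this out directly, and the assumption $[\tilde{\sigma}] \in \mathcal{R}^{*,\tau}$ (rather than factoring through $\pi_{1}(Y)$) ensures $X \ne I$, which is equivalent to $\sigma$ belonging to the range $1 \le l \le r-1$ in the disjoint union. For the restriction map, the difficulty is that a Clifford-type decomposition may in principle split $\tilde{\sigma}$ into a sum of irreducibles cyclically permuted by $\sigma(\mu_{K})$; since we work in $SO(3)$ with a three-dimensional underlying space, any such nontrivial splitting would either yield a $\sigma(\mu_{K})$-invariant summand (contradicting irreducibility of $\sigma$) or a decomposition into three lines cyclically permuted, and I would rule out the latter by extracting a $\sigma$-invariant proper subspace from a $\sigma(\mu_{K})$-eigenvector in that decomposition, again contradicting irreducibility of $\sigma$.
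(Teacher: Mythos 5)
Your overall route is the same as the paper's: reduce via Lemma \ref{natcrr} to producing a bijection between $\mathcal{R}^{*}(\pi_{1}^{V}(Y,K;r),SO(3))$ and $\mathcal{R}^{*,\tau}(\tilde{Y}_{r},SO(3))$, take restriction along $i\colon\pi_{1}(\tilde{Y}_{r})\hookrightarrow\pi_{1}^{V}(Y,K;r)$, and invert by choosing the Schur-unique $X$ with $\tau^{*}\tilde{\sigma}=X\tilde{\sigma}X^{-1}$, checking $X^{r}=I$, and extending by $\mu_{K}\mapsto X$ through the splitting of Proposition \ref{orbseq}. All of that matches. The genuine gap is in the one step that needs a topological input: irreducibility of the restriction. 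You try to exclude the Clifford-theoretic possibility that $\sigma|_{\pi_{1}(\tilde{Y}_{r})}$ splits into three lines cyclically permuted by $\sigma(\mu_{K})$ by ``extracting a $\sigma$-invariant proper subspace from a $\sigma(\mu_{K})$-eigenvector''. No such subspace can exist when $\sigma$ is irreducible, and this configuration genuinely occurs for abstract groups: take $\sigma$ the tetrahedral representation of $A_{4}\subset SO(3)$ and $N=V_{4}$ the Klein four-group, with $A_{4}/V_{4}\cong\mathbb{Z}/3$; then $\sigma$ is irreducible while $\sigma|_{N}$ is a sum of three distinct characters cyclically permuted by an order-three element. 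So the case you need to kill cannot be killed by linear algebra, and your argument for the restriction map breaks down exactly there.

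What closes this case in the paper is the standing hypothesis (carried from Proposition \ref{lift}) that $\tilde{Y}_{r}$ is an integral homology $3$-sphere, which you never use. Since $\pi_{1}(\tilde{Y}_{r})$ is then perfect, every reducible $SO(3)$-representation of it (in particular any representation with image in a Klein four-group or a circle) is trivial; and if $\sigma\circ i$ is trivial, then $\sigma$ factors through $\pi_{1}^{V}(Y,K;r)/i(\pi_{1}(\tilde{Y}_{r}))\cong\mathbb{Z}/r$ and is therefore reducible, contradicting irreducibility of $\sigma$. That short argument is the paper's entire treatment of this step, and its necessity is visible in Lemma \ref{dih}: when nontrivial reducibles of $\pi_{1}(\tilde{Y}_{r})$ are available, dihedral irreducibles of $\pi_{1}^{V}$ really do restrict to nontrivial reducible representations, so the phenomenon you attempt to rule out formally is only excluded by the homology-sphere hypothesis. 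A smaller point: your justification that $X\neq I$ (``because $[\tilde{\sigma}]\in\mathcal{R}^{*,\tau}$'') does not follow, since $\mathcal{R}^{*,\tau}$ contains classes fixed by $\tau^{*}$ on the nose; placing the extension in the range $1\le l\le r-1$ is exactly what Lemma \ref{natcrr} is invoked for, so you should lean on that lemma, as the paper does, rather than on $\tau$-invariance alone.
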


\begin{proof}
Since there is a natural one to one correspondence in Lemma \ref{natcrr}, we only have to construct
$$\mathcal{R}^{*}(\pi^{V}_{1}(Y, K; r), SO(3))\xrightarrow{\cong}\mathcal{R}^{*, \tau}(\tilde{Y}_{r}, SO(3)).$$
This is induced from $\pi_{1}(\tilde{Y}_{r})\xrightarrow{i} \pi^{V}_{1}(Y, K ; r)$ in the short exact sequence in Proposition \ref{orbseq}. We claim that if $\rho:\pi^{V}_{1}(Y, K; r)\rightarrow SO(3)$ is irreducible then $\rho\circ i$ is also irreducible. Since $\tilde{Y}_{r}$ is an integral homology sphere, any reducible $SO(3)$-representation of $\pi_{1}(\tilde{Y}_{r})$ is the trivial representation.  If $\rho\circ i$ is trivial, then $\rho$ factors through $\pi^{V}_{1}(Y, K; r)/i(\pi_{1}(\tilde{Y}_{r}))\cong\mathbb{Z}/r$  and hence reducible. This is a contradiction.

We will construct the inverse correspondence of the above. 
Let $\sigma$ be a $SO(3)$-representation of $\pi_{1}(\tilde{Y}_{r})$ which represents an element in $\mathcal{R}^{*, \tau}(\tilde{Y}_{r}, SO(3))$. Since the conjugacy class of $\sigma$ is fixed by the induced action of $\tau$, there is a matrix $A\in SO(3)$ such that
$$\tau^{*}\sigma(u)=A\sigma (u) A^{-1}$$for any $u\in \pi_{1}(\tilde{Y}_{r})$. $A$ is uniquely determined since $\sigma$ is irreducible and has the trivial stabilizer $\{1\}$ in $SO(3)$, and $A$ is conjugate to the matrix of the form,
\begin{equation}
\left[\begin{array}{ccc}
1&0&0\\
0&{\rm cos}( \frac{2\pi l}{r})& -{\rm sin}(\frac{2\pi l}{r})\\
0&{\rm sin}(\frac{2\pi l}{r})&{\rm cos}(\frac{2\pi l}{r})
\end{array}
\right].
\end{equation}
Since $\tau$ has order $r$, we get a relation
$$\sigma(u)=A^{r}\sigma(u)A^{-r}$$ for any $u\in \pi_{1}(\tilde{Y}_{r})$, and we get $A^{r}=1$ using the irreducibility of $\sigma$. Thus we  can assign unique order $r$ element  $A_{\sigma} \in SO(3)$ for each $\sigma$. Finally, we assign a representation 
\begin{equation}
\begin{array}{ccccc}
\bar{\sigma}:\pi_{1}^{V}(Y, K;r)&\cong&\pi_{1}(\tilde{Y}_{r})\rtimes \mathbb{Z}/r&\rightarrow & SO(3)\\
&&&&\\
&&(u, t^{k})&\mapsto & \sigma(u)\cdot A_{\sigma}^{k}

\end{array}\label{ind-rep}
\end{equation}
to a given representation $\sigma$, where $t\in \mathbb{Z}/r$ is a generator.   This satisfies $\bar{\sigma}(\mu_{K})=A_{\sigma}$. The above construction gives the inverse of
$\mathcal{R}^{*}(\pi^{V}_{1}(Y, K; r), SO(3))\ni [\rho]\mapsto [\rho\circ i]\in \mathcal{R}^{*, \tau}(\tilde{Y}_{r}, SO(3))$.
\end{proof}
We write 
$$\Pi':\bigsqcup_{l=1}^{r-1}\mathcal{R}^{*}_{\frac{l}{2r}}(Y\setminus K, SO(3))\xrightarrow{\cong}\mathcal{R}^{*,\tau}(\tilde{Y}_{r}, SO(3))$$
for the bijection constructed above.

Now we can describe the definition of $\Pi$. 
Let $K$ be a knot in an integral homology $3$-sphere $Y$, and assume that $\tilde{Y}_{r}$ is also an integral homology 3-sphere.
\begin{dfn}

$\Pi:\bigsqcup_{l=1}^{r-1}\mathcal{R}_{\frac{l}{2r}}^{*}(Y\setminus K, SU(2))\rightarrow \mathcal{R}^{*, \tau}(\tilde{Y}_{r}, SU(2))$  is given by the following compositions.
\begin{eqnarray*}
\bigsqcup_{l=1}^{r-1}\mathcal{R}_{\frac{l}{2r}}^{*}(Y\setminus K, SU(2))&\xrightarrow{{\rm Ad}}&\bigsqcup_{l=1}^{r-1}\mathcal{R}^{*}_{\frac{l}{2r}}(Y\setminus K,SO(3))\\
&\xrightarrow{\Pi'}&\mathcal{R}^{*, \tau}(\tilde{Y}_{r}, SO(3))\xrightarrow{{\rm Ad}^{-1}}\mathcal{R}^{*, \tau}(\tilde{Y}_{r}, SU(2)).
\end{eqnarray*}
We call $\Pi([\rho])$ a lift of $[\rho]$.
\end{dfn}
An $SU(2)$-representation which factors through $Pin(2)$ subgroups is called binary dihedral representation.
An $SO(3)$-representation which factors through $O(2)$ subgroups is called dihedral representation.
Note that $O(2)$ is embedded in $SO(3)$ as
\[
\left[
\begin{array}{cc}
A&0\\
0&{\rm det}A
\end{array}\right]\in SO(3),
\]
where $A\in O(2)$.
The adjoint representation of a binary dihedral representation is dihedral representation.
In the proof of Proposition \ref{lift}, which is an important property of the lift $\Pi$, we use the following Lemma.
\begin{lem}\label{fix}(\cite{SP})
The fixed point set of $H^{1}(Y\setminus K, \mathbb{Z}_{2})$-action on $\mathcal{R}(Y\setminus K, SU(2))$ consists of conjugacy classes of binary dihedral representations. 
\end{lem}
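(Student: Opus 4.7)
The plan is to unwind what it means to be fixed by the $H^{1}(Y\setminus K,\mathbb{Z}_{2})$-action and then read off the dihedral structure from the resulting algebraic constraints. Since $Y\setminus K$ has infinite cyclic abelianization, we have $H^{1}(Y\setminus K,\mathbb{Z}_{2})\cong\mathbb{Z}_{2}$ with non-trivial generator $\chi$ determined by $\chi(\mu_{K})=-1$; set $N:=\ker\chi$, an index-two subgroup of $\pi_{1}(Y\setminus K)$. By the definition of the action recorded in the proof of Proposition \ref{action}, $[\rho]$ is a fixed point if and only if there exists $A\in SU(2)$ such that
\[
A\rho(g)A^{-1}=\chi(g)\rho(g)\qquad\text{for every }g\in\pi_{1}(Y\setminus K).
\]

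The key observation is that this single equation splits into two pieces: $A$ must commute with $\rho(g)$ when $g\in N$, and must anti-commute with $\rho(g)$ when $g\notin N$. In particular, taking $g=\mu_{K}$ forces $\mathrm{tr}(\rho(\mu_{K}))=0$, so $\rho(\mu_{K})^{2}=-1$. From here the proof will proceed by a case analysis on whether $\rho|_{N}$ hits a non-central element of $SU(2)$ or not.

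If $\rho(N)\not\subset\{\pm 1\}$, then $A$ centralizes a non-central element, hence lies in a maximal torus of $SU(2)$; after conjugation I may take $A=e^{i\alpha}$ inside the standard diagonal $U(1)$. The commutation condition then puts $\rho(N)\subset U(1)$, while the anti-commutation condition, combined with the quaternionic identities $iji^{-1}=-j$, $iki^{-1}=-k$, forces $\alpha=\pm\pi/2$ and $\rho(g)\in jU(1)$ for every $g\notin N$. Consequently the image of $\rho$ is contained in $U(1)\cup jU(1)=\mathrm{Pin}(2)$, so $\rho$ is binary dihedral. If instead $\rho(N)\subset\{\pm 1\}$, then the image of $\rho$ is generated by the central scalars $\pm 1$ together with $\rho(\mu_{K})$ and is therefore abelian; conjugating it into $U(1)\subset\mathrm{Pin}(2)$ again realizes $\rho$ as a (reducible) binary dihedral representation, finishing the lemma.

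The main technical point to execute carefully is the anti-commutation analysis in the first case, where one must verify that conjugation by $e^{i\alpha}$ acts trivially on the $1$-$i$ plane and as rotation by $2\alpha$ on the $j$-$k$ plane, so that the equation $A\rho(g)A^{-1}=-\rho(g)$ simultaneously pins down $A=\pm i$ and forces the vanishing of the $(1,i)$-components of $\rho(g)$. Everything else is formal, and the converse (any $\rho$ with image in $\mathrm{Pin}(2)$ and $\rho(\mu_{K})\in jU(1)$ is fixed by $\chi$, via $A=i$) is an immediate quaternion computation that I will record to confirm the characterization is sharp.
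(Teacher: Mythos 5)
Your proof is correct and follows essentially the same route as the paper: both unwind the fixed-point condition into the existence of $A\in SU(2)$ with $A\rho(g)A^{-1}=\chi(g)\rho(g)$, reduce (after conjugation) to $A=\pm i$, and use the quaternionic commutation/anti-commutation analysis to place the image in $S^{1}\cup jS^{1}=\mathrm{Pin}(2)$. The only difference is bookkeeping: the paper splits into reducible versus irreducible $\rho$ and gets $A^{2}=\pm1$ from irreducibility, while you split on whether $\rho(\ker\chi)$ is central and locate $A$ in a maximal torus via a centralizer argument — both degenerate cases being settled by observing the image is abelian.
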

\begin{proof}
Let $[\rho]\in \mathcal{R}(Y\setminus K, SU(2))$ be a fixed point of the action of $H^{1}(Y\setminus K, \mathbb{Z}_{2})$. We regard this as a representation $\rho:\pi_{1}(Y\setminus K)\rightarrow SU(2)$ such that there exists $A\in SU(2)$ and $(\chi\cdot \rho )(u)=A\rho(u)A^{-1}$ for any $u\in\pi_{1}(Y\setminus K)$. Here $\chi\in H^{1}(Y\setminus K)$ is a generator. Since $\chi$ has order $2$, $\rho(u)=A^{2}\rho(u)A^{-2}$. If $\rho$ is reducible then its image is contained in a circle in $SU(2)$ and is binary dihedral representation. Assume that $\rho$ is irreducible, and there are two cases $A^{2}=1$ and $A^{2}=-1$.  We regard $SU(2)$ as the unit sphere in quaternions. 
Then $Pin(2)=S^{1}\cup jS^{1}$.
If $A^{2}=1$ then $A=\pm 1$ and $-\rho(u)=\rho(u)$ for some $u\in \pi_{1}(Y\setminus K)$. This cannot happen in $SU(2)$. If $A^{2}=-1$ then we can assume that $A=i$ taking after a conjugation and then $\rho(u)=\pm i\rho(u)i^{-1} $  for any $u\in \pi_{1}(Y\setminus K)$. If $\rho(u)=i\rho(u)i^{-1}$ then $\rho(u)\in S^{1}=\{a+bi\}$. If $\rho(u)=-i\rho(u)i^{-1}$ then $\rho(u)\in jS^{1}=\{cj+dk\}$. Thus the image of $\rho$ is contained in $S^{1}\cup jS^{1}$.
\end{proof}
\begin{lem}\label{dih}
Let $r\in 2\mathbb{Z}$. If $\rho:\pi^{V}_{1}(Y, K; r)\rightarrow SO(3)$ be a dihedral representation, then  its pull-back $\pi_{1}(\tilde{Y}_{r})\rightarrow SO(3)$ by the orbifold exact sequence (in Proposition \ref{orbseq}) is a reducible representation. 
\end{lem}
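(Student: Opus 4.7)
The plan is to exploit the sign homomorphism $\det\colon O(2)\to\{\pm 1\}$. Since $\rho$ is dihedral, after conjugation its image lies in a copy of $O(2)\subset SO(3)$, and post-composing with $\det$ yields a homomorphism
\[\phi:=\det\circ\rho:\pi_{1}^{V}(Y,K;r)\to\mathbb{Z}/2.\]

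Next I would restrict $\phi$ to the normal subgroup $\pi_{1}(\tilde{Y}_{r})\subset\pi_{1}^{V}(Y,K;r)$ arising from the orbifold short exact sequence of Proposition \ref{orbseq}. Because $\mathbb{Z}/2$ is abelian, this restriction factors through $H_{1}(\tilde{Y}_{r};\mathbb{Z}/2)$, which vanishes under the running hypothesis (as in Proposition \ref{lift}) that $\tilde{Y}_{r}$ is an integer homology 3-sphere. Hence $\phi|_{\pi_{1}(\tilde{Y}_{r})}$ is trivial, so the image of $\rho|_{\pi_{1}(\tilde{Y}_{r})}$ is contained in $\ker(\det)=SO(2)$. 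As $SO(2)$ is abelian, the centralizer of this image inside $SO(3)$ contains the full $SO(2)$, so $\rho|_{\pi_{1}(\tilde{Y}_{r})}$ has a positive-dimensional stabilizer under conjugation, and is therefore reducible in the gauge-theoretic sense used throughout the paper (compare the proof of Proposition \ref{bij}).

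The hypothesis $r\in 2\mathbb{Z}$ does not enter the argument in an essential way: it reflects the context in which the lemma will be applied, since a dihedral $\rho$ can have $\rho(\mu_{K})$ lying in $O(2)\setminus SO(2)$ (a $\pi$-rotation in $SO(3)$) only when the holonomy index satisfies $l=r/2$, forcing $r$ to be even; if $r$ were odd, then $\rho(\mu_{K})\in SO(2)$ already, and combined with the same homology-sphere input the conclusion would follow verbatim. I do not anticipate any substantive obstacle; the argument is essentially a short refinement of the homology-sphere observation already invoked in the proof of Proposition \ref{bij}.
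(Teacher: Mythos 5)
Your argument is correct in the setting where the paper actually uses the lemma, but it takes a genuinely different route and proves a formally weaker statement. You kill $\det\circ\rho$ on $\pi_{1}(\tilde{Y}_{r})$ ``downstairs'': any homomorphism $\pi_{1}(\tilde{Y}_{r})\to\mathbb{Z}/2$ factors through $H_{1}(\tilde{Y}_{r};\mathbb{Z}/2)$, which you assume vanishes because $\tilde{Y}_{r}$ is an integral homology sphere. The paper instead works ``upstairs'': $\det\circ\rho$ is a homomorphism from $\pi_{1}^{V}(Y,K;r)$ to an abelian group, so it factors through the abelianization of $\pi_{1}^{V}(Y,K;r)$, which (since $Y$ is an integral homology sphere, so $H_{1}(Y\setminus K)\cong\mathbb{Z}\langle\mu_{K}\rangle$) is exactly the quotient $\mathbb{Z}/r$ appearing in the orbifold exact sequence of Proposition \ref{orbseq}; since $\pi_{1}(\tilde{Y}_{r})$ is by definition the kernel of that quotient map, $\det\circ\rho\circ i$ is trivial with no hypothesis on $\tilde{Y}_{r}$ at all. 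From that point on the two arguments coincide: the image of $\rho\circ i$ lies in $SO(2)$, hence is abelian and reducible. The practical difference is that Lemma \ref{dih} as stated imposes no condition on the branched cover, and branched cyclic covers of knots in homology spheres can certainly have nontrivial (even) first homology, in which case your $H_{1}(\tilde{Y}_{r};\mathbb{Z}/2)=0$ step is unavailable; so your proof only establishes the lemma under the additional hypothesis that $\tilde{Y}_{r}$ is a homology sphere (or at least that $H_{1}(\tilde{Y}_{r};\mathbb{Z}/2)=0$). Since the lemma is invoked only inside the proof of Proposition \ref{lift}, where that hypothesis is in force, your argument suffices for the paper's purposes, and your remark about the role of $r\in 2\mathbb{Z}$ is consistent with the paper (neither proof uses evenness of $r$; it only matters because dihedral fixed points occur at $\alpha=\tfrac14$, i.e.\ $2l=r$). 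Still, the paper's factorization through the abelianization of $\pi_{1}^{V}(Y,K;r)$ is the sharper mechanism, and it is worth knowing it gives the lemma in the generality in which it is stated.
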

\begin{proof}
Since $\rho$ factors through $O(2)$, we have a representation $\rho':\pi^{V}_{1}(Y, K; r)\rightarrow O(2)$.  Composing with ${\rm det}: O(2)\rightarrow \mathbb{Z}/2$, we have a representation ${\rm det}\circ \rho':\pi_{1}^{V}(Y, K; r)\rightarrow \mathbb{Z}/2$. Since ${\rm det}\circ\rho'$ factors through abelianization $\pi_{1}^{V}(Y, K; r)=\pi_{1}(Y\setminus K)/\langle{\mu_{K}}^{r}\rangle \xrightarrow {{\rm Ab}}\mathbb{Z}[\mu_{K}]/\langle{\mu_{K}}^{r}\rangle $, we have the following diagram.
\[
\xymatrix{
\pi_{1}(\tilde{Y}_{r})\ar[r]^{i}&\pi^{V}_{1}(Y,K;r)\ar[r]^{\rho'}\ar[rd]^{\rm Ab}&O(2)\ar[r]^{{\rm det}}&\mathbb{Z}/2\\
&&\mathbb{Z}/r\ar[ru]&\\
}
\]
where $\pi_{1}(\tilde{Y}_{r})\xrightarrow{i} \pi^{V}_{1}(Y, K;r)$ is the inclusion map in the orbifold exact sequence. By the construction,  ${\rm Ab}$ coincides with the map $\pi^{V}_{1}(Y, K;r)\rightarrow \mathbb{Z}/r$ in the orbifold exact sequence. Thus ${\rm Ab}\circ i$ is the trivial representation and hence ${\rm det}\circ \rho'\circ i$ is also the trivial representation. This implies that the image of $\rho'\circ i$ is contained in $SO(2)$.
Thus $\rho\circ i:\pi_{1}(\tilde{Y}_{r})\rightarrow SO(3)$ factors through $SO(2)$, and this means that $\rho\circ i$ is reducible. 
\end{proof}

The following proposition gives the proof of Proposition \ref{lift}.
\begin{prp}
Let $K\subset Y$ be a knot in an integral homology $3$-sphere whose $r$-fold cyclic branched covering $\tilde{Y}_{r}$ is also an integral homology $3$-sphere.
For each $[\rho] \in \mathcal{R}^{*, \tau}(\tilde{Y}, SU(2))$,  $\Pi^{-1}([\rho])$ consists of two elements which correspond to  each other by the flip symmetry.
\end{prp}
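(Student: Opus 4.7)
The plan is to exploit the factorization $\Pi={\rm Ad}^{-1}\circ\Pi'\circ{\rm Ad}$. Both $\Pi'$ (Proposition \ref{bij}) and the $SU(2)$-lifting ${\rm Ad}^{-1}$ over $\tilde{Y}_{r}$, which is bijective since $H^{1}(\tilde{Y}_{r};\mathbb{Z}_{2})=0$, are bijections, so counting $\Pi^{-1}([\rho])$ reduces to counting the fiber of the initial ${\rm Ad}$-map sending $\bigsqcup_{l}\mathcal{R}^{*}_{l/(2r)}(Y\setminus K,SU(2))$ onto its $SO(3)$-counterpart. By Proposition \ref{action}, each such fiber is an orbit of the $H^{1}(Y\setminus K,\mathbb{Z}_{2})\cong\mathbb{Z}/2$-action, i.e.\ the flip symmetry, so has cardinality one or two.

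First I would verify that the flip symmetry actually acts on the set $\Pi^{-1}([\rho])$. Since the generator $\chi\in H^{1}(Y\setminus K,\mathbb{Z}_{2})$ takes values in $\{\pm1\}=\ker(SU(2)\to SO(3))$, we have ${\rm Ad}(\chi\cdot\tau)={\rm Ad}(\tau)$ and therefore $\Pi(\chi\cdot\tau)=\Pi(\tau)$. The flip also preserves the disjoint union on the left, interchanging $\mathcal{R}^{*}_{l/(2r)}$ and $\mathcal{R}^{*}_{(r-l)/(2r)}$. Combined with the previous paragraph, this shows that $\Pi^{-1}([\rho])$ is a single flip-orbit of cardinality one or two.

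The main content of the proof, and the main obstacle, is to rule out the cardinality-one case. Suppose $[\tau]\in\Pi^{-1}([\rho])$ is fixed by the flip symmetry. By Lemma \ref{fix}, $\tau$ is binary dihedral, so ${\rm Ad}(\tau)\colon\pi_{1}(Y\setminus K)\to SO(3)$ is dihedral; the holonomy condition forces ${\rm Ad}(\tau)(\mu_{K}^{r})=1$, so ${\rm Ad}(\tau)$ descends to a representation of $\pi_{1}^{V}(Y,K;r)$. Lemma \ref{dih} then gives that the restriction ${\rm Ad}(\tau)\circ i\colon\pi_{1}(\tilde{Y}_{r})\to SO(3)$ is reducible. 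Unwinding the definition of $\Pi$ yields $\Pi([\tau])=[\tau\circ i]$, so ${\rm Ad}(\rho)=[{\rm Ad}(\tau)\circ i]$ would be reducible, contradicting $[\rho]\in\mathcal{R}^{*,\tau}(\tilde{Y}_{r},SU(2))$.

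The only delicate point is the invocation of Lemma \ref{dih} whose stated hypothesis is $r\in2\mathbb{Z}$; however, the same conclusion holds for odd $r$ for an even simpler reason: composing any dihedral representation of $\pi_{1}^{V}$ with ${\rm det}\colon O(2)\to\mathbb{Z}/2$ yields a homomorphism from $(\pi_{1}^{V})^{ab}\cong\mathbb{Z}/r$ to $\mathbb{Z}/2$, which vanishes when $r$ is odd, so the representation already lands in $SO(2)$ and is reducible before restriction. Hence in every case any flip-fixed point in $\Pi^{-1}([\rho])$ yields a contradiction, the fiber has exactly two elements, and they are exchanged by the flip symmetry.
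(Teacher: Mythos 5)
Your proof is correct and follows essentially the same route as the paper: reduce via the bijection $\Pi'$ of Proposition \ref{bij} and the unique $SU(2)$-lifting over $\tilde{Y}_{r}$ to the fiber of ${\rm Ad}$ on $Y\setminus K$ (an orbit of the flip action by Proposition \ref{action}), then use Lemma \ref{fix} together with Lemma \ref{dih} to rule out flip-fixed points. The only difference is organizational: the paper dismisses the cases $l\neq r-l$ immediately because the flip changes the holonomy parameter, so Lemma \ref{dih} is only invoked when $2l=r$ (where $r$ is automatically even, matching its hypothesis), whereas you run the binary-dihedral argument uniformly and correctly supply the missing odd-$r$ case via $(\pi_{1}^{V}(Y,K;r))^{\rm ab}\cong\mathbb{Z}/r$ having no nontrivial map to $\mathbb{Z}/2$.
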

\begin{proof}
Applying Proposition \ref{action} to the 3-manifold $Y\setminus K$, we have a bijection,
\begin{equation}\mathcal{R}(Y\setminus K, SU(2))/H^{1}(Y\setminus K, \mathbb{Z}_{2})\cong\mathcal{R}(Y\setminus K, SO(3)).\label{isom}
\end{equation}
Note that $\mathcal{R}(Y\setminus K, SO(3))=\mathcal{R}^{0}(Y\setminus K, SO(3))$ since $H^{2}(Y\setminus K, \mathbb{Z}_{2})=0$.  
We restrict this correspondence to elements with holonomy parameter $\alpha=\frac{l}{2r}\ \ ( l=1,\cdots, r-1)$.
Note that  $H^{1}(Y\setminus K, \mathbb{Z}_{2})$ acts on $\mathcal{R}^{*}_{\alpha}(Y\setminus K, SU(2))\cup \mathcal{R}^{*}_{\frac{1}{2}-\alpha}(Y\setminus K, SU(2))$  since the flip symmetry changes the  holonomy parameter as $\alpha \mapsto \frac{1}{2}-\alpha$, and the bijection (\ref{isom}) is restricted to 
\begin{equation}
\left[\bigsqcup_{l=1}^{r-1}\mathcal{R}^{*}_{\frac{l}{2r}}(Y\setminus K, SU(2))\right]/H^{1}(Y\setminus K, \mathbb{Z}_{2})
 \cong \bigsqcup_{l=1}^{r-1}\mathcal{R}^{*}_{\frac{l}{2r}}(Y\setminus K, SO(3)).
\end{equation}
Fix $[\rho] \in \mathcal{R}^{*,\tau}(\tilde{Y}_{r}, SU(2))$. The composition 
$$\bigsqcup_{0<l<r}\mathcal{R}^{*}_{\frac{l}{2r}}(Y\setminus K, SO(3))\xrightarrow{\Pi'} \mathcal{R}^{*,\tau}(\tilde{Y}_{r}, SO(3))\xrightarrow{{\rm Ad}^{-1}} \mathcal{R}^{*,\tau}(\tilde{Y}_{r}, SU(2))$$
is bijective. Thus we have a unique element $[\rho'] \in \bigsqcup_{0<l<r}\mathcal{R}^{*}_{\frac{l}{2r}}(Y\setminus K, SO(3))$ which corresponds to $[\rho]$. $\Pi^{-1}([\rho])$ is the inverse image of $[\rho']$ by the map$$ \bigsqcup_{l=1}^{r-1}\mathcal{R}^{*}_{\frac{l}{2r}}(Y\setminus K , SU(2))\xrightarrow{{\rm Ad}} \bigsqcup_{l=1}^{r-1}\mathcal{R}^{*}_{\frac{l}{2r}}(Y\setminus K, SO(3)).$$ Finally, we prove that ${\rm Ad}^{-1}([\rho'])$ consists of two elements. Let $[\sigma] \in {\rm Ad}^{-1}([\rho']) $ be an element contained in $\mathcal{R}^{*}_{\frac{l}{2r}}(Y\setminus K, SU(2))$.
If $r$ is odd then $\frac{l}{2r}\neq\frac{1}{2}-\frac{l}{2r}$, and thus $[\rho]\neq\chi [\rho]$ in $\bigsqcup_{0<l<r}\mathcal{R}^{*}_{\frac{l}{2r}}(Y\setminus K, SU(2))$ since $\rho$ and $\chi\rho$ have different holonomy parameters, where $\chi$ is a generator of $H^{1}(Y\setminus K,\mathbb{Z}_{2})\cong\mathbb{Z}_{2}$ . This means that $H^{1}(Y\setminus K,\mathbb{Z}_{2})$-action on  $\cup_{0<l<r}\mathcal{R}_{\frac{l}{2r}}(Y\setminus K, SU(2))$ is free and ${\rm Ad}^{-1}([\rho])$ consists of two elements. If $r$ is even and $2l\neq r$, then ${\rm Ad}^{-1}([\rho])$ consists of two elements by same reason. If $2l=r$ then $H^{1}(Y\setminus K,\mathbb{Z}_{2})$ acts on $\mathcal{R}^{*}_{\frac{1}{4}}(Y\setminus K, SU(2))$. The fixed points of $H^{1}(Y\setminus K,\mathbb{Z}_{2})$-action on $\mathcal{R}^{*}_{\frac{1}{4}}(Y\setminus K, SU(2))$ are binary dihedral representations by Lemma \ref{fix}. We show that $\Pi^{-1}([\rho])$ does not contain a binary dihedral representation. Let $\sigma' : \pi_{1}(Y\setminus K) \rightarrow SU(2)$ be a binary dihedral representation with a holonomy parameter $\alpha=\frac{1}{4}$ then ${\rm Ad}\sigma'$ defines a dihedral representation$\pi_{1}^{V}(Y, K;r)\rightarrow SO(3)$.  Then the induced representation  ${\rm Ad}\sigma'\circ i :\pi_{1}(\tilde{Y}_{r})\rightarrow SO(3)$ is reducible by Lemma \ref{dih} and its $SU(2)$-lift is also reducible. This means that $\Pi^{-1}([\rho])$ does not contain any binary dihedral representation. Thus $H^{1}(Y\setminus K, \mathbb{Z}_{2})$ acts free on $\Pi^{-1}([\rho])$ and hence $\Pi^{-1}([\rho])$ consists of two elements which are related by the flip symmetry.  
\end{proof}

\subsection{ Non-degeneracy results}\label{non-deg res}
The purpose of this subsection is to associate non-degeneracy property of critical point set $\mathfrak{C}$ of singular Chern-Simons functional and transversality of moduli space of irreducible flat connections $\mathcal{R}^{*}(Y\setminus K, SU(2))$.  
Let us recall the setting of the gauge theory used in \cite{her94} and \cite{her97} to deal with "pillowcase picture" of perturbed flat connections.
In this subsection, $Y$ denotes a (general) oriented closed 3-manifold and $K$ be a knot in $Y$.
Let $E$ be a $SU(2)$-bundle over $X=Y\setminus N(K)$. We fix a Riemannian metric on $X$. We introduce the space of $SU(2)$-connections over $X$ and $\partial X=T^{2}$ as follows:
\[\mathcal{A}_{X}=L^{2}_{2}(X, \mathfrak{su}(2)\otimes \Lambda^{1}), \  \mathcal{A}_{T^{2}}=L^{2}_{\frac{3}{2}}(T^{2}, \mathfrak{su}(2)\otimes \Lambda^{1}).\]
Here we fix a trivialization of the $SU(2)$-bundle over $X$ and $\partial X$ and identify the trivial connection to zero element in each functional spaces. We also introduce spaces of gauge transformations:
\[\mathcal{G}_{X}=\{g\in {\rm Aut}(E)| g\in L^{2}_{3}\},\ \mathcal{G}_{T^{2}}=\{g \in {\rm Aut}(E|_{T^{2}})| g \in L^{2}_{\frac{5}{2}} \}.\] 
The action of gauge transformations on connections and $\mathfrak{su}(2)$ valued $p$-forms are given by the obvious way.
$\mathcal{G}_{X}$ and $\mathcal{G}_{T^{2}}$ have Banach Lie group structures and act smoothly on $\mathcal{A}_{X}$ and $\mathcal{A}_{T^{2}}$ respectively. A connection whose stabilizer of gauge transformations is $\{\pm 1\}$ is called irreducible. $\mathcal{A}^{*}_{X}$ denotes subset of irreducible connections.

We introduce following spaces of $p$-forms with  boundary conditions:
$$\Omega^{p}_{\nu}(X,\mathfrak{su}(2))=\{\omega\in \Omega^{p}(X,\mathfrak{su}(2))|*\omega|_{\partial X}=0\},$$
$$\Omega^{p}_{\tau}(X, \mathfrak{su}(2))=\{\omega\in \Omega^{p}(X, \mathfrak{su}(2))|\ \omega|_{\partial X}=0\}.$$

We define the $L^{2}$-inner product on $\Omega^{p}(X, \mathfrak{su}(2))$ by the formula
$$\langle a, b\rangle=-\int_{X}{\rm tr}(a\wedge *b).$$For each $A\in \mathcal{A}_{X}$, the slice of the action of $\mathcal{G}_{X}$ on $\mathcal{A}_{X}$ is given by
$$X_{A}=A+{\rm Ker}d^{*}_{A}\cap L^{2}_{2}\Omega^{1}_{\nu}(X,\mathfrak{su}(2)).$$
For each flat connection $A\in \mathcal{A}$, the space of harmonic $p$-forms are given by
\begin{eqnarray*}
\mathcal{H}^{p}(X;{\rm ad}{A})&=&\{\omega\in \Omega^{p}_{\nu}(X, \mathfrak{su}(2))|d_{A}\omega=0, d^{*}_{A}\omega=0\},\\
\mathcal{H}^{p}(X, \partial X; {\rm ad}{A})&=&\{\omega\in \Omega^{p}_{\tau}(X,\mathfrak{su}(2))|d_{A}\omega=0, d^{*}_{A}\omega=0\}.
\end{eqnarray*}

The holonomy perturbation $h$ defines a compact perturbation term $V_{h}:\mathcal{A}_{X}\rightarrow \Omega^{1}(X, \mathfrak{su}(2))$ and perturbed flat connection can be defined as a solution of the equation:
\begin{equation}
*F_{A}+V_{h}=0.\label{perflat}
\end{equation}
$\mathcal{R}^{*, h}(X,SU(2))$ denotes gauge equivalence classes of irreducible solutions for (\ref{perflat}). 
Consider the restriction map $r:\mathcal{R}^{*, h}(X,SU(2))\rightarrow \mathcal{R}(T^{2}, SU(2))$. For a generic perturbation $h$, $\mathcal{R}^{*, h}(X,SU(2))$ is a smooth one-manifold. 
Moreover, the restriction map $r$ is a smooth immersion of $\mathcal{R}^{*, h}(X,SU(2))$ to the smooth part of the pillowcase. The detailed argument is contained in \cite{her94}. 
Put $S_{\alpha}:=\{\rho \in \mathcal{R}(T^{2}, SU(2))|{\rm tr}\rho(\mu_{K})=2{\rm cos}(2\pi i \alpha)\}$. This is a vertical slice in the pillowcase. 
Note that $\mathcal{R}_{\alpha}(X, SU(2))=r^{-1}(S_{\alpha})\cap \mathcal{R}(X, SU(2))$ and we define $\mathcal{R}_{\alpha}^{*, h}(X, SU(2)):=r^{-1}(S_{\alpha})\cap \mathcal{R}^{*, h}(X, SU(2))$.
Now we state the following proposition.
\begin{prp}\label{immers'}
Let $K\subset Y$ be a knot in a closed $3$-manifold, and $\alpha$ be an arbitrary holonomy parameter in $(0,\frac{1}{2})$.
Assume that $[\rho] \in \mathcal{R}^{*}_{\alpha}(Y\setminus K, SU(2))$ is a non-degenerate critical point.
We also assume that the image of $\mathcal{R}^{*}(Y\setminus K, SU(2))$ by the restriction map $r$ is contained in the smooth part of the pillowcase.
Then $\mathcal{R}^{*}(Y\setminus K, SU(2))$ is smooth near $[\rho]$.
Moreover, the restriction map $r:\mathcal{R}^{*}(Y\setminus K ,SU(2))\rightarrow \mathcal{R}(T^{2}, SU(2))$ is an immersion to the smooth part of the pillowcase at $[\rho]$. 
\end{prp}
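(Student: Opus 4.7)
The plan is to reduce the proposition to a cohomological injectivity statement using Proposition~\ref{grcoh non-deg}, and then to conclude using the Fredholm/implicit function theorem machinery of Herald's framework \cite{her94}.

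First I identify the infinitesimal picture. In Herald's gauge-theoretic setup recalled just above, the Zariski tangent space to $\mathcal{R}^{*}(Y\setminus K, SU(2))$ at an irreducible flat class $[\rho]$ is canonically $H^{1}(Y\setminus K;\mathrm{ad}\,\rho)$, and the differential of $r$ at $[\rho]$ is the inclusion-induced restriction $i^{*}\colon H^{1}(Y\setminus K;\mathrm{ad}\,\rho)\to H^{1}(T^{2};\mathrm{ad}\,\rho|_{T^{2}})$. By Proposition~\ref{grcoh non-deg}, non-degeneracy of $[\rho]$ is equivalent to injectivity of $H^{1}(Y\setminus K;\mathrm{ad}\,\rho)\to H^{1}(\mu_{K};\mathrm{ad}\,\rho|_{\mu_{K}})$. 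Since $\mu_{K}\hookrightarrow T^{2}\hookrightarrow Y\setminus K$, the latter map factors through $i^{*}$; injectivity of the composite thus forces $i^{*}$ itself to be injective.

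Next, a dimension count pins the Zariski tangent dimension down to $1$. Since $\alpha\in(0,\tfrac{1}{2})$, the holonomy $\rho(\mu_{K})$ is non-central, so $\rho|_{T^{2}}$ takes values in a maximal torus and $\mathrm{ad}\,\rho|_{T^{2}}$ splits as $\mathbb{R}\oplus\mathbb{C}_{\rho}$ with non-trivial monodromy on the $\mathbb{C}_{\rho}$-summand. A direct computation yields $H^{1}(T^{2};\mathrm{ad}\,\rho|_{T^{2}})\cong\mathbb{R}^{2}$. The classical half-lives-half-dies argument via Poincar\'e--Lefschetz duality applied to $(Y\setminus K, T^{2})$ shows that the image of $i^{*}$ is Lagrangian in $H^{1}(T^{2};\mathrm{ad}\,\rho)$ with respect to the Goldman symplectic form, hence of real dimension $1$. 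Combined with the injectivity above this gives $\dim H^{1}(Y\setminus K;\mathrm{ad}\,\rho)=1$ and that $i^{*}$ is an isomorphism onto a $1$-dimensional Lagrangian.

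Finally I extract smoothness and the immersion from Herald's Fredholm setup. Locally at $[\rho]$ the character variety is cut out on the Coulomb slice $A_{0}+X_{A_{0}}$ by the flatness equation $F_{A}=0$, and the linearization of this equation is Fredholm with kernel $H^{1}(Y\setminus K;\mathrm{ad}\,\rho)$. By hypothesis $r[\rho]$ lies in the smooth part of the pillowcase, a $2$-dimensional smooth manifold; the composition with $r$ is therefore a Fredholm map into a smooth ambient manifold whose linearization is $i^{*}$, injective with image a $1$-dimensional Lagrangian. The implicit function theorem inside the Banach slice, applied exactly as in the transversality arguments of \cite{her94}, then produces a smooth $1$-dimensional submanifold of the slice through $[\rho]$ on which $r$ restricts to a local embedding; this simultaneously gives smoothness of $\mathcal{R}^{*}(Y\setminus K, SU(2))$ near $[\rho]$ and the immersion of $r$. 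The main technical obstacle is the careful treatment of the Coulomb-slice boundary conditions needed to make the linearized operator Fredholm and to identify its kernel with $H^{1}(Y\setminus K;\mathrm{ad}\,\rho)$, which is precisely the content of the analytic framework of \cite{her94}.
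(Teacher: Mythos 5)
Your treatment of the immersion part is the same as the paper's: by Proposition \ref{grcoh non-deg}, non-degeneracy of $[\rho]$ says the composite $H^{1}(X;\mathrm{ad}\,\rho)\xrightarrow{j}H^{1}(\partial X;\mathrm{ad}\,\rho)\to H^{1}(\mu_{K};\mathrm{ad}\,\rho)$ is injective, hence $j$ itself is injective, and that is exactly what gives the immersion once smoothness is known. Your half-lives-half-dies dimension count ($\dim H^{1}(T^{2};\mathrm{ad}\,\rho)=2$, image of $j$ Lagrangian, so $\dim H^{1}(X;\mathrm{ad}\,\rho)=1$) is correct but is extra information the paper does not need.

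The genuine gap is the smoothness step. You invoke the implicit function theorem on the Coulomb slice, but the only infinitesimal input you have established is \emph{injectivity} of $dr=j$ on the Zariski tangent space; the IFT needs \emph{surjectivity} of the linearization of the equation that cuts out the flat set, and that fails here. Concretely, on the slice the cokernel of the linearized flatness map (after absorbing the gauge directions via the $d_{A}\zeta$ term) is $\mathcal{H}^{1}(X,\partial X;\mathrm{ad}\,A_{0})\cong H^{2}(X;\mathrm{ad}\,\rho)^{*}$, and under your own hypotheses this space is \emph{never} zero: since $\rho$ is irreducible, $H^{0}(X;\mathrm{ad}\,\rho)=0$, so the connecting map $\partial\colon H^{0}(\partial X;\mathrm{ad}\,\rho)\cong\mathbb{R}\to H^{1}(X,\partial X;\mathrm{ad}\,\rho)$ is injective and the obstruction space is (at least) one-dimensional. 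So the naive IFT argument cannot work, and appealing to the "transversality arguments of \cite{her94}" does not fill the hole, because Herald's transversality is about generic holonomy perturbations, not about the unperturbed variety at a non-degenerate point. The paper's proof is devoted precisely to this issue: it replaces $F_{A}=0$ by the augmented map $\Phi(A,\zeta,t)=*F_{A}+d_{A}\zeta+t\,d_{A}\tilde{\eta}(A)$, where $\eta$ is the gauge-invariant section of Lemma \ref{lemma1} with $\int_{T^{2}}\mathrm{tr}(F_{B}\wedge\eta(B))=0$ and $\tilde{\eta}$ is the extension of Lemma \ref{connecting} with $d_{A_{0}}\tilde{\eta}(A_{0})\in\mathcal{H}^{1}(X,\partial X;\mathrm{ad}\,A_{0})$; Lemma \ref{flat eqn} shows $\Phi^{-1}(0)$ coincides with the flat connections near $A_{0}$, and injectivity of $j$ together with the long exact sequence of the pair shows $\partial$ is onto, so $d_{A_{0}}\tilde{\eta}(A_{0})$ spans $\mathcal{H}^{1}(X,\partial X;\mathrm{ad}\,A_{0})$ and $D\Phi_{(A_{0},0,0)}$ is surjective — only then does the IFT apply. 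Without this (or some other argument killing the obstruction, e.g.\ a vanishing cup-product/Kuranishi argument), your proposal shows only that the Zariski tangent space is one-dimensional and injects into the pillowcase, not that $\mathcal{R}^{*}(Y\setminus K,SU(2))$ is smooth at $[\rho]$.
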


For the proof of Proposition \ref{immers'}, we need setting of the gauge theory on 3-manifolds with boundary described as above.
\begin{lem}\label{diag}
Let $B_{0}$ be an abelian $SU(2)$-flat connection on a torus $T^{2}$.
Then $\mathfrak{su}(2)$-valued harmonic form $h\in \mathcal{H}^{1}(T^{2}; {\rm ad}{B_{0}})$ has a diagonal form
\[h=\left[
\begin{array}{cc}
ai&0\\
0&-ai
\end{array}
\right]\]where $a\in \Omega^{1}(T^{2})$.
\end{lem}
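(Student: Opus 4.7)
The plan is to exploit the abelian nature of $B_{0}$ to reduce to a computation of cohomology of $T^{2}$ with values in a flat line bundle. After conjugation, I may assume the holonomy of $B_{0}$ lies in the diagonal $U(1)\subset SU(2)$, so that the connection $B_{0}$ itself is of the form $\mathrm{diag}(ib,-ib)$ for some real $1$-form $b$. Under this normalization, the adjoint action of $B_{0}$ preserves the $\mathrm{Ad}$-invariant splitting
\[
\mathfrak{su}(2)=(i\mathbb{R})\,\oplus\,\mathfrak{m},\qquad \mathfrak{m}=\left\{\begin{bmatrix}0 & z\\ -\bar z & 0\end{bmatrix}\ \middle|\ z\in\mathbb{C}\right\},
\]
where the first factor is the Cartan subalgebra fixed by the adjoint action and $\mathfrak{m}\cong\mathbb{C}$ on which the diagonal torus acts by the character $\lambda\mapsto \lambda^{2}$.

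Accordingly, the adjoint bundle splits as a flat direct sum $\mathfrak{su}(2)_{B_{0}}\cong\underline{i\mathbb{R}}\oplus L_{B_{0}}$, where $L_{B_{0}}$ is a flat complex line bundle on $T^{2}$ whose holonomy character is the square of the holonomy character of $B_{0}$. Since the coefficient system splits as a flat direct sum, the Hodge decomposition and the harmonic projection respect it, giving
\[
\mathcal{H}^{1}(T^{2};\mathrm{ad}\,B_{0})=\mathcal{H}^{1}(T^{2};i\mathbb{R})\oplus\mathcal{H}^{1}(T^{2};L_{B_{0}}).
\]
A harmonic form in the first summand is precisely a real harmonic $1$-form $a$ on $T^{2}$ times the matrix $\mathrm{diag}(i,-i)$, which is exactly the diagonal form asserted in the lemma.

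It therefore suffices to show $\mathcal{H}^{1}(T^{2};L_{B_{0}})=0$ in the situation at hand. In this subsection $B_{0}$ arises as the restriction to $\partial X$ of a non-trivial abelian flat $SU(2)$-connection with holonomy parameter $\alpha\in(0,\tfrac12)$ along $\mu_{K}$, so the holonomy of $B_{0}$ is not contained in the center $\{\pm1\}$, and the character governing $L_{B_{0}}$ is non-trivial. By Hodge theory, $\mathcal{H}^{1}(T^{2};L_{B_{0}})\cong H^{1}(T^{2};L_{B_{0}})$, and a standard computation (e.g.\ using the Künneth-type formula for $T^{2}=S^{1}\times S^{1}$, or noting $\chi(T^{2})=0$ together with the vanishing $H^{0}=H^{2}=0$ for a non-trivial unitary character) gives $H^{1}(T^{2};L_{B_{0}})=0$.

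The only subtle point is the vanishing of $H^{1}(T^{2};L_{B_{0}})$; everything else is bookkeeping of the $\mathfrak{su}(2)$-splitting. I would present the Hodge/cohomology step by writing out the flat $L_{B_{0}}$ explicitly on $\mathbb{R}^{2}/\mathbb{Z}^{2}$ via the character $(m,n)\mapsto e^{2\pi i(m\alpha_{1}+n\alpha_{2})}$ (with $(\alpha_{1},\alpha_{2})\neq(0,0)\bmod\mathbb{Z}$ by the non-centrality) and observing that $d+i\,d\theta\wedge$ has trivial kernel and cokernel on the corresponding twisted de Rham complex, which gives the desired vanishing.
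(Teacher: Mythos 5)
Your proof is correct, and it reaches the key vanishing by a somewhat different route than the paper. Both arguments start from the same parallel, orthogonal splitting of the adjoint bundle determined by $B_{0}$, namely $\mathfrak{g}_{E}\cong\underline{\mathbb{R}}\oplus L^{\otimes 2}$ (your $\underline{i\mathbb{R}}\oplus L_{B_{0}}$), so that diagonality of $h$ is equivalent to $\mathcal{H}^{1}(T^{2};L^{\otimes 2})=0$. The paper establishes this indirectly: it computes $\dim H^{1}(\pi_{1}(T^{2});\mathrm{ad}\,\rho)=2$ by an explicit cocycle/coboundary count (the cocycle condition $(1-\mathrm{Ad}_{\rho(\mu)})\gamma(\lambda)=(1-\mathrm{Ad}_{\rho(\lambda)})\gamma(\mu)$ giving a $4$-dimensional cocycle space and a $2$-dimensional coboundary space) and then subtracts the $2$-dimensional contribution of $\mathcal{H}^{1}(T^{2};\underline{\mathbb{R}})$. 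You instead compute the off-diagonal summand directly, observing that $L_{B_{0}}$ is the flat line bundle of the \emph{squared} holonomy character, which is non-trivial precisely because $B_{0}$ is non-central, and then invoking $H^{0}=H^{2}=0$ together with $\chi(T^{2})=0$ (or the explicit Fourier/twisted de Rham computation) to get $H^{1}(T^{2};L_{B_{0}})=0$. One remark on hypotheses: as literally stated the lemma fails for a central $B_{0}$, and both proofs implicitly need non-centrality — the paper uses it when asserting that $\mathrm{Im}(1-\mathrm{Ad}_{\rho(\mu)})$ is a plane and that the coboundary space is $2$-dimensional, while you invoke it explicitly from the context of Proposition \ref{immers'} (where $B_{0}=A_{0}|_{T^{2}}$ is assumed non-central, e.g.\ since the meridian holonomy has trace $2\cos(2\pi\alpha)$ with $\alpha\in(0,\tfrac12)$); making that hypothesis explicit, as you do, is a small gain in precision. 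The trade-off is that the paper's cocycle count also records the fact $\dim\mathcal{H}^{1}(T^{2};\mathrm{ad}\,B_{0})=2$, which is reused later as the tangent space to the pillowcase, whereas your argument isolates exactly the vanishing needed for diagonality.
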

\begin{proof}
Since $B_{0}$ is an abelian flat connection,  it defines  a splitting of the $SU(2)$-bundle $E$ over $T^{2}$
into $E=L\oplus L^{*}$ where $L$ is a trivial line bundle.
Then the adjoint bundle of $E$ has a splitting
$\mathfrak{g}_{E}=\underline{\mathbb{R}}\oplus L^{\otimes 2}$. Let $d_{B_{0}}=d\oplus d_{C}$ be the covariant derivative induced on $\Omega^{p}(T^{2}, \mathfrak{g}_{E})=\Omega^{p}(T^{2}, \underline{\mathbb{R}})\oplus \Omega^{p}(T^{2}, L^{\otimes 2})$. Any section $\omega \in \Omega^{p}(T^{2}, {\mathfrak{g}_{E}})$ has the form
\[
\omega=\left[\begin{array}{cc}
ai& b\\
-\bar{b}&-ai
\end{array}
\right],
\]
where $a\in \Omega^{p}(T^{2})$ and $b\in \Omega^{p}(T^{2}, L^{\otimes 2})$. The space of harmonic forms $\mathcal{H}^{p}(T^{2}; {\rm ad}{B_{0}})$ splits into $\mathcal{H}^{p}(T^{2}; \underline{\mathbb{R}})\oplus \mathcal{H}^{p}(T^{2}; L^{\otimes 2})$ with respect to the decomposition of $(\Omega^{p}(T^{2},\mathfrak{g}_{E}), d_{B_{0}})$. Let us compute $\mathcal{H}^{1}(T^{2}; {\rm ad}{B_{0}})$ using $H^{1}(\pi_{1}(T^{2}); {\rm ad}{\rho})$ where $\rho$ is an abelian $SU(2)$-representation corresponding to $B_{0}$.
Let $\mu$ and $\lambda$ be canonical generators of $\pi_{1}(T^{2})$. Then the space of 1-cocycles consists of the element $\gamma:\pi_{1}(T^{2})\rightarrow \mathfrak{su}(2)\cong \mathbb{R}^{3}$ such that $$(1-{\rm Ad}_{\rho(\mu)})\gamma(\lambda)=(1-{\rm Ad}_{\rho(\lambda)})\gamma(\mu).$$ since $\mu$ and $\lambda$ commute. Let $F:\mathbb{R}^{3}\oplus \mathbb{R}^{3}\rightarrow \mathbb{R}^{2}$ be a linear map given by
$$F(x_{1}, x_{2})=(1-A_{\mu})x_{1}-(1-A_{\lambda})x_{2}$$
where $A_{\mu}:={\rm Ad}_{\rho(\mu)}$ and $A_{\lambda}:={\rm Ad}_{\rho(\lambda)}$. Since $A_{\mu}$ and $A_{\lambda}$ are $SO(3)$-linear transformation acting on $\mathbb{R}^{3}$, they have one dimensional axes of rotation $\mathbb{R}_{\mu}$ and $\mathbb{R}_{\lambda}$ respectively. 
Let $\mathbb{C}_{\mu}$ and $\mathbb{C}_{\lambda}$ be their orthogonal complement spaces. 
Then ${\rm Im}(1-A_{\mu})=\mathbb{C}_{\mu}$ and ${\rm Im}(1-A_{\lambda})=\mathbb{C}_{\lambda}$ and hence $F$ is surjective. Thus the space of 1-cocycles is isomorphic to $\mathbb{R}^{4}$. On the other hand, the space of 1-coboundaries is spanned by ${\rm Im}(1-{\rm Ad}_{\rho(g)})$   for all $g\in \pi_{1}(T^{2})$, and this is 2-dimensional since $\rho$  is reducible. Thus $\mathcal{H}^{1}(T^{2}; {\rm ad}{B_{0}})\cong H^{1}(\pi_{1}(T^{2}); {\rm ad}{\rho})\cong \mathbb{R}^{2}$. 
  Thus $ H^{1}(T^{2}; L^{\otimes 2})$ vanishes since $\mathcal{H}^{1}(T^{2}; \underline{\mathbb{R}})\cong H^{1}(T^{2}; \mathbb{R})\cong\mathbb{R}^{2}$.
This means that if $\omega \in \Omega^{1}(T^{2}, \mathfrak{g}_{E})$ is a harmonic form then $b=0$. Thus $h\in \mathcal{H}^{1}(T^{2}; {\rm ad}{B_{0}})$ has only diagonal components.
\end{proof}
Since $B_{0}$ is a reducible connection with $U(1)$-stabilizer, $\mathcal{H}^{0}(T^{2}; {\rm ad}{B_{0}})={\rm Ker}d_{B_{0}}\cong \mathbb{R}$. We fix a generator $\gamma_{0}\in \mathcal{H}^{0}(T^{2}; {\rm ad}{B_{0}})$. 
\begin{lem}\label{lemma1}
There is a $\mathcal{G}_{T^{2}}$-invariant neighborhood ${N}_{B_{0}}$ of $B_{0}\in \mathcal{A}_{T^{2}}$ and $\mathcal{G}_{T^{2}}$-invariant map $\eta:{N}_{B_{0}}\rightarrow \Omega^{0}(T^{2}, \mathfrak{su}(2))$ such that
\begin{enumerate}
\item $\eta(B_{0})=\gamma_{0}$,
\item $\int_{T^{2}}{\rm tr}(F_{B}\wedge \eta(B))=0$ for all $B\in {N}_{B_{0}}$.
\end{enumerate}
\end{lem}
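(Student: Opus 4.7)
My approach is to construct $\eta$ first on a Coulomb slice through $B_{0}$ and then extend by $\mathcal{G}_{T^{2}}$-equivariance. By the standard slice theorem for the gauge group action, a $\mathcal{G}_{T^{2}}$-invariant neighborhood of $B_{0}$ is equivariantly modeled on $\mathcal{G}_{T^{2}}\times_{U(1)} S$, where $S=B_{0}+\mathrm{Ker}\,d_{B_{0}}^{*}$ is the Coulomb slice and $U(1)=\mathrm{Stab}(B_{0})=\exp(\mathbb{R}\gamma_{0})$ acts on $S$ by conjugation. It thus suffices to construct a smooth $U(1)$-equivariant map $\eta:S\to\Omega^{0}(T^{2},\mathfrak{su}(2))$ with $\eta(B_{0})=\gamma_{0}$ satisfying condition (2), and then extend by $\eta(g\cdot B)=g\,\eta(B)\,g^{-1}$.

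The key observation is that the obstruction to taking the constant section $\eta\equiv\gamma_{0}$ is purely quadratic in $a:=B-B_{0}$. Indeed, $F_{B_{0}}=0$ yields $F_{B}=d_{B_{0}}a+a\wedge a$, and integration by parts on the closed surface $T^{2}$, together with $d_{B_{0}}\gamma_{0}=0$ (which follows from $\gamma_{0}\in\mathcal{H}^{0}(T^{2};\mathrm{ad}\,B_{0})=\mathrm{Ker}\,d_{B_{0}}$), gives $\int\mathrm{tr}(d_{B_{0}}a\wedge\gamma_{0})=\int\mathrm{tr}(a\wedge d_{B_{0}}\gamma_{0})=0$. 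Consequently $\int\mathrm{tr}(F_{B}\wedge\gamma_{0})=\int\mathrm{tr}(a\wedge a\wedge\gamma_{0})=O(|a|^{2})$, so any first-order correction $\delta(B)=O(|a|)$ can potentially absorb the obstruction.

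I would then construct the correction $\delta(B):=\eta(B)-\gamma_{0}$ inductively in the order of $a$, valued in the $L^{2}$-orthogonal complement $V_{0}$ of $\mathbb{R}\gamma_{0}$ inside $\Omega^{0}(T^{2},\mathfrak{su}(2))$. Using the identity $\mathrm{tr}(a\wedge a\wedge\gamma_{0})=\tfrac{1}{2}\mathrm{tr}(a\wedge[a,\gamma_{0}])$ together with integration by parts, the leading-order requirement for a linear correction $\delta_{1}:\mathrm{Ker}\,d_{B_{0}}^{*}\to V_{0}$ reduces to the Hodge-theoretic equation $d_{B_{0}}\delta_{1}(a)=-\tfrac{1}{2}[a,\gamma_{0}]$ modulo the harmonic part $\mathcal{H}^{1}$. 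This is solvable since $d_{B_{0}}|_{V_{0}}$ is injective, and the right-hand side lies in its image after the harmonic projection is accounted for, which can be verified directly using $a\in\mathrm{Ker}\,d_{B_{0}}^{*}$. The higher-order corrections are then obtained by iterating the same inversion scheme in an appropriate Sobolev completion. $U(1)$-equivariance of $\eta$ is preserved throughout because the operators $d_{B_{0}}$, $d_{B_{0}}^{*}$, and $[\,\cdot\,,\gamma_{0}]$ all commute with conjugation by $\exp(t\gamma_{0})$.

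The main obstacle is establishing the smoothness of $\eta$ at $B_{0}$: since the pairing obstruction vanishes to second order in $a$ while the linear correction $\delta_{1}(a)$ vanishes only to first order, one must carefully match Taylor coefficients and show that the elliptic inversions yield a genuine $C^{\infty}$ map $S\to\Omega^{0}(T^{2},\mathfrak{su}(2))$. I would handle this either by applying the implicit function theorem to a suitably augmented Banach space problem, or by exhibiting $\delta$ directly as the fixed point of a uniformly contractive map in $B$, with the contraction constant controlled by Sobolev embeddings on the closed surface $T^{2}$.
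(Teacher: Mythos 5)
Your slice set-up is fine, but the core of your argument has a genuine gap, and it sits exactly where you say the verification ``can be done directly.'' Decompose $a=h+d_{B_{0}}^{*}\omega$ with $h\in\mathcal{H}^{1}(T^{2};{\rm ad}\,B_{0})$. Along a purely harmonic direction $B=B_{0}+th$ one has $F_{B}=t^{2}\,h\wedge h$ (the linear term $t\,d_{B_{0}}h$ vanishes), so for \emph{any} correction $\delta(B)=\eta(B)-\gamma_{0}$ the constraint reads
\begin{equation*}
t^{2}\int_{T^{2}}{\rm tr}\bigl(h\wedge h\wedge\gamma_{0}\bigr)+t^{2}\int_{T^{2}}{\rm tr}\bigl(h\wedge h\wedge\delta(B_{0}+th)\bigr)=0 .
\end{equation*}
If $\int_{T^{2}}{\rm tr}(h\wedge h\wedge\gamma_{0})\neq0$ for some harmonic $h$, then as $t\to0$ continuity of $\eta$ (with $\eta(B_{0})=\gamma_{0}$, i.e.\ $\delta\to0$) makes this impossible: no first-order correction can absorb a harmonic--harmonic quadratic term, because your linear term $\int{\rm tr}(d_{B_{0}}a\wedge\delta_{1}(a))$ is identically zero there. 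So your iteration scheme (and indeed the lemma itself) hinges on proving that $\int_{T^{2}}{\rm tr}(h\wedge h\wedge\gamma_{0})=0$ for all harmonic $h$; this is not a consequence of $a\in{\rm Ker}\,d_{B_{0}}^{*}$ alone, and the solvability claim ``$d_{B_{0}}\delta_{1}(a)=-\tfrac12[a,\gamma_{0}]$ modulo harmonics'' precisely sweeps the obstruction into the part you never control. Injectivity of $d_{B_{0}}$ on the complement of $\mathbb{R}\gamma_{0}$ says nothing about hitting the harmonic component of $[a,\gamma_{0}]$.

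The missing input is the structural fact recorded in Lemma \ref{diag}: because $B_{0}$ is abelian, $H^{1}(T^{2};L^{\otimes 2})=0$, so every $h\in\mathcal{H}^{1}(T^{2};{\rm ad}\,B_{0})$ is diagonal, as is $\gamma_{0}$, and a pointwise computation gives ${\rm tr}(h\wedge h\wedge\gamma_{0})=0$. Once you have this, your whole correction/implicit-function apparatus is unnecessary: on the Coulomb slice the identity $\int_{T^{2}}{\rm tr}(F_{B}\wedge\gamma_{0})=\int_{T^{2}}{\rm tr}(b\wedge b\wedge\gamma_{0})=\int_{T^{2}}{\rm tr}(h\wedge h\wedge\gamma_{0})=0$ holds exactly (Stokes kills the $d_{B_{0}}b$ term and all cross terms in the Hodge decomposition), so the \emph{constant} section $\eta\equiv\gamma_{0}$ on the slice, extended $\mathcal{G}_{T^{2}}$-equivariantly, already satisfies both conditions. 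That is the paper's proof; your route can only be repaired by first establishing the same diagonal-form lemma, after which the iteration you propose collapses to the trivial choice $\delta\equiv0$.
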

\begin{proof}
Take a small neighborhood of $B_{0}$ in the slice of the action of $\mathcal{G}_{T^{2}}$ on $\mathcal{A}_{T^{2}}$ as  follows.
$$X_{B_{0}, \epsilon}=\{B_{0}+b|\ b\in L^{2}_{\frac{3}{2}}\Omega^{1}(T^{2},\mathfrak{su}(2)), d^{*}_{B_{0}}b=0, \|b\|_{L^{2}_{\frac{3}{2}}}<\epsilon\}$$
where $\epsilon>0$ is small enough. Firstly, we define $\Omega^{0}(T^{2}, \mathfrak{su}(2))$-valued map $\eta$ on the slice $X_{B_{0}, \epsilon}$ and then extend it to a gauge-invariant neighborhood. For $B=B_{0}+b\in X_{B_{0}, \epsilon}$, define $$\eta(B):=\gamma_{0}.$$ Then we have,
\begin{eqnarray*}
\int_{T^{2}}{\rm tr}(F_{B}\wedge \gamma_{0})&=&\int_{T^{2}}{\rm tr}((d_{B_{0}}b+b\wedge b)\wedge \gamma_{0})\\
&=&\int_{T^{2}}{\rm tr}(d_{B_{0}}b\wedge \gamma_{0})+\int_{T^{2}}{\rm tr }(b\wedge b\wedge \gamma_{0}).
\end{eqnarray*}Using the Stokes' theorem and the condition $d_{B_{0}}\gamma_{0}=0$,
$$\int_{T^{2}}{\rm tr}(d_{B_{0}}b\wedge \gamma_{0})=\int_{T^{2}}d{\rm tr}(b\wedge \gamma_{0})=0.$$Thus we have 
\begin{equation}
\int_{T^{2}}{\rm tr}(F_{B}\wedge \eta(B))=\int_{T^{2}}{\rm tr }(b\wedge b\wedge \gamma_{0}).\label{a}
\end{equation}

Since $d_{B_{0}}^{*}b=0$, we have $h\in \mathcal{H}^{1}(T^{2};{\rm ad}{B_{0}})$ and $\omega \in \Omega^{2}(T^{2}, \mathfrak{su}(2))$ such that
\begin{equation}
b=h+d_{B_{0}}^{*}\omega.\label{b}
\end{equation}

Using (\ref{a}) and (\ref{b}),
\begin{eqnarray*}
\int_{T^{2}}{\rm tr}(F_{B}\wedge \eta(B))&=&\int_{T^{2}}{\rm tr}\left[(h+d_{B_{0}}^{*}\omega)\wedge(h+d_{B_{0}}^{*}\omega)\wedge\gamma_{0}\right]\\
&=& \int_{T^{2}}{\rm tr}(h\wedge h\wedge \gamma_{0})+\int_{T^{2}}{\rm tr}(d_{B_{0}}^{*}\omega\wedge h\wedge \gamma_{0})\\
&&+\int_{T^{2}}{\rm tr}(h\wedge d_{B_{0}}^{*}\omega\wedge \gamma_{0})+\int_{T^{2}}{\rm tr}(d_{B_{0}}^{*}\omega\wedge d_{B_{0}}^{*}\omega\wedge \gamma_{0}).
\end{eqnarray*}
Note that,
\begin{eqnarray*}
\int_{T^{2}}{\rm tr}(*d_{B_{0}}*\omega \wedge h\wedge \gamma_{0})&=&\int_{T^{2}}{\rm tr}(d_{B_{0}}*\omega\wedge *h\wedge \gamma_{0})\\
&=&-\int_{T^{2}}{\rm tr}(*\omega\wedge d_{B_{0}}*h\wedge \gamma_{0})+\int_{T^{2}}{\rm tr}(*\omega\wedge *h\wedge d_{B_{0}}\gamma_{0})\\
&=&0.
\end{eqnarray*}Here we use the Stokes' theorem at the second equality.
Similarly we have,
\begin{eqnarray*}
\int_{T^{2}}{\rm tr}(h\wedge d_{B_{0}}^{*}\omega\wedge \gamma_{0})&=&-\int_{T^{2}}{\rm tr}(d_{B_{0}}*h\wedge *\omega \wedge \gamma_{0})-\int_{T^{2}}{\rm tr}(*h\wedge *\omega \wedge d_{B_{0}}\gamma_{0})=0,
\end{eqnarray*}
\begin{eqnarray*}
\int_{T^{2}}{\rm tr}(d_{B_{0}}^{*}\omega\wedge d_{B_{0}}^{*}\omega\wedge \gamma_{0})&=&-\int_{T^{2}}{\rm tr}(d_{B_{0}}^{2}*\omega\wedge *\omega \wedge \gamma_{0})-\int_{T^{2}}{\rm tr}(d_{B_{0}}*\omega\wedge *\omega \wedge d_{B_{0}}\gamma_{0})=0.
\end{eqnarray*}
Hence, $$\int_{T^{2}}{\rm tr}(F_{B}\wedge\eta(B) )=\int_{T^{2}}{\rm tr}(h\wedge h\wedge\gamma_{0} ).$$Since $\gamma_{0}\in \mathcal{H}^{0}(T^{2}; {\rm ad}{B_{0}})$ is an element of Lie algebra of the stabilizer of $B_{0}$,  ${\rm Stab}(B_{0})=U(1)$ and it has the point-wise form
\[
\gamma_{0}(x)=\left[\begin{array}{cc}
ri&0\\
0&-ri
\end{array}\right]\in \mathfrak{su}(2)
\]where $r\in \mathbb{R}$.
Similarly, $h\in \mathcal{H}^{1}(T^{2}; {\rm ad}{B_{0}})$ has the the form
\[
h(x)=\left[\begin{array}{cc}
ai&0\\
0&-ai\end{array}\right]
\]
by Lemma \ref{diag}.
 By the point-wise computation of ${\rm tr }(h\wedge h\wedge \gamma_{0})$, we obtain
\begin{eqnarray*}
{\rm tr}(h\wedge h\wedge \gamma_{0})(x)&=&{\rm tr}\left(\left[\begin{array}{cc}
ai&0\\
0&-ai\
\end{array}\right]\wedge\left[\begin{array}{cc}
ai&0\\
0&-ai\
\end{array}\right]\wedge\left[\begin{array}{cc}
ri&0\\
0&-ri
\end{array}\right]\right)\\
&=&{\rm tr}\left[ \begin{array}{cc}
-ra\wedge ai&0\\
0&ra\wedge ai
\end{array}
\right]\\
&=&0.
\end{eqnarray*}
Thus $\eta:X_{B_{0}, \epsilon}\rightarrow \Omega^{0}(T^{2}, \mathfrak{su}(2))$ satisfies $\int_{T^{2}}{\rm tr}(F_{B}\wedge \eta(B))=0$ for all $B\in X_{B_{0}, \epsilon}$.
Define $N_{B_{0}}:=\mathcal{G}_{T^{2}}\cdot X_{B_{0},\epsilon}$ and we extend $\eta$ to $N_{B_{0}}$ in a gauge-equivariant way (i.e. $\eta(g^{*}(B))=g^{-1}\eta(B)g$).
\end{proof}

Let  $A_{0}$ be a flat irreducible $SU(2)$-connection  on $X$. 
We can assume that $A_{0}|_{T^{2}}$ is non-central flat connection on $T^{2}$ by the assumption of Proposition \ref{immers'}, and we write $A_{0}|_{T^{2}}=B_{0}$.
Let $U_{A_{0}}$ be a gauge invariant neighborhood of $A_{0}$ in $\mathcal{A}^{*}_{X}$. We define $\tilde{\eta}:U_{A_{0}}\rightarrow \Omega^{0}(X, \mathfrak{su}(2))$ as a smooth extension of $\eta$ which satisfies
$$\tilde{\eta}(A)|_{\partial X}=\eta(A|_{\partial X}).$$
Here we assume that the extension $\tilde{\eta}$ satisfies $d_{A_{0}}\tilde{\eta}(A_{0})\in \mathcal{H}^{1}(X, \partial X; {\rm ad}{A_{0}}) $ and this is possible by the following lemma.
\begin{lem}\label{connecting}
For $\eta\in \mathcal{H}^{0}(T^{2}; {\rm ad}{B_{0}})$
there is an extension $\tilde{\eta}$ on $X$ such that $d_{A_{0}}\tilde{\eta}\in \mathcal{H}^{1}(X, \partial X;{\rm ad}{A_{0}})$.
\end{lem}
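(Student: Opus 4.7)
The plan is to construct $\tilde{\eta}$ by solving a Dirichlet boundary value problem associated to the covariant Laplacian $\Delta_{A_0}=d_{A_0}^*d_{A_0}$ on $\Omega^0(X,\mathfrak{su}(2))$. First I would pick any smooth extension $\tilde{\eta}_0\in\Omega^0(X,\mathfrak{su}(2))$ with $\tilde{\eta}_0|_{\partial X}=\eta$ and set $f:=\Delta_{A_0}\tilde{\eta}_0$. Then I would solve the inhomogeneous Dirichlet problem $\Delta_{A_0}\xi=-f$ with $\xi|_{\partial X}=0$ and define $\tilde{\eta}:=\tilde{\eta}_0+\xi$. By construction $\tilde{\eta}|_{\partial X}=\eta$ and $\Delta_{A_0}\tilde{\eta}=0$.

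The key analytical input is solvability of the Dirichlet problem. The operator $\Delta_{A_0}$ with Dirichlet boundary conditions is self-adjoint and elliptic, so the Fredholm alternative applies. Its kernel consists of sections $\xi$ with $\xi|_{\partial X}=0$ satisfying $\Delta_{A_0}\xi=0$; integration by parts (the boundary term vanishes because $\xi|_{\partial X}=0$) yields $\|d_{A_0}\xi\|_{L^2}^2=0$, so $d_{A_0}\xi=0$. Since $A_0$ is irreducible, its stabilizer has trivial Lie algebra, hence $\xi=0$. Therefore $\Delta_{A_0}$ is invertible with these boundary conditions and the Dirichlet problem is uniquely solvable.

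It remains to verify that $d_{A_0}\tilde{\eta}$ lies in $\mathcal{H}^1(X,\partial X;{\rm ad}\,A_0)$. Three conditions need to be checked: (i) $d_{A_0}(d_{A_0}\tilde{\eta})=0$, which is immediate from flatness $F_{A_0}=0$; (ii) $d_{A_0}^*(d_{A_0}\tilde{\eta})=\Delta_{A_0}\tilde{\eta}=0$, which holds by construction; and (iii) the tangential condition $(d_{A_0}\tilde{\eta})|_{\partial X}=0$ defining $\Omega^1_\tau$. For the last point, the pullback of $d_{A_0}\tilde{\eta}$ to $\partial X$ equals $d_{B_0}(\tilde{\eta}|_{\partial X})=d_{B_0}\eta$, and this vanishes because $\eta\in\mathcal{H}^0(T^2;{\rm ad}\,B_0)$ is already $B_0$-harmonic.

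The only step with any real content is the solvability of the Dirichlet problem, and the main observation that unlocks it is that irreducibility of $A_0$ removes the potential kernel of the Laplacian. The boundary condition (iii)—which a priori could have been the delicate point—turns out to be automatic once $\eta$ is drawn from $\mathcal{H}^0(T^2;{\rm ad}\,B_0)$, so no further adjustment of $\tilde{\eta}$ near $\partial X$ is required.
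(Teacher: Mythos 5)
Your proof is correct, and in substance it coincides with the paper's argument: both start from an arbitrary smooth extension and correct it by a $0$-form vanishing on $\partial X$ so that $d_{A_0}\tilde{\eta}$ becomes coclosed, and both rest on the same key observation that the tangential boundary condition is automatic because the pullback of $d_{A_0}\tilde{\eta}$ to $\partial X$ is $d_{B_0}\eta=0$. The only difference is how the correction term is produced: the paper quotes the relative Hodge decomposition ${\rm Ker}\,d_{A_0}|_{\Omega^1_\tau}=d_{A_0}\Omega^0_\tau\oplus\mathcal{H}^1(X,\partial X;{\rm ad}A_0)$ and subtracts the exact component, whereas you solve the Dirichlet problem $\Delta_{A_0}\xi=-\Delta_{A_0}\tilde{\eta}_0$, $\xi|_{\partial X}=0$ directly; these give literally the same corrected extension (your $\xi$ is the $-\tilde{\xi}$ of the paper, i.e.\ the $L^2$-projection onto $d_{A_0}\Omega^0_\tau$), so your version just makes the analytic content of the decomposition explicit via the Fredholm alternative. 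One minor remark: irreducibility of $A_0$ is sufficient but not necessary to kill the Dirichlet kernel --- $d_{A_0}\xi=0$ together with $\xi|_{\partial X}=0$ already forces $\xi\equiv 0$ by parallel transport on the connected manifold $X$ --- so your argument would apply verbatim even at reducible flat connections, which is a (harmless) extra generality over what the lemma needs.
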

\begin{proof}
For  $\eta\in \mathcal{H}^{0}(T^{2}, {\rm ad}{B_{0}})$, we take arbitrary smooth extension $\tilde{\eta}$ to $X$. Then 
$$d_{A_{0}}\tilde{\eta}\in {\rm Ker}d_{A_{0}}|_{\Omega^{1}_{\tau}(X, \mathfrak{su}(2))}=d_{A_{0}}\Omega^{0}_{\tau}(X, \mathfrak{su}(2))\oplus \mathcal{H}^{1}(X, \partial X; {\rm ad}{A_{0}}).$$
Let $d_{A_{0}}\tilde{\xi}$ be the $d_{A_{0}}\Omega^{0}_{\tau}(X, \mathfrak{su}(2))$-component of $d_{A_{0}}\tilde{\eta}$, then 
$d_{A_{0}}(\tilde{\eta}-\tilde{\xi})\in \mathcal{H}^{1}(X, \partial X; {\rm ad}{A_{0}})$ with $(\tilde{\eta}-\tilde{\xi})|_{\partial X}=\eta$. Hence we can choose an extension $\tilde{\eta}$ of $\eta$ as $d_{A_{0}}\tilde{\eta}\in \mathcal{H}^{1}(X, \partial X; {\rm ad}{A_{0}})$. 
\end{proof}
We define a map
$$\Phi:U_{A_{0}}\times L^{2}_{2}\Omega^{0}_{\tau}(X, \mathfrak{su}(2))\times \mathbb{R}\rightarrow L^{2}_{1}(X, \mathfrak{su}(2)\otimes \Lambda^{1})$$by $\Phi(A, \zeta, t)=*F_{A}+d_{A}\zeta+td_{A}\tilde{\eta}(A)$.  
The linearized operator of $\Phi$ at $(A_{0}, 0, 0)$ has a form  $$D\Phi_{(A_{0}, 0, 0)}(a, \zeta, t)=*d_{A_{0}}a+d_{A_{0}}\zeta+td_{A_{0}}\tilde{\eta}(A_{0}).$$
${\rm Coker}D\Phi_{(A_{0}, 0,0)}$ is $\mathcal{H}^{1}(X, \partial X;{\rm ad}_{A_{0}})\cap (d_{A_{0}}\tilde{\eta}(A_{0}))^{\perp}$ by the Hodge decomposition.

\begin{lem}\label{flat eqn}
$\Phi(A,\zeta, t)=0$ if  only if $F_{A}=0$, $\zeta=0$ and $t=0$. 
\end{lem}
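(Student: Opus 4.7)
The ``if'' direction is immediate, so the content is the ``only if'' direction: assuming $\Phi(A,\zeta,t)=*F_A+d_A\zeta+td_A\tilde\eta(A)=0$, show $F_A=0$, $\zeta=0$, $t=0$.

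My plan is to exploit the $L^2$-pairing of $\Phi$ with $*F_A$. Expanding, this produces three terms: $\|*F_A\|^2=\|F_A\|^2$, together with $\langle d_A\zeta,*F_A\rangle$ and $t\langle d_A\tilde\eta(A),*F_A\rangle$. I would like both cross terms to vanish. For this I integrate by parts and use the Bianchi identity $d_A^**F_A=\pm*d_AF_A=0$. The boundary contribution in $\langle d_A\zeta,*F_A\rangle$ vanishes because $\zeta\in\Omega^0_\tau(X,\mathfrak{su}(2))$, so $\zeta|_{\partial X}=0$. The boundary contribution in $\langle d_A\tilde\eta(A),*F_A\rangle$ takes the form $\int_{\partial X}\mathrm{tr}(\tilde\eta(A)\wedge F_A)=\int_{T^2}\mathrm{tr}(\eta(A|_{T^2})\wedge F_{A|_{T^2}})$, using $\tilde\eta(A)|_{\partial X}=\eta(A|_{T^2})$, and this is precisely the pairing which vanishes by property (2) of Lemma \ref{lemma1}, provided that $A|_{\partial X}\in N_{B_0}$ (guaranteed by shrinking the neighborhood $U_{A_0}$). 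Thus $\|F_A\|^2=0$ and $F_A=0$.

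With $F_A=0$ in hand, the equation $\Phi(A,\zeta,t)=0$ reduces to $d_A\zeta+td_A\tilde\eta(A)=0$, i.e.\ $d_A\bigl(\zeta+t\tilde\eta(A)\bigr)=0$. Since $A$ is irreducible and flat, the kernel of $d_A:\Omega^0(X,\mathfrak{su}(2))\to\Omega^1(X,\mathfrak{su}(2))$ is trivial, hence $\zeta+t\tilde\eta(A)=0$ pointwise on $X$. Restricting to $\partial X$ and using $\zeta|_{\partial X}=0$ leaves $t\cdot\eta(A|_{T^2})=0$. By property (1) of Lemma \ref{lemma1}, $\eta(B_0)=\gamma_0\neq 0$, so after further shrinking $N_{B_0}$ (hence $U_{A_0}$) we may assume $\eta(A|_{T^2})$ is not identically zero on $T^2$. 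This forces $t=0$, and then $\zeta=0$.

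The main obstacle is the boundary term in the integration by parts for $\langle d_A\tilde\eta(A),*F_A\rangle$; controlling it is exactly what Lemma \ref{lemma1} was built for. Everything else is a direct consequence of Bianchi, the Hodge--theoretic boundary conditions imposed on $\zeta$, and the irreducibility of $A$.
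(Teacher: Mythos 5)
Your proposal is correct and follows essentially the same route as the paper: compute $\|F_A\|^2$ by pairing with $*F_A$, kill the $\zeta$-cross term via Stokes and the boundary condition $\zeta|_{\partial X}=0$, kill the $\tilde\eta$-cross term via the defining property $\int_{T^2}\mathrm{tr}(F_B\wedge\eta(B))=0$ of Lemma \ref{lemma1}, and then use flatness, irreducibility of $A$, and $\eta(B)=\gamma_0\neq 0$ to force $t=0$ and $\zeta=0$. No substantive differences to report.
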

\begin{proof}
Assume that $\Phi(A, \zeta, t)=0$. Then
\begin{eqnarray*}\|F_{A}\|^{2}_{L^{2}}&=&-\int_{X}{\rm tr}(F_{A}\wedge*F_{A})\\
&=&\int_{X}{\rm tr }(F_{A}\wedge d_{A}\zeta)+t\int_{X}{\rm tr}(F_{A}\wedge d_{A}\tilde{\eta}(A)).
\end{eqnarray*}
Using the Stokes' theorem and the Bianchi identity, 
\begin{eqnarray*}\int_{X}{\rm tr}(F_{A}\wedge d_{A}\zeta)&=&\int_{X}d{\rm tr}(F_{A}\wedge \zeta)-\int_{X}{\rm tr}(d_{A}F_{A}\wedge \zeta)\\
&=&\int_{T^{2}}{\rm tr}(F_{A|_{T^{2}}}\wedge \zeta|_{T^{2}}).
\end{eqnarray*}
The last term vanishes by the boundary condition on $\zeta$.
Consider the remained term
\begin{equation}
\int_{X}{\rm tr}(F_{A}\wedge d_{A}\tilde{\eta}(A)).\label{inner prod}
\end{equation}
Using the Stokes' theorem and the Bianchi identity, this is equal to
$$\int_{T^{2}}{\rm tr}(F_{B}\wedge \eta(B))$$where $B=A|_{T^{2}}$. By Lemma \ref{lemma1}, this is equal to zero and we have $F_{A}=0$.
Thus $0=*F_{A}=-d_{A}(\zeta+t\tilde{\eta}(A))$ by our assumption. Since $A$ is a irreducible connection, $d_{A}$ has the trivial kernel and $\zeta=-t\tilde{\eta}(A)$. Restricting this to the boundary $T^{2}$, we have a relation $t{\eta}(B)=0$.
Since $\eta(B)=\gamma_{0}$ is a generator of $\mathcal{H}^{0}(T^{2};{\rm ad}{B_{0}})$, we have $\eta(B)\neq 0$. Hence $t=0$ and $\zeta=0$ follows.

Conversely, if we assume that $F_{A}=0$, $\zeta=0$, and $t=0$, then clearly $\Phi(A, \zeta, t)=0$.
\end{proof}
Lemma \ref{flat eqn} means that two equations $F_{A}=0$ and $\Phi(A, \zeta, t)=0$ have the same zero set near a irreducible flat connection $A_{0}$. Hence $\Phi=0$ defines space of flat connections near  $A_{0}$.
\begin{proof}[Proof of Proposition \ref{immers'}]
Natural embeddings 
$\mu_{K}\hookrightarrow \partial X \hookrightarrow X$  induce maps on cohomology groups with a local coefficient system,
\begin{equation}
H^{1}(X; {\rm ad}{\rho})\xrightarrow{j} H^{1}(\partial{X}; {\rm ad}{\rho})\rightarrow H^{1}(\mu_{K}; {\rm ad}{\rho}).\label{comp}
\end{equation}
The non-degeneracy condition on $[\rho]$  is equivalent to the condition that the composition  (\ref{comp}) is injective by Proposition \ref{grcoh non-deg}. This implies that $j$ is also injective. 
 Thus the restriction map $\mathcal{R}^{*}(X ,SU(2))\rightarrow \mathcal{R}(T^{2}, SU(2))$ to the pillowcase is an immersion at $[\rho]$ if we show that $\mathcal{R}^{*}(X, SU(2))$ has a smooth manifold structure near $[\rho]$. Next, we show that $\mathcal{R}^{*}(X, SU(2))$ is a smooth manifold near $[\rho]$. Consider the long exact sequence of cohomology with local coefficient associated to the pair $(X, \partial X)$,
$$\cdots \rightarrow H^{0}(\partial X;{\rm ad}{\rho})\xrightarrow{\partial} H^{1}(X, \partial X;{\rm ad}{\rho})\rightarrow H^{1}(X; {\rm ad}{\rho})\xrightarrow{j} H^{1}(\partial{X}; {\rm ad}{\rho})\rightarrow \cdots$$The cokernel of connecting homomorphism $\partial$ is zero since $j$ is injective.  Using harmonic representative of cohomology with local coefficient, the connecting homomorphism $\partial$ is given by $\gamma_{0}\mapsto d_{A_{0}}\tilde{\eta}(A_{0})$  where $A_{0}$ is a $SU(2)$-flat connection corresponding to $\rho$.  Thus ${\rm coker}\partial=\mathcal{H}^{1}(X, \partial X; {\rm ad}_{A_{0}})\cap (d_{A_{0}}\tilde{\eta}(A_{0}))^{\perp}=0$. This means that the equation $\Phi(A, \zeta, t)=0$ has a surjective linearization map at $(A_{0}, 0, 0)$. 
Thus there is a neighborhood $V_{A_{0}}$ of $A_{0}\in \Phi^{-1}(0)$ which has a smooth structure by the implicit function theorem. Since $A_{0}$ is irreducible, the quotient singularity by gauge transformations $\mathcal{G}_{X}$ does not occur. Thus $\mathcal{R}^{*}(X, SU(2))$ has a smooth manifold structure near $[\rho]$. 
\end{proof}
By Proposition \ref{grcoh non-deg}, the following proposition shows that singular Chern-Simons functional for $(p, q)$-torus knot has non-degenerate irreducible critical points without perturbation. 
\begin{prp}\label{non-deg'}
For any $[\rho] \in \mathcal{R}^{*}(S^{3}\setminus T_{p, q}, SU(2))$, the natural map 
\begin{equation}H^{1}(S^{3}\setminus T_{p, q};{\rm ad}{\rho})\rightarrow H^{1}(\mu_{T_{p, q}}; {\rm ad}{\rho})\label{coh}
\end{equation}
is injective.
\end{prp}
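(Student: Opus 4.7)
The plan is to work with the standard torus knot group presentation $\pi_{1}(S^{3}\setminus T_{p,q}) = \langle a, b \mid a^{p}=b^{q}\rangle$, in which the central element $c := a^{p}=b^{q}$ generates the center and the meridian can be written $\mu = a^{r}b^{s}$ with $rq+sp = \pm 1$. For any irreducible $\rho$, both $\rho(a)$ and $\rho(b)$ must be non-central in $SU(2)$ (otherwise $\rho$ factors through an abelian quotient), so the axes $\vec n_{a},\vec n_{b}\in\mathbb{R}^{3}$ of the rotations $A:={\rm Ad}\rho(a)$ and $B:={\rm Ad}\rho(b)$ are linearly independent.

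First I would compute $H^{1}(S^{3}\setminus T_{p,q};{\rm ad}\rho)$ using Fox calculus on the single relator $a^{p}b^{-q}$. Cocycles correspond to pairs $(u,v)\in\mathfrak{su}(2)^{\oplus 2}$ satisfying $N_{A}\,u = \widetilde{N}_{B}\,v$, where $N_{A}=I+A+\dots+A^{p-1}$ and $\widetilde{N}_{B}=B^{-1}+\dots+B^{-q}$. Because ${\rm Ad}(\pm I)=I$, we have $A^{p}=I=B^{q}$ in $SO(3)$ with $A,B\neq I$, so the image of $N_{A}$ is the line $\mathbb{R}\vec n_{a}$ and its kernel is the plane $\vec n_{a}^{\perp}$, and analogously for $\widetilde{N}_{B}$. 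Linear independence of $\vec n_{a},\vec n_{b}$ forces both sides of the cocycle equation to vanish separately, giving $\dim Z^{1}=2+2=4$. Coboundaries $\xi\mapsto ((A-I)\xi,(B-I)\xi)$ have trivial kernel since $\ker(A-I)\cap\ker(B-I)=\mathbb{R}\vec n_{a}\cap\mathbb{R}\vec n_{b}=0$, so $\dim B^{1}=3$ and $\dim H^{1}(S^{3}\setminus T_{p,q};{\rm ad}\rho)=1$. On the meridian side, $H^{1}(\mu;{\rm ad}\rho)\cong\mathfrak{su}(2)/({\rm Ad}\rho(\mu)-I)\mathfrak{su}(2)$ is also $1$-dimensional because $\rho(\mu)$ is non-central.

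Since both spaces are $1$-dimensional, injectivity is equivalent to non-vanishing of the restriction map, so it suffices to exhibit a single non-coboundary cocycle whose image on $\mu$ is not a coboundary. I would use the deformation picture: with the conjugacy classes of $\rho(a)$ and $\rho(b)$ pinned by the central constraint $\rho(c)=\pm I$, the component of the character variety through $[\rho]$ is a $1$-parameter family $[\rho_{\phi}]$ obtained by rotating $\vec n_{b}$ relative to $\vec n_{a}$, where $\phi$ denotes the relative axis angle. Differentiating at $\phi=0$ produces a canonical non-trivial class $[(u,v)]\in H^{1}(S^{3}\setminus T_{p,q};{\rm ad}\rho)$ (this tangent vector spans the $1$-dimensional $H^{1}$ by the dimension count above), and its restriction to $\mu$ equals $\tfrac{d}{d\phi}\bigl(\rho_{\phi}(\mu)\rho(\mu)^{-1}\bigr)$; this is a coboundary in $H^{1}(\mu;{\rm ad}\rho)$ precisely when $\tfrac{d}{d\phi}{\rm tr}\,\rho_{\phi}(\mu)=0$.

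The main obstacle will be verifying that this trace-of-meridian function has nowhere-vanishing derivative on the open interval of irreducible $\phi\in(0,\pi)$. This is a quaternionic computation: writing $\rho(a),\rho(b)\in SU(2)$ in terms of their fixed half-angles $\pi k/p$, $\pi l/q$ (for $0<k<p$, $0<l<q$) and the relative axis angle $\phi$, and expanding $\rho(a)^{r}\rho(b)^{s}$, one obtains an explicit analytic expression for ${\rm tr}\,\rho_{\phi}(\mu)$ whose $\phi$-derivative factors as $\sin\phi$, which vanishes only at the reducible endpoints $\phi=0,\pi$, times a non-vanishing trigonometric factor depending on $k,l,p,q,r,s$. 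This is essentially Klassen's classical computation of the character variety of $T_{p,q}$; invoking it establishes the required non-vanishing and completes the proof.
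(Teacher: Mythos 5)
Your proposal is correct, and its first half coincides with the paper's proof: the paper performs the same group--cohomology count with the presentation $\langle a,b\mid a^{p}=b^{q}\rangle$, getting $\dim Z^{1}=4$, $\dim B^{1}=3$, hence $H^{1}(S^{3}\setminus T_{p,q};\mathrm{ad}\,\rho)\cong\mathbb{R}$, and $H^{1}(\mu;\mathrm{ad}\,\rho)\cong\mathbb{R}$. Where you genuinely diverge is the final non-vanishing step. The paper argues directly that any cocycle $\gamma$ representing a nonzero class satisfies $\gamma(g)\notin\mathrm{Im}(I-\mathrm{Ad}_{\rho(g)})$ for every $g$, and applies this to $g=\mu$; you instead realize a generator of $H^{1}$ as the tangent vector to the explicit family $\rho_{\phi}$ rotating the axis of $\rho(b)$, and reduce injectivity to $\frac{d}{d\phi}\mathrm{tr}\,\rho_{\phi}(\mu)\neq 0$, which follows from the quaternionic identity $\mathrm{tr}\,\rho_{\phi}(\mu)=2\bigl(\cos(r\alpha_{a})\cos(s\alpha_{b})-\sin(r\alpha_{a})\sin(s\alpha_{b})\cos\phi\bigr)$ together with $\sin(r\alpha_{a})\sin(s\alpha_{b})\neq 0$, a consequence of $\gcd(r,p)=\gcd(s,q)=1$ from $rq+sp=\pm1$; this is the Klassen picture you cite, and it is easily completed in two lines rather than left as a black box. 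Two remarks: (i) you do not need the a priori claim that the tangent class spans $H^{1}$ --- once its restriction to $\mu$ is not a coboundary, the class is automatically nonzero and hence spans the one-dimensional $H^{1}$, and injectivity follows; (ii) the equivalence between ``restriction is a coboundary'' and ``vanishing of the meridian-trace derivative'' uses $\rho(\mu)\neq\pm1$, which you should note follows because $\mu$ normally generates the knot group and $\rho$ is irreducible. What your route buys is an explicit, checkable verification localized at the meridian; this matters because the paper's blanket claim is actually false for $g=a$ (every cocycle has $\gamma(a)\in\ker(I+A+\cdots+A^{p-1})=\mathrm{Im}(I-\mathrm{Ad}_{\rho(a)})$), so the direct argument as written only makes sense when specialized to $\mu$, and your deformation computation is the cleaner way to see why it holds there; the cost is importing (or reproving) the classical description of the torus-knot character variety, which the paper avoids.
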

\begin{proof}
Firstly, we compute $H^{1}(S^{3}\setminus T_{p, q}; {\rm ad}{\rho})$ using the group cohomology of $\pi_{1}(Y\setminus K)$. Since the fundamental group $\pi_{1}(S^{3}\setminus T_{p, q})$ has a presentation $\langle x, y|x^{p}=y^{q}\rangle$, 1-cocycles $\gamma:\pi_{1}(S^{3}\setminus T_{p, q})\rightarrow \mathfrak{su}(2)$ satisfy the relation
$$(I+{A_x}+\cdots +A^{p-1}_x)\gamma(x)=(I+A_{y}+\cdots +A_{y}^{q-1})\gamma(y)$$
where $A_{x}:={\rm Ad}_{\rho(x)}$ and $A_{y}:={\rm Ad}_{\rho(y)}$. Since $A_{x}$ and $A_{y}$ are $SO(3)$ linear transformation acting on $\mathbb{R}^{3}$, they have one dimensional axis of rotation $\mathbb{R}_{x}$ and $\mathbb{R}_{y}$ respectively. Let $\mathbb{C}_{x}$ and $\mathbb{C}_{y}$ denote their orthogonal complement spaces. Then ${\rm Im}(I-A_{x})=\mathbb{C}_{x}$ and ${\rm Im}(I-A_{y})=\mathbb{C}_{y}$. Note that $\rho(x)$ and $\rho(y)$ are contained in different great circles in $SU(2)\cong S^{3}$ since $\rho$ is irreducible $SU(2)$-representation. Thus $\rho$ satisfies $\rho(x)^{p}=\rho(y)^{q}=\pm 1$ and hence $A_x^{p}=A_{y}^{q}=I$. Thus we have ${\rm Ker}(I+A_x+\cdots +A^{p-1}_x)=\mathbb{C}_{x}$ and ${\rm Ker}(I+A_y +\cdots +A_{y}^{q-1})=\mathbb{C}_{y}$. 
Since $\rho$ is irreducible, $\mathbb{R}_{x}$ and $\mathbb{R}_{y}$ are independent in $\mathbb{R}^{3}$. Consider the linear map $L: \mathbb{R}^{3}\oplus \mathbb{R}^{3}\rightarrow \mathbb{R}^{3}$ defined by
$$L(x_{1}, x_{2})=(I+A_{x}+\cdots +A_{x}^{p-1})x_{1}-(I+A_{y}+\cdots +A_{y}^{q-1})x_{2}.$$
This has rank $2$, and the space of $1$-cocycles has dimension 4. On the other hand, the space of $1$-coboundaries is a subspace of $\mathbb{R}^{3}$ spanned by ${\rm Im}(I-{\rm Ad}_{\rho(g)})$ for all $g\in \pi_{1}(S^{3}\setminus T_{p, q})$, and this coincides to $\mathbb{R}^{3}$ itself. Thus, we have $H^{1}(S^{3}\setminus T_{p, q}; {\rm ad}{\rho})\cong \mathbb{R}^{4}/\mathbb{R}^{3}\cong \mathbb{R}$.

Next, we compute $H^{1}(\mu_{T_{p, q}}; {\rm ad}{\rho})$. In this case, the space of $1$-cocycles is isomorphic to $\mathbb{R}^{3}$ since its elements are determined by choosing $\gamma(\mu)\in \mathfrak{su}(2)\cong \mathbb{R}^{3}$. The space of $1$-coboundaries is ${\rm Im}(I-{\rm Ad}_{\rho(\mu)})\cong \mathbb{C}$. Thus, we have $H^{1}(\mu; {\rm ad}{\rho})\cong \mathbb{R}^{3}/\mathbb{C}\cong \mathbb{R}$.

Finally, we prove that the map (\ref{coh}) is surjective.  If $\gamma:\pi_{1}(S^{3}\setminus T_{p, q})\rightarrow \mathfrak{su}(2)$ represents nonzero element in $H^{1}(S^{3}\setminus T_{p, q}; {\rm ad}{\rho})$ then $\gamma(g)\notin {\rm Im}(I-{\rm Ad_{\rho(g)}})$ for any $g\in \pi_{1}(S^{3}\setminus T_{p, q})$. Thus $\gamma(\mu)\notin {\rm Im}(I-{\rm Ad}_{\rho(\mu)})$ for meridian $\mu \in \pi_{1}(S^{3}\setminus T_{p, q})$, and this means that the image of $[\gamma]\in H^{1}(S^{3}\setminus T_{p, q}; {\rm ad}{\rho})$ in $H^{1}(\mu_{T_{p, q}}; {\rm ad}{\rho})$ is a nonzero element. This completes the proof.
\end{proof}
Consider a knot $K$ in $S^{3}$. 
Note that the image of restriction map $r:\mathcal{R}^{*}(S^{3}\setminus K,SU(2))\rightarrow \mathcal{R}(T^{2},SU(2))$ is contained in the smooth part of the pillowcase.
By Proposition \ref{non-deg'} and \ref{immers'}, we get the following statement. 
\begin{cor}\label{torus immers}
The natural restriction map $\mathcal{R}^{*}(S^{3}\setminus T_{p,q}, SU(2))\rightarrow \mathcal{R}(T^{2}, SU(2))$ to the smooth part of the pillowcase is an immersion of smooth one-manifold. 
\end{cor}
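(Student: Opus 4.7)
The plan is to verify the two hypotheses of Proposition \ref{immers'} at every irreducible representation $[\rho]\in\mathcal{R}^{*}(S^{3}\setminus T_{p,q},SU(2))$, and then apply that proposition pointwise.

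First, I would check that the image of the restriction map lies in the smooth part of the pillowcase $\mathcal{R}(T^{2},SU(2))$. The smooth part is the complement of the four corners, which correspond to representations sending $\mu_{T_{p,q}}$ to a central element $\pm 1$. For any $[\rho]\in\mathcal{R}^{*}(S^{3}\setminus T_{p,q},SU(2))$ with the standard presentation $\pi_{1}(S^{3}\setminus T_{p,q})=\langle x,y\mid x^{p}=y^{q}\rangle$, the center $x^{p}=y^{q}$ must be sent to $\pm 1$ (otherwise $\rho(x)$ and $\rho(y)$ would commute, forcing $\rho$ to be abelian and hence reducible). Writing $\mu_{T_{p,q}}=x^{-b}y^{a}$ with $aq-bp=1$, one sees that $\rho(\mu_{T_{p,q}})=\pm 1$ would force $\rho(x)$ and $\rho(y)$ to lie in a common maximal torus, contradicting irreducibility. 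Hence $r([\rho])$ avoids the four corners.

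Next, I would invoke Proposition \ref{non-deg'}, which states that for every $[\rho]\in\mathcal{R}^{*}(S^{3}\setminus T_{p,q},SU(2))$ the natural map $H^{1}(S^{3}\setminus T_{p,q};\mathrm{ad}\,\rho)\to H^{1}(\mu_{T_{p,q}};\mathrm{ad}\,\rho)$ is injective. By Proposition \ref{grcoh non-deg}, this is precisely the non-degeneracy of $\rho$ as a critical point of the unperturbed Chern-Simons functional.

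With both conditions in place at every irreducible $[\rho]$, Proposition \ref{immers'} gives that $\mathcal{R}^{*}(S^{3}\setminus T_{p,q},SU(2))$ carries a smooth manifold structure near each $[\rho]$, and that the restriction map $r$ is a smooth immersion at each such point, with image in the smooth part of the pillowcase. Since the local dimension is $\dim H^{1}(S^{3}\setminus T_{p,q};\mathrm{ad}\,\rho)=1$ (computed in the proof of Proposition \ref{non-deg'}), we obtain a smooth one-manifold that immerses into the smooth part of $\mathcal{R}(T^{2},SU(2))$, as claimed. The only subtlety—and the step I would double-check most carefully—is the corner-avoidance verification above; everything else is a direct concatenation of the already-proved propositions.
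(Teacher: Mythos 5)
Your proposal follows the paper's own route: check that the image of the restriction map avoids the pillowcase corners, invoke Proposition \ref{non-deg'} for non-degeneracy of every irreducible critical point, and apply Proposition \ref{immers'} pointwise to get both the smooth one-manifold structure and the immersion. The only remark is that your corner-avoidance argument via the presentation $\langle x,y\mid x^{p}=y^{q}\rangle$ has a small unaddressed edge case (when $\rho(x)^{b}$ is central the common-maximal-torus step needs more care), whereas the fact the paper tacitly uses is simpler and fully general: the meridian normally generates $\pi_{1}(S^{3}\setminus T_{p,q})$, so an irreducible $\rho$ can never send $\mu_{T_{p,q}}$ to $\pm 1$.
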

In fact, it is known that the irreducible representation variety $\mathcal{R}^{*}(S^{3}\setminus T_{p, q}, SU(2))$ is a disjoint union of $\frac{1}{2}(p-1)(q-1)$ segment. 
(See \cite{klassen1991representations}.)
\subsection{Tristram-Levine signature and representation variety\label{TLsig}}
The following statement relates size of the set of singular flat connections over $S^3\setminus T_{p, q}$ and the set of flat connections over the cyclic branched covering.
\begin{lem}\label{P2}
Let $p, q, r$ be relatively coprime  positive  integers and $\Sigma(p, q ,r)$ be a Brieskorn  homology sphere.  Then
$$2|\mathcal{R}^{*}(\Sigma(p, q, r), SU(2))|=\sum_{ l=1}^{r-1}|\mathcal{R}^{*}_{\frac{l}{2r}}(S^{3}\setminus T_{p, q}, SU(2))|.$$
\end{lem}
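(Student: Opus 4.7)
The plan is to combine Proposition \ref{lift} with the observation that $\tau$ acts trivially on the character variety of $\Sigma(p,q,r)$. Since $p,q,r$ are pairwise coprime, the $r$-fold cyclic branched cover of $(S^{3}, T_{p,q})$ is the Brieskorn homology sphere $\Sigma(p,q,r)$, which is an integer homology sphere, so Proposition \ref{lift} supplies a two-to-one map
$$\Pi:\bigsqcup_{l=1}^{r-1}\mathcal{R}^{*}_{\frac{l}{2r}}(S^{3}\setminus T_{p,q}, SU(2))\to \mathcal{R}^{*,\tau}(\Sigma(p,q,r), SU(2)).$$
The lemma therefore reduces to the equality $\mathcal{R}^{*,\tau}(\Sigma(p,q,r), SU(2))=\mathcal{R}^{*}(\Sigma(p,q,r), SU(2))$.

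To prove this equality, I will realize $\tau$ as an element of the connected Seifert $S^{1}$-action on $\Sigma(p,q,r)$. Write $\Sigma(p,q,r)$ as the Brieskorn link $\{z_{1}^{p}+z_{2}^{q}+z_{3}^{r}=0\}\cap S^{5}$ and consider the Seifert $S^{1}$-action $\lambda\cdot(z_{1},z_{2},z_{3})=(\lambda^{qr}z_{1},\lambda^{pr}z_{2},\lambda^{pq}z_{3})$. The covering transformation is $\tau(z_{1},z_{2},z_{3})=(z_{1},z_{2},\zeta_{r}z_{3})$ with $\zeta_{r}=e^{2\pi i/r}$. Using $\gcd(p,q)=1$, the conditions $\lambda^{qr}=\lambda^{pr}=1$ force $\lambda^{r}=1$, and since $\gcd(pq,r)=1$ there exists an integer $k$ with $kpq\equiv 1\pmod{r}$; setting $\lambda_{0}=\zeta_{r}^{k}$ one verifies that acting by $\lambda_{0}$ in the Seifert $S^{1}$-action recovers $\tau$. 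Hence $\tau$ lies in the identity component of $\mathrm{Diff}(\Sigma(p,q,r))$.

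Finally, the singular orbit $\{z_{3}=0\}\cap\Sigma(p,q,r)$ is pointwise fixed by $\tau$, so choosing a base point $x_{0}$ on this orbit and a path $\lambda(t)$ in $S^{1}$ from $1$ to $\lambda_{0}$, the trajectory $\gamma(t)=\lambda(t)\cdot x_{0}$ is a loop at $x_{0}$ and realizes $\tau_{*}$ as conjugation by $[\gamma]$ on $\pi_{1}(\Sigma(p,q,r),x_{0})$. Consequently $\tau^{*}$ fixes every conjugacy class in $\mathcal{R}^{*}(\Sigma(p,q,r), SU(2))$, which completes the proof. The main obstacle is the algebraic identification of $\tau$ inside the Seifert circle action; this is where the pairwise coprimality of $p,q,r$ enters essentially. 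Once this is done, the triviality of $\tau^{*}$ on the character variety follows from the general principle that a diffeomorphism in the identity component of $\mathrm{Diff}$ induces an inner automorphism of $\pi_{1}$ at a suitably chosen basepoint.
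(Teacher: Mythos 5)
Your proposal is correct and follows essentially the same route as the paper: apply Proposition \ref{lift} to the $r$-fold branched cover $\Sigma(p,q,r)$ and reduce the count to the statement that the covering transformation $\tau$ acts trivially on $\mathcal{R}^{*}(\Sigma(p,q,r),SU(2))$, so that $\mathcal{R}^{*,\tau}=\mathcal{R}^{*}$. The only difference is that where the paper simply cites \cite{CSa} for this triviality, you prove it directly (and correctly) by exhibiting $\tau$ inside the Seifert circle action and using a basepoint on the pointwise-fixed singular fiber to see that $\tau_{*}$ is an inner automorphism of $\pi_{1}$.
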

\begin{proof}
We apply Proposition \ref{lift} to the $r$-fold cyclic branched covering $\Sigma(p, q, r)$ of $T_{p, q}\subset S^{3}$.
Note that the covering transformation $\tau$ induces trivial action on $\mathcal{R}^{*}(\Sigma(p, q, r), SU(2))$ by \cite{CSa}  and hence there is a two to one correspondence 
\[\bigsqcup_{l}\mathcal{R}^{*}_{\frac{l}{2r}}(S^{3}\setminus T_{p, q},SU(2))\xrightarrow{\cong} \mathcal{R}^{*}(\Sigma(p, q, r), SU(2)).\]
 \end{proof}
The non-degeneracy condition at irreducible critical points can be interpreted in the pillowcase as follows.
 \begin{lem}\label{trans}
 Let $\alpha\in(0,\frac{1}{2})$ be a fixed holonomy parameter.
 Assume that  $[\rho]\in\mathcal{R}^{*}_{\alpha}(S^{3}\setminus K,SU(2))$ is non-degenerate. Then $S_{\alpha}$ and the image of $\mathcal{R}^{*}(S^{3}\setminus K,SU(2))$ by the restriction map $r:\mathcal{R}^{*}(S^{3}\setminus K,SU(2))\rightarrow \mathcal{R}(T^{2}, SU(2))$ intersect transversely at $r([\rho])$.
 \end{lem}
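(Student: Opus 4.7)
The strategy is to translate the transversality statement into a linear-algebra question about group cohomology with local coefficients, and then invoke the non-degeneracy criterion of Proposition \ref{grcoh non-deg}.

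First, I would identify the relevant tangent spaces. Since $[\rho]$ is irreducible and (by Proposition \ref{immers'}) $\mathcal{R}^{*}(S^{3}\setminus K, SU(2))$ is a smooth $1$-manifold near $[\rho]$ with $r$ an immersion, the tangent space to $\mathcal{R}^{*}(S^{3}\setminus K, SU(2))$ at $[\rho]$ is naturally $H^{1}(S^{3}\setminus K;\mathrm{ad}\,\rho)$, and the differential of $r$ is induced by the restriction of cocycles to the boundary torus:
\[
dr_{[\rho]}\colon H^{1}(S^{3}\setminus K;\mathrm{ad}\,\rho)\longrightarrow H^{1}(T^{2};\mathrm{ad}(\rho|_{T^{2}})).
\]
Because $B_{0}:=\rho|_{T^{2}}$ is an abelian connection with $U(1)$-stabilizer and because $\alpha\in(0,\tfrac12)$, the point $r([\rho])$ lies in the smooth locus of the pillowcase, where the tangent space is exactly $H^{1}(T^{2};\mathrm{ad}\,B_{0})\cong\mathbb{R}^{2}$ (Lemma \ref{diag}).

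Next, I would compute $T_{r([\rho])}S_{\alpha}$. The slice $S_{\alpha}$ is cut out by the smooth function $f([\sigma]):=\mathrm{tr}\,\sigma(\mu_{K})$. A short calculation shows that for a cocycle $\gamma\in Z^{1}(\pi_{1}T^{2};\mathrm{ad}\,B_{0})$ representing a tangent vector, one has
\[
df(\gamma)=\mathrm{tr}\bigl(\gamma(\mu_{K})\rho(\mu_{K})\bigr)=-2\sin(2\pi\alpha)\cdot a,
\]
where $a$ denotes the component of $\gamma(\mu_{K})$ along the Lie algebra of the maximal torus containing $\rho(\mu_{K})$. Since $\sin(2\pi\alpha)\neq 0$, the differential $df$ vanishes on $\gamma$ if and only if the class of $\gamma(\mu_{K})$ in $H^{1}(\mu_{K};\mathrm{ad}\,\rho)$ is zero. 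Thus
\[
T_{r([\rho])}S_{\alpha}=\ker\bigl(H^{1}(T^{2};\mathrm{ad}\,B_{0})\longrightarrow H^{1}(\mu_{K};\mathrm{ad}\,\rho)\bigr).
\]

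Now I would assemble the argument. The restriction map in the statement of Proposition \ref{grcoh non-deg} factors as
\[
H^{1}(S^{3}\setminus K;\mathrm{ad}\,\rho)\xrightarrow{\;dr_{[\rho]}\;}H^{1}(T^{2};\mathrm{ad}\,B_{0})\longrightarrow H^{1}(\mu_{K};\mathrm{ad}\,\rho),
\]
and non-degeneracy of $[\rho]$ means this composition is injective. In particular, no non-zero vector in the image of $dr_{[\rho]}$ lies in $\ker\bigl(H^{1}(T^{2})\to H^{1}(\mu_{K})\bigr)=T_{r([\rho])}S_{\alpha}$. Since $r$ is an immersion (Proposition \ref{immers'}), the image of $dr_{[\rho]}$ is exactly the $1$-dimensional tangent line to $r(\mathcal{R}^{*}(S^{3}\setminus K,SU(2)))$ at $r([\rho])$. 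Combined with the $1$-dimensional $T_{r([\rho])}S_{\alpha}$ inside the $2$-dimensional pillowcase tangent space, this gives
\[
T_{r([\rho])}S_{\alpha}+\mathrm{Im}\,dr_{[\rho]}=H^{1}(T^{2};\mathrm{ad}\,B_{0}),
\]
which is transversality at $r([\rho])$.

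The only subtle step is the identification of $T_{r([\rho])}S_{\alpha}$ with the kernel of restriction to $\mu_{K}$; the rest is bookkeeping. I expect this to be routine once one verifies the trace-derivative computation and invokes the fact that $\sin(2\pi\alpha)\neq 0$ for $\alpha\in(0,\tfrac12)$.
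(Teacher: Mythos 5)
Your proposal is correct and follows essentially the same route as the paper: both identify $T_{r([\rho])}S_{\alpha}$ with $\ker\bigl(H^{1}(T^{2};\mathrm{ad}\,\rho)\to H^{1}(\mu_{K};\mathrm{ad}\,\rho)\bigr)$ (the paper by viewing $S_{\alpha}=p^{-1}([\sigma])$ for the map $p$ to $\mathcal{R}(\mu_{K},SU(2))$, you by the equivalent explicit trace-derivative computation) and then conclude transversality from the injectivity of the composition $H^{1}(S^{3}\setminus K;\mathrm{ad}\,\rho)\to H^{1}(T^{2};\mathrm{ad}\,\rho)\to H^{1}(\mu_{K};\mathrm{ad}\,\rho)$ supplied by non-degeneracy, together with the smoothness and immersion statements of Proposition \ref{immers'}.
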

 \begin{proof}
 Consider the natural map $p:\mathcal{R}(T^{2}, SU(2))\rightarrow \mathcal{R}(\mu_{K}, SU(2))$ induced from the embedding $\mu_{K}\hookrightarrow T^{2}$. Let $[\sigma]\in \mathcal{R}(\mu_{K}, SU(2))$ be an element such that ${\rm tr}(\sigma(\mu_{K}))=2{\rm cos}(2\alpha \pi)$.
 Then $p^{-1}([\sigma])=S_{\alpha}\subset \mathcal{R}(T^{2}, SU(2))$ by the definition. Note that $S_{\alpha}$ is contained in the smooth part of the pillowcase, and $[\sigma]$ is also contained in the smooth part of $\mathcal{R}(\mu_{K}, SU(2))$ since the quotient singularity by the conjugacy action of $SU(2)$ does not happen when $\alpha\neq 0, \frac{1}{2}$.  Thus the kernel of the map
 $$dp_{[\sigma']}:T_{[\sigma']}\mathcal{R}(T^{2},SU(2))=H^{1}(T^{2};{\rm ad}{\rho})\rightarrow T_{[\sigma]}\mathcal{R}(\mu_{K}, SU(2))=H^{1}(\mu_{K}; {\rm ad}{\rho})$$
  induced on their  tangent spaces is  $T_{[\sigma]}S_{\alpha}$, where $[\sigma']=r([\rho])$. Note that $\mathcal{R}^{*}(S^{3}\setminus K, SU(2))$ is smooth near $[\rho]$ by Proposition \ref{immers'}.
  The composition of the natural maps
  \begin{equation}
T_{[\rho]}\mathcal{R}(S^{3}\setminus K, SU(2))=H^{1}(S^{3}\setminus K; {\rm ad}{\rho})\xrightarrow{j} H^{1}(T^{2}; {\rm ad}{\rho})\rightarrow H^{1}(\mu_{K}; {\rm ad}{\rho}).
\end{equation}
  is injective by our non-degeneracy assumption. Thus the image of $H^{1}(S^{3}\setminus K;{\rm ad}{\rho})$ in $ H^{1}(T^{2}; {\rm ad}{\rho})$ is independent of ${\rm Ker}(H^{1}(T^{2};{\rm ad}{\rho})\rightarrow H^{1}(\mu_{K}; {\rm ad}{\rho}) )$.
  This means that $r(\mathcal{R}^{*}(S^{3}\setminus K, SU(2)))$ and $S_{\alpha}$ intersect transversely at $r([\rho])$.
 \end{proof}
There is a relation between $\sigma_{\alpha}(K)$ and $\mathcal{R}^{*}_{\alpha}(S^{3}\setminus K, SU(2))$. We use the following inequality in the proof of Proposition \ref{T1}.

\begin{lem}\label{P1}
Let  $K$ be a knot in $S^{3}$. Assume that $\mathcal{R}^{*}_{\alpha}(S^{3}\setminus K, SU(2))$ is non-degenerate. Then
$$|\sigma_{\alpha}(K)|\leq2|\mathcal{R}^{*}_\alpha(S^{3}\setminus K, SU(2))|$$ for $\alpha \in [0, \frac{1}{2}]$ with $\Delta_{K}(e^{4 \pi i \alpha})\neq 0$.
\end{lem}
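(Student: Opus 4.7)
The plan is to reduce the inequality to Herald's signed-count formula. Under the non-degeneracy hypothesis, no perturbation is required to make the character variety well-behaved near the slice $S_\alpha$, so the signed count and the unsigned count can be compared directly point by point.

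First, I would invoke Proposition \ref{immers'} together with the non-degeneracy of each $[\rho] \in \mathcal{R}^*_\alpha(S^3\setminus K, SU(2))$: near every such $[\rho]$, the character variety $\mathcal{R}^*(S^3\setminus K, SU(2))$ carries a smooth $1$-manifold structure and the restriction map $r$ to the pillowcase is a smooth immersion. Combining this with Lemma \ref{trans}, the image of $r$ meets the vertical slice $S_\alpha$ transversely at each preimage point. In particular, since $\mathcal{R}^*_\alpha(S^3\setminus K, SU(2)) = r^{-1}(S_\alpha) \cap \mathcal{R}^*(S^3\setminus K, SU(2))$ and every intersection is transverse, the set is finite and no auxiliary perturbation $h$ is needed: we have $\mathcal{R}^{*,h}_\alpha(S^3\setminus K, SU(2)) = \mathcal{R}^*_\alpha(S^3\setminus K, SU(2))$ for the trivial perturbation.

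Next, I would apply Herald's signed-count formula as recalled in the introduction (cf.\ \cite{her97}, \cite{Lin}):
\[
\#\mathcal{R}^{*}_{\alpha}(S^{3}\setminus K, SU(2)) = -\tfrac{1}{2}\sigma_{\alpha}(K),
\]
which is defined as the sum over the discrete transverse intersection set $r(\mathcal{R}^*(S^3\setminus K, SU(2))) \pitchfork S_\alpha$ of a local sign $\pm 1$ at each intersection point. Since the previous step guarantees the intersection is already transverse and finite without perturbation, the formula applies directly to $\mathcal{R}^*_\alpha(S^3\setminus K, SU(2))$.

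Finally, each of the $|\mathcal{R}^*_\alpha(S^3\setminus K, SU(2))|$ transverse intersection points contributes exactly $+1$ or $-1$ to the signed count, so by the triangle inequality
\[
|\sigma_\alpha(K)| = 2\,\bigl|\#\mathcal{R}^*_\alpha(S^3\setminus K, SU(2))\bigr| \leq 2\,|\mathcal{R}^*_\alpha(S^3\setminus K, SU(2))|,
\]
which is the desired inequality. The only delicate point, and what I expect to be the main thing to state carefully, is verifying that Herald's signed-count prescription really does reduce to a $\pm 1$ sum over the geometric intersection points in the unperturbed setting afforded by the non-degeneracy assumption; the rest is formal.
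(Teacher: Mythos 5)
Your overall route is the same as the paper's: use Proposition \ref{immers'} and Lemma \ref{trans} to see that $r(\mathcal{R}^{*}(S^{3}\setminus K, SU(2)))$ meets the slice $S_{\alpha}$ transversely at every point of $\mathcal{R}^{*}_{\alpha}(S^{3}\setminus K, SU(2))$, then compare the unsigned count with Herald's signed count $\#\mathcal{R}^{*}_{\alpha}=-\frac{1}{2}\sigma_{\alpha}(K)$ and finish with the triangle inequality. The place where your argument has a genuine gap is exactly the step you flag as ``delicate'' and then do not carry out: the claim that one may take the trivial perturbation, i.e.\ that Herald's formula applies directly to the unperturbed set $\mathcal{R}^{*}_{\alpha}(S^{3}\setminus K, SU(2))$. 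Herald's identity (\cite[Corollary 0.2]{her97}) is proved for perturbations $h$ satisfying the conditions of his Lemma 5.1, and those conditions constrain the \emph{global} perturbed flat moduli space (its regularity as a $1$-manifold and its behaviour near the reducible/abelian locus), not merely the fibre over the single slice $S_{\alpha}$. The hypothesis of the lemma only gives non-degeneracy of the irreducible representations at the one holonomy parameter $\alpha$; for a general knot $K$ the character variety can be badly behaved away from this slice, so $h=0$ need not be an admissible choice in Herald's framework, and your identification $\mathcal{R}^{*,h}_{\alpha}=\mathcal{R}^{*}_{\alpha}$ for $h=0$ does not by itself let you invoke his theorem.

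The paper closes this gap differently, and more cheaply than verifying Herald's hypotheses for zero perturbation: it chooses a small perturbation $h$ that does satisfy Herald's conditions, and then uses the fact that the unperturbed intersection with $S_{\alpha}$ is already transverse, hence stable under small perturbations, to conclude
\[
|\mathcal{R}^{*}_{\alpha}(S^{3}\setminus K, SU(2))|=|\mathcal{R}^{*,h}_{\alpha}(S^{3}\setminus K, SU(2))|,
\]
after which $|\sigma_{\alpha}(K)|=2\,|\#\mathcal{R}^{*,h}_{\alpha}(S^{3}\setminus K, SU(2))|\le 2\,|\mathcal{R}^{*,h}_{\alpha}(S^{3}\setminus K, SU(2))|$ gives the inequality. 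So your proof becomes correct, and essentially identical to the paper's, once you replace ``no perturbation is needed'' by ``a small admissible perturbation does not change the number of intersection points, because transverse intersections are stable''; as written, the direct appeal to Herald's formula for the unperturbed variety is unjustified.
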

\begin{proof}
By Proposition \ref{immers'}, $\mathcal{R}^{*}(S^{3}\setminus K, SU(2))\rightarrow \mathcal{R}(T^{2}, SU(2))$ is an immersion to the smooth part of the pillowcase.
By Proposition \ref{non-deg'} and Lemma \ref{trans}, the immersed image of $\mathcal{R}^{*}(S^{3}\setminus K, SU(2))$ intersects transversely to $S_{\alpha}$.
After taking a small perturbation, the image of $\mathcal{R}^{*, h}(S^{3}\setminus K, SU(2))$ intersects to $S_{\alpha}$ transversely and the number of intersection points does not change,
\begin{equation}
|\mathcal{R}^{*}(S^{3}\setminus K, SU(2))\cap r^{-1}(S_{\alpha})|=|\mathcal{R}^{*, h}(S^{3}\setminus K,SU(2))\cap r^{-1}( S_{\alpha})|.\label{intersec}
\end{equation}

If $\Delta_{K}(e^{4\pi i \alpha})\neq 0$ and the perturbation $h$ is chosen so that it satisfies conditions in \cite[Lemma 5.1]{her97} then the signed counting $\#\mathcal{R}_{\alpha}^{*, h}(S^{3}\setminus K,SU(2)))$ can be defined and
\\
$$\# \mathcal{R}_{\alpha}^{*, h}(S^{3}\setminus K,SU(2)))=-\frac{1}{2}\sigma_{\alpha}(K)$$
holds by  \cite[Corollary 0.2]{her97}. On the other hand, the left hand side of (\ref{intersec}) is just the size of the set $\mathcal{R}^{*}_{\alpha}(S^{3}\setminus K, SU(2))$ by the definition. Hence the statement is proved.
\end{proof}
Since $K=T_{p, q}$ satisfies the assumption of Lemma \ref{P1}, the inequality $|\sigma_{\alpha}(T_{p, q})|\leq 2|\mathcal{R}^{*}_{\alpha}(S^{3}\setminus T_{p,q}, SU(2))|$ holds for $\alpha \in [0, \frac{1}{2}]$ with $\Delta_{T_{p, q}}(e^{4\pi i \alpha})\neq 0$.
\begin{prp}\label{T1}
Let $p, q ,r$ be positive and relatively coprime integers.
The following formula
$$-\frac{1}{2}\sigma_{\frac{l}{2r}}(T_{p, q})=|\mathcal{R}_{\frac{l}{2r}}^{*}(S^{3}\setminus T_{p, q}, SU(2))|$$
holds for $1\leq l\leq r-1 $ with $\Delta_{T_{p, q}}(e^{\frac{2\pi i  l}{r}})\neq 0$.
\end{prp}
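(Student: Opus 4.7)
\medskip

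\noindent\textbf{Proof proposal.}
The plan is a sandwich argument: Lemma~\ref{P1} supplies a pointwise lower bound for $|\mathcal{R}^{*}_{l/(2r)}|$ in terms of $-\sigma_{l/(2r)}(T_{p,q})/2$, and Lemma~\ref{P2} combined with a Casson-invariant computation on $\Sigma(p,q,r)$ will produce a matching identity after summing over $l$; this forces equality pointwise. First I will verify that the hypotheses of Lemma~\ref{P1} are satisfied for every $l\in\{1,\dots,r-1\}$. Proposition~\ref{non-deg'} shows that all irreducible flat connections on $S^{3}\setminus T_{p,q}$ are non-degenerate with no perturbation needed. Moreover, the roots of $\Delta_{T_{p,q}}(t)=\frac{(t^{pq}-1)(t-1)}{(t^{p}-1)(t^{q}-1)}$ are $pq$-th roots of unity that are neither $p$-th nor $q$-th roots of unity, so the pairwise coprimality of $p,q,r$ rules out any $r$-th root of unity being such a zero; hence $\Delta_{T_{p,q}}(e^{2\pi i l/r})\neq 0$ for each $l$ in the range. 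Since $T_{p,q}$ is a positive braid, $\sigma_{\alpha}(T_{p,q})\leq 0$ on $[0,1/2]$, so Lemma~\ref{P1} delivers
\begin{equation*}
-\tfrac{1}{2}\sigma_{l/(2r)}(T_{p,q})\;\leq\;\bigl|\mathcal{R}^{*}_{l/(2r)}(S^{3}\setminus T_{p,q},SU(2))\bigr|.
\end{equation*}

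Next I will sum this inequality over $l=1,\dots,r-1$. Lemma~\ref{P2} identifies the right-hand sum with $2|\mathcal{R}^{*}(\Sigma(p,q,r),SU(2))|$. The sign-positivity theorem of Fintushel--Stern~\cite{FS90} states that every irreducible $SU(2)$-representation of $\pi_{1}(\Sigma(p,q,r))$ contributes $+1$ to the Casson invariant, giving $2\lambda(\Sigma(p,q,r))=|\mathcal{R}^{*}(\Sigma(p,q,r),SU(2))|$. Combining this with the classical identity
\begin{equation*}
\lambda(\Sigma(p,q,r))\;=\;-\tfrac{1}{8}\sum_{l=1}^{r-1}\sigma_{l/(2r)}(T_{p,q}),
\end{equation*}
which comes from expressing the Casson invariant of a Brieskorn sphere as $-\sigma(\text{Milnor fiber})/8$ and applying Viro's equivariant signature formula for the $r$-fold cyclic branched cover of $S^{3}$ along $T_{p,q}$, I obtain
\begin{equation*}
2|\mathcal{R}^{*}(\Sigma(p,q,r),SU(2))|\;=\;-\tfrac{1}{2}\sum_{l=1}^{r-1}\sigma_{l/(2r)}(T_{p,q}).
\end{equation*}

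Thus the sum of the pointwise inequalities obtained in the first step is an equality, which forces each individual inequality to be an equality. This is exactly the claim of Proposition~\ref{T1}. The substantive obstacle, in my view, is importing the two external inputs about $\Sigma(p,q,r)$: the Fintushel--Stern sign-positivity and the Casson-invariant/Tristram--Levine-signature formula via the Milnor fiber. Once these are granted, the remainder is the pointwise-versus-sum sandwich outlined above together with the elementary arithmetic check that $\Delta_{T_{p,q}}$ does not vanish at the relevant roots of unity.
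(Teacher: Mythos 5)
Your proposal is correct and follows essentially the same route as the paper: the pointwise inequality of Lemma \ref{P1} (using positivity of $T_{p,q}$ to drop the absolute value), the two-to-one correspondence of Lemma \ref{P2}, the Fintushel--Stern count of irreducible representations on $\Sigma(p,q,r)$ together with Viro's equivariant signature formula for the Milnor fiber, and the sum-versus-pointwise sandwich forcing equality for each $l$. The only cosmetic difference is that you package the Fintushel--Stern input via the Casson invariant ($|\mathcal{R}^{*}(\Sigma(p,q,r))|=2\lambda=-\tfrac{1}{4}\sigma(M(p,q,r))$) instead of citing \cite[Corollary 2.9]{FS90} directly, which is an equivalent statement, and you additionally observe (correctly, and harmlessly beyond the stated hypothesis) that coprimality makes $\Delta_{T_{p,q}}(e^{2\pi i l/r})\neq 0$ automatic.
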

For the proof, we use the similar argument in the proof of \cite[Theorem 3.4]{CSa}.
\begin{proof}
Consider a $4$-ball $B^{4}$ and a torus knot in its boundary $T_{p, q}\subset S^3 =\partial B^4$, and take a Seifert surface $S$ for $T_{p, q}$  as $S\subset B^4$ and $S\cap\partial B^4=\partial S $. The $r$-fold cyclic branched covering of $B^4$ branched along $S$ is the Milnor fiber
$$M(p, q, r)=\{(z_{1},z_{2},z_{3})|z^p_{1}+z^q_{2}+z^r_{3}=\epsilon\}\cap B^{6}\subset \mathbb{C}^3,$$
where $\epsilon>0$ is small enough. Furthermore, $\partial M(p, q, r)=\Sigma(p, q, r)$ is a $r$-fold cyclic branched covering of $\partial B^4=S^3$, blanched along $T_{p, q}$.
There is the following formula ( \cite[Corollary 2.9 ]{FS90}), 
$$-\frac{1}{4}\sigma(M(p, q, r))=|\mathcal{R}^{*}(\Sigma(p, q, r), SU(2)) |.$$
Using signature formula in \cite{Viro}, Lemma \ref{P2} and decomposition of $\sigma (M(p, q, r))$ into the equivariant signature $\sigma(M(p, q, r);\frac{i}{r})$, we have
$$-\frac{1}{2}\sum_{ l=1}^{r-1}\sigma_{\frac{l}{2r}}(T_{p, q})=\sum_{ l=1}^{r-1}|\mathcal{R}_{\frac{l}{2r}}^{*}(S^{3}\setminus T_{p, q}, SU(2))|.$$
Note that $\sigma_{\frac{l}{2r}}(T_{p, q})\leq 0$ since $T_{p, q}$ is a positive knot. If we assume that the inequality in Lemma \ref{P1} is strict for some $l$, then $-\frac{1}{4}\sigma(M(p, q, r ))<|\mathcal{R}^{*}(\Sigma(p, q, r))|$, and this is a contradiction. Thus we get the desired formula.
\end{proof}

We complete the proof of Theorem \ref{absolute counting}.
\begin{proof}[Proof of Theorem \ref{absolute counting}]
When $\alpha=0$ or $\frac{1}{2}$, $\sigma_{T_{p, q}}=0$ and $\mathcal{R}^{*}_{\alpha}(S^{3}\setminus T_{p, q},SU(2))$ is empty. So we consider the case  $\alpha \in (0, \frac{1}{2})$ with $\Delta_{T_{p, q}}(e^{4\pi i \alpha})\neq 0$. Since the image of $\mathcal{R}^{*}(S^{3}\setminus T_{p, q}, SU(2))$ in the pillowcase intersects to $S_{\alpha}$ transversely, there is small $\epsilon>0$  such that for any $\alpha' \in (\alpha-\epsilon, \alpha +\epsilon)$, $\Delta_{T_{p, q}}(e^{4\pi i \alpha})\neq 0$ and
$$|\mathcal{R}^{*}(S^{3}\setminus T_{p, q}, SU(2))\cap r^{-1}(S_{\alpha})|=|\mathcal{R}^{*}(S^{3}\setminus T_{p, q}, SU(2))\cap r^{-1}(S_{\alpha'})|$$
holds.
Thus $|\mathcal{R}^{*}_{\alpha}(S^{3}\setminus T_{p, q}, SU(2))|=|\mathcal{R}^{*}_{\alpha'}(S^{3}\setminus T_{p, q}, SU(2))|$.
The Tristram-Levine signature is piecewise constant and jumps at the root of the Alexander polynomial. Hence $\sigma_{T_{p, q}}(e^{4\pi i \alpha})=\sigma_{T_{p, q}}(e^{4 \pi i \alpha'})$ if $\epsilon>0$ is small enough. We can find a positive integer $r$ which is coprime to $p$ and $q$, and a positive integer  $l$ such that $\frac{l}{2r}\in (\alpha-\epsilon, \alpha+\epsilon)$. Then
$$-\frac{1}{2}\sigma_{\frac{l}{2r}}(T_{p,q})=|\mathcal{R}^{*}_{\frac{l}{2r}}(S^{3}\setminus T_{p, q}, SU(2))|$$
by Proposition \ref{T1}. Thus we have
$$-\frac{1}{2}\sigma_{\alpha}(T_{p, q})=|\mathcal{R}^{*}_{\alpha}(S^{3}\setminus T_{p, q}, SU(2))|.$$
\end{proof}

\section{Properties of instanton knot invariants and their applications}
In this section, we give the proof of Theorem \ref{main thm} which is the main theorem in this paper. The important consequence of Subsection \ref{eulerchar} is that the Floer chain $C^{\alpha}_{*}(T_{p, q};\Delta_{\mathscr{S}})$ is supported only on odd graded part. The key argument is that the Fr{\o}yshov  invariant of a knot $K\subset S^{3}$ for an appropriate choice of coefficient $\mathscr{S}$ reduces to the Tristram-Levine signature. This is a generalization of the corresponding result in \cite{DS2} and argument is parallel. Subsection \ref{self-con}  gives the proof of Theorem \ref{main thm} using this specific property of the Floer chain complex $\tilde{C}^{\alpha}_{*}(T_{p, q};\Delta_{\mathscr{S}})$ and  the Fr{\o}yshov knot invariant.
\subsection{The Fr{\o}yshov knot invariant and the structure  theorem}\label{eulerchar}
Let $W$ be a compact, oriented, smooth 4-manifold with $b^{1}(W)=b^{+}(W)=0$, whose boundary
$\partial W=Y$ is an integral homology 3-sphere. Let $K\subset Y$ be an oriented knot and $S\subset W$ be an embedded,  oriented surface with $\partial S=K$.
Throughout this subsection, we assume that $\mathscr{S}$ is an integral domain over $\mathscr{R}_{\alpha}$ . 
We define 
\[K(A):=\kappa(A)+\left(\alpha-\frac{1}{4}\right)\nu(A)+\alpha^{2}S\cdot S\]
and
$$d^{\alpha}(W, S):=4K(A_{\rm min})-g(S)-\frac{1}{2}\sigma_{\alpha}(Y, K)-1$$for each holonomy parameter $\alpha \in (0, \frac{1}{2})\cap \mathbb{Q}$. Here $A_{\rm min}$ is a minimal reducible and note that $K(A_{\rm min})$ is independent of the choice of minimal reducibles. Moreover $d^{\alpha}(W, S)$ is an integer by the index theorem.
The value of the Fr{\o}yshov knot invariant is evaluated by the following proposition.
\begin{prp}\label{ineq}
Let $(W, S)$ and $(Y, K)$ as above and $\alpha \in (0, \frac{1}{2})\cap\mathbb{Q}$ satisfies $\Delta_{(Y, K)}(e^{4\pi i \alpha})\neq 0$. 
If $d:=d^{\alpha}(W, S)\geq 0$ then there is a cycle $c^{\alpha}(W, S)\in C^{\alpha}_{2d+1}(Y, K; \Delta_{\mathscr{S}})$ satisfying
\[
\delta_{1}v^{j}(c^{\alpha}(W, S))=\begin{cases}
0&(0\leq j<d)\\
\eta^{\alpha}(W, S) & (j=d)
\end{cases}.
\]
\end{prp}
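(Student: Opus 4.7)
The plan is to construct $c^\alpha(W, S)$ as a cobordism-map-type cycle and then reduce $\delta_1 v^j c^\alpha(W, S)$ to counts of moduli spaces of instantons on $(W, S)$ asymptotic to the reducible $\theta_\alpha$, exploiting the local cone structure of the moduli space at minimal reducibles from Proposition \ref{perturbation red}. This is a direct generalization of the analogous statement for $\alpha = 1/4$ in \cite{DS1, DS2}.

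First, I would regard $(W, S)$ as a cobordism of pairs $(\emptyset, \emptyset) \to (Y, K)$. On the single cylindrical end over $(Y, K)$, use the weighted Sobolev setup $\phi \check L^2_{m,A_0}$ with $\phi = e^{-\delta|t|}$ at reducible limits and $\phi \equiv 1$ at irreducible ones, as in Subsection \ref{cobordism}. By the index formula of Proposition \ref{index formula} adapted to this one-ended situation, the dimension assumption $d = d^\alpha(W, S) \geq 0$ is equivalent to saying that irreducible moduli spaces $M_z(W, S; \beta)$ of formal dimension $0$ feed generators $\beta$ of grading $\mathrm{gr}(\beta) \equiv 2d+1 \pmod 4$. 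Then define
\[
c^\alpha(W, S) := \sum_{\beta}\sum_{z}\#M_z(W, S;\beta)_0 \,\lambda^{\kappa_0-\kappa(z)} T^{\nu(z)-\nu_0}\, \beta \;\in\; C^\alpha_{2d+1}(Y, K;\Delta_{\mathscr{S}}).
\]
Counting codimension-one ends of the compactified one-dimensional moduli spaces $M_z(W, S; \beta)_1$, whose only boundary faces are of the form $M_{z_1}(W, S; \beta')_0 \times \breve M_{z_2}(\beta', \beta)_0$ (there is no left-end breaking because the left boundary is empty), gives $d\,c^\alpha(W, S) = 0$, so $c^\alpha(W, S)$ is a cycle.

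To analyze $\delta_1 v^j c^\alpha(W, S)$, I would use the cobordism-composition viewpoint from Subsections \ref{delta}--\ref{vmu}. Geometrically, applying $v^j$ amounts to gluing $j$ copies of $\mathbb{R} \times (Y, K)$, each carrying the holonomy divisor of the base-point curve, and then $\delta_1$ imposes the asymptotic limit $\theta_\alpha$. Using the chain-level relations in Propositions \ref{delt d} and \ref{rel mu} together with the gluing formula, one obtains that $\delta_1 v^j c^\alpha(W, S)$ equals (up to chain homotopies supported on irreducible breakings) the weighted count
\[
\sum_{z} \#\bigl(M_z(W, S; \theta_\alpha)_{2j} \cap \mu_1 \cap \cdots \cap \mu_j\bigr)\,\lambda^{\kappa_0-\kappa(z)}T^{\nu(z)-\nu_0},
\]
where $\mu_1, \ldots, \mu_j$ are generic codimension-two divisors in the configuration space representing the $v$-map class. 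Applying Proposition \ref{perturbation red} at the minimal reducibles $A_L$ in $M(W, S; \theta_\alpha)_{2d+1}$, a neighborhood of each such $A_L$ is homeomorphic to the cone on $\pm\mathbb{CP}^{d}$, and the divisors $\mu_i$ restrict to the hyperplane class. Intersecting with $j$ generic $\mu$-divisors produces a cone on $\pm\mathbb{CP}^{d-j}$ of real dimension $2(d-j)$. For $j < d$ this is positive-dimensional and, after cutting by the remaining transversality, contributes $0$; hence $\delta_1 v^j c^\alpha(W, S) = 0$. For $j = d$ each minimal reducible contributes a single isolated point with sign $(-1)^{c_1(L)^2}$ and weight $\lambda^{\kappa_0-\kappa(A_L)}T^{\nu(A_L)-\nu_0}$, so the total equals $\eta^\alpha(W, S)$ by Definition \ref{negativedef}.

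The main obstacle is the lack of monotonicity: for general rational $\alpha$ the energy $\kappa(z)$ is no longer a linear function of the index, so infinitely many homotopy classes $z$ may populate a given dimension. This is precisely absorbed by the Novikov-type coefficient ring $\mathscr{R}_\alpha$, since the compactness results of Subsection 2.6 guarantee only finitely many contributions at any fixed energy bound. A secondary subtlety is justifying the identification of $v^j$ with cutting by $j$ $\mu$-divisors on the moduli space over $(W, S)$ at the level of counts; I would address this by a Morse--Bott perturbation argument along the lines of \cite[\S 8]{DS1}, choosing holonomy perturbations generically so that Proposition \ref{perturbation red} applies uniformly to all relevant moduli spaces simultaneously and so that the induced map on $\mathcal{S}$-complexes matches the composition $\delta_1 v^j$.
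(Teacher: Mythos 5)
Your proposal follows essentially the same route as the paper: define $c^{\alpha}(W,S)$ by counting zero-dimensional moduli spaces on $(W,S)$, verify $dc^{\alpha}(W,S)=0$ by ends of one-dimensional moduli, and compute $\delta_{1}v^{j}c^{\alpha}(W,S)$ by excising the minimal reducibles (cone on $\pm\mathbb{CP}^{d}$ via Proposition \ref{perturbation red}) from the moduli space asymptotic to $\theta_{\alpha}$, cutting by $j$ holonomy divisors, and comparing boundary points with sliding ends, with the Novikov ring $\mathscr{R}_{\alpha}$ absorbing the failure of monotonicity. The only slip is bookkeeping: the relevant moduli space carrying the reducibles is $M_{z}(W,S;\theta_{\alpha})_{2j+1}$ (so the cut-down space is a $1$-manifold whose boundary gives $\eta^{\alpha}(W,S)$ and whose ends give $\delta_{1}v^{j}$), not the $2j$-dimensional space in your display, and for $j<d$ the cleaner statement is that $M_{z}(W,S;\theta_{\alpha})_{2j+1}$ contains no reducibles at all.
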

\begin{proof}
We define $c^{\alpha}(W, S)\in C^{\alpha}_{2d+1}(Y, K; \Delta_{\mathscr{S}})$ as follows
\[\langle c^{\alpha}(W, S), \beta\rangle=\sum_{[A]\in M(W, S;\beta)_{0}}\epsilon(A)\lambda^{\kappa_{0}-\kappa(A)}T^{\nu(A)-\nu_{0}},\]
where $M(W,S,\beta)_0$ is a zero dimensional moduli space. Since $dc^{\alpha}(W,S)$ corresponds to the counting of the boundary of one dimensional moduli space $M^{\alpha}(W, S; \beta')^{+}_{1}$, we have $dc^{\alpha}(W, S)=0$. 
Let $M(W,S, \theta_{\alpha})_{2d+1}$ be a moduli space of instantons $A$ over $(W, S)$ which are asymptotic to $\theta_{\alpha}$ and satisfy $\kappa(A)=\kappa(A_{\rm min})$.
If $d\geq 0$  then we can perturb ASD equation so that the neighborhood of a reducible connection in $M_{z}(W, S;\theta_{\alpha})_{2d+1}$ is homeomorphic to the cone of $\mathbb{CP}^{d}$.
Removing small $2d+1$ balls of each reducible points from $M_{z}(W,S;\theta_{\alpha})_{2d+1}$, then we get $2d+1$ manifold $M'_{z}$ whose boundary is $\sqcup(\pm \mathbb{CP}^{d})$, where signs $\pm$ are determined by the orientation of each reducible points.
Cutting down $M'_{z}$ by codimension two divisors $\{V_{i}\}_{1\leq i\leq d}$ associated to $d$ points in $S$, $M'_{z}\cap V_{1}\cap\cdots \cap V_{d}$ is a one manifold with boundary. 
Then we have $\sum_{z}\#\partial(  M_{z}'\cap V_{1}\cap \cdots \cap V_{d}))\lambda^{\kappa_{0}-\kappa(z)}T^{\nu(z)-\nu_{0}}=\eta^{\alpha}(W, S)$. On the other hand, $M'_{z}\cap V_{1}\cap\cdots \cap V_{d}$ has ends arising from sliding end of instantons. Define $\psi \in C_{1}(Y, K;\Delta_{\mathscr{S}})$ by
$$\langle \beta, \psi \rangle=\sum_{z}\sum_{[A]}\epsilon([A])\lambda^{\kappa_{0}-\kappa(A)}T^{\nu(A)-\nu_{0}}$$where  $[A]$ run through all elements in $\#(M_{z}(W, S;\beta)_{2d}\cap V_{1}\cap\cdots \cap V_{d})$ for each $z$.
Then $\psi$ and $v^{d}c^{\alpha}(W, S)$ are homologous. Since $\delta_{1}\psi=\eta^{\alpha}(W, S) $, we have $\delta_{1}v^{d}c^{\alpha}(W,S)=\eta^{\alpha}(W, S)$.
If $j<d$, $M_{z}(W,S; \theta_{\alpha})_{2j+1}$ does not contain reducible points and we have $\delta_{1}v^{j}c^{\alpha}(W, S)=0$.
\end{proof}

Before the proof of Theorem \ref{str}, we state Lemma \ref{tsig} and Proposition \ref{h=1} related to two-bridge torus knots.
\begin{lem}\label{tsig}
For any $\alpha\in (0, \frac{1}{2})$, there is an integer $k> 0$ such that $\sigma_{\alpha}(T_{2, 2k+1})=-2$ and $\Delta_{T_{2,2k+1}}(e^{4\pi i \alpha})\neq 0$.
\end{lem}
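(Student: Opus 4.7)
The plan is to compute $\sigma_\alpha(T_{2,2k+1})$ explicitly as a function of both $k$ and $\alpha$, and then, for a given $\alpha\in(0,1/2)$, to choose $k$ placing $\alpha$ inside an open interval on which the signature equals $-2$; openness then forces $\Delta_{T_{2,2k+1}}(e^{4\pi i\alpha})\ne 0$ automatically.

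First I would record that $\Delta_{T_{2,2k+1}}(t)=(t^{2k+1}+1)/(t+1)=\sum_{j=0}^{2k}(-t)^j$, whose unit-circle zeros are $e^{(2j+1)\pi i/(2k+1)}$ for $j\in\{0,\dots,2k\}\setminus\{k\}$. Under $t=e^{4\pi i\alpha}$ these become the $2k$ values $\alpha_j:=(2j+1)/(4(2k+1))$ in $(0,1/2)\setminus\{1/4\}$, with exactly $k$ of them in each of $(0,1/4)$ and $(1/4,1/2)$.

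Next I would compute $\sigma_\alpha(T_{2,2k+1})$ from the standard $2k\times 2k$ Seifert matrix $V$ (upper bidiagonal, with $-1$ on the diagonal, $+1$ on the superdiagonal, and $0$ elsewhere). Setting $\omega=e^{4\pi i\alpha}$, the Hermitian form $H(\omega):=(1-\omega)V+(1-\bar\omega)V^T$ is tridiagonal with common diagonal entry $-4\sin^2(2\pi\alpha)$ and off-diagonal entries $1-\omega,\,1-\bar\omega$ whose product $|1-\omega|^2=4\sin^2(2\pi\alpha)$ is a positive real number. The classical spectrum of such a tridiagonal Hermitian matrix gives
\[
\lambda_j\;=\;4\sin(2\pi\alpha)\bigl(\cos\tfrac{j\pi}{2k+1}-\sin(2\pi\alpha)\bigr),\qquad j=1,\dots,2k,
\]
and counting positive eigenvalues yields, for $\alpha\in(0,1/4)$ away from the $\alpha_j$,
\[
\sigma_\alpha(T_{2,2k+1})\;=\;2\bigl\lfloor\tfrac{2k+1}{2}(1-4\alpha)\bigr\rfloor-2k.
\]
Hence $\sigma_\alpha=-2$ precisely on the open interval $I_k^-:=\bigl(\tfrac{1}{4(2k+1)},\tfrac{3}{4(2k+1)}\bigr)$. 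Since $H(\bar\omega)=\overline{H(\omega)}$ has the same signature, one has $\sigma_\alpha=\sigma_{1/2-\alpha}$, and therefore $\sigma_\alpha=-2$ also on $I_k^+:=\bigl(\tfrac{4k-1}{4(2k+1)},\tfrac{4k+1}{4(2k+1)}\bigr)$.

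Finally, for any $\alpha\in(0,1/2)$ I would exhibit $k\ge 1$ with $\alpha\in I_k^-\cup I_k^+$. Using the symmetry $\alpha\leftrightarrow 1/2-\alpha$, reduce to $\alpha\in(0,1/4]$. For $\alpha=1/4$ take $k=1$: the trefoil satisfies $\sigma_{1/4}(T_{2,3})=-2$ and $\Delta_{T_{2,3}}(-1)=3\ne 0$. For $\alpha\in(0,1/4)$, the condition $\alpha\in I_k^-$ is equivalent to $\tfrac{1-4\alpha}{8\alpha}<k<\tfrac{3-4\alpha}{8\alpha}$, an open interval of length $1/(4\alpha)>1$, which therefore contains a positive integer. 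Openness of $I_k^-$ guarantees $\alpha\neq\alpha_j$ for every $j$, so $\Delta_{T_{2,2k+1}}(e^{4\pi i\alpha})\ne 0$. The only slightly non-routine step is the tridiagonal eigenvalue formula in the second paragraph, which is the standard result that a tridiagonal Hermitian matrix of size $n$ with constant diagonal $d$ and off-diagonal product $c>0$ has eigenvalues $d+2\sqrt{c}\cos(j\pi/(n+1))$; everything else is elementary arithmetic.
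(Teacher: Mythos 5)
Your argument is correct and follows the same skeleton as the paper's proof: identify the open interval $\left(\tfrac{1}{8k+4},\tfrac{3}{8k+4}\right)$ on which $\sigma_{\alpha}(T_{2,2k+1})=-2$, observe that the roots of $\Delta_{T_{2,2k+1}}$ (pulled back to $\alpha$-values $\tfrac{2j+1}{4(2k+1)}$) sit exactly at its endpoints and outside it, transfer to $\alpha>\tfrac14$ by the symmetry $\alpha\mapsto\tfrac12-\alpha$, and then choose $k$ so that $\alpha$ lands in the interval. The difference is in how the signature formula is obtained: the paper quotes Litherland's lattice-point formula for $\sigma_{\alpha}$ of torus knots, while you derive it from scratch by diagonalizing the Hermitian form $(1-\omega)V+(1-\bar\omega)V^{T}$ for the standard bidiagonal Seifert matrix, using the explicit tridiagonal eigenvalues $4\sin(2\pi\alpha)\bigl(\cos\tfrac{j\pi}{2k+1}-\sin(2\pi\alpha)\bigr)$; this makes the key computation self-contained and, as a bonus, makes the non-degeneracy statement ($\lambda_j=0$ exactly at the $\alpha_j$) transparent. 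You also spell out two points the paper glosses over: that the set of admissible $k$ is an interval of length $\tfrac{1}{4\alpha}>1$ and hence contains a positive integer, and the endpoint case $\alpha=\tfrac14$, which the paper's strict-inequality characterization technically misses and which you settle directly with the trefoil. One small wording caveat: ``openness forces $\Delta\neq 0$'' is not quite the reason by itself---what you actually use is that the two endpoints of $I_k^-$ are consecutive roots $\alpha_0<\alpha_1$, so no root lies strictly between them; since you list the roots explicitly, this is harmless.
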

\begin{proof}
Consider the case $\alpha\leq\frac{1}{4}$. By \cite[Proposition 1]{Liz}, $\sigma_{\alpha}(T_{2, 2k+1})$ is given  by
\[\sigma_{\alpha}(T_{2, 2k+1})=n_{1}-n_{2}\]
where $n_{1}$ is a number of lattice points $\{(1, m)|(k+\frac{1}{2})(1+4\alpha)<m<2k+1\}$ and $n_{2}$ is a number of lattice points $\{(1, m)|0<m<(k+\frac{1}{2})(1+4\alpha)\}$. Thus 
$\sigma_{\alpha}(T_{2, 2k+1})=-2$ if only if $\frac{1}{8k+4}<\alpha<\frac{3}{8k+4}$. Moreover, note that the interval  $\left(\frac{1}{8k+4}, \frac{3}{8k+4}\right)$ does not contain any root of $\Delta_{T_{2, 2k+1}}(t)$. Thus, for any $\alpha\leq\frac{1}{4}$, we can find $k>0$ such that $\sigma_{\alpha}(T_{2, 2k+1})=-2$ and $\Delta_{T_{2,2k+1}}(e^{4\pi i \alpha})\neq 0$.   For the case $\alpha>\frac{1}{4}$, it follows that $\sigma_{\alpha}(T_{2, 2k+1})=-2$ if only if $\frac{1}{2}-\frac{3}{8k+4}<\alpha<\frac{1}{2}-\frac{1}{8k+4}$ by the flip symmetry. 
 \end{proof}

\begin{prp}\label{h=1}
For any $\alpha\in \mathbb{Q}\cap (0, \frac{1}{2})$, there is an integer $k>0$ such that $\Delta_{T_{2, 2k+1}}(e^{4\pi i\alpha})\neq 0$ and $h_{\mathscr{S}}^{\alpha}(T_{2, 2k+1})=1$.
\end{prp}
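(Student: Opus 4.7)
The plan is to combine a lower bound for $h^\alpha_\mathscr{S}(T_{2, 2k+1})$ obtained from Proposition~\ref{ineq} with an algebraic upper bound coming from the rank-one structure of the Floer chain complex. First I would use Lemma~\ref{tsig} to pick $k > 0$ with $\sigma_\alpha(T_{2, 2k+1}) = -2$ and $\Delta_{T_{2, 2k+1}}(e^{4\pi i\alpha}) \neq 0$. Theorem~\ref{absolute counting} then gives $|\mathcal{R}_\alpha^*(S^3 \setminus T_{2, 2k+1}, SU(2))| = 1$, while Propositions~\ref{non-deg red} and~\ref{non-deg'} ensure that all critical points are non-degenerate already for the unperturbed Chern--Simons functional. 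Consequently $C^\alpha_*(T_{2, 2k+1}; \Delta_\mathscr{S})$ is free of rank one over the integral domain $\mathscr{S}$, concentrated in a single $\mathbb{Z}/4$-degree. Since the $\mathscr{S}$-linear maps $d$ and $v$ shift degree by $-1$ and $-2$ respectively, both must vanish identically. This forces $h^\alpha_\mathscr{S}(T_{2, 2k+1}) \leq 1$: the criterion for $h \geq 2$ demands a cycle $\beta$ with $\delta_1 v(\beta) \neq 0$, impossible when $v \equiv 0$.

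For the matching lower bound I would construct an explicit cap. Since $T_{2, 2k+1}$ has unknotting number $k$, realized by $k$ positive-to-negative crossing changes, it bounds an immersed disk in $B^4$ with $k$ positive double points; blowing up produces a pair $(W, D)$ with $W = B^4 \# k\overline{\mathbb{CP}^2}$ and $D$ a smoothly embedded disk of self-intersection $-4k$. Reducibles $A_\mathbf{m}$ are parametrized by $\mathbf{m} = (m_1, \ldots, m_k) \in \mathbb{Z}^k$, and generalizing the single-blowup computation from the proof of Proposition~\ref{neg def pair} to the $k$-fold case gives $\kappa(A_\mathbf{m}) = \sum_i (m_i + 2\alpha)^2$ and $\nu(A_\mathbf{m}) = -4\sum_i m_i$. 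A short algebraic manipulation then yields
\[
K(A_\mathbf{m}) = \sum_{i=1}^k m_i(m_i + 1),
\]
whose minimum $0$ is attained precisely on the $2^k$ vectors $\mathbf{m} \in \{0, -1\}^k$. Gluing $(W, D)$ against a Seifert cap $(B^4, \Sigma_k)$ and using the closed-pair index formula from the proof of Proposition~\ref{index formula} establishes that the ASD-index at each minimal reducible on $(W, D)$ equals $-1$. Hence
\[
d^\alpha(W, D) = 4 \cdot 0 - g(D) - \tfrac{1}{2}\sigma_\alpha(T_{2, 2k+1}) - 1 = 0,
\]
and the sum defining $\eta^\alpha(W, D)$ factors across the blowups to give
\[
\eta^\alpha(W, D) = \begin{cases} (1 - Z^{-1})^k & \alpha \leq \tfrac{1}{4},\\ (Z - 1)^k & \alpha > \tfrac{1}{4},\end{cases}
\]
with $Z = \lambda^{1 - 4\alpha} T^{-4}$. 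In either case this element is a unit in $\mathscr{R}_\alpha$, and so in particular nonzero in $\mathscr{S}$.

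Proposition~\ref{ineq} applied with $d = 0$ then produces a cycle $c^\alpha(W, D) \in C^\alpha_1(T_{2, 2k+1}; \Delta_\mathscr{S})$ satisfying $\delta_1 c^\alpha(W, D) = \eta^\alpha(W, D) \neq 0$, so $h^\alpha_\mathscr{S}(T_{2, 2k+1}) \geq 1$, and combined with the earlier upper bound the equality $h^\alpha_\mathscr{S}(T_{2, 2k+1}) = 1$ follows. The main obstacle will be the index bookkeeping for $(W, D)$: specifically, verifying that the ASD-index at minimal reducibles is exactly $-1$ and that the moduli space of instantons contributing to $\eta^\alpha$ carries the cone structure required by Proposition~\ref{ineq}. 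Both points require careful use of the closed-pair index formula from the proof of Proposition~\ref{index formula} combined with the gluing at $\theta_\alpha$ across the Seifert cap, together with the perturbation result Proposition~\ref{perturbation red} to handle the reducible locus.
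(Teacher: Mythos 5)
Your proof is correct and follows essentially the same route as the paper: choose $k$ via Lemma~\ref{tsig}, cap $T_{2,2k+1}$ by the blown-up unknotting cobordism (your $(B^4\#k\overline{\mathbb{CP}^2}, D)$ is exactly the paper's composition of crossing-change cobordisms with $(D^4, D^2)$), apply Proposition~\ref{ineq} with $d^\alpha=0$ to get $h^\alpha_{\mathscr{S}}>0$, and use the rank-one, single-degree structure of $C^\alpha_*(T_{2,2k+1};\Delta_{\mathscr{S}})$ to cap $h^\alpha_{\mathscr{S}}$ at $1$. You simply spell out the computations ($K(A_{\mathbf{m}})=\sum m_i(m_i+1)$, the factorization of $\eta^\alpha$, and $v\equiv 0$) that the paper cites as analogous to Proposition~\ref{neg def pair} and the rank count.
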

\begin{proof}
By Lemma \ref{tsig}, we can find an integer $k>0$ such that $\sigma_{\alpha}(T_{2, 2k+1})=-2$ and $\Delta_{T_{2,2k+1}}(e^{4\pi i \alpha})\neq 0$.
Consider a cobordism of pairs $(W_k, S_k)$ obtained by the compositions
\[(W_k, S_k): (S^3, U)\rightarrow (S^3, T_{2,3})\rightarrow \cdots \rightarrow (S^3, T_{2, 2k-1})\rightarrow (S^3, T_{2, 2k+1})\]
where $(S^3, T_{2,2i-1})\rightarrow (S^3,T_{2,2i+1})$ is obtained by crossing change of knots.
Put $(\overline{W_{k}}, \overline{S_k}):=(D^4, D^2)\cup_{(S^3, U)}(W_k, S_k)$.
Then it is easy to see that $b^1(\overline{W_k})=b^{+}(\overline{W_k})=0$ and $d^{\alpha}(\overline{W_k}, \overline{S_k})=0$ by the similar argument in Proposition \ref{neg def pair}.
Applying  Proposition \ref{ineq} to the pair $(\overline{W_k}, \overline{S_k})$, we obtain a cycle $c^{\alpha}(\overline{W_k}, \overline{S_k})\in C^{\alpha}_{1}(T_{2, 2k+1})$ such that
$\delta_{1}c^{\alpha}(\overline{W_k}, \overline{S_k})\neq 0$. This implies that $h_{\mathscr{S}}^{\alpha}(T_{2, 2k+1})\neq 0$. 
Since ${\rm rank}C^{\alpha}_{*}(T_{2, 2n+1})=1$, we have $h_{\mathscr{S}}^{\alpha}(T_{2, 2k+1})=1$.
\end{proof}
Now, we complete the proof of Theorem \ref{str}.
\begin{proof}[Proof of  Theorem \ref{str}]
Consider a knot $K\subset S^3$ and a holonomy parameter $\alpha\in \mathbb{Q}\cap(0, \frac{1}{2})$ with $\Delta_{K}(e^{4\pi i \alpha})\neq 0$.
Since $K\subset S^{3}$ is homotopic to any knot, it can be deformed into $lT_{2, 2n+1}$ by positive and negative crossings changes, where $l=-
\frac{1}{2}\sigma_{\alpha}(K)$.
This operation defines a cobordism of pairs $([0, 1]\times S^{3}, S): (S^{3}, K)\rightarrow (S^{3}, lT_{2, 2n+1})$ where $S$ is an immersed surface with normal self-intersection points.
Let $S':lT_{2, 2n+1}\rightarrow K$ be the inverse cobordism of $S$.
Since $\sigma_{\alpha}(K)=\sigma_{\alpha}(lT_{2,2n+1})$, two cobordisms $S$ and $S'$ induce negative definite cobordisms.
Let $\tilde{m}_{S}:\tilde{C}^{\alpha}_{*}(K;\Delta_{\mathscr{S}})\rightarrow \tilde{C}^{\alpha}_{*}(lT_{2,2n+1};\Delta_{\mathscr{S}})$ and $\tilde{m}_{S'}:\tilde{C}^{\alpha}_{*}(lT_{2,2n+1};\Delta_{\mathscr{S}})\rightarrow \tilde{C}^{\alpha}_{*}(K;\Delta_{\mathscr{S}})$
be induced cobordism maps on $\mathcal{S}$-complexes.
Since two immersed cobordisms $S'\circ S$ and $S\circ S'$ can be deformed in to product cobordisms by finitely many finger moves, its induced map $\tilde{m}_{S'\circ S}$ and $\tilde{m}_{S\circ S'}$ are $S$-chain homotopic to identities up to the multiplication of unit elements by Proposition \ref{immersed map}.
By the functoriality of $\mathcal{S}$-morphisms, $\tilde{m}_{S'}\circ \tilde{m}_{S}$ and $\tilde{m}_{S}\circ\tilde{m}_{S}$ are $\mathcal{S}$-chain homotopic to identities up to the multiplication of unit elements.
The proof is completed by Remark \ref{S chain unit}.
\end{proof}
The proof of Theorem \ref{Froyshov} immediately follows from Theorem \ref{str}.
\begin{proof}[Proof of Theorem \ref{Froyshov}]
Comparing Fr{\o}yshov invariants for both sides of
\[\widetilde{C}_{*}^{\alpha}(K; \Delta_{\mathscr{S}})\simeq C^{\alpha}_{*}(lT_{2,2k+1}; \Delta_{\mathscr{S}}),\]
we obtain $h_{\mathscr{S}}^{\alpha}(K)=lh_{\mathscr{S}}^{\alpha}(T_{2,2k+1})$ where $l=-\frac{1}{2}\sigma(K)$.
Since $h_{\mathscr{S}}^{\alpha}(T_{2, 2k+1})=1$ by Proposition \ref{h=1}, we obtain the desired formula. 
\end{proof}
\begin{rmk}
{\rm
Since $\mathcal{S}$-chain homotopy equivalence of two $\mathcal{S}$-complexes $\tilde{C}_{*}\simeq\tilde{C'}_{*}$
implies  chain homotopy equivalence between $C_{*}$ and $C'_{*}$.
Assume that $\sigma_{\alpha}(K)\leq 0$. 
Then Theorem \ref{connected sum S-cpx} and Theorem \ref{str} implies the $\mathcal{S}$-chain homotopy equivalence 
$\tilde{C}^{\alpha}_{*}(K;\Delta_{\mathscr{S}})\simeq \tilde{C}^{\alpha}_{*}(T_{2, 2n+1};\Delta_{\mathscr{S}})^{\otimes l}$ and hence we have the Euler characteristic formula, 
\begin{equation*}
\chi(C^{\alpha}_{*}(K;\Delta_{\mathscr{S}}))=l\chi(C^{\alpha}_{*}(T_{2, 2n+1};\Delta_{\mathscr{S}})).
\end{equation*}
If $\sigma_{\alpha}(K)>0$ then there is an $\mathcal{S}$-chain homotopy equivalence $\tilde{C}^{\alpha}_{*}(K;\Delta_{\mathscr{S}})\simeq \tilde{C}^{\alpha}_{*}(-T_{2, 2n+1};\Delta_{\mathscr{S}})^{\otimes -l}$
and we have
\[\chi(C^{\alpha}_{*}(K;\Delta_{\mathscr{S}}))=-l\chi(C^{\alpha}_{*}(-T_{2, 2n+1};\Delta_{\mathscr{S}})).\]By Proposition \ref{h=1}, we have $\chi(C^{\alpha}_{*}(T_{2, 2n+1};\Delta_{\mathscr{S}}))=-1$.
On the other hand, $\chi(C^{\alpha}_{*}(-T_{2, 2n+1};\Delta_{\mathscr{S}}))=1$ since if we reverse the orientation of $3$-manifold, the $\mathbb{Z}/4$-grading of chain complex changes so that ${\rm gr}_{-Y}(\beta)\equiv 3-{\rm gr}_{Y}(\beta)$, which follows from (\ref{adding formula for grading}).
In any case, we have 
\[\chi(C^{\alpha}_{*}(K; \Delta_{\mathscr{S}}))=\frac{1}{2}\sigma_{\alpha}(K).\]
Note that this formula for the Euler characteristic is independent of the choice of the coefficient $\mathscr{S}$.}
\end{rmk}

\begin{proof}[Proof of Theorem \ref{sub thm}]
Consider arbitrarily knot $K\subset S^{3}$. 
For any holonomy parameter $\alpha \in (0, \frac{1}{2})\cap \mathbb{Q}$ with $\Delta_{K}(e^{4\pi i \alpha})\neq 0$, the Floer chain complex $C^{\alpha}_{*}(K;\Delta_{\mathscr{S}})$ is defined and the relation
$h^{\alpha}_{\mathscr{S}}(K)=-\frac{1}{2}\sigma_{\alpha}(K)$ holds.
By the definition of the Fr{\o}yshov knot invariant, we have lower bounds of Floer homology groups,
\[{\rm rank}I^{\alpha}_{1}(K; \Delta_{\mathscr{S}})\geq \left\lceil -\frac{\sigma_{\alpha}(K)}{4}\right\rceil,\  {\rm rank}I^{\alpha}_{3}(K; \Delta_{\mathscr{S}})\geq \left\lfloor -\frac{\sigma_{\alpha}(K)}{4}\right\rfloor.\]
for any knot $K\subset S^{3}$ with $\sigma_{\alpha}(K)\leq 0$. 
In particular, $K=T_{p, q}$ satisfies this condition. Using the equality ${\rm rank} I_{*}(T_{p, q})=-\frac{\sigma_{\alpha}(T_{p, q})}{2}$, we obtain   
\[{\rm rank}I^{\alpha}_{1}(T_{p, q};\Delta_{\mathscr{S}} )=\left\lceil -\frac{\sigma_{\alpha}(T_{p, q})}{4}\right\rceil,\ {\rm rank}I^{\alpha}_{3}(T_{p, q};\Delta_{\mathscr{S}})=\left\lfloor-\frac{\sigma_{\alpha}(T_{p, q})}{4}\right\rfloor.\]
Since $I^{\alpha}_{*}(T_{p, q})$ is supported only on odd graded part, we obtain the statement.
 \end{proof}
\subsection{An application to knot concordance\label{self-con}}

In this subsection, we complete the proof of our main theorem (Theorem \ref{main thm}).

The operator $Z^{\pm 1}$ and $U^{\pm 1}$ extend to the $\mathcal{S}$-complex $\tilde{C}_{*}$ in the obvious way.
We also introduce the operator
\[\mathcal{W}_{i,j,k}:=\delta_{1}v^{i}U^{k}Z^{j}: \tilde{C}_{*}\rightarrow \tilde{C}_{*}.\]

If ${\rm deg}_{\mathbb{R}}(Z\gamma)>{\rm deg}_{\mathbb{R}}(\gamma)$, the operator $Z$ does not act on the filtered chain complex $C^{[-\infty, R]}_{*}$ and
$\mathcal{W}_{i, j}$ does not directly induces a map on $\tilde{C}^{[-\infty,R]}_{*}$. 
For this reason, we introduce the map $\mathcal{V}^{[-\infty, R]}_{i, j, k}$ on the filtered chain complex by the following composition:
\[\tilde{C}^{[-\infty,R]}_{*}\hookrightarrow \tilde{C}^{[-\infty, \infty]}_{*}\xrightarrow{\mathcal{W}_{i, j, k}}\tilde{C}^{[-\infty, \infty]}_{*}.\]
We also introduce the operator $\mathcal{W}^{[R', R]}_{i, j, k}$ on the quotient filtered $\mathcal{S}$-complex $\tilde{C}^{[R', R]}_{*}$ by the following compositions:
\[\tilde{C}^{[R',R]}_{*}\hookrightarrow \tilde{C}^{[-\infty, \infty]}_{*}\xrightarrow{\mathcal{W}_{i, j, k}}\tilde{C}^{[-\infty, \infty]}_{*}\twoheadrightarrow \tilde{C}^{[R', \infty]}_{*}.\]
Here, the last map is a natural quotient map.

\begin{prp}\label{selfconc}
Let $S:T_{p, q}\rightarrow T_{p,q}$ be a given self-concordance. Then there is a dense subset $\mathcal{I}\subset (0, \frac{1}{2})$ such that all elements in $\mathcal{R}_{\alpha}(S^{3}\setminus T_{p, q},SU(2))$ extend to elements in $\mathcal{R}_{\alpha}((S^{3}\times [0,1])\setminus S, SU(2))$ for any $\alpha\in \mathcal{I}$.
\end{prp}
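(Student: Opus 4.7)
The plan is to analyze the cobordism map induced by the self-concordance $(W, S) = (S^3 \times [0, 1], S)$ on the Floer $\mathcal{S}$-complex via the $\mathbb{R}$-filtration arising from the Chern-Simons functional. Set $\mathcal{I} := \{\alpha \in \mathbb{Q} \cap (0, \tfrac{1}{2}) : \Delta_{T_{p,q}}(e^{4\pi i \alpha}) \neq 0\}$, which is dense in $(0, \tfrac{1}{2})$. For $\alpha \in \mathcal{I}$, Theorem \ref{absolute counting} combined with Theorem \ref{sub thm} implies that $C^{\alpha}_{*}(T_{p,q}; \Delta_{\mathscr{R}_{\alpha}})$ is concentrated in odd $\mathbb{Z}/4$-gradings with vanishing differential, so it is a free $\mathscr{R}_{\alpha}$-module with basis given by fixed lifts $\tilde\beta_1, \ldots, \tilde\beta_N$ of the irreducible flat connections $\beta_i \in \mathcal{R}^{*}_{\alpha}(S^3 \setminus T_{p,q}, SU(2))$.

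First, I would analyze $m := m_{(W, S)}$ in the $\mathbb{R}$-filtered setting. Since $(W, S)$ is a concordance, $\kappa_0(W, S, \alpha) = 0$, so $m$ preserves the $\mathbb{R}$-filtration; by Lemma \ref{kappa and nu}, an instanton $A$ on $(W, S)$ asymptotic to $\tilde\beta_i$ contributes a term of $\mathbb{R}$-degree $CS(\tilde\beta_i) - \kappa(A)$. Hence the associated graded $\mathrm{gr}(m)$ sends $\tilde\beta_i$ to a linear combination of generators arising precisely from flat connections on $W \setminus S$ (i.e.\ instantons with $\kappa(A) = 0$) limiting to $\beta_i$, which via the holonomy correspondence are exactly the flat $SU(2)$-extensions of $\beta_i$ to $\pi_1((S^3 \times [0,1]) \setminus S)$ with holonomy parameter $\alpha$.

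Next, I would prove that $\mathrm{gr}(m)(\tilde\beta_i) \neq 0$ for every $i$ by doubling the cobordism. The composite $\tilde{m}_{(W, -S)} \circ \tilde{m}_{(W, S)}$ coincides up to $\mathcal{S}$-chain homotopy with the cobordism map of the doubled concordance $(W, S) \cup_{T_{p,q}} (W, -S) : T_{p,q} \to T_{p,q}$. The doubled annulus is regularly homotopic to the product annulus $T_{p,q} \times [0, 1]$, and such a regular homotopy can be decomposed into finitely many finger and twist moves; by Proposition \ref{immersed map} each such move changes the induced $\mathcal{S}$-morphism by a unit of $\mathscr{R}_{\alpha}$. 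Since the product cobordism induces the identity $\mathcal{S}$-morphism, this yields $\tilde{m}_{(W, -S)} \circ \tilde{m}_{(W, S)} \simeq u \cdot \mathrm{id}$ for some unit $u \in \mathscr{R}_{\alpha}$. Because the differential of $C^{\alpha}_{*}(T_{p,q})$ vanishes, $m$ is therefore an invertible $\mathscr{R}_{\alpha}$-module map, and taking associated gradeds of the identity $\mathrm{gr}(m_{(W,-S)}) \circ \mathrm{gr}(m) = u_0 \cdot \mathrm{id}$ (with $u_0$ the leading term of $u$, which is a unit) yields that $\mathrm{gr}(m)$ is itself an isomorphism. In particular $\mathrm{gr}(m)(\tilde\beta_i) \neq 0$ for each $i$, forcing every $\beta_i$ to extend to a flat $SU(2)$-connection on $(W, S)$. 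The reducible $\theta_{\alpha}$ always extends trivially via abelianization, so every element of $\mathcal{R}_{\alpha}(S^3 \setminus T_{p,q}, SU(2))$ extends for $\alpha \in \mathcal{I}$.

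The principal obstacle is the doubling step: one must verify that the regular homotopy between $(W, S) \cup (W, -S)$ and the product admits a decomposition into finger and twist moves only, including handling the Whitney-type cancellations of intersection pairs and confirming that these are covered by Proposition \ref{immersed map}, either directly as reverses of finger moves or via a mild extension of its proof. A secondary technical point is the compatibility of $\mathcal{S}$-chain homotopies with the $\mathbb{R}$-filtration, needed to ensure that chain-homotopy invertibility of $m$ descends cleanly to invertibility on the associated graded; this requires the Chern-Simons filtration to be exhaustive and the chain homotopies produced by the cobordism machinery to be filtration-preserving in the relevant sense.
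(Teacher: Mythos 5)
Your proposal is correct in outline, but it establishes the crucial injectivity step by a genuinely different mechanism than the paper. The paper never doubles the concordance: instead it uses the identity $h^{\alpha}_{\mathscr{S}}(T_{p,q})=-\tfrac12\sigma_{\alpha}(T_{p,q})=\operatorname{rank}C^{\alpha}_{*}(T_{p,q})$ (Theorems \ref{Froyshov} and \ref{sub thm}, over the field $\mathscr{R}_{\alpha}\otimes\mathbb{Q}$) to produce a $v$-chain of cycles $\hat\beta_{0},\dots,\hat\beta_{d-1}$ with $v(\hat\beta_{i-1})=\hat\beta_{i}$ and $\delta_{1}(\hat\beta_{d-1})\neq 0$; it then shows, by applying the operators $\mathcal{W}_{i,j,k}=\delta_{1}v^{i}U^{k}Z^{j}$ to linear combinations, that the $Z,U$-translates of the $\hat\beta_{i}$ form a basis of the truncated filtered complex $C^{[-\epsilon,R]}_{*}$ in degrees $1$ and $3$; since the $\mathcal{S}$-morphism relations of Proposition \ref{delt d} force the images $m_{S}(\hat\beta_{i})$ to satisfy the same properties with the same $\mathbb{R}$-degrees, $m_{S}$ is an isomorphism on the truncated complex, and the block-triangular matrix argument (ordering lifted flat generators by Chern--Simons value) then yields the flat extensions. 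Your route replaces all of this by the doubling trick $\tilde m_{\bar S}\circ\tilde m_{S}\simeq u\cdot\mathrm{id}$, which is legitimate by the paper's own standards: it is exactly the device used in the proof of Theorem \ref{str}, your Whitney-move worry is absorbed because the conclusion of Proposition \ref{immersed map} is symmetric (a Whitney move is the inverse of a finger move and the ``up to a unit'' relation can be read in either direction), and the intermediate immersed cobordisms are negative definite since both ends are $T_{p,q}$ so the check of Proposition \ref{neg def pair} applies with $\sigma_{\alpha}(K_{+})=\sigma_{\alpha}(K_{-})$. Your second flagged obstacle (filtration-compatibility of the chain homotopies) in fact disappears: the constraint $\chi'\tilde h+\tilde h\chi=0$ forces the $(1,2)$ and $(1,3)$ entries of any $\mathcal{S}$-chain homotopy to vanish, so on the irreducible summand the homotopy relation reads $d'h_{11}+h_{11}d=m_{\bar S}m_{S}-u$, and since $d=d'=0$ for $T_{p,q}$ (Proposition \ref{non-deg'} plus Theorem \ref{absolute counting}) you get the exact identity $m_{\bar S}m_{S}=u\cdot\mathrm{id}$; one then only needs that the units produced by Proposition \ref{immersed map} have leading term $\pm1$ in $\mathbb{R}$-degree zero, which is visible in its proof ($1-\lambda^{4\alpha-1}T^{4}$, $1+\sum_{k>0}c_{k}Z^{-k}$, etc.), so passing to the associated graded is harmless. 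What each approach buys: yours is shorter and avoids the $\mathcal{W}_{i,j,k}$ bookkeeping, at the cost of invoking the immersed-surface machinery and functoriality of $\mathcal{S}$-morphisms for the doubled annulus; the paper's argument stays inside the filtered equivariant algebra and keeps explicit control of the $\mathbb{Z}\times\mathbb{R}$-bigrading, which is what it advertises as the essential ingredient. Two small points you should still make explicit: the top-filtration contributions are counts of solutions of the \emph{perturbed} equation in zero-energy classes, so to conclude honest flat extensions you need the compactness argument letting the (small) perturbation go to zero, exactly as in the paper's closing sentence about regularity being an open condition; and the identification of the chain generators with $\mathcal{R}^{*}_{\alpha}(S^{3}\setminus T_{p,q},SU(2))$ uses that the perturbation can be chosen supported away from the flat set, which rests on Proposition \ref{non-deg'}.
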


\begin{proof}
We choose a dense subset $\mathcal{I}\subset (0, \frac{1}{2})$ such that Theorem \ref{sub thm} holds for $T_{p, q}$.
Since all irreducible critical points of the Chern-Simons functional of $T_{p, q}$ are non-degenerate by Proposition \ref{non-deg'}, we can choose a perturbation $\pi$ so that it is supported away from flat connections.
In particular, we can assume that the chain complex $C^{\alpha}_{*}(T_{p,q}; \Delta_{\mathscr{S}})$ is generated by $\mathcal{R}^{*}_{\alpha}(S^{3}\setminus T_{p, q},SU(2))$.
Since the assertion for $\alpha=\frac{1}{4}$ is proved in \cite{DS2}, we assume that $\alpha\neq \frac{1}{4}$. 
In particular, we consider the case $\alpha<\frac{1}{4}$ for a while.
Since the unique flat reducible $\theta_{\alpha}$ with holonomy parameter $\alpha$ on $S^{3}\setminus T_{p, q}$ always extends to concordance complement, it is enough to consider the extension problem for irreducibles.
We choose a field $\mathscr{S}:=\mathscr{R}_{\alpha}\otimes \mathbb{Q}$.
By Theorem \ref{Froyshov} and \ref{sub thm} , we have 
$$h_{\mathscr{S}}^{\alpha}(T_{p, q})=-\frac{1}{2}\sigma_{\alpha}(T_{p, q})=d$$
where $d:={\rm rank}C^{\alpha}_{*}(T_{p, q};\Delta_{\mathscr{S}})$. This implies that there is an cycle $\beta_{0}\in C^{\alpha}_{*}(T_{p, q};\Delta_{\mathscr{S}})$ such that $\delta_{1}v^{k}(\beta_{0})=0 $ if $k<d-1$ and $\delta_{1}v^{d-1}(\beta_{0})\neq 0$.
Put $\beta_{i}:=v^{i}(\beta_{0})$ for $0\leq i\leq d-1$.
The chain complex $C^{\alpha}_{*}(T_{p,q};\Delta_{\mathscr{S}_{\alpha}})$ admits a $\mathbb{Z}\times \mathbb{R}$-bigrading by fixing lifts $\tilde{\rho}_{1},\cdots ,\tilde{\rho}_{d}$ of singular flat connections $\rho_{1},\cdots,\rho_{d}\in \mathcal{R}^{*}_{\alpha}(S^{3}\setminus T_{p, q},SU(2))$.
In particular, we may assume that ${\rm deg}_{\mathbb{Z}}(\tilde{\rho}_{i})=1$ or $3$ by Theorem \ref{sub thm}. 
Using properties of elements $\beta_{0},\cdots,\beta_{d-1}$, we fix elements $\hat{\beta}_{0},\cdots ,\hat{\beta}_{d-1} \in C^{\alpha}_{*}(T_{p, q};\Delta_{\mathscr{S}_{\alpha}})^{[-\infty, \infty]}$ by the following way.
Firstly, there exists an cycle $\hat{\beta}_{d-1}$  such that ${\rm deg}_{\mathbb{Z}}(\hat{\beta}_{d-1})=1$ and satisfies
\[\delta_{1}(\hat{\beta}_{d-1})=\sum_{k\leq 0}c_{k}Z^{k}\tilde{\theta}_{\alpha}\]
with $c_{0}\neq 0$ since $\delta_{1}(\beta_{d-1})\neq 0$.
Next, we assume that an element $\hat{\beta}_{i}$ is chosen. Then $\hat{\beta}_{i-1}$ is defined as a cycle  satisfying 
\[v(\hat{\beta}_{i-1})=\hat{\beta}_{i}.\]
Finally, we obtain cycles $\hat{\beta}_{0}, \cdots, \hat{\beta}_{d-1}$ by induction.
\begin{figure}
    \centering
    \includegraphics[scale=0.59]{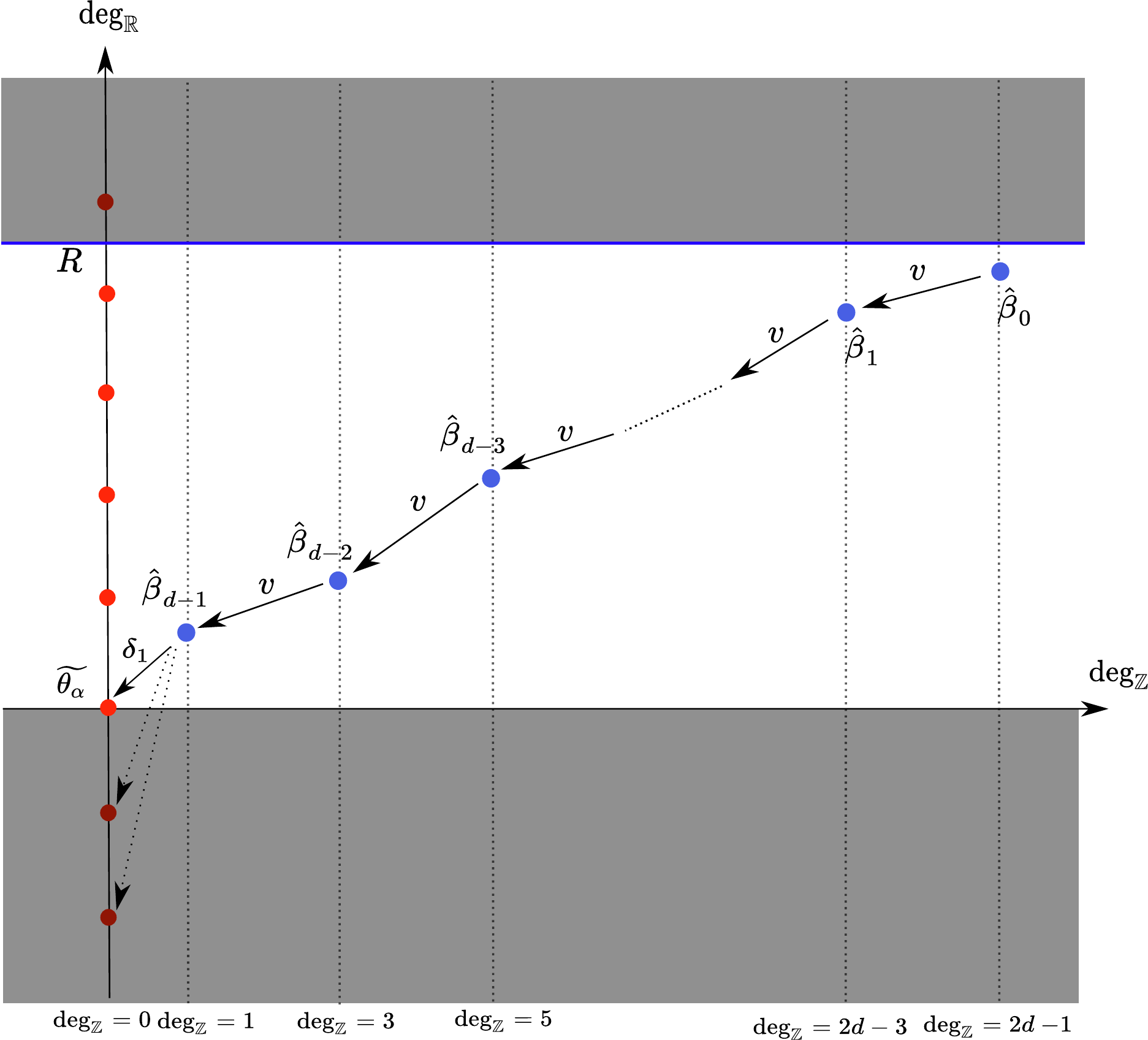}
    \caption{Elements $\hat{\beta}_{0},\cdots, \hat{\beta}_{d-1}$ and their $\mathbb{Z}\times \mathbb{R}$-gradings.}
    \label{vchain}
\end{figure}

Note that ${\rm deg}_{\mathbb{Z}}(\hat{\beta}_{i})=2(d-1)-2i$ and $0\leq{\rm deg}_{{\mathbb{R}}}(\hat{\beta}_{i-1})\leq\cdots \leq{\rm deg}_{{\mathbb{R}}}(\hat{\beta}_{0})$.
We fix $R>0$ so that it satisfies ${\rm deg}_{{\mathbb{R}}}(\hat{\beta}_{0})<R$ and $R\notin \mathscr{C}^{*}$.
See Figure \ref{vchain}.

Let $-\epsilon<0$ be a small negative number such that an interval $[-\epsilon, 0)$ does not contain any critical value of Chern-Simons functional $CS$.
Our aim is to show that the cobordism map $m_{S}$ on the quotient filtered chain $\tilde{C}^{\alpha}_{*}(T_{p, q};\Delta_{\mathscr{S}})^{[-\epsilon, R]}$ is an isomorphism.

Since the map $\tilde{m}_{S}$ preserves the $\mathbb{Z}$-grading, it is enough to show that $\tilde{m}_{S}$ is an isomorphism on $C^{\alpha}_{1}(T_{p,q};\Delta_{\mathscr{S}})^{[-\epsilon, R]}$ and $C^{\alpha}_{3}(T_{p,q};\Delta_{\mathscr{S}})^{[-\epsilon, R]}$.
We claim that $C^{\alpha}_{1}(T_{p,q};\Delta_{\mathscr{S}})^{[-\epsilon, R]}$ and $C^{\alpha}_{3}(T_{p,q};\Delta_{\mathscr{S}})^{[-\epsilon, R]}$ are generated by elements of the form \[\{Z^{j}U^{2k+1-(d-1)}\hat{\beta}_{2k+1}|k, m_{2k+1}\leq j\leq n_{2k+1}\}\] or \[\{Z^{j}U^{2k-(d-1)}\hat{\beta}_{2k}|k, m_{2k}\leq j\leq n_{2k}\}\] over $\mathbb{Q}$.
To see this, consider the following linear combination,
\begin{equation}
\sum_{0\leq i\leq d-1}\sum_{m_{j}\leq j\leq n_{i}}c_{i,j}Z^{j}U^{i-(d-1)}\hat{\beta_{i}}=0, \label{linear comb}
\end{equation}
where $c_{i,j}$ are rational coefficients.
Then we consider applying operators $\mathcal{W}^{[-\epsilon, R]}_{i, j, k}$ to the equation (\ref{linear comb}).
Firstly, we apply the operator for $(i, j,k)=(0, -n_{d-1}, 0)$.
Then we obtain $c_{0, n_{d-1}}\delta_{1}(\hat{\beta}_{d-1})=0$ with $\delta_{1}(\hat{\beta}_{d-1})$ is non-zero. Since $\mathscr{S}$ is an integral domain, we have $c_{d-1, n_{d-1}}=0$.
Next, we apply the operator $\mathcal{W}^{[-\epsilon, R]}_{(i, j, k)}$ for $(i,j,k)=(0, -n_{d-1}+1,0)$ to the equation (\ref{linear comb}). Then we obtain $c_{d-1, n_{d-1}-1}$ using $c_{d-1, n_{d-1}}=0$.
Inductively, we obtain 
\[c_{d-1,n_{d-1}}=\cdots =c_{d-1, m_{d-1}}=0\]
by applying operators $\mathcal{W}^{[-\epsilon, R]}_{0, -n_{d-1},0}, \cdots, \mathcal{W}^{[-\epsilon, R]}_{0, -m_{d-1},0}$.
We repeat similar arguments using operators $\{\mathcal{W}^{[-\epsilon, R]}_{1, j, 1}\}_{m_{d-2}\leq j\leq n_{d-2}}$, and obtain 
\[c_{d-2,n_{d-2}}=\cdots =c_{d-2, m_{d-2}}=0.\]
Inductively, we conclude that 
\[c_{i,n_{i}}=\cdots =c_{i, m_{i}}=0\]
for all $0\leq i\leq d-1$.
This means that $\{Z^{j}U^{i-(d-1)}\hat{\beta_{i}}\}_{\substack{0\leq i\leq d-1\\ m_{i}\leq j\leq n_{i}}}$ are linearly independent.

Put $\hat{\beta}'_{i}:=m_{S}(\hat{\beta}_{i})$.
Since the induced cobordism map $\tilde{m}_{S}$ on an $\mathcal{S}$-complex satisfies relations (\ref{delt d}),
elements $\hat{\beta}_{0},\cdots, \hat{\beta}_{d-1}$ have same properties and the same technique shows that $\{Z^{j}U^{i-(d-1)}\hat{\beta'_{i}}\}_{\substack{0\leq i\leq d-1\\ m_{i}\leq j\leq n_{i}}}$ are linearly independent. 
Moreover, ${\rm deg}_{\mathbb{R}}(\hat{\beta}_{i})={\rm deg}_{\mathbb{R}}(\hat{\beta}'_{i})$ by the construction of elements $\{\hat{\beta}_{i}\}$.
We conclude that the map ${m}_{S}$ is an isomorphism on $C^{\alpha}_{1}(T_{p, q};\Delta_{\mathscr{S}_{\alpha}})^{[-\epsilon, R]}$ and $C^{\alpha}_{3}(T_{p, q};\Delta_{\mathscr{S}_{\alpha}})^{[-\epsilon, R]}$.

Note that the chain complex $C^{\alpha}_{*}(T_{p, q};\Delta_{\mathscr{S}})$ is generated by those irreducible singular flat connections.
Then the degree one part  $C^{\alpha}_{1}(T_{p, q};\Delta_{\mathscr{S}})^{[-\epsilon, R]}$ of the quotient filtered chain complex is generated by elements
of the form $\{Z^{j}\tilde{\rho_{1}}\}_{m_{1}\leq j\leq n_{1}},\cdots, \{Z^{j}\tilde{\rho_{l}}\}_{m_{l}\leq j\leq n_{l}} $ over $\mathbb{Q}$.
We ordered these generators by values of Chern-Simons functional. 
Then the cobordism map $m_{S}$ can be represented by the following form
\begin{equation}
\left[\begin{array}{ccccc}L_{1}&O&\cdots &&\\ &L_{2}&O &\cdots&\\ & &\ddots&O&\cdots \\ &&&\ddots&O\\ &&&&L_{k}\end{array}\right]\label{matrix repre}
\end{equation}
where diagonal blocks $L_{i}$ are components which corresponds to basis with same value of the Chern-Simons  functional.
Note that components in $L_{i}$ are defined by counting (perturbed) flat connection over the concordance complement.
Since $m_{S}$ is an isomorphism on the degree one part, the matrix (\ref{matrix repre}) is invertible over $\mathbb{Q}$. 
Hence each diagonal blocks $L_{i}$ are also invertible. 
In particular, they do not contain any zero-column.
Since the regular condition on moduli space is open condition with respect to choices of perturbation, all flat connections $\rho_{1}, \cdots, \rho_{l}$ extend to flat connections over the concordance complement.
The similar argument works for $\rho_{l+1}, \cdots \rho_{d}$ and thus all elements in $\mathcal{R}_{\alpha}^{*}(S^{3}\setminus T_{p, q};\Delta_{\mathscr{S}})$ extend to the concordance complement.

Finally, we consider the case $\alpha>\frac{1}{4}$.  
In this case, we only change the above  argument at the following point:
We apply the operator $\mathcal{W}^{[-\epsilon, R]}_{0, -m_{d-1}}$ on the equation (\ref{linear comb}) at the first time.
Then we obtain $c_{d-1, m_{d-1}}=0$. 
Next we apply the operator $\mathcal{W}^{[-\epsilon, R]}_{0, -m_{d-1}+1,0}$ and obtain $c_{d-1, m_{d-1}-1 }=0$.
We inductively obtain $c_{d-1,n_{d-1}}=\cdots =c_{d-1, m_{d-1}}=0$.
The rest of the argument goes in the similar way and finally all coefficients in (\ref{linear comb}) vanish.
\end{proof}

\begin{proof}[Proof of Theorem \ref{main thm}]
Let $S:T_{p, q}\rightarrow K$ be a given concordance, then we can construct a concordance  $\bar{S}\circ S:T_{p, q}\rightarrow K\rightarrow T_{p, q}$ by the composition, where $\bar{S}$ is the opposite concordance of $S$. By Proposition \ref{selfconc} there exists a dense subset  $\mathcal{I}\subset [0,\frac{1}{2}]$ such that there is a extension $\mathcal{R}_{\alpha}(S^{3}\setminus T_{p, q}, SU(2))\rightarrow \mathcal{R}_{\alpha}((S^{3}\times [0, 1])\setminus \bar{S}\circ S, SU(2) )$ for any $\alpha \in \mathcal{I}$.  Let $\alpha\in [0, \frac{1}{2}]$ be any holonomy parameter and consider the representation $\rho:\pi_{1}(S^{3}\setminus T_{p, q})\rightarrow SU(2)$ with
\[\rho(\mu_{T_{p, q}})\sim
\left[\begin{array}{cc}
e^{2\pi i \alpha}&0\\
0&e^{-2\pi i \alpha}
\end{array}\right].
\]
Then we can choose a sequence $\{\alpha_{i}\}\subset \mathcal{I}$ such that $\lim_{i\rightarrow \infty}\alpha_{i}=\alpha$ and $SU(2)$ representations $\rho_{i}$ of $\pi_{1}(S^{3}\setminus T_{p, q})$ with
\[\rho_{i}(\mu_{T_{p, q}})\sim
\left[\begin{array}{cc}
e^{2\pi i \alpha_{i}}&0\\
0&e^{-2\pi i \alpha_{i}}
\end{array}\right].
\]
Since $\rho_{i}$ extends to $SU(2)$ representation $\Phi_{i}:\pi_{1}((S^{3}\times [0, 1])\setminus \bar{S}\circ S)\rightarrow SU(2)$, and we can choose a convergent subsequence of $\{\Phi_{i}\}$ which has the limiting representation $\Phi_{\infty}:\pi_{1}((S^{3}\times [0, 1])\setminus \bar{S}\circ S)\rightarrow SU(2)$. (Since $SU(2)$ is compact, we can choose a convergent subsequence $\{\Phi_{i}(x_{j})\}_{i}$ for each generator $x_{j}$ of $\pi_{1}((S^{3}\times [0, 1])\setminus \bar{S}\circ S)$, and $\lim_{i\rightarrow \infty}\Phi_{i}(x_{j})$ defines a limiting  representation $\Phi_{\infty}$.)  By restriction, we get a representation  $\pi_{1}((S^{3}\times [0, 1])\setminus  S)\rightarrow SU(2)$ which is the extension of $\rho$. 

\end{proof}

\section{Appendix: The connected sum theorem}

In this section, we give the proof of the connected sum theorem.
The connected sum theorem for non-singular settings was proved by \cite{Fuk96}, and singular setting with $\alpha=\frac{1}{4}$ was proved by \cite{DS1}.
We use the similar argument used in \cite{DS1} to prove our connected sum theorem (Theorem \ref{connected sum}).
Let us recall the settings which are introduced in Section 6 in \cite{DS1}.
Let $(Y, K)$ and $(Y', K')$ be two given knots in integral homology 3-spheres. Fixing base points $p\in K$ and $p'\in K'$and we take a pair connected  sum $(Y\# Y', K\#K')$ at these base points. We also fix a base point $p^{\#}\in K\# K'$. Consider a cobordism\[(W, S):(Y\sqcup Y', K\sqcup K')\rightarrow (Y\#Y', K\#K')\]
constructed by attaching a pair of $1$-handle $(D^{1}\times D^{3}, D^{1}\times D^{1})$ to the product cobordism $[0, 1]\times (Y\sqcup Y', K\sqcup K')$. Let \[(W', S'):(Y\#Y', K\#K')\rightarrow (Y\sqcup Y', K\sqcup K')\]be a cobordism of opposite direction. We define three oriented piece-wise smooth paths $\gamma, \gamma', \gamma^{\#}$on $S\subset W$. Assume that these paths intersect boundaries of the cobordism only at their edge points.  The path $\gamma$ starts from $p\in Y$ and ends at the base point $p^{\#}\in Y\#Y'$. 
Similarly, $\gamma'$ starts from $p'\in Y'$ and ends at $p^{\#}$. $\gamma^{\#}$ starts from $p\in Y$ and ends at $p'\in Y'$. Let us define the paths $\sigma, \sigma', \sigma^{\#}$ in $S'$ as mirrors of $\gamma, \gamma', \gamma^{\#}$ respectively.   
We use notations $\beta, \beta', \beta^{\#}$ (and their indexed versions) for critical points of the perturbed Chern-Simons functional on $(Y, K)$, $(Y', K')$ and $(Y\# Y',K\# K')$ respectively. $\theta_{\alpha}, \theta'_{\alpha}$ and $\theta^{\#}_{\alpha}$ denote unique flat reducibles on $(Y, K)$, $(Y', K')$ and $(Y\# Y',K\# K')$  respectively.
We use the reduced notations for $d$-dimensional moduli spaces as follows
\[M_{z}(\beta, \beta'; \beta^{\#})_{d}:=M_{z}(W,S;\beta, \beta', \beta^{\#})_{d }\]
\[M_{z}(\beta^{\#};\beta, \beta')_{d}:=M_{z}(W', S';\beta^{\#},\beta, \beta')_{d}\]
We drop $z$ from notations above if we consider all union of $z$.
We define maps 
\[H^{\gamma}:\mathcal{B}(W,S;\beta, \beta',\beta^{\#})\rightarrow S^{1}\]
\[H^{\gamma'}:\mathcal{B}(W,S;\beta, \beta',\beta^{\#})\rightarrow S^{1}\]
as in Subsection \ref{vmu}.  
The moduli space cut down by these maps are defined by as follows,
\[M_{\gamma,z}(\beta,\beta';\beta^{\#})_{d}:=\{[A]\in M_{z}(\beta, \beta';\beta^{\#})_{d+1}|H^{\gamma}([A])=s\}\]
\[M_{\gamma',z}(\beta,\beta';\beta^{\#})_{d}:=\{[A]\in M_{z}(\beta, \beta';\beta^{\#})_{d+1}|H^{\gamma'}([A])=s'\}\]
\[M_{\gamma\gamma',z}(\beta,\beta';\beta^{\#})_{d}:=\{[A]\in M_{z}(\beta, \beta';\beta^{\#})_{d+1}|H^{\gamma}([A])=s, H^{\gamma'}([A])=s'\}\]
where $s\in S^{1}$ is a generic point. 
The orientation of moduli space over $(W, S)$ is defined by the following way.
Let $o_{W}\in \mathscr{O}[W,S;\theta_{\alpha +}, \theta'_{\alpha +}, \theta^{\#}_{\alpha -}]$ be a canonical homology orientation of $(W,S)$, and $o_{\beta}\in\mathscr{O}[\beta]$, $o_{\beta'}\in\mathscr{O}[\beta']$and $o_{\beta^{\#}}\in \mathscr{O}[\beta^{\#}]$ be given orientations for generators. Then $o_{\beta, \beta': \beta^{\#}}\in \mathscr{O}[W, S:\beta, \beta'; \beta^{\#}]$ is fixed so that the relation
\[\Phi(o_{\beta}\otimes o_{\beta'}\otimes o_{W})=\Phi(o_{\beta, \beta';\beta^{\#}}\otimes o_{\beta^{\#}})\] holds.

The argument of the proof consists of the following steps.
\begin{itemize}
\item [I.]A pair of cobordism $(W,S):(Y\sqcup Y', K\sqcup K')\rightarrow (Y\#Y', K\#K')$ induces an $\mathcal{S}$-morphism \[\tilde{m}_{(W, S)}:\tilde{C}^{\alpha}_{*}(Y, K)\otimes \tilde{C}^{\alpha}_{*}(Y', K')\rightarrow \tilde{C}^{\alpha}_{*}(Y\#Y', K\# K').\]
\item [II.]A pair of cobordism $(W', S'): (Y\#Y', K\# K')\rightarrow (Y\sqcup Y', K\sqcup K')$ induces an $\mathcal{S}$-morphism\[\tilde{m}_{(W', S')}:\tilde{C}^{\alpha}_{*}(Y\# Y', K\#K')\rightarrow \tilde{C}^{\alpha}_{*}(Y, K)\otimes\tilde{C}^{\alpha}_{*}(Y', K').\]
\item [III.] Put $\tilde{C}^{\#}:=\tilde{C}^{\alpha}_{*}(Y\# Y', K\# K')$. The composition $\tilde{m}_{(W, S)}\circ \tilde{m}_{(W',S')}$ is $\mathcal{S}$-chain homotopic to ${\rm id}_{\tilde{C}^{\#}}$ up to the multiplication of a unit element in $\mathscr{S}$.
\item [IV.] The composition $\tilde{m}_{(W', S')}\circ \tilde{m}_{(W, S)}$ is $\mathcal{S}$-chain homotopic to ${\rm id}_{\tilde{C}^{\otimes}}$.
\end{itemize}

\subsection{Step I.}
We define a map $\tilde{m}_{(W, S)}$as follows.
Using the decomposition of the Floer chain group ${C}^{\otimes}=(C\otimes C')_{*}\oplus(C\otimes C')_{*-1}\oplus C_{*} \oplus C'_{*}$, we define four maps: 
\[m=[m_{1}, m_{2}, m_{3}, m_{4}]:(C\otimes C')_{*}\oplus(C\otimes C')_{*-1}\oplus C_{*}\oplus C'_{*}\rightarrow C^{\#}_{*},\]
\[\mu=[\mu_{1},\mu_{2},\mu_{3},\mu_{4}]:(C\otimes C')_{*}\oplus(C\otimes C')_{*-1}\oplus C_{*}\oplus C'_{*}\rightarrow C^{\#}_{*},\]
\[\Delta_{1}=[\Delta_{1,1},\Delta_{1,2},\Delta_{1,3},\Delta_{1,4}]:(C\otimes C')_{0}\oplus(C\otimes C')_{-1}\oplus C_{0}\oplus C'_{0}\rightarrow \mathscr{S},\]
\[\Delta_{2}:\mathscr{S}\rightarrow C^{\#}_{-1}.\]Each components of above maps are defined as follows:

\begin{eqnarray*}
\langle m_{1}(\beta\otimes \beta'),\beta^{\#} \rangle&=&\sum_{z}\#M_{\gamma^{\#}, z}(\beta, \beta';\beta^{\#})_{0}\lambda^{-\kappa(z)}T^{\nu(z)}\\
\langle m_{2}(\beta\otimes \beta'), \beta^{\#}\rangle&=&\sum_{z}\#M_{z}(\beta, \beta';\beta^{\#})_{0}\lambda^{-\kappa(z)}T^{\nu(z)}\\
\langle m_{3}(\beta),\beta^{\#} \rangle&=&\sum_{z}\#M_{z}(\beta, \theta'_{\alpha};\beta^{\#})_{0}\lambda^{-\kappa(z)}T^{\nu(z)}\\
\langle m_{4}(\beta'),\beta^{\#} \rangle&=&\sum_{z}\#M_{z}(\theta_{\alpha},\beta';\beta^{\#})_{0}\lambda^{-\kappa(z)}T^{\nu(z)}
\end{eqnarray*}

\begin{eqnarray*}
\langle\mu_{1}(\beta\otimes\beta'),\beta^{\#} \rangle&=&\sum_{z}\#M_{\gamma\gamma', z}(\beta, \beta';\beta^{\#})_{0}\lambda^{-\kappa(z)}T^{\nu(z)}\\
\langle\mu_{2}(\beta\otimes \beta'),\beta^{\#} \rangle&=&\sum_{z}\#M_{\gamma, z}(\beta,\beta';\beta^{\#})_{0}\lambda^{-\kappa(z)}T^{\nu(z)}\\
\langle\mu_{3}(\beta),\beta^{\#}\rangle&=&\sum_{z}\#M_{\gamma, z}(\beta, \theta'_{\alpha};\beta^{\#})_{0}\lambda^{-\kappa(z)}T^{\nu(z)}\\
\langle\mu_{4}(\beta'),\beta^{\#} \rangle&=&\sum_{z}\#M_{\gamma', z}(\theta_{\alpha},\beta';\beta^{\#})_{0}\lambda^{-\kappa(z)}T^{\nu(z)}
\end{eqnarray*}

\begin{eqnarray*}
\Delta_{1,1}(\beta\otimes \beta')&=&\sum_{z}\#M_{\gamma^{\#},z}(\beta, \beta';\theta_{\alpha}^{\#})_{0}\lambda^{-\kappa(z)}T^{\nu(z)}\\
\Delta_{1,2}(\beta\otimes \beta')&=&\sum_{z}\#M_{z}(\beta,\beta';\theta^{\#}_{\alpha})_{0}\lambda^{-\kappa(z)}T^{\nu(z)}\\
\Delta_{1,3}(\beta)&=&\sum_{z}\#M_{z}(\beta,\theta'_{\alpha};\theta^{\#}_{\alpha})_{0}\lambda^{-\kappa(z)}T^{\nu(z)}\\
\Delta_{1,4}(\beta')&=&\sum_{z}\#M_{z}(\theta'_{\alpha},\beta';\theta^{\#}_{\alpha})_{0}\lambda^{-\kappa(z)}T^{\nu(z)}
\end{eqnarray*}

\[\langle\Delta_{2}(1),\beta^{\#}\rangle=\sum_{z}\#M_{z}(\theta_{\alpha},\theta'_{\alpha};\beta^{\#})_{0}\lambda^{-\kappa(z)}T^{\nu(z)}\]
As described in Remark 6.10 and Remark 6.11 in \cite{DS1}, notice that
\begin{itemize}
\item $H^{-1}_{\beta\beta_{1}}(s)\cap H^{-1}_{\beta\beta'}(s')\cap M(\beta, \beta_{1})_{2}=\emptyset$ for distinct regular value $s, s'\in S^{1}$,
\item $\#M_{\gamma}(\beta, \beta';\beta^{\#})-\#M_{\gamma'}(\beta, \beta'; \beta^{\#})=\#M_{\gamma^{\#}}(\beta, \beta';\beta^{\#})$.
\end{itemize}

\begin{prp}\label{step1}
There are the following relations:
\begin{eqnarray}
d^{\#}\circ m&=&m\circ d^{\otimes}\label{7.1}\\
\delta_{1}^{\#}\circ m&=&\Delta_{1}\circ d^{\otimes}+\delta_{1}^{\otimes}\label{7.2}\\
m\circ \delta_{2}^{\otimes}&=&\delta_{2}^{\#}-d^{\#}\circ \Delta_{2},\label{7.3}\\
d^{\#}\circ \mu +\mu \circ d^{\otimes}&=&v^{\#}\circ m -m\circ v^{\otimes}+\delta_{2}^{\#}\circ \Delta_{1}-\Delta_{2}\circ \delta_{1}^{\otimes}\label{7.4}.
\end{eqnarray}
\end{prp}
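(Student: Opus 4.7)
The plan is to establish each of the four relations (7.1)--(7.4) by examining the codimension one boundary strata of appropriate (possibly holonomy-cut-down) moduli spaces over the $1$-handle cobordism $(W,S)$, exactly in the spirit of Propositions \ref{delt d}, \ref{DonFur}, and \ref{rel mu}. The single technical novelty compared to \cite[Section 6]{DS1} is that the monotonicity condition is absent, so for any compactness statement we must work modulo the filtration by topological energy $\kappa$ and invoke the Novikov completeness of $\mathscr{R}_{\alpha}$: for each fixed $C>0$ only finitely many homotopy classes $z$ contribute moduli spaces with $\kappa(z)<C$, and this is precisely what allows the sums defining $m,\mu,\Delta_1,\Delta_2$ to make sense in $\mathscr{S}$.

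For (7.1), I would consider the one dimensional compactified moduli spaces $M_{z}^{+}(\beta,\beta';\beta^{\#})_{1}$, $M_{z}^{+}(\beta,\theta'_{\alpha};\beta^{\#})_{1}$, $M_{z}^{+}(\theta_{\alpha},\beta';\beta^{\#})_{1}$, together with the cut-down $M_{\gamma^{\#},z}^{+}(\beta,\beta';\beta^{\#})_{1}$, and in each case enumerate the oriented codimension one strata. Breaking at irreducible limits on the two incoming ends yields the $m\circ d^{\otimes}$ contributions and breaking at the outgoing irreducible limit yields the $d^{\#}\circ m$ contributions. The relative grading formula together with the fact that $\theta_{\alpha}$, $\theta'_{\alpha}$, $\theta^{\#}_{\alpha}$ have a one-dimensional stabilizer forces any stratum that breaks at a reducible to sit in moduli spaces of the wrong parity, so these do not contribute to (7.1); no reducible gluing analysis is needed.

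Relations (7.2) and (7.3) are the analogues of Proposition \ref{delt d} for a cobordism with three (rather than two) cylindrical ends. I would run the same boundary count as in (7.1) but now with the distinguished outgoing (respectively incoming) limit $\theta^{\#}_{\alpha}$. The new phenomenon is the end where $[A]$ breaks at a reducible limit of $(W,S)$ whose restriction to $Y$ or $Y'$ is irreducible while the other restriction is the trivial/minimal reducible. On the $1$-handle cobordism the space of minimal reducibles is precisely a point with $\eta^{\alpha}(W,S)=1$ (the cohomology of $W$ rel the two $3$-sphere ends is trivial), which contributes exactly the $\delta_{1}^{\otimes}=[0,0,\delta_1,\delta_1']$ (respectively $\delta_{2}^{\#}$) terms appearing in the stated identities. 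Standard Morse--Bott/obstructed gluing at the reducible limit, together with Proposition \ref{perturbation red} applied to $(W,S)$, guarantees that these contributions are regular and counted with the correct sign.

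The main obstacle is (7.4). I would run the argument of Proposition \ref{rel mu} on the two dimensional moduli spaces $M_{z}^{+}(\beta,\beta';\beta^{\#})_{2}$, cutting down by the path $\gamma^{\#}=\gamma*\sigma^{-1}*\gamma'$ modified near breaking strata exactly as in Proposition \ref{DonFur}. The codimension one faces split into: (a) breakings at irreducibles on each of the three ends, which after careful sign bookkeeping yield the four terms $d^{\#}\mu,\mu d^{\otimes},v^{\#}m,mv^{\otimes}$; (b) breakings at one of the reducibles $\theta_{\alpha},\theta'_{\alpha},\theta^{\#}_{\alpha}$, whose contribution is analysed by the same ungluing model used in the proof of Proposition \ref{DonFur} and yields the $\delta_{2}^{\#}\Delta_{1}-\Delta_{2}\delta_{1}^{\otimes}$ terms; (c) the ``internal'' boundary of the two cut-down constraints $H^{\gamma}([A])=s$ and $H^{\gamma'}([A])=s$, which, using the identity $H^{\gamma^{\#}}=H^{\gamma}\cdot(H^{\gamma'})^{-1}$ and the fact that $H^{-1}_{\beta\beta_{1}}(s)\cap H^{-1}_{\beta'\beta'_{1}}(s')$ is empty for distinct generic $s\neq s'$, contributes nothing extra. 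The delicate step will be the orientation analysis at the reducible strata in case (b), which is where the absence of monotonicity most strongly interferes with a direct transcription of \cite[Section 6]{DS1}; I would handle it by fixing the canonical homology orientation $o_{W}\in\mathscr{O}[W,S;\theta_{\alpha,+},\theta'_{\alpha,+},\theta^{\#}_{\alpha,-}]$ and tracking it through the gluing pairing $\Phi$, exactly as in the formalism recalled in Subsection~2.8.
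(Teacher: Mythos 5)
Your overall strategy (enumerating oriented codimension-one faces of the cut-down one- and two-dimensional moduli spaces over $(W,S)$, in the spirit of Propositions \ref{delt d}, \ref{DonFur} and \ref{rel mu}) is the same as the paper's, but your treatment of (\ref{7.1}) contains a genuine gap. You assert that any stratum breaking at one of the reducibles $\theta_{\alpha},\theta'_{\alpha},\theta^{\#}_{\alpha}$ has the wrong parity, ``so these do not contribute to (\ref{7.1}); no reducible gluing analysis is needed.'' This cannot be correct, because the map $d^{\otimes}$ appearing in (\ref{7.1}) is not $d\otimes 1+\epsilon\otimes d'$: it is the differential of the product $\mathcal{S}$-complex on $C^{\otimes}_{*}=(C\otimes C')_{*}\oplus(C\otimes C')_{*-1}\oplus C_{*}\oplus C'_{*}$, whose matrix (Subsection \ref{4-1}) contains $v$-, $\delta_{1}$- and $\delta_{2}$-entries. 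Hence $m\circ d^{\otimes}$ contains terms such as $m_{4}(\delta_{1}\otimes 1)$, $m_{3}(\epsilon\otimes\delta'_{1})$ and the $\delta_{2}$-type terms in (\ref{7.1.3})--(\ref{7.1.4}), and these must be produced by boundary faces involving the reducibles. In the paper's proof they arise precisely from faces such as $\breve{M}_{z'}(\beta,\theta_{\alpha})_{0}\times M_{z''}(\theta_{\alpha},\beta';\beta^{\#})_{0}$ and $(-1)^{{\rm gr}(\beta)}\breve{M}_{z'}(\beta',\theta'_{\alpha})_{0}\times M_{z''}(\beta,\theta'_{\alpha};\beta^{\#})_{0}$ in $\partial M^{+}_{\gamma^{\#},z}(\beta,\beta';\beta^{\#})_{1}$, and from $\breve{M}_{z'}(\theta'_{\alpha},\beta')_{0}\times M_{z''}(\beta,\beta';\beta^{\#})_{0}$ in $\partial M^{+}_{z}(\beta,\theta'_{\alpha};\beta^{\#})_{1}$. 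Your parity argument also fails on its own terms for the cut-down space: at an $S^{1}$-stabilized limit the extra $S^{1}$ gluing parameter absorbs the holonomy constraint $H^{\gamma^{\#}}=s$, which is exactly the mechanism of case (III) in the proof of Proposition \ref{DonFur}; the relation (\ref{adding formula for grading}) only excludes reducible factorization of low-dimensional cylinder moduli spaces, not these faces. If you discard them, your boundary count proves an identity differing from (\ref{7.1}) by exactly the $\delta_{1},\delta_{2},v$-components of $d^{\otimes}$, so the $\mathcal{S}$-morphism property does not follow.

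Two smaller points. In (\ref{7.2})--(\ref{7.3}), the terms $\delta_{1}^{\otimes}$ and $\delta_{2}^{\#}$ do not come from a minimal reducible ``whose restriction to $Y$ or $Y'$ is irreducible''; they come from faces in which a cylinder trajectory counted by $\delta_{1}$ (resp.\ by $\delta_{2}^{\#}$) breaks off, leaving on $(W,S)$ the unique reducible element of $M(\theta_{\alpha},\theta'_{\alpha};\theta^{\#}_{\alpha})_{0}$; that this moduli space is a single point is the fact that replaces your appeal to $\eta^{\alpha}(W,S)=1$. Finally, for (\ref{7.4}) note that $\mu$ has four components cut down by different paths ($\gamma\gamma'$, $\gamma$, $\gamma$, $\gamma'$), so the relation splits into four identities each requiring its own cut-down moduli space ($M^{+}_{\gamma\gamma',z}$, $M^{+}_{\gamma,z}$, $M^{+}_{\gamma',z}$), rather than a single count over $M^{+}_{z}(\beta,\beta';\beta^{\#})_{2}$ cut down by $\gamma^{\#}$; your use of the identity relating $H^{\gamma}$, $H^{\gamma'}$, $H^{\gamma^{\#}}$ and the emptiness of double cut-downs on the cylinder is consistent with the paper's Remarks from \cite{DS1}, but it does not by itself reduce the four identities to one.
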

\begin{proof}
Identity (\ref{7.1}) decomposes into the following four relations:
\begin{eqnarray}d^{\#}m_{1}&=&m_{1}(d\otimes 1)+m_{1}(\epsilon\otimes d')-m_{2}(\epsilon v\otimes 1)+m_{3}(\epsilon\otimes v')+m_{3}(\epsilon\otimes \delta'_{1})+m_{4}(\delta_{1}\otimes 1)\label{7.1.1}\\
d^{\#}m_{2}&=&m_{2}(d\otimes 1)-m_{2}(\epsilon\otimes d')\label{7.1.2}\\
d^{\#}m_{3}&=&m_{3}(\epsilon\otimes \delta'_{2})+m_{3}d\label{7.1.3}\\
d^{\#}m_{4}&=&-m_{2}(\delta_{2}\otimes 1)+m_{4}d'\label{7.1.4}
\end{eqnarray}The identity (\ref{7.1.1}) is obtained by counting boundary of the compactified moduli space $M^{+}_{\gamma^{\#}, z}(\beta, \beta';\beta^{\#})_{1}$ for each path $z$. 
In fact, the oriented boundary of $M^{+}_{\gamma^{\#}, z}(\beta, \beta';\beta^{\#})_{1}$ consists of the following types of codimension one faces:
\[
M_{\gamma^{\#},z'}(\beta, \beta'; \beta_{1}^{\#})_{0}\times \breve{M}_{z''}(\beta_{1}^{\#}, \beta^{\#})_{0},\ \breve{M}_{z'}(\beta, \beta_{1})_{0}\times M_{\gamma^{\#}, z''}(\beta_{1}, \beta'; \beta^{\#})_{0},\]
\[(-1)^{\rm gr(\beta)}\breve{M}_{z'}(\beta', \beta'_{1})_{0}\times M_{\gamma, z''}(\beta, \beta'_{1}; \beta^{\#})_{0},\ (-1)^{\rm gr(\beta)+1}(H^{-1}_{\beta\beta_{1}}(s)\cap M_{z'}(\beta, \beta_{1})_{1})\times M_{z''}(\beta_{1}, \beta';\beta^{\#})_{0},\]
\[(-1)^{\rm gr(\beta)}(H_{\beta'\beta'_{1}}^{-1}(s)\cap M_{z'}(\beta', \beta'_{1})_{1})\times M_{z''}(\beta, \beta'_{1}; \beta^{\#})_{0}, \ (-1)^{\rm gr(\beta)}\breve{M}_{z'}(\beta', \theta'_{\alpha})_{0}\times M_{z''}(\beta, \theta'_{\alpha};\beta^{\#})_{0},\]
\[\breve{M}_{z'}(\beta, \theta_{\alpha})_{0}\times M_{z''}(\theta_{\alpha}, \beta'; \beta^{\#})_{0}.\]
Here we explain how the orientation of each faces is determined:
\\

Identities (\ref{7.1.2}), (\ref{7.1.3}) and (\ref{7.1.4}) are obtained by counting the compactified moduli space $M^{+}_{z}(\beta, \beta'; \beta^{\#})_{1}$, $M^{+}_{z}(\beta, \theta'_{\alpha};\beta^{\#})_{1}$ and $M_{z}^{+}(\theta_{\alpha}, \beta';\beta^{\#})_{1}$ respectively. Here we list up codimension one faces of each moduli spaces:
\begin{itemize}
\item Codimension one faces of $\partial M_{z}^{+}(\beta, \beta'; \beta^{\#})_{1}$:\[M_{z'}(\beta,\beta'; \beta_{1}^{\#})_{0}\times \breve{M}_{z''}(\beta^{\#}_{1}, \beta^{\#})_{0}, \ \breve{M}_{z'}(\beta, \beta_{1})\times M_{z''}(\beta_{1}, \beta'; \beta^{\#})_{0},\  \]\[(-1)^{\rm gr(\beta)}\breve{M}_{z'}(\beta', \beta'_{1})_{0}\times M_{z''}(\beta, \beta'_{1};\beta^{\#})_{0}.\]
\item Codimension one faces of $\partial M^{+}_{z}(\beta, \theta'_{\alpha};\beta^{\#})_{1}$:\[{M}_{z'}(\beta, \theta'_{\alpha};\beta_{1}^{\#})_{0}\times \breve{M}_{z''}(\beta_{1}^{\#},\beta^{\#})_{0},\ (-1)^{\rm gr(\beta)}\breve{M}_{z'}(\theta'_{\alpha}, \beta')_{0}\times M_{z''}(\beta, \beta';\beta^{\#})_{0},\]\[\breve{M}_{z'}(\beta, \beta_{1})\times M_{z''}(\beta_{1}, \theta'_{\alpha};\beta^{\#})_{0}.\]
\item Codimension one faces of $\partial M^{+}_{z}(\theta_{\alpha}, \beta';\beta^{\#})_{1}$:\[M_{z'}(\theta_{\alpha},\beta'; \beta^{\#}_{1})_{0}\times \breve{M}_{z''}(\beta_{1}^{\#}, \beta^{\#})_{0}, \ \breve{M}_{z'}(\theta_{\alpha}, \beta)_{0}\times M_{z''}(\beta, \beta';\beta^{\#})_{0},\]\[\breve{M}_{z'}(\beta',\beta_{1}')_{0}\times M_{z''}(\theta_{\alpha},\beta'_{1};\beta^{\#})_{0}.\]
\end{itemize}

The relation (\ref{7.2}) decomposes into the following four identities:
\begin{eqnarray*}
\delta_{1}^{\#}m_{1}&=&\Delta_{1, 1}(d\otimes 1)+\Delta_{1,1}(\epsilon\otimes d')-\Delta_{1,2}(\epsilon v \otimes 1)+\Delta_{1,2}(\epsilon\otimes v')+\Delta_{1,3}(\epsilon\otimes \delta'_{1})+\Delta_{1,4}(\delta_{1}\otimes 1),\\
\delta^{\#}_{1}m_{2}&=&\Delta_{1,2}(d\otimes 1)-\Delta_{1,2}(\epsilon\otimes d')\\
\delta_{1}^{\#}m_{3}&=&\Delta_{1,2}(\epsilon\otimes \delta'_{2})+\Delta_{1,3}d+\delta_{1}\\
\delta_{1}^{\#}m_{4}&=&-\Delta_{1,2}(\delta_{2}\otimes 1)+\Delta_{1,4}d'
+\delta'_{1}
\end{eqnarray*}
Each relations are obtained by counting the boundary of compactified one-dimensional moduli spaces $M^{+}_{\gamma^{\#}, z}(\beta, \beta'; \theta^{\#}_{\alpha})_{1}$, $M^{+}_{z}(\beta, \beta'; \theta^{\#}_{\alpha})_{1}$ ,$M_{z}^{+}(\theta_{\alpha}, \beta';\theta^{\#}_{\alpha})_{1}$ and $M^{+}_{z}(\beta, \theta'_{\alpha};\theta_{\alpha}^{\#})_{1}$ for each paths $z$ and the argument is similar to the previous case. 
Note that $M(\theta_{\alpha}, \theta'_{\alpha}; \theta^{\#}_{\alpha})_{0}$ consists of unique reducible connection.
The relation (\ref{7.3}) reduces to 
\[m_{3}\delta_{2}+m_{4}\delta_{2}'=\delta^{\#}_{2}-d^{\#}\Delta_{2},\]
and this reduces to the counting of the boundary of $M^{+}_{z}(\theta_{\alpha}, \theta'_{\alpha}: \beta^{\#})_{1}$ whose codimension one faces are 
\[\breve{M}_{z'}(\theta_{\alpha}, \beta)_{0}\times M_{z''}(\beta, \theta'_{\alpha}; \beta^{\#})_{0},\ \breve{M}_{z}(\theta'_{\alpha}, \beta')_{0}\times M_{z''}(\beta, \beta'; \theta^{\#}_{\alpha})_{0},\]
\[\breve{M}_{z'}(\theta_{\alpha}, \theta'_{\alpha};\theta^{\#}_{\alpha})_{0}\times \breve{M}_{z''}(\theta^{\#}_{\alpha}, \beta^{\#})_{0}, \ M_{z'}(\theta_{\alpha}, \theta'_{\alpha};\beta^{\#}_{1})\times\breve{M}_{z''}(\beta_{1}^{\#}, \beta^{\#}).\]
The relation (\ref{7.4}) reduces to four identities:

\[d^{\#}\mu_{1}+\mu_{1}(d\otimes 1)+\mu_{1}(\epsilon \otimes d')-\mu_{2}(\epsilon v \otimes 1)+\mu_{2}(\epsilon\otimes v')+\mu_{3}(\epsilon\otimes \delta'_{1})+\mu_{4}(\delta_{1}\otimes 1)=v^{\#}m_{1}-m_{1}(v\otimes 1)+\delta^{\#}_{2}\Delta_{1,1},\]
\[d^{\#}\mu_{2}+(d\otimes 1)-\mu_{2}(\epsilon\otimes d')=v^{\#}m_{2}-m_{2}(v\otimes1)+m_{4}(\delta_{1}\otimes 1),\]
\[d^{\#}\mu_{3}+\mu_{2}(\epsilon\otimes \delta'_{2})+\mu_{3}d=v^{\#}m_{3}-m_{3}v+\delta_{2}^{\#}\Delta_{1,3}-\Delta_{2}\delta_{1},\]
\[d^{\#}\mu_{4}-\mu_{2}(\delta_{2}\otimes 1)+\mu_{4}d'=v^{\#}m_{4}-m_{1}(\delta_{1}\otimes 1)-m_{4}v'+\delta^{\#}_{2}\Delta_{1,3}-\Delta_{2}\delta'_{1}.\]

Each relations are obtained by counting the boundary of $M^{+}_{\gamma\gamma', z}(\beta, \beta';\beta^{\#})_{1}$, $M^{+}_{\gamma, z}(\beta, \beta'; \beta^{\#})_{1}$, $M^{+}_{\gamma, z}(\beta, \theta'_{\alpha}; \beta^{\#})_{1}$ and $M^{+}_{\gamma'}(\theta_{\alpha}, \beta';\beta^{\#})_{1}$. See also  \cite[Remark 6.11]{DS1}.

\end{proof}
\subsection{Step II.}
\[m'=[m'_{1}, m'_{2}, m'_{3}, m'_{4}]^{\rm T}:C^{\#}_{*}\rightarrow (C\otimes C')_{*}\oplus(C\otimes C')_{*-1}\oplus C_{*}\oplus C'_{*},\]
\[\mu'=[\mu'_{1},\mu'_{2},\mu'_{3},\mu'_{4}]^{\rm T}:C^{\#}_{*}\rightarrow(C\otimes C')_{*}\oplus(C\otimes C')_{*-1}\oplus C_{*}\oplus C'_{*},\]
\[\Delta'_{1}:C^{\#}_{1}\rightarrow\mathscr{S}.\]
\[\Delta'_{2}=[\Delta'_{2,1},\Delta'_{2,2},\Delta'_{2,3},\Delta'_{2,4}]^{\rm T}:\mathscr{S}\rightarrow(C\otimes C')_{-1}\oplus(C\otimes C')_{-2}\oplus C_{-1}\oplus C'_{-1},\]
Each components of above maps are defined as follows:

\begin{eqnarray*}
\langle m'_{1}(\beta^{\#}),\beta\otimes\beta' \rangle&=&\sum_{z}\#M_{z}(\beta^{\#};\beta, \beta')_{0}\lambda^{-\kappa(z)}T^{\nu(z)}\\
\langle m'_{2}(\beta^{\#}),\beta\otimes\beta' \rangle&=&\sum_{z}\#M_{\sigma, z}(\beta^{\#};\beta,\beta')_{0}\lambda^{-\kappa(z)}T^{\nu(z)}\\
\langle m'_{3}(\beta^{\#}),\beta \rangle&=&\sum_{z}\#M_{z}(\beta^{\#};\beta, \theta'_{\alpha})_{0}\lambda^{-\kappa(z)}T^{\nu(z)}\\
\langle m'_{4}(\beta^{\#}),\beta'\rangle&=&\sum_{z}\#M_{z}(\beta^{\#};\theta_{\alpha},\beta')_{0}\lambda^{-\kappa(z)}T^{\nu(z)}
\end{eqnarray*}

\begin{eqnarray*}
\langle\mu'_{1}(\beta\otimes\beta'),\beta^{\#} \rangle&=&\sum_{z}\#M_{\sigma, z}(\beta^{\#};\beta, \beta')_{0}\lambda^{-\kappa(z)}T^{\nu(z)}\\
\langle\mu'_{2}(\beta\otimes \beta'),\beta^{\#} \rangle&=&\sum_{z}\#M_{\sigma\sigma', z}(\beta^{\#};\beta,\beta')_{0}\lambda^{-\kappa(z)}T^{\nu(z)}\\
\langle\mu'_{3}(\beta),\beta^{\#}\rangle&=&\sum_{z}\#M_{\sigma, z}(\beta^{\#};\beta, \theta'_{\alpha})_{0}\lambda^{-\kappa(z)}T^{\nu(z)}\\
\langle\mu'_{4}(\beta'),\beta^{\#} \rangle&=&\sum_{z}\#M_{\sigma', z}(\beta^{\#};\theta_{\alpha},\beta')_{0}\lambda^{-\kappa(z)}T^{\nu(z)}
\end{eqnarray*}

\[\langle\Delta'_{1}(1),\beta^{\#}\rangle=\sum_{z}\#M_{z}(\beta^{\#};\theta_{\alpha},\theta'_{\alpha})_{0}\lambda^{-\kappa(z)}T^{\nu(z)}\]

\begin{eqnarray*}
\Delta'_{2,1}(\beta\otimes \beta')&=&\sum_{z}\#M_{z}(\beta^{\#};\beta, \beta')_{0}\lambda^{-\kappa(z)}T^{\nu(z)}\\
\Delta'_{2,2}(\beta\otimes \beta')&=&\sum_{z}\#M_{\sigma, z}(\theta^{\#}_{\alpha};\beta,\beta')_{0}\lambda^{-\kappa(z)}T^{\nu(z)}\\
\Delta'_{2,3}(\beta)&=&\sum_{z}\#M_{z}(\theta^{\#}_{\alpha};\beta,\theta'_{\alpha})_{0}\lambda^{-\kappa(z)}T^{\nu(z)}\\
\Delta'_{2,4}(\beta')&=&\sum_{z}\#M_{z}(\theta^{\#}_{\alpha};\theta'_{\alpha},\beta')_{0}\lambda^{-\kappa(z)}T^{\nu(z)}
\end{eqnarray*}

\begin{prp}
There are the following relations:
\begin{eqnarray}
d^{\otimes}\circ m'&=&m'\circ d^{\#}\\
\delta_{1}^{\otimes}\circ m'&=&\Delta'_{1}\circ d^{\#}+\delta_{1}^{\#}\\
m'\circ \delta_{2}^{\#}&=&\delta_{2}^{\otimes}-d^{\otimes}\circ \Delta'_{2},\\
d^{\otimes}\circ \mu' +\mu' \circ d^{\#}&=&v^{\otimes}\circ m' -m'\circ v^{\#}+\delta_{2}^{\otimes}\circ \Delta'_{1}-\Delta'_{2}\circ \delta_{1}^{\#}.
\end{eqnarray}
\end{prp}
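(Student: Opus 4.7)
The plan is to mimic the argument of Proposition \ref{step1} almost verbatim, with the cobordism $(W',S')$ playing the role of $(W,S)$ and with the ends swapped: now the connected sum $(Y\#Y',K\#K')$ is the incoming end and the disjoint union $(Y\sqcup Y',K\sqcup K')$ is the outgoing end. Using the splitting $\tilde{C}^{\otimes}=(C\otimes C')_{*}\oplus (C\otimes C')_{*-1}\oplus C_{*}\oplus C'_{*}$, each of the four displayed identities breaks into component equations (four for the first and second, one for the third, four for the fourth). Each component is then obtained by counting the boundary of a suitable compactified one-dimensional moduli space over $(W',S')$.

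Concretely, the first identity $d^{\otimes}\circ m'=m'\circ d^{\#}$ decomposes into four relations corresponding to the summands in the target. These come from counting the codimension-one faces of $M_{z}^{+}(\beta^{\#};\beta,\beta')_{1}$, $M_{\sigma,z}^{+}(\beta^{\#};\beta,\beta')_{1}$, $M_{z}^{+}(\beta^{\#};\beta,\theta'_{\alpha})_{1}$ and $M_{z}^{+}(\beta^{\#};\theta_{\alpha},\beta')_{1}$ respectively. The faces are of three kinds: sliding ends on the incoming end (broken trajectories in $(Y\#Y',K\#K')\times\mathbb{R}$), sliding ends on the two outgoing ends (broken trajectories in $(Y,K)\times\mathbb{R}$ and $(Y',K')\times\mathbb{R}$), and, in the case of the moduli space cut down by $H^{\sigma}$, the cutting faces $H^{-1}_{\beta\beta_1}(s)\cap M_z(\beta,\beta_1)_1$ arising exactly as in the Step I proof via the identity $\#M_{\sigma}-\#M_{\sigma'}=\#M_{\sigma^{\#}}$.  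The second identity is obtained by the same analysis applied to moduli spaces with $\beta^{\#}$ replaced by $\theta^{\#}_{\alpha}$, and the extra summands $\delta_{1}^{\#}$ (respectively $\delta'_{1}$, $\delta_{1}$) arise from codimension-one faces of the form $\breve M_{z'}(\beta^{\#},\theta^{\#}_{\alpha})_{0}\times M_{z''}(\theta^{\#}_{\alpha};\beta,\beta')_{0}$ where the trajectory factors through the reducible on the incoming end. The third identity reduces to the single equation obtained by counting $\partial M_{z}^{+}(\theta^{\#}_{\alpha};\theta_{\alpha},\theta'_{\alpha})_{1}$, and the fourth reduces to four equations obtained from the compactified one-dimensional cut-down moduli spaces $M_{\sigma,z}^{+}$, $M_{\sigma\sigma',z}^{+}$ with various combinations of reducible and irreducible limits, the terms $\delta_{2}^{\otimes}\circ\Delta'_{1}$ and $\Delta'_{2}\circ\delta_{1}^{\#}$ appearing from faces that factor through one of the reducibles $\theta_{\alpha}$, $\theta'_{\alpha}$ or $\theta^{\#}_{\alpha}$.

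The regularity at reducibles needed for all these counts is supplied by Proposition \ref{perturbation red}; the compactness of the relevant one-dimensional moduli spaces and the description of their codimension-one faces is as in \cite[Section 6]{DS1}, and is the same as in Step I once the roles of the two ends are exchanged.  The sign conventions for the induced boundary orientations are as described before Proposition \ref{step1}: the outward normal is put in the first factor, and the product orientation on boundary faces of the form $X\times Y$ with $X$ on the incoming end acquires the extra sign $(-1)^{\deg X}$, while the sign of a cutting face $H^{-1}_{\beta\beta_{1}}(s)\cap M$ comes with the fixed sign dictated by the orientation of $S^{1}$ and of the normal to the slice $\{s\}\subset S^{1}$.

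The only genuinely delicate part of the proof is the bookkeeping of these signs and of the additional sign twists produced by the identification $\Phi(o_{\beta}\otimes o_{\beta'}\otimes o_{W'})=\Phi(o_{(W',S';\beta^{\#};\beta,\beta')}\otimes o_{\beta^{\#}})$ used to orient the moduli spaces of $(W',S')$; this is where reversing the direction of the cobordism could in principle introduce a sign I have not tracked in the sketch. I expect this sign bookkeeping, rather than any new analytic ingredient, to be the main obstacle, and the plan is to carry it out by mirroring line by line the corresponding computation in the proof of Proposition \ref{step1}, using the mirror paths $\sigma,\sigma',\sigma^{\#}$ in place of $\gamma,\gamma',\gamma^{\#}$ and the canonical homology orientation $o_{W'}\in\mathscr{O}[W',S';\theta^{\#}_{\alpha,+},\theta_{\alpha,-},\theta'_{\alpha,-}]$.
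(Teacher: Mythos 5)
Your proposal follows exactly the paper's own route: the paper's proof of this proposition consists of the single remark that it is "similar to that of Proposition \ref{step1}," applied to the opposite cobordism $(W',S')$, which is precisely the strategy you carry out (and in considerably more detail, including the relevant one-dimensional moduli spaces, the factorization through reducibles, and the orientation conventions). The sign bookkeeping you flag as the delicate point is not addressed explicitly in the paper either, so there is no gap relative to the paper's argument.
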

\begin{proof}
The proof is similar to that of Proposition \ref{step1}. 
In this case, we consider the opposite cobordism $(W', S')$. 
\end{proof}
\subsection{Step III.}
Put $(W^{o}, S^{o}):=(W\circ W', S\circ S')$. We define compositions of paths $\rho^{\#}:=\gamma^{\#}\circ \sigma^{\#}$, $\rho:=\gamma\circ \sigma$ and $\rho':=\gamma'\circ \sigma'$. See Figure \ref{fig:paths on W^o}. 
We regard the configuration space of connections over $(W^{o}, S^{o})$ as quotient of the space of $SO(3)$-adjoint connections by determinant-$1$ gauge group $\mathcal{G}$.
Then there is an exact sequence
\[\mathcal{G}\hookrightarrow \mathcal{G}^{e}\twoheadrightarrow H^{1}(W^{o};\mathbb{Z}_{2}),\]
where $\mathcal{G}^{e}$ is a  $SO(3)$-gauge transformations and the last map gives the obstruction to lift an $SO(3)$-automorphism to $SU(2)$-automorphism over the $1$-skelton.
There is an action of $\mathcal{G}^{e}/\mathcal{G}\cong H^{1}(W^{o}, \mathbb{Z}_
{2})\cong \mathbb{Z}_{2}$ on the configuration space. Especially, there is an involution on
the moduli space $M(W^{o}, S^{o}; \beta^{\#}, \beta^{\#}_{1})_{d}$ and we denote its quotient by $M(W^{o}, S^{o};\beta^{\#}, \beta^{\#}_{1})_{d}^{e}$.  
We define
\[M_{\rho^{\#};z}(W^{o}, S^{o};\beta^{\#}, \beta^{\#}_{1})_{0}^{e}:=\{[A]\in M_{z}(W^{o}, S^{o};\beta^{\#}, \beta^{\#}_{1})_{1}^{e}|H^{\rho^{\#}}([A])=s\},\]
\[M_{\rho^{\#}\rho;z}(W^{o}, S^{o};\beta^{\#}, \beta^{\#}_{1})_{0}^{e}:=\{[A]\in M_{z}(W^{o}, S^{o};\beta^{\#}, \beta^{\#}_{1})_{2}^{e}|H^{\rho^{\#}}([A])=s, H^{\rho}([A])=s'\}.\]
The cardinality of these moduli spaces is half of that of the usual ones. Assume that $(W^{o}, S^{o})$ is equipped with a Riemannian metric with long neck along a cylinder $[0, 1]\times (Y\sqcup Y', K\sqcup K')$. Then  we have a good gluing relation
\begin{eqnarray*}M_{\rho^{\#}}(W^{o}, S^{o};\beta^{\#}, \beta^{\#}_{1})_{0}^{e}&=&\bigsqcup_{\beta, \beta'}M_{\sigma^{\#}}(W',S';\beta^{\#}, \beta, \beta')_{0}\times M(W, S;\beta, \beta';\beta_{1}^{\#})_{0}\\
&&\sqcup \bigsqcup_{\beta, \beta'}M(W',S';\beta^{\#}, \beta, \beta')_{0}\times M_{\gamma^{\#}}(W, S;\beta, \beta';\beta_{1}^{\#})_{0}\\
&&\sqcup\bigsqcup_{\beta'\in \mathfrak{C}'^{*}}M(W', S'; \beta^{\#}, \theta_{\alpha}, \beta')_{0}\times M(W, S;\theta_{\alpha}, \beta';\beta^{\#}_{1})_{0}\\
&&\sqcup\bigsqcup_{\beta\in \mathfrak{C}^{*}}M(W', S'; \beta^{\#},\beta, \theta_{\alpha})_{0}\times M(W, S;\beta, \theta'_{\alpha};\beta^{\#}_{1})_{0}
\end{eqnarray*}
Let $\tilde{m}_{(W^{o}, S^{o},\rho^{\#})}:\tilde{C}^{\#}_{*}\rightarrow \tilde{C}^{\#}_{*}$ be an $\mathcal{S}$-morphism whose components $m^{o}, \mu^{o}, \Delta^{o}_{1}$ and $\Delta^{o}_{2}$ are defined by
\begin{eqnarray*}
\langle m^{o}(\beta^{\#}),\beta^{\#}_{1}\rangle&=&\sum_{z}\#M_{\rho^{\#},z}(W^{o}, S^{o};\beta^{\#},\beta^{\#}_{1})^{e}_{0}\lambda^{-\kappa(z)}T^{\nu(z)}\\
\langle \mu^{o}(\beta^{\#}),\beta^{\#}_{1}\rangle&=&\sum_{z}\#M_{\rho^{\#}\rho,z}(W^{o}, S^{o};\beta^{\#},\beta^{\#}_{1})^{e}_{0}\lambda^{-\kappa(z)}T^{\nu(z)}\\
 \Delta^{o}_{1}(\beta^{\#})&=&\sum_{z}\#M_{\rho^{\#},z}(W^{o}, S^{o};\beta^{\#},\theta^{\#}_\alpha)^{e}_{0}\lambda^{-\kappa(z)}T^{\nu(z)}\\
 \Delta^{o}_{2}(1)\beta^{\#}&=&\sum_{z}\#M_{\rho^{\#},z}(W^{o}, S^{o};\beta^{\#},\theta^{\#}_\alpha)^{e}_{0}\lambda^{-\kappa(z)}T^{\nu(z)}.
\end{eqnarray*}

\begin{prp}
$\tilde{m}_{(W, S)}\circ \tilde{m}_{(W',S')}$ is $\mathcal{S}$-chain homotopic to $\tilde{m}_{(W\circ W', S\circ S' ;\rho^{\#})}$.
\end{prp}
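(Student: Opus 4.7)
The plan is the standard neck-stretching argument, adapted to singular instanton theory with general rational holonomy parameter. Equip $(W^o, S^o) = (W\circ W', S\circ S')$ with a one-parameter family of orbifold metrics $\{g_T\}_{T\in[0,\infty)}$ in which the intermediate cross-section $(Y\sqcup Y', K\sqcup K')$ carries a cylindrical neck of length $2T$. For finite $T$ the associated cobordism map is $\widetilde{m}_{(W^o, S^o;\rho^{\#})}$ up to $\mathcal{S}$-chain homotopy, while as $T\to\infty$ the moduli spaces factor through intermediate critical points on $(Y\sqcup Y', K\sqcup K')$. The plan is to extract an explicit $\mathcal{S}$-chain homotopy from the codimension-one boundary of the parametrized moduli spaces over $T\in[0,\infty)$.

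The key input is the gluing identity displayed just above the proposition, which identifies $M^{e}_{\rho^{\#}}(W^o,S^o;\beta^{\#},\beta^{\#}_1)_0$ at the limiting metric with a disjoint union of four types of fiber products. Under this identification the four strata correspond exactly to the nonzero block entries in the matrix composition of $\widetilde{m}_{(W,S)}$ with $\widetilde{m}_{(W',S')}$: the irreducible-breaking strata $M_{\sigma^{\#}}(W')\times M(W)$ and $M(W')\times M_{\gamma^{\#}}(W)$ produce the $m_2\circ m_2'$ and $m_1\circ m_1'$ blocks of $m\circ m'$, while the two reducible strata $M(W';\beta^{\#},\theta_\alpha,\beta')\times M(W;\theta_\alpha,\beta';\beta_1^{\#})$ and its counterpart produce the $m_4\circ m_4'$ and $m_3\circ m_3'$ blocks. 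The factor of $2$ arising in the gluing is absorbed by the $\mathcal{G}^e/\mathcal{G}\cong \mathbb{Z}_2$ quotient defining $M^e$, and the multiplicativity $\lambda^{-\kappa(z_1\circ z_2)}T^{\nu(z_1\circ z_2)} = \lambda^{-\kappa(z_1)}T^{\nu(z_1)}\cdot\lambda^{-\kappa(z_2)}T^{\nu(z_2)}$ (which follows from Lemma \ref{kappa and nu} after a fixed renormalization of reference reducibles) ensures the identification holds as an equality of $\mathscr{S}$-coefficients. Analogous gluing identifications have to be written out for the components $\mu^o$, $\Delta_1^o$ and $\Delta_2^o$, using moduli cut down by $H^{\rho}$ and/or having reducible asymptotics $\theta^{\#}_\alpha$; in each case the four types of stratum match a specific sum of blocks in the matrix composition.

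The chain homotopy is then produced by counting the one-dimensional parametrized moduli spaces $\bigcup_{T}\{T\}\times M_{\rho^{\#},z}(W^o, g_T;\beta^{\#},\beta^{\#}_1)^e_0$, and their variants cut down by $H^{\rho}$ or asymptotic to $\theta^{\#}_\alpha$. Their codimension-one boundary splits into three families: (a) the $T=0$ slice, which contributes the $(\beta^{\#},\beta_1^{\#})$-coefficient of $\widetilde{m}_{(W^o,S^o;\rho^{\#})}$; (b) the $T\to\infty$ end, identified by the gluing formula with the corresponding coefficient of $\widetilde{m}_{(W,S)}\circ\widetilde{m}_{(W',S')}$; and (c) genuine sliding ends through irreducible or reducible critical points on the incoming/outgoing ends $(Y\#Y', K\#K')$. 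Contributions of type (c) are of the form $\widetilde{d}^{\#}\widetilde{h}+\widetilde{h}\widetilde{d}^{\#}$ for the degree-$1$ map $\widetilde{h}$ assembled from the signed parametrized counts, and the $\chi\widetilde{h}+\widetilde{h}\chi=0$ condition holds because none of the defining counts involve the reducible $\theta^{\#}_\alpha$ in a way that couples to $\chi$.

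The principal obstacle, and the reason one cannot invoke \cite{DS1} verbatim, is the lack of monotonicity: for $\alpha\ne \tfrac14$ instantons with bounded index do not have bounded energy, and compactness of the parametrized moduli must be organized via the Novikov-type completion $\mathscr{S}$ rather than a finite partial sum. This is handled exactly as in Subsection \ref{Floer homology}, weighting each trajectory by $\lambda^{-\kappa}T^{\nu}$ so that the signed count of boundary points in any given $\mathbb{R}$-degree range is finite, and then passing to the completed sum. Sign and orientation conventions are determined by the gluing pairing $\Phi$ described in the orientation paragraph, applied first to identify $\mathscr{O}[W^o,S^o;\theta^{\#}_{\alpha,+},\theta^{\#}_{\alpha,-}]$ with $\mathscr{O}[W',S';\theta^{\#}_{\alpha,+},\theta_{\alpha,-},\theta'_{\alpha,-}]\otimes\mathscr{O}[W,S;\theta_{\alpha,+},\theta'_{\alpha,+},\theta^{\#}_{\alpha,-}]$ and then transported to the general asymptotic data.
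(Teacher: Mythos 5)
Your proposal is correct and follows essentially the same route as the paper: a one-parameter family of metrics stretching $(W^{o},S^{o})$ along the intermediate $(Y\sqcup Y', K\sqcup K')$, with the $\mathcal{S}$-chain homotopy assembled from signed counts of the parametrized moduli spaces (with the cuts by $H^{\rho^{\#}}$, $H^{\rho}$ and reducible asymptotics), the $T\to\infty$ end matched to the blocks of $\tilde{m}_{(W,S)}\circ\tilde{m}_{(W',S')}$ via the displayed gluing identity for $M^{e}_{\rho^{\#}}$, and the sliding ends producing the $\tilde{d}$-commutator terms. This is exactly the paper's construction of the homotopy $H^{o}$ with components $K^{o}$, $L^{o}$, $M^{o}_{1}$, $M^{o}_{2}$, so no further comparison is needed.
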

\begin{proof}
\begin{figure}
    \centering
    \includegraphics[scale=0.8]{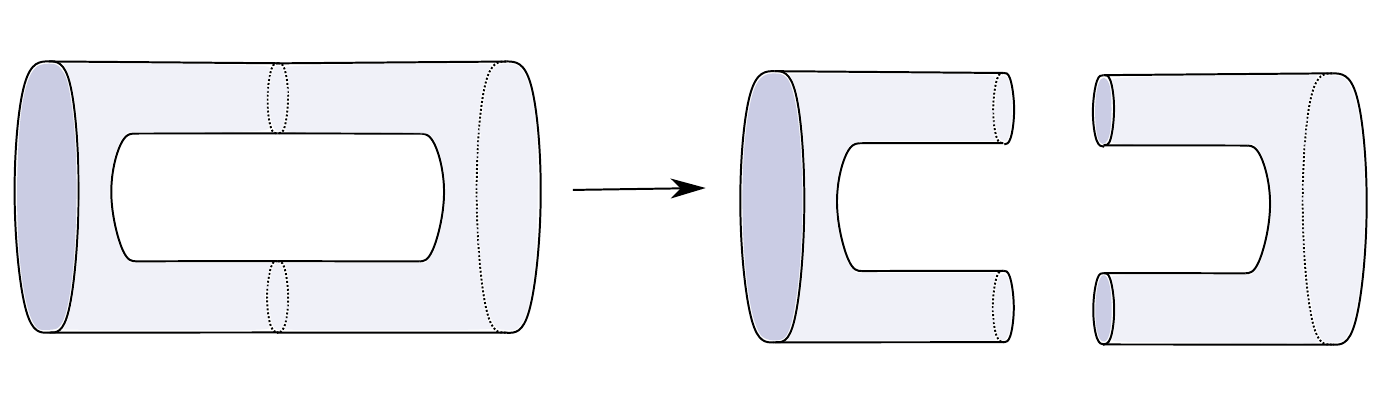}
    \caption{The family of metric $G^{o}$}
    \label{fig:G^o}
\end{figure}
\begin{figure}
    \centering
    \includegraphics[scale=1.3]{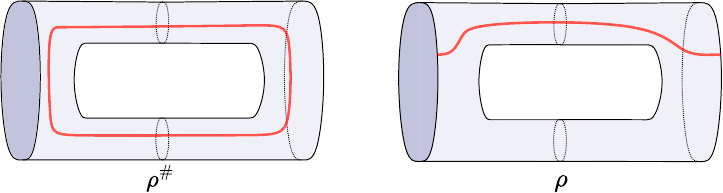}
    \caption{Paths on $(W^o, S^{o})$}
    \label{fig:paths on W^o}
\end{figure}
Let $G^{o}$ be a one parameter family of metric which stretch the cobordism $(W^{o}, S^{o})$ as in Figure \ref{fig:G^o}.
We modify the definition of $\mathcal{S}$-chain homotopy in Proposition 6.16 in \cite{DS1} as the following way:
\begin{eqnarray*}
\langle K^{o}(\beta^{\#}),\beta_{1}^{\#}\rangle&=&\sum_{z}\#\{[A]\in \bigcup_{g\in G^{o}}M_{z}^{g}(W^{o}, S^{o};\beta^{\#}, \beta_{1}^{\#})_{0}^{e}|H^{\rho^{\#}}([A])=s\}\lambda^{-\kappa(z)}T^{\nu(z)},\\
\langle L^{o}(\beta^{\#}),\beta^{\#}_{1}\rangle&=&\sum_{z}\#\{[A]\in \bigcup_{g\in G^{o}}M^{g}_{z}(W^{o}, S^{o}; \beta^{\#},\beta_{1}^{\#})_{0}^{e}|H^{\rho^{\#}}([A])=s, H^{\rho}([A])=t\}\lambda^{-\kappa(z)}T^{\nu(z)},\\
\langle M_{1}^{o}(\beta^{\#}),1\rangle&=&\sum_{z}\#\{[A]\in \bigcup_{g\in G^{o}}M_{z}^{g}(W^{o}, S^{o};\beta^{\#}, \theta^{\#})^{e}_{0}|H^{\rho^{\#}}([A])=s\}\lambda^{-\kappa(z)}T^{\nu(z)},\\
\langle M_{2}^{o}(1),\beta^{\#}\rangle&=&\sum_{z}\#\{[A]\in \bigcup_{g \in G^{o}}M^{g}_{z}(W^{o}, S^{o};\theta^{\#}_{\alpha},\beta^{\#})^{e}_{0}| H^{\rho^{\#}}([A])=s\}\lambda^{-\kappa(z)}T^{\nu(z)}.
\end{eqnarray*}
The rest of the argument is similar to  \cite[Proposition 6.16]{DS1} and we can check that \[H^{o}=\left[\begin{array}{ccc}
K^{o}&0&0\\
L^{o}&-K^{o}&M^{o}_{2}\\
M^{o}_{1}&0&0
\end{array}
\right]\]
gives an $\mathcal{S}$-chain homotopy from $\tilde{m}_{(W^{o}, S^{o})}$ to $\tilde{m}_{(W\sqcup W', S\sqcup S')}$.

\end{proof}
\begin{prp}
$\tilde{m}_{(W\circ W', S\circ S'; \rho^{\#})}$ is $\mathcal{S}$-chain homotopic to ${\rm id}_{\tilde{C}^{\#}}$ up to the multiplication of a unit element in $\mathscr{S}$.

\end{prp}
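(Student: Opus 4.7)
The plan is to realize $\tilde{m}_{(W^{o}, S^{o}; \rho^{\#})}$ as the composition of an identity-like cylinder cobordism map with an explicit scalar $\eta \in \mathscr{S}$, and then to verify that $\eta$ is a unit. The starting observation is that in $W^{o}=W\circ W'$ the 1-handle attached by $W$ and the cancelling handle attached by $W'$ cancel topologically, so $W^{o}$ is diffeomorphic to the cylinder $[0,1]\times (Y\# Y')$. What prevents $\tilde{m}_{(W^{o}, S^{o}; \rho^{\#})}$ from literally being the cylinder map is that the embedded surface $S^{o}$, together with the loop $\rho^{\#}=\gamma^{\#}\circ\sigma^{\#}$, carries non-trivial holonomy data concentrated inside the standard 4-ball where the handle cancellation is supported.

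First, I would produce a separating 3-sphere $\Sigma\subset W^{o}$ that bounds a 4-ball $B\subset W^o$ such that $B\cap S^{o}$ is a standard pair-of-pants carrying the loop $\rho^{\#}$, while the complement is a cylinder over $(Y\# Y', K\# K')$ with a pair of disks removed from the surface. Stretching the metric along $\Sigma$ and applying the gluing theorem along the flat reducible connection on $(S^{3}, S^{1})$ (which is unobstructed because $\mathcal{R}_{\alpha}(S^{3}\setminus S^{1}, SU(2))=\{\theta_{\alpha}\}$, as used in the proof of Proposition \ref{immersed map}), the cut-down zero-dimensional moduli space $M_{\rho^{\#},z}(W^{o}, S^{o};\beta^{\#}, \beta_{1}^{\#})_{0}^{e}$ splits as a disjoint union of fiber products of cylinder moduli spaces with moduli spaces on the closed model $(B, B\cap S^{o})$ equipped with $\rho^{\#}$.

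Next I would identify each factor. The cylinder factor contributes $\mathrm{id}_{\tilde{C}^{\#}}$ up to $\mathcal{S}$-chain homotopy by the standard product-cobordism argument (using a 1-parameter family interpolating between the stretched metric and the product metric). The ball factor contributes a scalar $\eta\in\mathscr{S}$ given by the weighted count of singular instantons on $(B, B\cap S^{o})$ cut down by the holonomy along $\rho^{\#}$. A direct analysis of the minimal-index reducible on this local model, following the index computation in Proposition \ref{index formula}, shows that the leading term of $\eta$ is $1$. Combined with the description of $\mathscr{R}_{\alpha}$ in Subsection \ref{summary}, this identifies $\eta$ as a unit in $\mathscr{S}$. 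Applying the previous proposition of Step III then yields the desired $\mathcal{S}$-chain homotopy between $\tilde m_{(W,S)}\circ\tilde m_{(W',S')}$ and $\eta\cdot\mathrm{id}_{\tilde{C}^{\#}}$.

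The main obstacle is the control of $\eta$ in the absence of the monotonicity condition. In the $\alpha=\frac{1}{4}$ setting of \cite{DS1} only finitely many energy levels contribute and $\eta$ reduces to a signed count; for general rational $\alpha$, singular instantons of arbitrarily high topological energy contribute terms $\lambda^{-\kappa(z)}T^{\nu(z)}$ to $\eta$, and one must use the reducible perturbation of Proposition \ref{perturbation red} together with the Novikov-type completeness of $\mathscr{R}_{\alpha}$ to show that the resulting formal sum lies in $\mathscr{S}$ and is invertible there. This is parallel to the twist-move analysis in the proof of Proposition \ref{immersed map}, where $\eta^{\alpha}(\overline{\mathbb{CP}^{2}}, S_{2})$ was computed to equal $1-\lambda^{4\alpha-1}T^{4}$ or $\lambda^{1-4\alpha}T^{-4}-1$ and shown to be a unit in $\mathscr{S}$; I would adapt that analysis to the cancelling-handle local model $(B, B\cap S^{o}, \rho^{\#})$ here.
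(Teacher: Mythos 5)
Your proposal rests on a topological premise that fails. The composite $W^{o}=W\circ W'$ is \emph{not} diffeomorphic to the cylinder $[0,1]\times (Y\# Y')$, and the handles you want to cancel do not cancel: gluing the two pair-of-pants cobordisms along both components of the middle level $Y\sqcup Y'$ creates an essential loop (down one connecting tube and back up the other), so $H^{1}(W^{o};\mathbb{Z}_{2})\cong \mathbb{Z}_{2}$ and $H_{1}(W^{o})\cong\mathbb{Z}$. The paper uses exactly this nontriviality to introduce the quotient moduli spaces $M(W^{o},S^{o};\beta^{\#},\beta^{\#}_{1})^{e}_{d}$ by the action of $\mathcal{G}^{e}/\mathcal{G}\cong H^{1}(W^{o};\mathbb{Z}_{2})$. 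Moreover the loop $\rho^{\#}=\gamma^{\#}\circ\sigma^{\#}$ represents the generator of $H_{1}(W^{o})$, so it cannot lie in an embedded $4$-ball $B\subset W^{o}$ (a loop in a ball is null-homotopic in $W^{o}$), and a separating $(S^{3},S^{1})$ with a pair-of-pants piece of $S^{o}$ carrying $\rho^{\#}$ inside a ball and a cylinder over $(Y\#Y',K\#K')$ outside does not exist. The picture you describe is the one relevant to the \emph{other} composition $W'\circ W$ (Step IV), where the complement of two ball pairs really is a disjoint union of cylinders; it is not available for $W\circ W'$.

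Consequently the heart of the paper's argument is missing from your sketch. The paper splits $(W^{o},S^{o})$ along $(S^{1}\times S^{2},\,S^{1}\times\{p_{1},p_{2}\})$ into $(W^{c},S^{c})\cup(S^{1}\times D^{3},S^{1}\times D^{1})$. The character variety of this neck is a Morse--Bott circle $\mathfrak{C}\cong S^{1}$, not a single reducible as in your ball model; unobstructedness of the Morse--Bott gluing is established by group-cohomology and index computations giving ${\rm ind}\,D_{A}=-1$ and $H^{2}=0$ on $(S^{1}\times D^{3},S^{1}\times D^{1})$. The cut-down by $H^{\rho^{\#}}$ then selects exactly two points of $\mathfrak{C}$, and the resulting factor of $2$ is cancelled against the $\mathbb{Z}_{2}$-quotient defining $M^{e}$; the reducible count on $(S^{1}\times D^{3},S^{1}\times D^{1})$ contributes a series $\sum_{k\leq 0}c_{k}Z^{k}$ with $c_{0}=1$ (unique extension of neck flat connections), hence a unit of $\mathscr{S}$. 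Finally, a second neck-stretching after replacing $(S^{1}\times D^{3},S^{1}\times D^{1})$ by $(D^{2}\times S^{2},D^{2}\times\{p_{1},p_{2}\})$ compares the same count with the honest product cobordism $[0,1]\times(Y\#Y',K\#K')$, producing a second unit, and only the combination of the two comparisons yields the chain homotopy to the identity up to a unit. Your "leading term $1$" estimate for a scalar attached to a nonexistent ball model, and the analogy with the twist-move computation of $\eta^{\alpha}(\overline{\mathbb{CP}^{2}},S_{2})$, do not substitute for these steps.
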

\begin{proof}
As in the proof of \cite[Proposition 6.17]{DS1}, we consider the decomposition,
\[(W^{o}, S^{o})=(W^{c}, S^{c})\cup (S^{1}\times D^{3}, S^{1}\times D^{1})\]
along $(S^{1}\times S^{2}, S^{1}\times {2{\rm pt}})$. 
We arrange the perturbation date on $(S^{1}\times D^{3}, S^{1}\times D^{1})$ and gluing region so that it is supported away from the moduli space of  flat connections.
Define the map $\tilde{m}^{+}$ as $\tilde{m}_{(W^{o}, S^{o}, \rho^{\#})}$ but using the metric on $(W^{o}, S^{o})$ which is stretched along the gluing region $(S^{1}\times S^{2}, S^{1}\times 2{\rm pt})$.
We write $m^{+}, \mu^{+}, \Delta^{+}_{1}$ and $\Delta^{+}_{2}$ for corresponding components of $\tilde{m}^{+}$.

Let $\mu_{1}$ be a generator of $S^{2}\setminus{\rm 2pt}$ factor and $\mu_{2}$ be a generator of $S^{1}$ factor in $\pi_{1}(S^{1}\times S^{2}\setminus S^{1}\times {\rm 2pt})$.
The equivalence classes of critical point set $\mathfrak{C}$ of the Chern-Simons functional on $(S^{1}\times S^{2}, S^{1}\times {\rm 2pt})$ can be identified with
\[\mathcal{R}_{\alpha}(S^{1}\times S^{2}\setminus S^{1}\times {\rm 2pt})=\{\beta \in {\rm Hom}(\pi_{1}, SU(2))|{\rm tr}\beta(\mu_{1})=2{\rm cos}(2\pi \alpha)\}/SU(2)\]
by the holonomy correspondence.
The character variety $\mathcal{R}_{\alpha}(S^{1}\times S^{2}\setminus S^{1}\times {\rm 2pt})$ is identified with $S^{1}$ as follows. 
Let $\mu_{1}$ be a generator of $\pi_{1}(S^{1}\times (S^{2}\setminus {\rm 2pt}))$ arising from $S^{2}\setminus {\rm 2pt}$ factor, and $\mu_{2}$ be another generator arising from $S^{1}$ factor.
Since ${\rm tr}\beta(\mu_{1})=2{\rm cos}(2\pi i \alpha)$, there is a element $g_{\beta}\in SU(2)$ with $g_{\beta}\beta(\mu_{1})g_{\beta}^{-1}=e^{2\pi i \alpha}\in S^{1}$ . Since $\mu_{1}$ and $\mu_{2}$ commute and $\alpha\neq 0,\frac{1}{2}$, there is $\theta(\beta)\in [0, 2\pi)$ and we have  $g_{\beta}\beta(\mu_{2})g_{\beta}^{-1}=e^{i\theta(\beta)}\in S^{1}$. The correspondence $\beta\mapsto e^{i\theta(\beta)}$ gives a bijection $\mathcal{R}_{\alpha}(S^{1}\times S^{2}\setminus S^{1}\times {\rm 2pt})\cong S^{1}$.

Let $A$ be a singular flat connection which is the extension of $\rho \in \mathfrak{C}$ over $(S^{1}\times D^{3}, S^{1}\times D^{1})$.
Since all elements in $\mathfrak{C}$ have $U(1)$-stabilizer, we have ${\rm dim}H^{0}(S^{1}\times D^{3}\setminus S^{1}\times D^{1}); {\rm ad}{A})=1$. We also have 
\[{\rm dim}H^{1}(S^{1}\times D^{3}\setminus S^{1}\times D^{1}; {\rm ad}{A})=1\]
by the computation of group cohomology of $\pi_{1}(S^{1}\times(D^{3}\setminus D^{1}))$.
Thus the critical point set $\mathfrak{C}=\mathcal{R}_{\alpha}(S^{1}\times (S^{2}\setminus {\rm 2pt}))$ is Morse-Bott non-degenerate. Consider a closed pair $(S^{1}\times S^{3},S^{1}\times S^{1})=(S^{1}\times D^{3}, S^{1}\times D^{1})\cup_{(S^{1}\times S^{2},S^{1}\times{\rm 2pt})}(S^{1}\times D^{3}, S^{1}\times D^{1})$. Then the gluing of index formula is  
\[2{\rm ind}D_{A}+{\rm dim}\mathfrak{C}+{\rm dim}{\rm Stab}(\rho)={\rm ind}D_{A\#_{\rho}A}.\]
Since ${\rm dim}\mathfrak{C}={\rm dim Stab}(\rho)=1$ and ${\rm ind}D_{A\#_{\rho}A}=0$ by the index formula for a closed pair, we have ${\rm ind}D_{A}=-1$. This implies that
\[{\rm dim}H^{2}(S^{1}\times D^{3}\setminus S^{1}\times D^{1});{\rm ad}{A})=0\]
and hence the  gluing theory is unobstructed at the flat connection.
Morse-Bott gluing theory tells us that  moduli space $M_{\rho^{\#}}(W^{o}, S^{o};\beta^{\#}, \beta_{1}^{\#})_{0}$ has the structure of union of fiber products as follows. 
\[M(W^{c}, S^{c};\beta^{\#}, \beta^{\#}_{1})_{d}\times_{\mathfrak{C}}M_{\rho^{\#}}(S^{1}\times D^{3}, S^{1}\times D^{1})^{\rm irred}_{d'}\ \ (d+d'=1)\]

\[M(W^{c}, S^{c};\beta^{\#}, \beta^{\#}_{1})_{1}\times_{\mathfrak{C}}M_{\rho^{\#}}(S^{1}\times D^{3}, S^{1}\times D^{1})^{\rm red}\]
The first case is excluded by the index reason.

Consider the restriction map 
\[r':M_{\rho^{\#}}(S^{1}\times D^{3}, S^{1}\times D^{1})^{\rm red}\rightarrow \mathfrak{C}.\]
By the holonomy condition $H^{\rho^{\#}}([A])=1$ on the moduli space $M(S^{1}\times D^{3}, S^{1}\times D^{1})$, the image of $r'$ consists of two points $\theta, \theta'\in \mathfrak{C}$. 
Hence, if the metric on $(W^{o}, S^{o})$ has long neck along $(S^{1}\times S^{2}, S^{2}\times 2{\rm pt})$, the moduli space $M_{\rho^{\#}}(W^{o}, S^{o};\beta^{\#}, \beta^{\#}_{1})$ is two copy of 
\[M(W^{c}, S^{c};\beta^{\#},\theta, \beta^{\#}_{1})_{0}\times M(S^{1}\times D^{3}, S^{1}\times D^{1};\theta)^{\rm red}.\]
In particular, 
\begin{eqnarray*}\label{unit}
&&\sum_{z}\#M_{\rho^{\#}, z}(W^{o},S^{o};\beta^{\#}, \beta^{\#}_{1})_{0}\lambda^{-\kappa(z)}T^{\nu(z)}\\&=&2
\sum_{z}\sum_{z'\circ z''=z}\#M_{z'}(S^{1}\times D^{3}, S^{1}\times D^{1};\theta)^{\rm red}\lambda^{-\kappa(z')}T^{\nu(z')}\cdot \#M_{z''}(W^{c}, S^{c};\beta^{\#}, \theta, \beta^{\#}_{1})_{0}\lambda^{-\kappa(z'')}T^{\nu (z'')}\\
&=&2\left(\sum_{k\leq 0}c_{k}Z^{k}\right)\cdot \sum_{z''}\#M_{z''}(W^{c}, S^{c};\beta^{\#}, \theta, \beta^{\#}_{1})_{0}\lambda^{-\kappa(z'')}T^{\nu (z'')}
\end{eqnarray*}
Since flat connections on $(S^{1}\times S^{2}, S^{1}\times 2{\rm pt})$ are uniquely extend to $(S^{1}\times D^{3}, S^{1}\times D^{1})$, $c_{0}=1$.

Since $2\#M_{\rho^{\#}}(W^{o}, S^{o};\beta^{\#}, \beta^{\#}_{1})_{0}^{e}=\#M_{\rho^{\#}}(W^{o}, S^{o};\beta^{\#}, \beta^{\#}_{1})_{0}$ , there is a unit element $C_{1}\in \mathscr{S}$ and we have 
\[\langle m^{+}(\beta^{\#}),\beta^{\#}_{1}\rangle=C_{1}\sum_{z}\#M_{z}(W^{c}, S^{c}; \beta^{\#}, \theta, \beta_{1}^{\#})_{0}\lambda^{-\kappa(z)}T^{\nu(z)}.\] Replacing $M_{\rho^{\#}}(W^{o},S^{o};\beta^{\#}, \beta^{\#}_{1})_{0}$ to $M_{\rho^{\#}\rho}(W^{o}, S^{o};\beta^{\#},\theta, \beta_{1}^{\#})_{0}$, $M_{\rho^{\#}}(W^{o}, S^{o};\beta^{\#}, \theta, \theta^{\#}_{\alpha})_{0}$, and $M_{\rho^{\#}}(W^{o},S^{o};\theta^{\#}_{\alpha}, \theta, \beta^{\#}_{1})_{0}$ in the argument above, we also have
\begin{eqnarray*}
\langle\mu^{+}(\beta^{\#}),\beta_{1}^{\#}\rangle&=&C_{1}\sum_{z}\#M_{\rho, z}(W^{c}, S^{c};\beta^{\#}, \theta,\beta^{\#}_{1})_{0}\lambda^{-\kappa(z)}T^{\nu(z)},\\
\langle \Delta^{+}_{1}(\beta^{\#}), 1\rangle&=&C_{1}\sum_{z}\#M_{z}(W^{o}, S^{o};\beta^{\#}, \theta, \theta^{\#}_{\alpha})_{0}\lambda^{-\kappa(z)}T^{\nu(z)},\\
\langle\Delta_{2}^{+}(1), \beta_{1}^{\#}\rangle&=&C_{1}\sum_{z}\#M_{z}(W^{o},S^{o};\theta^{\#}_{\alpha}, \theta, \beta^{\#}_{1})_{0}\lambda^{-\kappa(z)}T^{\nu(z)}.
\end{eqnarray*}
Replacing the pair $(S^{1}\times D^{3},S^{1}\times D^{1})$ with $(D^{2}\times S^{2},D^{2}\times 2{\rm pt})$, we obtain the product cobordism $[0, 1]\times (Y\# Y', K\# K')$. By stretching the metric on $[0, 1]\times (Y\#Y', K\# K')$ along the attaching domain, the moduli space $M([0,1]\times (Y\#Y', K\# K')\beta^{\#}, \beta^{\#}_{1})_{0}$ has the structure of the union of fiber products
\begin{equation}M(W^{c}, S^{c};\beta^{\#}, \beta^{\#}_{1})\times_{\mathfrak{C}}M(D^{2}\times S^{2}, D^{2}\times 2{\rm pt})^{red}. \label{fiber product}
\end{equation}
Let $A'$ be a extended flat connection on $(D^{2}\times S^{2}, D^{2}\times 2{\rm pt})$ of flat connection $\theta$.
Such $A'$ uniquely exists.
Moreover, it can be checked that the point $\Theta$ is unobstructed as follows. 
Consider the following closed pair 
\[(S^4, S^2):=(D^{2}\times S^{2}, D^{2}\times {\rm 2 pt})\cup_{(S^{1}\times S^{2}, S^{1}\times {\rm 2pt})}(S^{1}\times D^{3}, S^{1}\times D^{1})\]
and a glued reducible flat connection $A'\#_{\theta} A$ on $(S^4, S^2)$.
Then we have
\[{\rm ind}D_{A'\#_{\theta}A}={\rm ind}D_{A'}+{\rm dim}{\rm Stab}(\theta)+{\rm dim}\mathfrak{C}+{\rm ind}D_{A}.\]
Since $b^{1}(X)=b^{+}(X)=0$ and $S\cong S^{2}$, the index formula for a closed pair shows that ${\rm ind}D_{A'\#A}=-1$. 
Moreover, ${\rm ind}D_{A}=-1$ by the previous argument.
Thus we have ${\rm ind}D_{A'}=-2$.

Since ${\rm dim}H^{0}(D^{2}\times (S^{2}\setminus {2{\rm pt})}; {\rm ad}A')=1$ and ${\rm dim}H^{1}(D^{2}\times (S^{2}\setminus {2{\rm pt})}; {\rm ad}A')=0$ by the computation of group cohomology, ${\rm dim}H^{2}(D^{2}\times (S^{2}\setminus {2{\rm pt})}; {\rm ad}A')$ vanishes.

Now, a fiber product structure ($\ref{fiber product}$) implies that there is a unit element $C_{2}\in \mathscr{S}$ and
\[\langle m^{+}(\beta^{\#}),\beta^{\#}_{1}\rangle=C_{2}\sum_{z}\#M_{z}([0 ,1]\times (Y\# Y', K\# K');\beta^{\#}, \beta^{\#}_{1})_{0}\lambda^{-\kappa(z)}T^{\nu(z)}.\]
Similarly, we have 
\[\langle \mu^{+}(\beta^{\#}),\beta^{\#}_{1}\rangle=C_{2}\sum_{z}\#M_{\rho, z}([0 ,1]\times (Y\# Y', K\# K');\beta^{\#}, \beta^{\#}_{1})_{0}\lambda^{-\kappa(z)}T^{\nu(z)},\]
\[ \Delta_{1}^{+}(\beta^{\#})=C_{2}\sum_{z}\#M_{z}([0 ,1]\times (Y\# Y', K\# K');\beta^{\#}, \theta_{\alpha}^{\#})_{0}\lambda^{-\kappa(z)}T^{\nu(z)},\]
\[ \langle \Delta_{2}^{+}(1), \beta^{\#}\rangle=C_{2}\sum_{z}\#M_{z}([0 ,1]\times (Y\# Y', K\# K');\theta_{\alpha}^{\#}, \beta^{\#}_{1})_{0}\lambda^{-\kappa(z)}T^{\nu(z)}.\]
 Finally, there is a unit element $c\in \mathscr{S}$ and we have \[\tilde{m}^{+}=c\tilde{m}_{[0,1]\times(Y\#Y',K\#K')}.\] The right hand side is $\mathcal{S}$-chain homotopic to the identity since it is induced from the product cobordism.
By the construction, the unit element $c$ has the top term $1$, and hence $\tilde{m}_{(W\circ W', S\circ S';\rho^{\#})}$ is $\mathcal{S}$-chain homotopic to the identity up to the multiplication of a unit element in $\mathscr{S}$.
\end{proof}
\subsection{Step IV.}
Set $\bar{\rho}:=\sigma\circ\gamma$, $\bar{\rho}':=\sigma'\circ\gamma'$, $\tilde{\rho}:=\sigma'\circ\gamma$ , and $\tilde{\rho}':=\sigma\circ \gamma'$. See Figure \ref{fig:paths on W^I}.
\begin{figure}[hbtp]
    \centering
    \includegraphics[scale=1.0]{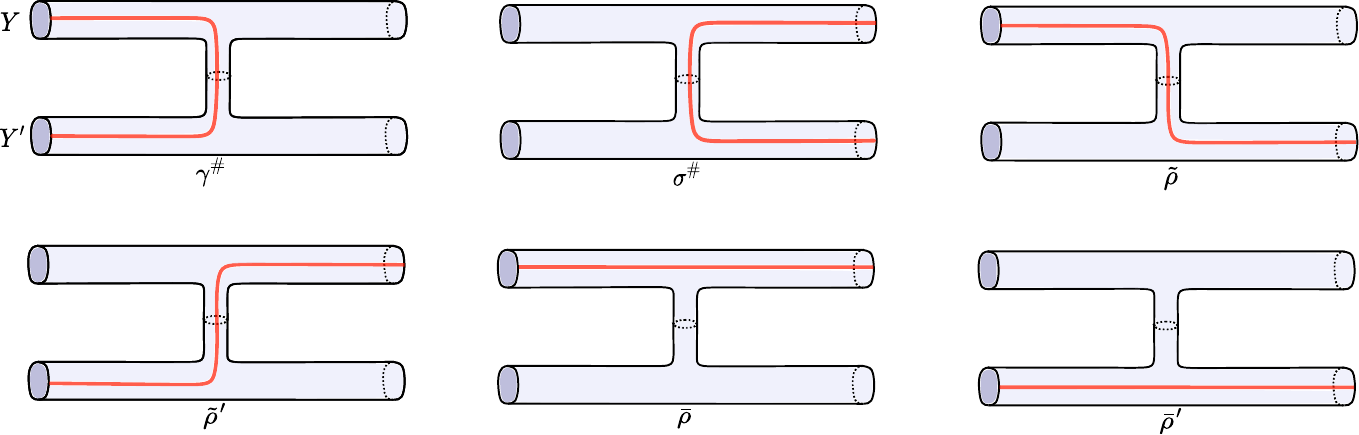}
    \caption{Paths on $(W^{I}, S^{I})$}
\label{fig:paths on W^I}
\end{figure}
\begin{prp}
$\tilde{m}_{(W', S')}\circ \tilde{m}_{(W, S)}$ is $\mathcal{S}$-chain homotopic to 
$\tilde{m}_{(W'\circ W, S'\circ S)}$.

\end{prp}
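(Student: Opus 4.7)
The plan is to mimic Step III but in a simpler setting, since $(W^{I}, S^{I}) := (W'\circ W, S'\circ S)$ is a cobordism from $(Y\sqcup Y', K\sqcup K')$ to itself and does not carry a non-trivial $H^{1}(W^{I}; \mathbb{Z}_{2})$-action of the kind that forced the quotient moduli spaces in Step III. Thus the argument reduces to a standard stretching-of-the-neck chain homotopy along the intermediate manifold $(Y\#Y', K\#K')$.

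First I would introduce a one-parameter family of Riemannian metrics $G^{I}$ on $(W^{I}, S^{I})$ interpolating between (i) a short-neck metric that represents the composed cobordism and defines $\tilde{m}_{(W^{I}, S^{I})}$, and (ii) a long-neck metric obtained by inserting an increasingly long cylinder $[-T, T]\times (Y\#Y', K\# K')$. For each $g\in G^{I}$, one has the moduli spaces $M^{g}_{z}(W^{I}, S^{I}; \beta\sqcup\beta', \beta_{1}\sqcup \beta'_{1})_{d}$, together with the compositions of paths $\bar{\rho} = \sigma\circ \gamma$, $\bar{\rho}' = \sigma'\circ \gamma'$, $\tilde{\rho}=\sigma'\circ \gamma$ and $\tilde{\rho}'=\sigma\circ \gamma'$, which yield the holonomy maps $H^{\bar{\rho}}$, $H^{\bar{\rho}'}$, etc. used to define the various components of the induced $\mathcal{S}$-morphism $\tilde{m}_{(W^{I}, S^{I})}$.

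Next I would define a candidate chain homotopy $\tilde{H}^{I}$ by counting parametrized moduli spaces, in exact analogy with Proposition~6.16 of \cite{DS1}:
\begin{align*}
\langle K^{I}(\beta^{\sqcup}),\, \beta^{\sqcup}_{1}\rangle &= \sum_{z}\#\{[A]\in\textstyle{\bigcup}_{g\in G^{I}}M^{g}_{z}(W^{I}, S^{I}; \beta^{\sqcup}, \beta^{\sqcup}_{1})_{0}\mid H^{\bar{\rho}}([A])=s\}\lambda^{-\kappa(z)}T^{\nu(z)},\\
\langle L^{I}(\beta^{\sqcup}),\, \beta^{\sqcup}_{1}\rangle &= \sum_{z}\#\{[A]\in\textstyle{\bigcup}_{g\in G^{I}}M^{g}_{z}(W^{I}, S^{I}; \beta^{\sqcup}, \beta^{\sqcup}_{1})_{0}\mid H^{\bar{\rho}}=s,\, H^{\bar{\rho}'}=t\}\lambda^{-\kappa(z)}T^{\nu(z)},
\end{align*}
with analogous formulas $M^{I}_{1}$, $M^{I}_{2}$ involving reducible limits at $\theta_{\alpha}\sqcup \theta'_{\alpha}$. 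Counting the oriented boundary of the one-dimensional parametrized moduli spaces, and assembling the resulting identities as in Proposition~\ref{step1}, produces an $\mathcal{S}$-chain homotopy between the maps defined by the two endpoints of $G^{I}$.

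The decisive ingredient is the identification of the long-neck endpoint with $\tilde{m}_{(W', S')}\circ \tilde{m}_{(W, S)}$. For a metric with a sufficiently long cylinder along $(Y\# Y', K\#K')$, the gluing theory splits every moduli space as a union of fiber products over the critical point set $\mathfrak{C}^{\#}_{\pi}$: at an irreducible critical point $\beta^{\#}$ the gluing is unobstructed by the transversality arrangement used in Section~3, and along the reducible $\theta^{\#}_{\alpha}$ the obstruction bundle is handled as in Proposition~\ref{perturbation red} (the non-degeneracy of $\theta^{\#}_{\alpha}$ is guaranteed by $\Delta_{(Y, K)}(e^{4\pi i\alpha})\Delta_{(Y', K')}(e^{4\pi i \alpha})\neq 0$ together with the connect-sum formula for the Alexander polynomial). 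Matching the resulting expressions component by component with the formulas of $\tilde{m}_{(W, S)}$ and $\tilde{m}_{(W', S')}$ (keeping track of the decomposition of the paths $\bar{\rho}, \bar{\rho}', \tilde{\rho}, \tilde{\rho}'$ into $\gamma$-, $\gamma'$-, $\sigma$-, $\sigma'$-pieces), one obtains $\tilde{m}^{\infty} = \tilde{m}_{(W', S')}\circ \tilde{m}_{(W, S)}$. The other endpoint gives $\tilde{m}_{(W^{I}, S^{I})}$ by definition.

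The main technical obstacle is the bookkeeping at the reducible end: since we no longer have monotonicity, the coefficient appearing in front of each fiber-product term is a potentially non-trivial power series in $\lambda^{\pm 1}, T^{\pm 1}$, and one must verify that these coefficients factor consistently through the minimal-reducible weights $\eta^{\alpha}$ entering the definitions of $\Delta_{1}, \Delta_{2}, \Delta'_{1}, \Delta'_{2}$. This is exactly the same bookkeeping used in Step~III, but now with two reducible limits $\theta_{\alpha}$ and $\theta'_{\alpha}$ on the outgoing end; a careful sign/weight comparison (as in Proposition~\ref{step1}) shows that all coefficients assemble into the composition $\tilde{m}_{(W', S')}\circ \tilde{m}_{(W, S)}$ without extra factors, so no unit-element correction is needed. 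Combining the two steps yields an honest $\mathcal{S}$-chain homotopy $\tilde{m}_{(W^{I}, S^{I})}\simeq \tilde{m}_{(W', S')}\circ \tilde{m}_{(W, S)}$, proving the proposition.
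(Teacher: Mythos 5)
Your proposal follows essentially the same route as the paper's proof: a one-parameter family of metrics $G^{I}$ stretching $(W^{I},S^{I})$ along $(Y\#Y',K\#K')$, a chain homotopy defined by counting parametrized moduli spaces cut down by holonomy maps along the composed paths, the $\mathcal{S}$-chain homotopy identities obtained from oriented boundaries of one-dimensional parametrized moduli spaces, and the identification of the broken (long-neck) limit with $\tilde{m}_{(W',S')}\circ\tilde{m}_{(W,S)}$ via unobstructed gluing at irreducibles and the isolated non-degenerate reducible. The only difference is bookkeeping you explicitly defer: the paper assigns the holonomy cut-downs entry by entry on the four summands of $\tilde{C}^{\otimes}$ (using $\gamma^{\#},\sigma^{\#},\bar{\rho},\bar{\rho}',\tilde{\rho},\tilde{\rho}'$ in the components of $K^{I}$, $L^{I}$, $M^{I}_{1}$, $M^{I}_{2}$), rather than the single cut-downs by $H^{\bar{\rho}}$, $H^{\bar{\rho}'}$ in your sample formulas.
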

\begin{proof}
Let $G^{I}$ be a one parameter family of metric stretching $(W^{I}, S^{I}):=(W'\circ W, S'\circ S)$ along $(Y\# Y', K\#K')$.
Let  $\tilde{m}_{(W^{I}, S^{I})}$ be a cobordism map for $(W^{I},S^{I})$. We claim that there is a $\mathcal{S}$-chain homotopy $H^{I}$ such that
\[\tilde{d}^{\otimes}H^{I}+H^{I}\tilde{d}^{\otimes}=\tilde{m}_{(W', S')}\circ \tilde{m}_{(W, S)}-\tilde{m}_{(W^{I},S^{I})}.\]
Let us write each components of $H^{I}$ as\[H^{I}=\left[\begin{array}{ccc}
K^{I}&0&0\\
L^{I}&-K^{I}&M^{I}_{2}\\
M^{I}_{1}&0&0
\end{array}
\right]
\]

where $K^{I}$and $L^{I}$ are $4\times 4$ matrix. Before defining each components of these matrix, we introduce the following notation.
\[\mathfrak{m}^{I}_{z}(\beta, \beta', \beta_{1},\beta'_{1}):=\# \bigcup_{g\in G^{I}}M^{g}_{z}(W^{I}, S^{I}, \beta, \beta';\beta'_{1},\beta'_{1})_{-1},\]
\[\mathfrak{m}^{I}_{\circ_{1},\cdots, \circ_{d}; z}(\beta, \beta', \beta_{1},\beta'_{1}):=\#\{[A]\in \bigcup_{g\in G^{I}}M^{g}_{z}(W^{I}, S^{I}, \beta, \beta';\beta'_{1},\beta'_{1})_{d-1}|H^{\circ_{1}}([A])=s_{1},\cdots, H^{\circ_{d}}([A])=s_{d}\},\]
where $\circ_{1}, \cdots, \circ_{d}$ are elements in the set of paths $\{\gamma^{\#}, \sigma^{\#}, \bar{\rho}, \bar{\rho}', \tilde{\rho}, \tilde{\rho}'\}$. 
Then each components of $K^{I}, L^{I}, M_{1}^{I}$ and $M_{2}^{I}$ are given as follows:

\begin{itemize}
\item Components of $K^{I}$\begin{eqnarray*}
\langle K^{I}_{11}(\beta\otimes \beta'),\beta_{1}\otimes \beta'_{1}\rangle&=&\sum_{z}\mathfrak{m}_{\gamma^{\#};z}^{I}(\beta, \beta';\beta_{1}, \beta'_{1})\lambda^{-\kappa(z)}T^{\nu(z)}\\
\langle K^{I}_{12}(\beta\otimes \beta'),\beta_{1}\otimes \beta'_{1}\rangle&=&\sum_{z}\mathfrak{m}_{z}^{I}(\beta, \beta';\beta_{1},\beta'_{1})\lambda^{-\kappa(z)}T^{\nu(z)}\\
\langle K^{I}_{13}(\beta),\beta_{1}\otimes \beta'_{1}\rangle&=&\sum_{z}\mathfrak{m}_{z}^{I}(\beta, \theta_{\alpha}';\beta_{1},\beta'_{1})\lambda^{-\kappa(z)}T^{\nu(z)}\\
\langle K^{I}_{14}( \beta'),\beta_{1}\otimes \beta'_{1}\rangle&=&\sum_{z}\mathfrak{m}_{z}^{I}(\theta_{\alpha}, \beta';\beta_{1},\beta'_{1})\lambda^{-\kappa(z)}T^{\nu(z)}
\end{eqnarray*}\begin{eqnarray*}
\langle K^{I}_{21}(\beta\otimes \beta'),\beta_{1}\otimes \beta'_{1}\rangle&=&\sum_{z}\mathfrak{m}_{\gamma^{\#}, \sigma^{\#};z}^{I}(\beta, \beta';\beta_{1}, \beta'_{1})\lambda^{-\kappa(z)}T^{\nu(z)}\\
\langle K^{I}_{22}(\beta\otimes \beta'),\beta_{1}\otimes \beta'_{1}\rangle&=&\sum_{z}\mathfrak{m}_{\sigma^{\#};z}^{I}(\beta, \beta';\beta_{1},\beta'_{1})\lambda^{-\kappa(z)}T^{\nu(z)}\\
\langle K^{I}_{23}(\beta),\beta_{1}\otimes \beta'_{1}\rangle&=&\sum_{z}\mathfrak{m}_{\sigma^{\#};z}^{I}(\beta, \theta_{\alpha}';\beta_{1},\beta'_{1})\lambda^{-\kappa(z)}T^{\nu(z)}\\
\langle K^{I}_{24}( \beta'),\beta_{1}\otimes \beta'_{1}\rangle&=&\sum_{z}\mathfrak{m}_{\sigma^{\#};z}^{I}(\theta_{\alpha}, \beta';\beta_{1},\beta'_{1})\lambda^{-\kappa(z)}T^{\nu(z)}
\end{eqnarray*}
\begin{eqnarray*}
\langle K^{I}_{31}(\beta\otimes \beta'),\beta_{1}\rangle&=&\sum_{z}\mathfrak{m}_{\gamma^{\#};z}^{I}(\beta, \beta';\beta_{1}, \theta'_{\alpha})\lambda^{-\kappa(z)}T^{\nu(z)}\\
\langle K^{I}_{32}(\beta\otimes \beta'),\beta_{1}\rangle&=&\sum_{z}\mathfrak{m}_{z}^{I}(\beta, \beta';\beta_{1},\theta'_{\alpha})\lambda^{-\kappa(z)}T^{\nu(z)}\\
\langle K^{I}_{33}(\beta),\beta_{1}\rangle&=&\sum_{z}\mathfrak{m}_{z}^{I}(\beta, \theta_{\alpha}';\beta_{1},\theta'_{\alpha})\lambda^{-\kappa(z)}T^{\nu(z)}\\
\langle K^{I}_{34}( \beta'),\beta_{1}\rangle&=&\sum_{z}\mathfrak{m}_{z}^{I}(\theta_{\alpha}, \beta';\beta_{1},\theta'_{\alpha})\lambda^{-\kappa(z)}T^{\nu(z)}
\end{eqnarray*}
\begin{eqnarray*}
\langle K^{I}_{41}(\beta\otimes \beta'),\beta'_{1}\rangle&=&\sum_{z}\mathfrak{m}_{\gamma^{\#};z}^{I}(\beta, \beta'; \theta_{\alpha} ,\beta'_{1})\lambda^{-\kappa(z)}T^{\nu(z)}\\
\langle K^{I}_{42}(\beta\otimes \beta'),\beta'_{1}\rangle&=&\sum_{z}\mathfrak{m}_{z}^{I}(\beta, \beta';\theta'_{\alpha},\beta'_{1})\lambda^{-\kappa(z)}T^{\nu(z)}\\
\langle K^{I}_{43}(\beta),\beta'_{1}\rangle&=&\sum_{z}\mathfrak{m}_{z}^{I}(\beta, \theta_{\alpha}';\theta'_{\alpha},\beta'_{1})\lambda^{-\kappa(z)}T^{\nu(z)}\\
\langle K^{I}_{44}( \beta'),\beta'_{1}\rangle&=&\sum_{z}\mathfrak{m}_{z}^{I}(\theta_{\alpha}, \beta';\theta'_{\alpha},\beta_{1})\lambda^{-\kappa(z)}T^{\nu(z)}
\end{eqnarray*}
\item Components of $L^{I}$
\begin{eqnarray*}
\langle L^{I}_{11}(\beta\otimes \beta'),\beta_{1}\otimes \beta'_{1}\rangle&=&\sum_{z}\mathfrak{m}_{\gamma^{\#},\bar{\rho};z}^{I}(\beta, \beta';\beta_{1}, \beta'_{1})\lambda^{-\kappa(z)}T^{\nu(z)}\\
\langle L^{I}_{12}(\beta\otimes \beta'),\beta_{1}\otimes \beta'_{1}\rangle&=&\sum_{z}\mathfrak{m}_{\bar{\rho};z}^{I}(\beta, \beta';\beta_{1},\beta'_{1})\lambda^{-\kappa(z)}T^{\nu(z)}\\
\langle L^{I}_{13}(\beta),\beta_{1}\otimes \beta'_{1}\rangle&=&\sum_{z}\mathfrak{m}_{\bar{\rho};z}^{I}(\beta, \theta_{\alpha}';\beta_{1},\beta'_{1})_\lambda^{-\kappa(z)}T^{\nu(z)}\\
\langle L^{I}_{14}( \beta'),\beta_{1}\otimes \beta'_{1}\rangle&=&\sum_{z}\mathfrak{m}_{\bar{\rho}';z}^{I}(\theta_{\alpha}, \beta';\beta_{1},\beta'_{1})\lambda^{-\kappa(z)}T^{\nu(z)}
\end{eqnarray*}\begin{eqnarray*}
\langle L^{I}_{21}(\beta\otimes \beta'),\beta_{1}\otimes \beta'_{1}\rangle&=&\sum_{z}\mathfrak{m}_{\sigma^{\#},\bar{\rho};z}^{I}(\beta, \beta';\beta_{1}, \beta'_{1})\lambda^{-\kappa(z)}T^{\nu(z)}\\
\langle L^{I}_{22}(\beta\otimes \beta'),\beta_{1}\otimes \beta'_{1}\rangle&=&\sum_{z}\mathfrak{m}_{\sigma^{\#},\bar{\rho};z}^{I}(\beta, \beta';\beta_{1},\beta'_{1})\lambda^{-\kappa(z)}T^{\nu(z)}\\
\langle L^{I}_{23}(\beta),\beta_{1}\otimes \beta'_{1}\rangle&=&\sum_{z}\mathfrak{m}_{\sigma^{\#},\bar{\rho};z}^{I}(\beta, \theta_{\alpha}';\beta_{1},\beta'_{1})\lambda^{-\kappa(z)}T^{\nu(z)}\\
\langle L^{I}_{24}( \beta'),\beta_{1}\otimes \beta'_{1}\rangle&=&\sum_{z}\mathfrak{m}_{\sigma^{\#},\bar{\rho}';z}^{I}(\theta_{\alpha}, \beta';\beta_{1},\beta'_{1})\lambda^{-\kappa(z)}T^{\nu(z)}
\end{eqnarray*}
\begin{eqnarray*}
\langle L^{I}_{31}(\beta\otimes \beta'),\beta_{1}\rangle&=&\sum_{z}\mathfrak{m}_{\gamma^{\#},\bar{\rho};z}^{I}(\beta, \beta';\beta_{1}, \theta'_{\alpha})\lambda^{-\kappa(z)}T^{\nu(z)}\\
\langle L^{I}_{32}(\beta\otimes \beta'),\beta_{1}\rangle&=&\sum_{z}\mathfrak{m}_{\bar{\rho};z}^{I}(\beta, \beta';\beta_{1},\theta'_{\alpha})\lambda^{-\kappa(z)}T^{\nu(z)}\\
\langle L^{I}_{33}(\beta),\beta_{1}\rangle&=&\sum_{z}\mathfrak{m}_{\bar{\rho}';z}^{I}(\beta, \theta_{\alpha}';\beta_{1},\theta'_{\alpha})\lambda^{-\kappa(z)}T^{\nu(z)}\\
\langle L^{I}_{34}( \beta'),\beta_{1}\rangle&=&\sum_{z}\mathfrak{m}_{\tilde{\rho}'; z}^{I}(\theta_{\alpha}, \beta';\beta_{1},\theta'_{\alpha})\lambda^{-\kappa(z)}T^{\nu(z)}
\end{eqnarray*}
\begin{eqnarray*}
\langle L^{I}_{41}(\beta\otimes \beta'),\beta'_{1}\rangle&=&\sum_{z}\mathfrak{m}_{\gamma^{\#} ,\bar{\rho}';z}^{I}(\beta, \beta'; \theta_{\alpha},\beta'_{1})\lambda^{-\kappa(z)}T^{\nu(z)}\\
\langle L^{I}_{42}(\beta\otimes \beta'),\beta'_{1}\rangle&=&\sum_{z}\mathfrak{m}_{\tilde{\rho};z}^{I}(\beta, \beta';\theta'_{\alpha},\beta'_{1})\lambda^{-\kappa(z)}T^{\nu(z)}\\
\langle L^{I}_{43}(\beta),\beta'_{1}\rangle&=&\sum_{z}\mathfrak{m}_{\tilde{\rho};z}^{I}(\beta, \theta_{\alpha}';\theta'_{\alpha},\beta'_{1})\lambda^{-\kappa(z)}T^{\nu(z)}\\
\langle L^{I}_{44}( \beta'),\beta'_{1}\rangle&=&\sum_{z}\mathfrak{m}_{\bar{\rho}';z}^{I}(\theta_{\alpha}, \beta';\theta'_{\alpha},\beta_{1})\lambda^{-\kappa(z)}T^{\nu(z)}
\end{eqnarray*}
\item Components of $M^{I}_{1}$\begin{eqnarray*}
 M^{I}_{1,1}(\beta\otimes\beta')&=&\sum_{z}\mathfrak{m}_{\gamma^{\#};z}^{I}(\beta, \beta';\theta_{\alpha}, \theta'_{\alpha})\lambda^{-\kappa(z)}T^{\nu(z)}\\
 M^{I}_{1,2}(\beta\otimes\beta')&=&\sum_{z}\mathfrak{m}_{z}^{I}(\beta, \beta';\theta_{\alpha}, \theta'_{\alpha})\lambda^{-\kappa(z)}T^{\nu(z)}\\
 M^{I}_{1,3}(\beta)&=&\sum_{z}\mathfrak{m}_{z}^{I}(\beta, \theta'_{\alpha};\theta_{\alpha}, \theta'_{\alpha})\lambda^{-\kappa(z)}T^{\nu(z)}\\
 M^{I}_{1,4}(\beta')&=&\sum_{z}\mathfrak{m}_{z}^{I}(\theta_{\alpha}, \beta';\theta_{\alpha}, \theta'_{\alpha})\lambda^{-\kappa(z)}T^{\nu(z)}
\end{eqnarray*}
\item Components of $M^{I}_{2}$\begin{eqnarray*}
 \langle M^{I}_{2,1}(1),\beta\otimes\beta'\rangle&=&\sum_{z}\mathfrak{m}_{z}^{I}(\theta_{\alpha}, \theta'_{\alpha};\beta, \beta')\lambda^{-\kappa(z)}T^{\nu(z)}\\
 \langle M^{I}_{2,2}(1),\beta\otimes\beta'\rangle&=&\sum_{z}\mathfrak{m}_{\sigma^{\#};z}^{I}(\theta_{\alpha}, \theta'_{\alpha};\beta, \beta')\lambda^{-\kappa(z)}T^{\nu(z)}\\
 \langle M^{I}_{2,3}(1),\beta\rangle&=&\sum_{z}\mathfrak{m}_{z}^{I}(\theta_{\alpha}, \theta'_{\alpha};\beta, \theta'_{\alpha})\lambda^{-\kappa(z)}T^{\nu(z)}\\
 \langle M^{I}_{2,4}(1),\beta'\rangle&=&\sum_{z}\mathfrak{m}_{z}^{I}(\theta_{\alpha}, \theta'_{\alpha};\theta_{\alpha}, \beta')\lambda^{-\kappa(z)}T^{\nu(z)}
\end{eqnarray*}
\end{itemize}
Then we can check that there are following identities:
\begin{eqnarray}
d^{\otimes}K^{I}+K^{I}d^{\otimes}&=&m'm-m^{I}\label{7.17}\\
v^{\otimes}K^{I}-d^{\otimes}L^{I}+\delta_{2}^{\otimes}M^{I}_{1}+L^{I}d^{\otimes}-K^{I}v^{\otimes}+M_{2}^{I}\delta_{1}^{\otimes}&=&\mu'm+m'\mu+\Delta_{2}'\Delta_{1}-\mu^{I}\\
\delta_{1}^{\otimes}K^{I}+M_{1}^{I}d^{\otimes}&=&\Delta'_{1}m+\Delta_{1}-\Delta^{I}_{1}\\
-d^{\otimes}M_{2}^{I}-K^{I}\delta_{2}^{\otimes}&=&m'\Delta_{2}+\Delta_{2}'-\Delta_{2}^{I}
\end{eqnarray}
Identities above are proved by counting oriented boundaries of corresponding moduli spaces. 
For example, such moduli spaces  for identity (\ref{7.17}) are given as in Table \ref{tab1}.
Other identities can be proved in a similar way.
\begin{table}[h]
\centering
\caption{}
\label{tab1}
\begin{tabular}{|c|c|}
\hline
The component of (\ref{7.17})& The corresponding family of moduli space\\
\hline\hline
(1, 1) &$\{[A]\in \bigcup_{g\in G^{I}}M_{z}^{g}(W^{I}, S^{I};\beta, \beta';\beta_{1}, \beta'_{1})_{0}|H^{\gamma^{\#}}([A])=s\}$\\ 
(1, 2)&$\bigcup_{g\in G^{I}}M_{z}^{g}(W^{I}, S^{I};\beta, \beta';\beta_{1},\beta'_{1})_{0}$\\
(1, 3)&$\bigcup_{g\in G^{I}}M_{z}^{g}(W^{I}, S^{I};\beta, \theta_{\alpha}';\beta_{1},\beta'_{1})_{0}$\\
(1, 4)&$\bigcup_{g\in G^{I}}M_{z}^{g}(W^{I}, S^{I};\theta_{\alpha}, \beta';\beta_{1},\beta'_{1})_{0}$\\
\hline
(2, 1)&$\{[A]\in \bigcup_{g\in G^{I}}M_{z}^{g}(W^{I}, S^{I};\beta, \beta';\beta_{1}, \beta'_{1})_{2}|H^{\gamma^{\#}}([A])=s,H^{\sigma^{\#}}([A])=t\}$\\
(2, 2)&$\{[A]\in\bigcup_{g\in G^{I}}M_{z}^{g}(W^{I}, S^{I};\beta, \beta';\beta_{1},\beta'_{1})_{1}|H^{\sigma^{\#}}([A])=s\}$\\
(2, 3)&$\{[A]\in \bigcup_{g\in G^{I}}M_{z}^{g}(W^{I}, S^{I};\beta, \theta_{\alpha}';\beta_{1},\beta'_{1})_{1}|H^{\sigma^{\#}}([A])=s\}$\\
(2, 4)&$\{[A]\in\bigcup_{g\in G^{I}}M_{z}^{g}(W^{I}, S^{I};\theta_{\alpha}, \beta';\beta_{1},\beta'_{1})_{1}|H^{\sigma^{\#}}([A])=s\}$\\
\hline
(3, 1)&$\{[A]\in \bigcup_{g\in G^{I}}M_{z}^{g}(W^{I}, S^{I};\beta, \beta';\beta_{1}, \theta'_{\alpha})_{1}|H^{\gamma^{\#}}([A])=s\}$\\
(3, 2)&$\bigcup_{g\in G^{I}}M_{z}^{g}(W^{I}, S^{I};\beta, \beta';\beta_{1},\theta'_{\alpha})_{0}$\\
(3, 3)&$\{[A]\in\bigcup_{g\in G^{I}}M_{z}^{g}(W^{I}, S^{I};\beta, \theta_{\alpha}';\beta_{1},\theta'_{\alpha})_{1}|H^{\bar{\rho}'}([A])=t\}$\\
(3, 4)&$\bigcup_{g\in G^{I}}M_{z}^{g}(W^{I}, S^{I};\theta_{\alpha}, \beta';\beta_{1},\theta'_{\alpha})_{0}$\\
\hline
(4, 1)&$\{[A]\in \bigcup_{g\in G^{I}}M_{z}^{g}(W^{I}, S^{I};\beta, \beta'; \theta_{\alpha},\beta'_{1})_{1}|H^{\gamma^{\#}}([A])=s\}$\\
(4, 2)&$\bigcup_{g\in G^{I}}M_{z}^{g}(W^{I}, S^{I};\beta, \beta';\theta'_{\alpha},\beta'_{1})_{0}$\\
(4, 3)&$\bigcup_{g\in G^{I}}M_{z}^{g}(W^{I}, S^{I};\beta, \theta_{\alpha}';\theta'_{\alpha},\beta'_{1})_{0}$\\
(4, 4)&$\bigcup_{g\in G^{I}}M_{z}^{g}(W^{I}, S^{I};\theta_{\alpha}, \beta';\theta'_{\alpha},\beta_{1},)_{0}$\\
\hline
\end{tabular}
\end{table}
\end{proof}
\begin{prp}
$\tilde{m}_{(W'\circ W, S'\circ S)}$ is $\mathcal{S}$-chain homotopic to ${\rm id}_{\tilde{C}^{\otimes}}$.
\end{prp}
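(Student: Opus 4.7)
The plan is to mimic the neck-stretching strategy used in Step III, but now adapted to the cobordism $(W^I,S^I)=(W'\circ W, S'\circ S)$ which has boundary $(Y\sqcup Y', K\sqcup K')$ on both sides. The composition attaches a pair $1$-handle and then a pair dual $1$-handle, so on the underlying $4$-manifold level these handles admit a cancelling pair; the effect on the embedded surface is the insertion of a standard piece that I will analyze by a local model. Concretely, I would choose a separating hypersurface $(N,L)\subset (W^I,S^I)$ which isolates the portion built from the two handles. The natural candidate is a pair diffeomorphic to $(S^1\times S^2, S^1\times\{2\text{pt}\})$, chosen so that cutting along $(N,L)$ decomposes $(W^I,S^I)$ into a product cobordism $([0,1]\times(Y\sqcup Y'), [0,1]\times(K\sqcup K'))$ with a small ball removed, glued to a standard pair $(P,T)$ modeling the handle cancellation, whose character variety on $(N,L)$ agrees with the one computed in Step III.

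Next, let $G^I$ be a one-parameter family of metrics that stretches the cylinder normal to $(N,L)$. As in Step III, counting boundaries of moduli spaces over this family defines an $\mathcal{S}$-chain homotopy between $\tilde m_{(W^I,S^I)}$ and a limiting map $\tilde m^{+}_{(W^I,S^I)}$ built from gluing data along the Morse--Bott critical set $\mathfrak C=\mathcal R_\alpha(N\setminus L,SU(2))\cong S^1$. The analysis of Section on Step III already establishes that all non-degenerate extensions to $(P,T)$ are reducible, that the gluing theory at such reducibles is unobstructed (the key index and cohomology computations on $(S^1\times D^3, S^1\times D^1)$ and $(D^2\times S^2, D^2\times 2\text{pt})$ carry over verbatim), and that the holonomy constraint around the relevant loop forces the gluing parameter to lie in a finite set whose contribution is an invertible series with leading term $1$.

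The main step is then to identify the limiting map $\tilde m^{+}_{(W^I,S^I)}$ with the cobordism map of the product cobordism $[0,1]\times(Y\sqcup Y', K\sqcup K')$ up to a unit in $\mathscr S$. The fiber-product description
\[
M_{z}(W^I,S^I;\beta,\beta';\beta_1,\beta_1')\;\cong\;M_{z'}([0,1]\times(Y\sqcup Y'),[0,1]\times(K\sqcup K');\beta,\beta';\beta_1,\beta_1')\times_{\mathfrak C} M(P,T;\theta)^{\mathrm{red}}
\]
together with the product structure of the configuration space on a product cylinder reduces the matrix coefficients of $\tilde m^{+}_{(W^I,S^I)}$ to a common scalar multiple $c\in\mathscr S$ of the coefficients of $\tilde m_{[0,1]\times(Y\sqcup Y',K\sqcup K')}$, and a direct computation as in Step III gives $c$ with top term $1$. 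Since the product cobordism induces a map $\mathcal S$-chain homotopic to the identity of $\tilde C^{\otimes}$ by the invariance argument of Subsection~\ref{Floer homology}, composing with the unit $c^{-1}$ produces the required $\mathcal S$-chain homotopy $\tilde m_{(W^I,S^I)}\simeq\mathrm{id}_{\tilde C^{\otimes}}$.

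The main obstacle I foresee is the bookkeeping for the tensor product $\mathcal S$-complex: unlike Step III, the target contains the four summands $(C\otimes C')_{*}\oplus(C\otimes C')_{*-1}\oplus C_{*}\oplus C'_{*}$, and the limiting cobordism map has sixteen matrix components each of which must be identified with the corresponding component of the identity. This requires checking that the gluing parameter $s\in\mathfrak C\cong S^1$ does not create spurious cross terms mixing the $C\otimes C'$, $C$, and $C'$ summands; the cleanest way to verify this is to use the fact that on a product cobordism the relevant moduli spaces factor as products of moduli spaces on $(Y,K)$ and $(Y',K')$, and that the reducible contribution on $(P,T)$ is independent of the irreducible data on either factor. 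Once this factorization is established, the comparison with the identity map defined in Subsection~\ref{4-1} is formal.
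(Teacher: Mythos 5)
There is a genuine gap, and it is in the very first step: the decomposition you propose does not exist for this composite. Stretching along $(S^{1}\times S^{2}, S^{1}\times 2\mathrm{pt})$ is the geometry of Step III, i.e.\ of $W\circ W'$, where the two handles (a $3$-handle followed by a $1$-handle) do not cancel and their union is a neighborhood $S^{1}\times D^{3}$ of a circle. In the composite $W'\circ W$ considered here, the $3$-handle is attached along the belt sphere of the $1$-handle, the handle region together with the surface bands is a ball-like piece, and the natural separating hypersurface pairs are copies of the \emph{unknotted} pair $(S^{3}, S^{1})$, not $(S^{1}\times S^{2}, S^{1}\times 2\mathrm{pt})$. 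Your own description is internally inconsistent on this point: a ``product cobordism with a small ball removed'' has new boundary $S^{3}$, not $S^{1}\times S^{2}$; moreover $W^{I}=W'\circ W$ is connected while $[0,1]\times(Y\sqcup Y')$ has two components, so any comparison must remove a ball pair from \emph{each} component and glue along two copies of $(S^{3}, S^{1})$ --- which is exactly what the paper does with $(W^{c'}, S^{c'})=\bigl([0,1]\times Y\setminus D^{4}, [0,1]\times K\setminus D^{2}\bigr)\sqcup\bigl([0,1]\times Y'\setminus D^{4}, [0,1]\times K'\setminus D^{2}\bigr)$. Consequently the analytic input you import from Step III is the wrong local model: the relevant character variety $\mathcal{R}_{\alpha}(S^{3}\setminus S^{1})$ is a single reducible point $\theta_{\alpha}$ rather than a Morse--Bott circle, and the computation needed is that the unique reducible extension $\Theta_{\alpha}$ over $(D^{4}, D^{2})$ has $\mathrm{ind}\,D_{\Theta_{\alpha}}=-1$ with $H^{1}=H^{2}=0$, so the gluing is unobstructed and there is no $S^{1}$ gluing parameter and no holonomy cut-down.

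This also changes the conclusion of the comparison: because the flat connection on $(S^{3}, S^{1})$ extends uniquely over the ball pair, the stretched-limit counts on $(W^{I}, S^{I})$ agree \emph{exactly} with those of the product cobordism $[0,1]\times(Y\sqcup Y', K\sqcup K')$; there is no invertible Novikov series ``with top term $1$'' to divide out, unlike in Step III where the extension problem over $(S^{1}\times D^{3}, S^{1}\times D^{1})$ produces such a unit. Your fiber-product formula fibered over $\mathfrak{C}\cong S^{1}$ and the expected unit $c$ are therefore artifacts of transplanting Step III wholesale. The bookkeeping issue you flag about the sixteen matrix components of a map of $\tilde{C}^{\otimes}$ is real but secondary, and it is resolved automatically once the correct decomposition is used: after stretching along the two copies of $(S^{3}, S^{1})$, the moduli spaces factor as products of moduli spaces on the two punctured product cylinders, which matches the product $\mathcal{S}$-complex structure; the paper then concludes by identifying the result with $\tilde{m}_{\mathrm{prod}}$, which is $\mathcal{S}$-chain homotopic to the identity. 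To repair your argument you would need to replace your hypersurface by the two unknotted $(S^{3}, S^{1})$'s, redo the obstruction/index computation for $(D^{4}, D^{2})$ (not $(S^{1}\times D^{3}, S^{1}\times D^{1})$), and drop the unit-normalization step.
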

\begin{proof}
Consider a family of metric $G'^{I}$ on $(W^{I}, S^{I})$ which stretch the cobordism along $(S^{3}, S^{1})$ as in Figure\ref{fig:long neck W^I}.
\begin{figure}[hbtp]
    \centering
    \includegraphics[scale=0.5]{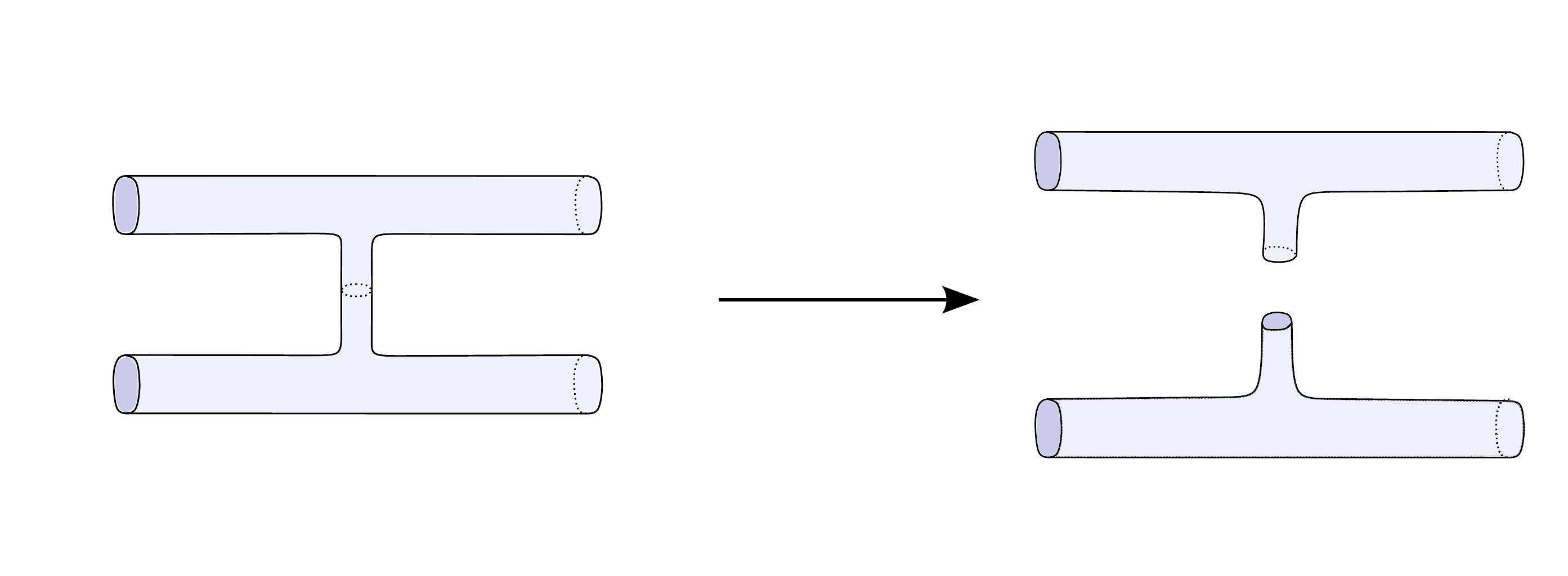}
   \caption{Family of metric $G'^{I}$}
    \label{fig:long neck W^I}
\end{figure}
Let $\tilde{m}^{I}$ be a map defined by long stretched metric on $(W^{I}, S^{I})$.
The family of  metric $G'^{I}$ gives $\mathcal{S}$-chain homotopy between $\tilde{m}_{(W^{I}, S^{I})}$ and $\tilde{m}^{I}$.
Let $(W^{c'}, S^{c'})$ be a disjoint union  \[(Y\times [0, 1]\setminus D^{4}, K\times [0, 1]\setminus D^{2})\sqcup(Y'\times [0, 1]\setminus D^{4}, K'\times [0, 1]\setminus D^{2}).\]
$\tilde{m}^{I}$ can also be defined by counting instantons on $(W^{c'}, S^{c'})$.
We will show that $\tilde{m}^{I}$ is an isomorphism of $\mathcal{S}$-complex.
We obtain a pair of cylinder $[0, 1]\times (Y\sqcup Y', K\sqcup K')$ by  gluing back two pair of disk $(D^{4}, D^{2})$ to $(W^{c'}, S^{c'})$.
Consider a character variety $\mathfrak{C}'$ with a holonomy parameter $\alpha$ on $(S^{3}, S^{1})$. For $0<\alpha<\frac{1}{2}$, $\mathfrak{C}'$ is a
one point  set which consists of a unique flat reducible $\theta_{\alpha}$ and  the moduli space $M(D^{4}, D^{2};\theta_{\alpha})_{0}$ also consists of one element $\Theta_{\alpha}$ which is a unique extension of $\theta_{\alpha}$ to $D^{4}\setminus D^{2}$. 
The computation of  group cohomology of $\pi_{1}(S^{3}\setminus S^{1})$ shows that ${\rm dim}H^{1}(S^{3}\setminus S^{1}; {\rm ad}\theta_{\alpha})=0$. Taking double of $(D^{4}, D^{2})$, we have a relation of indexes
\[2{\rm ind}D_{\Theta_{\alpha}}+1={\rm ind}D_{\Theta_{\alpha}\#{\Theta_{\alpha}}}\]
${\rm ind}D_{\Theta_{\alpha}\#\Theta_{\alpha}}=-1$ by the index formula for a closed pair $(S^{4}, S^{2})$. 
Thus we have ${\rm ind}D_{\Theta_{\alpha}}=-1$ and $H^{2}(D^{4}\setminus D^{2};{\rm ad}\Theta_{\alpha})=0$. 
In particular, the gluing along $\mathfrak{C}'$ is unobstructed.
Morse-Bott gluing argument shows that 
\[M([0, 1]\times Y, [0, 1]\times K; \beta, \beta_{1})_{d}=M([0, 1]\times Y\setminus D^{4}, [0, 1]\times K\setminus D^{2};\beta, \theta_{\alpha},\beta')_{d}\]
and similarly for a pair $(Y', K')$.
Thus we have 
\begin{eqnarray*}
\#M^{g^{\infty}}_{z}(W^{I}, S^{I},\beta, \beta';\beta_{1}, \beta'_{1})&=&\#M_{z'}(W^{I}, S^{I};\beta, \theta_{\alpha}, \beta_{1})\cdot\#M_{z''}(W^{I},S^{I};\beta', \theta_{\alpha}, \beta'_{1})\\
&=&\#M_{z'}(Y\times [0, 1], K\times [0, 1];\beta, \beta_{1})\cdot\#M_{z''}(Y'\times [0, 1], K'\times [0, 1];\beta', \beta'_{1}).\\
\end{eqnarray*}
Thus $\tilde{m}_{(W^{I}, S^{I})}$ is $\mathcal{S}$-chain homotopic to a morphism $\tilde{m}_{\rm prod}$ which is induced from the product cobordism $(Y\sqcup Y', K\sqcup K')\times [0,1]$.
The $\mathcal{S}$-morphism $\tilde{m}_{\rm prod}$ is an isomorphism of $\mathcal{S}$-complex (see \cite[Lemma 6.29]{DS1}), and in fact $\mathcal{S}$-chain homotopic to the identity by the formal argument.
\end{proof}

\bibliographystyle{plain}
\bibliography{article}

\noindent\textsc{
Depatrtment of Mathematics, 
Kyoto University,\\
Kyoto, 606-8502, Japan.}
\\ \\
\noindent{E-mail:\ imori.hayato.67m@st.kyoto-u.ac.jp}

\end{document}